\newtheorem{theorem}{Theorem}
\newtheorem{assumption}[theorem]{Assumption}
\newtheorem{corollary}[theorem]{Corollary}
\newtheorem{definition}[theorem]{Definition}
\newtheorem{lemma}[theorem]{Lemma}
\newtheorem{proposition}[theorem]{Proposition}
\newtheorem{remark}[theorem]{Remark}
\numberwithin{theorem}{section}
\numberwithin{equation}{section}
\newcommand{\mr}{\mathbb{R}}
\newcommand{\lan}{\langle}
\newcommand{\ran}{\rangle}
\newcommand{\ranL}{\ran_{L^2}}
\newcommand{\diverg}{\mathrm{div}}
\newcommand{\tr}{\mathrm{tr}}
\newcommand{\eps}{\epsilon}
\newcommand{\omd}{\omega^{\delta}}
\newcommand{\ud}{u^{\delta}}
\newcommand{\R}{\mathbb{R}}
\newcommand{\E}{\mathbb{E}}
\newcommand{\rd}{\mathrm{d}}
\newcommand{\omdR}{\omega^{\delta,R}}
\title[Existence and uniqueness by noise for 2D Euler equations]{Existence and uniqueness by Kraichnan noise for 2D Euler equations with unbounded vorticity}
\author{Michele Coghi}
\address[Michele Coghi]{Dipartimento di Matematica, Universit\`a degli Studi di Trento, via Sommarive 14, 38123 Povo (Trento), Italy; ORCID ID: 0000-0002-4198-0856}
\email{michele.coghi@unitn.it}
\author{Mario Maurelli}
\address[Mario Maurelli]{Dipartimento di Matematica, Universit\`a di Pisa, Largo Bruno Pontecorvo 5, 56127 Pisa, Italy; ORCID ID: 0000-0002-3028-1742}
\email{mario.maurelli@unipi.it}
\begin{document}

\maketitle

\begin{abstract}
    We consider the 2D Euler equations on $\R^2$ in vorticity form, with unbounded initial vorticity, perturbed by a suitable non-smooth Kraichnan transport noise, with regularity index $\alpha\in (0,1)$.
    We show weak existence for every $\dot{H}^{-1}$ initial vorticity. Thanks to the noise, the solutions that we construct are limits in law of a regularized stochastic Euler equation and enjoy an additional $L^2([0,T];H^{-\alpha})$ regularity.
    
    For every $p>3/2$ and for certain regularity indices $\alpha \in (0,1/2)$ of the Kraichnan noise, we show also pathwise uniqueness for every $L^p$ initial vorticity. This result is not known without noise.

\end{abstract}

\smallskip
\noindent \textbf{Keywords.} 2D Euler equation, Kraichnan
noise, regularization by noise.

\smallskip
\noindent \textbf{MSC (2020).} 35Q31, 60H15, 60H50, 35Q35.

\setcounter{tocdepth}{1}
\tableofcontents

\section{Introduction}

In this work, we consider the stochastic 2D Euler equations in vorticity form, on $[0,T]\times\R^2$, with Kraichnan noise, namely
\begin{align}
\begin{aligned}\label{eq:main}
&\rd \omega +u\cdot\nabla\omega \rd t +\sum_k\sigma_k\cdot\nabla\omega\circ \rd W^k =0,\\
&u=K\ast\omega,
\end{aligned}
\end{align}
where $K$ is the Biot-Savart kernel, $(W^k)_{k}$ is a family of independent real Brownian motions (on a filtered probability space $(\Omega,(\mathcal{F}_t)_t,P)$) and $\circ$ denotes formally Stratonovich integration. The spatial covariance $Q(x-y)=\sum_k\sigma_k(x)\sigma_k(y)^T$ is given in Fourier modes by
\begin{align}
    \widehat{Q} (n) = \langle n \rangle^{-(2+2\alpha)} \left(I - \frac{nn^{\top}}{|n|^2}\right),\quad n\in\R^2,\label{eq:Q intro}
\end{align}
with $0<\alpha<1$ (here $\langle n \rangle= (1+|n|)^2)^{1/2}$). Our main results, roughly speaking, are weak existence for $\dot{H}^{-1}$-valued vorticity and strong uniqueness for $L^p$-valued vorticity, for suitable $p$ and $\alpha$, including $p=2$ and $0<\alpha<1/2$.

\textbf{Background.} The incompressible Euler equations describe the motion of an incompressible non-viscous fluid. In two dimensions, the classical result by Yudovich \cite{yudovich1963non} proves well-posedness among solutions with bounded vorticity; see also Yudovich \cite{yudovich1995uniqueness} for an extension of these results to a slightly larger class. 
There has been a lot of interest in the case of unbounded vorticity, partly motivated by the formation of singular structures such as vortices and vortex sheets in fluids, especially close to boundaries. DiPerna and Majda \cite{diperna1987concentrations} proved existence of $L^p$ vorticity solutions, starting from an $L^p$ vorticity initial condition. 
In the class of distribution-valued vorticity, Delort \cite{delort1991existence} has shown existence of solutions whose vorticity is in $H^{-1}$ and is a positive measure, with initial condition in the same class; see also Schochet \cite{schochet1995weak} and Poupaud \cite{poupaud2002diagonal}. 
Vecchi and Wu \cite{vecchi19931} have shown existence of solutions which are sum of an $H^{-1}$, positive measure vorticity and an $L^1$ vorticity. Note that the case of $H^{-1}$ solutions is particularly relevant, because it include vortex sheets, i.e. distributions where vorticity is concentrated on a line, and it corresponds to finite-energy solutions, i.e. when $u$ is $L^2$ in space. Solutions outside the $H^{-1}$ class are also relevant as they include the case of $N$ vortices (i.e. the vorticity is a linear combination of Dirac deltas), but we will not consider this case here.

DiPerna and Majda \cite{diperna1987oscillations} give a concept of measure-valued solutions, where existence of solutions with velocity in $L^2$ hold but with a modified nonlinear term (in the sense of Young measures). More recently and with different techniques, Wiedemann \cite{wiedemann2011existence} has shown a general existence result in $H^{-1}$ for 2D Euler equations on the torus, starting from any $H^{-1}$ vorticity (actually his result is valid for any $L^2$ initial velocity in any dimension). His result relies on convex integration methods, in particular on the work \cite{de2010admissibility} by De Lellis and Sz\'ekelyhidi; the result in \cite{wiedemann2011existence} is striking, but the solutions provided in this way are not known, to our knowledge, to be the vanishing viscosity limits of solutions to Navier-Stokes equations.

Concerning uniqueness, even among $L^p$ solutions, $p<\infty$, uniqueness is in general not known and, in some cases, false. A numerical result indicating non-uniqueness has been performed by Bressan and Shen \cite{bressan2020posteriori}. Vishik \cite{vishik2018instability} (see also \cite{albritton2021instability}) has shown non-uniqueness in the class of $L^p$ vorticity, for every $p>2$, when the Euler equations in vorticity form are perturbed by a force in $L^p$ (based on this argument, non-uniqueness of Leray-Hoft solutions to forced 3D Navier-Stokes equations has been proved by Albritton, Bru\'e and Colombo \cite{albritton2022non}). Using a convex integration scheme, Bru\'e and Colombo \cite{brue2021nonuniqueness} have shown non-uniqueness on the 2D torus in the Lorentz class $L^{1,\infty}$, without external force.

The case of unbounded vorticity solutions has been studied also in relation to energy conservation and enstrophy conservation. Beside the celebrated work \cite{constantin1994onsager} by Constantin, E and Titi, the work \cite{cheskidov2016energy} by Cheskidov, Lopes Filho, Nussenzveig Lopes and Shvydkoy has shown energy conservation for $L^p$ vorticity solutions, with $p>3/2$ and, among vanishing viscosity limits, with $p>1$. Recently De Rosa and Inversi \cite{de2023dissipation} have proved energy conservation for bounded and $BV$ velocity. For more irregular solutions, the convex integration approach has produced solutions which dissipate energy, see e.g. the previously cited work \cite{de2010admissibility}. Crippa and Spirito \cite{crippa2015renormalized} and Crippa, Nobili, Seis and Spirito \cite{crippa2017eulerian} have shown enstrophy conservation for $L^1$ vorticity solutions which are vanishing viscosity limits. 

The study of the stochastic Euler equations goes back at least to the works of Bessaih and Flandoli \cite{bessaih19992}, Mikulevicius and Valiukevicius \cite{mikulevicius2000stochastic}, Brzezniak and Peszat \cite{brzezniak2001stochastic}. One of the most complete result is that by Glatt-Holtz and Vicol \cite{glatt2014local}: they show, among other things, global well-posedness for the $2$D Euler equations with additive and multiplicative noise, see also Kim \cite{kim2009existence}. 
Motivated by the transport structure of the Euler equations in vorticity form, the so-called transport noise has been introduced, see e.g. Brzezniak, Flandoli and Maurelli \cite{Brzezniak_Flandoli_Maurelli}, Crisan and Lang \cite{lang2023well} in the 2D case and Crisan, Flandoli and Holm \cite{CriFlaHol2019} in the 3D case. In these works, the transport noise is white in time and smooth in space. The work \cite{agresti2021stochastic} by Agresti and Veraar considers Navier-Stokes equations with a large class of transport noises (though on the velocity and not on the vorticity), possibly irregular in space, including the Kraichnan noise we take here. There exist works also with noises that are not white in time, e.g. Crisan, Holm, Leahy and Nilssen \cite{crisan2022solution} for Euler equations with rough path transport noise.

One of the motivations to study Euler equations and other PDEs with noise is the emergence of regularization by noise phenomena, namely the noise restores existence or uniqueness or regularity for a possibly ill-posed deterministic PDE. This phenomenon has been proved and widely studied for finite-dimensional ODEs with a non-smooth drift, see e.g. Krylov and R\"ockner \cite{krylov2005strong}, Fedrizzi and Flandoli \cite{fedrizzi2013noise}, Catellier and Gubinelli \cite{catellier2016averaging}, and for the associated linear transport PDEs, see e.g. Flandoli, Gubinelli and Priola \cite{FlaGubPri2010}. In the nonlinear case, there exist several results, but many cases are missing yet and a complete theory is not available (and perhaps not possible). In the framework of Euler equations and other incompressible fluid dynamical models, many results have been obtained with transport noise. Flandoli, Gubinelli and Priola \cite{flandoli2011full} have shown that a transport noise with ``rich'' spatial structure avoids the collapse of $N$ vortices, with $N$ arbitrary but fixed, while collapse may happen without noise from very special initial configurations. Galeati \cite{galeati2020convergence} has considered a family of stochastic transport equations whose space covariance tends to be concentrated on the diagonal. He has shown the convergence, in a negative Sobolev space, of the corresponding solutions to a deterministic parabolic PDE, hence getting an anomalous dissipation in the limit. This phenomenon has been extended to many nonlinear transport-type equations, where the additional viscosity of the limiting parabolic equation has been used to show regularization effects. For example, Flandoli, Galeati and Luo \cite{flandoli2021scaling}, see also \cite{flandoli2021quantitative}, have applied this argument to $2$D Euler equation with $L^2$ vorticity, showing that any solution to the 2D stochastic Euler equation tends to the unique solution to the Navier-Stokes equation in the afore-mentioned parabolic limit. Flandoli, Galeati and Luo \cite{flandoli2021delayed,flandoli2021quantitative} have exploited the parabolic limit in the case of 2D and 3D Keller-Segel equations (among other models) to obtain long-time existence with high probability. Flandoli and Luo \cite{flandoli2021high} have considered the 3D Navier-Stokes, perturbed by a transport noise, though without the stretching term, and have shown no blow-up for smooth solutions with high probability (with a noise intensity depending on the initial condition). Transport noise has also been used to show mixing and enhanced dissipation, see the already mentioned paper \cite{flandoli2021quantitative} by Flandoli, Galeati and Luo and \cite{gess2021stabilization} by Gess and Yaroslavtsev.

Perhaps the closest work related to ours is the very recent paper by Galeati and Luo \cite{galeati2023weak}. They have taken a 2D log-Euler equation, namely where the Biot-Savart kernel is slightly smoothed by a logarithmic regularization, and a transport noise with rough spatial covariance $Q$, more irregular than the one we consider in this work. They have shown uniqueness in law in the case of $L^2$-vorticity (i.e. finite enstrophy) solutions. Exploiting Girsanov theorem, they have also shown uniqueness in law, in the class of $L^2$-vorticities, for the 2D hypodissipative Navier-Stokes equations with transport noise as the one we consider here.

Outside the context of transport noise, Glatt-Holtz and Vicol \cite{glatt2014local} have proved that additive noise avoids blow-up of smooth solutions to 3D Euler equations with high probability. A recent line of research has exploited multiplicative noise with superlinear growth to show no blow-up of smooth solutions to 3D Euler equations and other PDEs with probability $1$, see e.g. the works by Tang and Wang \cite{tang2022general}, Alonso-Oran, Miao and Tang \cite{alonso2022global}, Bagnara, Maurelli and Xu \cite{bagnara2023no}. In these works, blow-up is avoided with probability $1$, at the price (at least for Euler equations) of a unphysical noise depending on a positive Sobolev norm of the velocity. 

There are also cases where the noise does not regularize the equation. For example, in the context of stochastic fluid dynamics, Hofmanova, Zhu and Zhu \cite{hofmanova2023nonuniqueness} and Hofmanova, Lange and Pappalettera \cite{hofmanova2022global} have shown, by convex integration techniques, non-uniqueness in law for the stochastic 3D Navier-Stokes equations with additive or multiplicative noise and the stochastic 3D Euler equations with transport noise (on the velocity), respectively.

It is worth mentioning that, for 2D Navier-Stokes equations with additive or multiplicative noise, there are phenomena that might be viewed as regularization by noise. For example, Da Prato and Debussche \cite{da2002two} have shown pathwise uniqueness for 2D Navier-Stokes equations with white noise in time and space on the velocity, for a.e. initial condition with respect to the invariant measure; in particular, the vorticity is distribution-valued. Recently Cannizzaro and Kiedrowski \cite{cannizzaro2023stationary} have considered 2D Navier-Stokes equations with even more irregular noise, namely the derivative of a white-in-space noise on the velocity, and shown that, by a suitable rescaling, the equation still makes sense and is expected to gain an extra viscosity coefficient. In these PDEs, one may view the noise as an enemy, since it is very irregular, but it actually helps in controlling the SPDE via its invariant measure: we are not aware of analogous results for deterministic 2D Navier-Stokes equations with such distribution-valued vorticity. We also mention the existence and selection criterion of Markov solutions for 3D Navier-Stokes by Flandoli and Romito \cite {flandoli2008markov, romito2008analysis}.

We also recall that stochasticity in the initial condition, rather than in the equation, can help in existence of irregular solutions (for example of class $H^{-1-\epsilon}$ in the vorticity), see Albeverio and Cruzeiro \cite{albeverio1990global} and more recently Flandoli and Luo \cite{flandoli2019rho}, Grotto and Romito \cite{grotto2020central} and Grotto, Luongo and Romito \cite{grotto2023gibbs}.

Another relevant motivation to introduce noise is the rigorous derivation of turbulent phenomena. We do not explore much this line here, but we mention recent important results by Bedrossian, Blumenthal and Punshon-Smith \cite{bedrossian2022batchelor}, \cite{bedrossian2021almost} among many others.

\textbf{The main results.} To the best of our knowledge, existence of $H^{-1}$ solutions and uniqueness in $L^p$ for 2D Euler equations in presence of noise are unknown so far. The present work fills this gap with the so-called non-smooth Kraichnan noise: we show weak existence in $H^{-1}$ and pathwise uniqueness in $L^p$, for $p>3/2$, for 2D Euler equations \eqref{eq:main}, where we use a suitable transport noise which is white in time and coloured but rough in space. The random field
\begin{align*}
\dot{V}(t,x) = \sum_k \sigma_k(x) \dot{W}^k_t
\end{align*}
is Gaussian centered, divergence-free, with covariance matrix
\begin{align*}
E[\dot{V}(t,x)\dot{V}(s,y)] = \delta_{0}(t-s)Q(x-y),
\end{align*}
with $Q$ given by \eqref{eq:Q intro} for some $0<\alpha<1$. Such $Q$ has the form, for $x$ close to $0$,
\begin{align}
    Q(x) \approx  (c_0-\beta_L|x|^{2\alpha})\frac{xx^\top}{|x|^2} +(c_0-\beta_N|x|^{2\alpha})\left(I-\frac{xx^\top}{|x|^2}\right) \label{eq:Q intro 2}
\end{align}
for some positive constants $c_0,\beta_L,\beta_N$ with $\beta_N>\beta_L$. The vector field $V$ has been used by Kraichnan \cite{kraichnan1968small} in a toy model of passive scalar (i.e. transport equation) with turbulent transport. The Stratonovich formulation of \eqref{eq:main} is understood rigorously in the It\^o form (see Definition \ref{def:def_main} and Remark \ref{rem: ito strato}).

Our first main result is the existence of $H^{-1}$-valued solutions, which gain additional $H^{-\alpha}$ regularity:
\begin{theorem}[Informal statement, see Theorem \ref{thm: weak-existence}]
    For every $\omega_0$ in $\dot{H}^{-1}$, there exists a (probabilistically) weak, distributional solution $\omega$ to \eqref{eq:main} in $L^\infty([0,T];\dot{H}^{-1})$ $P$-a.s. and satisfying in addition $\omega\in L^2([0,T]\times\Omega;H^{-\alpha})$.
\end{theorem}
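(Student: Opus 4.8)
The plan is to construct the solution as a limit in law of regularized stochastic Euler equations, via the standard compactness-plus-martingale-identification scheme, with the a priori bounds coming from an energy balance in $\dot{H}^{-1}$ in which the Kraichnan noise generates a genuine \emph{fractional} dissipation. First I would regularize: replace the Biot--Savart kernel $K$ by a mollified $K^\delta=\rho^\delta\ast K$ and the datum by $\omega_0^\delta\in L^2\cap\dot{H}^{-1}$ with $\omega_0^\delta\to\omega_0$ in $\dot{H}^{-1}$ (adding, if needed, a spatial truncation at scale $R$ to control the behaviour at infinity on $\R^2$, consistent with the $\omega^{\delta,R}$ of the paper). For each fixed $\delta$ the approximating equation has a smooth velocity and is well-posed among finite-enstrophy ($L^2$-vorticity) solutions by standard SPDE theory for transport-noise Euler with regular drift; call the solution $\omega^\delta$.

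The heart of the argument is a uniform a priori estimate. I would rewrite \eqref{eq:main} in It\^o form: since $Q(0)=c_0 I$, the Stratonovich corrector is the second-order operator $\tfrac12\sum_k\sigma_k\cdot\nabla(\sigma_k\cdot\nabla\,\cdot\,)=\tfrac{c_0}{2}\Delta$ (the first-order part vanishing by the divergence-free and homogeneity structure). Applying It\^o's formula to $\|\omega^\delta\|_{\dot{H}^{-1}}^2$, the (regularized) Euler nonlinearity is energy-neutral, $\langle u^\delta\cdot\nabla\omega^\delta,(-\Delta)^{-1}\omega^\delta\rangle=0$, while the corrector drift $-c_0\|\omega^\delta\|_{L^2}^2$ combines with the It\^o quadratic variation $\sum_k\|\sigma_k\cdot\nabla\omega^\delta\|_{\dot{H}^{-1}}^2$ into a single negative term $-\langle\omega^\delta,\mathcal{D}\omega^\delta\rangle$. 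A Fourier computation using the explicit $\widehat{Q}$ in \eqref{eq:Q intro} shows that the angular integration produces an exact cancellation at low wavenumbers and leaves a radial symbol $d(n)\sim|n|^{-2\alpha}$ at high frequencies, so that $\langle\omega^\delta,\mathcal{D}\omega^\delta\rangle\gtrsim\|\omega^\delta\|_{H^{-\alpha}}^2$. Taking expectations removes the martingale and yields, uniformly in $\delta$ (and $R$),
\begin{equation*}
\E\|\omega^\delta(t)\|_{\dot{H}^{-1}}^2+c\,\E\int_0^T\|\omega^\delta(s)\|_{H^{-\alpha}}^2\,\rd s\le\|\omega_0\|_{\dot{H}^{-1}}^2,
\end{equation*}
and a Burkholder--Davis--Gundy estimate upgrades the first term to a bound on $\E\sup_{t\le T}\|\omega^\delta(t)\|_{\dot{H}^{-1}}^2$.

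Next I would establish tightness. From the equation and these bounds one derives a uniform time-regularity estimate (fractional H\"older in a negative Sobolev space, controlling the drift in $L^1_t$ of such a space and the stochastic term by Burkholder--Davis--Gundy). Combined with the $L^2_tH^{-\alpha}$ bound, the local compact embedding $H^{-\alpha}_{\mathrm{loc}}\hookrightarrow H^{-\alpha'}_{\mathrm{loc}}$ for $\alpha'>\alpha$, and tightness at spatial infinity furnished by the $L^\infty_t\dot{H}^{-1}$ energy bound, an Aubin--Lions--Simon argument gives tightness of the laws of $\omega^\delta$ in a suitable path space. Crucially, the velocities $u^\delta=K^\delta\ast\omega^\delta$ are uniformly bounded in $L^\infty_tL^2\cap L^2_tH^{1-\alpha}$, hence relatively compact in $L^2_{t,x,\mathrm{loc}}$; this strong local velocity compactness (available for all $\alpha\in(0,1)$ since $1-\alpha>0$) is the mechanism that will enable the nonlinear passage to the limit.

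Finally, by Prokhorov and Skorokhod I would pass to a new probability space on which $\tilde\omega^\delta\to\tilde\omega$ almost surely and $\tilde W^\delta\to\tilde W$, and identify $\tilde\omega$ as a solution. The linear transport-noise terms pass to the limit by almost sure convergence and uniform integrability, most cleanly by casting the limit as a martingale problem so as to identify the stochastic integral. I expect the genuine obstacle to be the quadratic nonlinearity: since $\tilde\omega$ is only $\dot{H}^{-1}$-valued and sign-changing (not a positive measure), Delort-type positivity is unavailable, and it is precisely the noise-induced regularity that resolves this, since the strong $L^2_{t,x,\mathrm{loc}}$ convergence of $u^\delta$ lets one pass to the limit in $u^\delta\otimes u^\delta$ (velocity formulation), equivalently in the symmetrized vorticity form $\int\!\!\int H_\phi(x,y)\,\omega^\delta(x)\omega^\delta(y)$ with the bounded continuous kernel $H_\phi(x,y)=-\tfrac12 K(x-y)\cdot(\nabla\phi(x)-\nabla\phi(y))$. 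Lower semicontinuity of the norms then transfers the a priori bounds to $\tilde\omega$, giving the claimed $L^\infty_t\dot{H}^{-1}$ and $L^2([0,T]\times\Omega;H^{-\alpha})$ regularity. The two points I anticipate as hardest are (a) proving the fractional-dissipation energy inequality rigorously and uniformly at the regularized level, including the energy-neutrality of the mollified nonlinearity and the control at spatial infinity on $\R^2$, and (b) the limit in the nonlinear term, where the extra $H^{-\alpha}$/velocity regularity is indispensable.
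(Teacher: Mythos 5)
Your overall architecture coincides with the paper's: regularized equations, an It\^o energy balance in which the Stratonovich corrector and the It\^o quadratic variation combine into a dissipation whose symbol behaves like $|n|^{-2\alpha}$ (giving the uniform $L^2([0,T]\times\Omega;H^{-\alpha})$ bound), stochastic Aubin--Lions tightness built on the velocity bound in $L^\infty_t L^2\cap L^2_t H^{1-\alpha}$, Skorokhod representation, and passage to the limit in the nonlinearity via strong local $L^2$ convergence of the velocities. Two of your variants are legitimate alternatives to the paper's choices: identifying the limit through a martingale problem rather than a direct convergence lemma for stochastic integrals, and proving the coercivity $\langle\omega,\mathcal{D}\omega\rangle\gtrsim\|\omega\|_{H^{-\alpha}}^2-C\|\omega\|_{\dot{H}^{-1}}^2$ by a Fourier-side angular-average argument instead of the paper's physical-space computation (Proposition \ref{prop:covariance structure} and Lemma \ref{lem:main bound A}, where the sign rests on $\beta_N=(1+2\alpha)\beta_L>\beta_L$). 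Note, however, that the positivity of the high-frequency residue after the low-wavenumber cancellation is precisely the crux of the whole result, and in your sketch it is asserted rather than proved.

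The genuine gap is in the approximation layer. You mollify the Biot--Savart kernel but keep the non-smooth Kraichnan noise in the approximating equation and invoke ``standard SPDE theory'' for its well-posedness among $L^2$-vorticity solutions. There is no such standard theory: the coefficients satisfy only $\sum_k|\sigma_k(x)-\sigma_k(y)|^2=2\,\tr(Q(0)-Q(x-y))\sim|x-y|^{2\alpha}$, so the associated Lagrangian SDE is exactly the one exhibiting splitting/spontaneous stochasticity (the Le Jan--Raimond regime), the It\^o--Stratonovich equivalence is itself not justified at this regularity (Remark \ref{rem: ito strato}), and the corrector exactly cancels the second-order part of the noise, so there is no parabolic smoothing to lean on; moreover, It\^o's formula cannot be applied to the singular functional $\|\cdot\|_{\dot{H}^{-1}}^2$ on the state space where the solution lives. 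This is why the paper mollifies the noise as well ($\sigma_k^\delta=\rho^\delta\ast\sigma_k$), takes $L^1\cap L^\infty$ data, solves \eqref{eq: regularized main} by a Lipschitz Lagrangian fixed point (Lemma \ref{lem: l infty and l 1}), and replaces $\|\cdot\|_{\dot{H}^{-1}}^2$ by the smooth quadratic form $\langle\omega,G^\delta\ast\omega\rangle$ (Lemma \ref{lem:Hm1 norm formula}). Once you regularize the noise --- and you must --- your ``exact'' dissipation computation no longer applies: the quadratic form is built from $Q^\delta$ and $G^\delta$, and its symbol has the claimed behaviour only up to remainders. Controlling these remainders (the splitting \eqref{eq:QD2G split} into $A+R1+R2+R3$ and Lemmas \ref{lem:bound R1 R2}--\ref{lem:bound R3}) requires uniform $L^1$ and $L^\infty$ bounds on $\omega^\delta$ and a calibrated blow-up rate for $\|\omega_0^\delta\|_{L^\infty}$ as $\delta\to0$ (condition \eqref{eq:omega zero growth}); your choice $\omega_0^\delta\in L^2\cap\dot{H}^{-1}$ gives no access to these estimates. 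This remainder analysis is not a removable technicality: it occupies the bulk of Sections \ref{sec:regularization}--\ref{sec:a priori bd}, and without it the uniform $H^{-\alpha}$ dissipation bound --- the engine of your entire argument --- is not established.
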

With respect to the deterministic result in \cite{wiedemann2011existence}, our result and its proof has some fundamental differences, due to the noise: our solutions exhibit an $H^{-\alpha}$ regularization; no convex integration scheme is used, but our solutions are constructed as limit in law of (a subsequence of) solutions to a family of regularized stochastic Euler equations, with no external forcing.

Our second main result is the pathwise uniqueness of $L^p$ solutions, for $p>3/2$ and suitable $\alpha$:
\begin{theorem}[Informal statement, see Theorem \ref{thm:Lp_wellposed}]
    Assume that $3/2<p<\infty$ and $\max\{0,2/p-1\}<\alpha<\min\{1-1/p,1/2\}$. For every $\omega_0$ in $L^p\cap L^1$, existence and pathwise uniqueness hold for \eqref{eq:main} in the class \begin{equation}
    \label{eq: uniqueness class}
    L^\infty([0,T]\times\Omega;L^p\cap L^1) \cap L^\infty([0,T];L^2(\Omega;\dot{H}^{-1})) \cap L^2([0,T]\times\Omega;H^{-\alpha}).
    \end{equation}
\end{theorem}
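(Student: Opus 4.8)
The plan is to deduce strong well-posedness from weak existence and pathwise uniqueness via the Yamada--Watanabe principle, so that the real work is pathwise uniqueness. Weak existence in the class \eqref{eq: uniqueness class} I would obtain from the construction behind Theorem~\ref{thm: weak-existence}: the regularized stochastic Euler equations are transport equations driven by divergence-free fields, hence conserve every $L^q$ norm of the vorticity; passing $\|\omega_0\|_{L^p\cap L^1}$ through the limit by lower semicontinuity, together with the $\dot H^{-1}$ and $L^2_t H^{-\alpha}$ bounds already produced there, places the limit in \eqref{eq: uniqueness class}. For uniqueness I fix two solutions $\omega^1,\omega^2$ in this class on the same basis, driven by the same $(W^k)_k$, with the same datum, and set $v=\omega^1-\omega^2$, $w=K\ast v$. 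Since $Q(0)$ is a scalar multiple of the identity and the fields are divergence-free, the It\^o--Stratonovich corrector collapses to a genuine Laplacian $\tfrac{c_0}{2}\Delta$, so $v$ solves, in It\^o form,
\begin{equation*}
\rd v = \Big(-u^1\cdot\nabla v - w\cdot\nabla\omega^2 + \tfrac{c_0}{2}\Delta v\Big)\rd t - \sum_k \sigma_k\cdot\nabla v\,\rd W^k .
\end{equation*}

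The Lyapunov functional I would track is $\E\|v(t)\|_{\dot H^{-1}}^2$, which is finite and vanishes at $t=0$ by the assumption $\omega^1,\omega^2\in L^\infty_t L^2(\Omega;\dot H^{-1})$. Applying It\^o's formula in $\dot H^{-1}$ and taking expectations kills the martingale and leaves the Laplacian corrector $-c_0\E\|v\|_{L^2}^2$ together with the quadratic variation $\sum_k\E\|\sigma_k\cdot\nabla v\|_{\dot H^{-1}}^2$; because these two cancel exactly in $L^2$ but only partially in $\dot H^{-1}$, their sum is a coercive operator whose symbol is controlled from below, for $0<\alpha<1/2$, by the symbol of $H^{-\alpha}$. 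This is the Kraichnan regularization, which I would quote as the key coercivity estimate: the net linear contribution is $\le -c\,\E\|v\|_{H^{-\alpha}}^2 + C\,\E\|v\|_{\dot H^{-1}}^2$, a fractional dissipation gaining $1-\alpha$ derivatives over the base norm. Making this rigorous for merely distributional solutions is a technical point I would handle by mollifying the equation and passing to the limit through a DiPerna--Lions/renormalization commutator argument, bounding the commutators by the $L^p$ and $\dot H^{-1}$ norms.

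It then remains to absorb the two advection terms into this dissipation. The self-advection term $\langle u^1\cdot\nabla v,v\rangle_{\dot H^{-1}}$ is a commutator (it would vanish in $L^2$ by incompressibility), and I would bound it by $\varepsilon\|v\|_{H^{-\alpha}}^2 + C(\|\omega^1\|_{L^p})\|v\|_{\dot H^{-1}}^2$ via a commutator estimate together with $u^1=K\ast\omega^1$ and $\omega^1\in L^p$. The decisive term is the genuinely nonlinear $\langle w\cdot\nabla\omega^2,v\rangle_{\dot H^{-1}} = -\langle w\,\omega^2,\nabla(-\Delta)^{-1}v\rangle_{L^2}$. Here I would use that Biot--Savart gains one derivative, so both $w=K\ast v$ and $\nabla(-\Delta)^{-1}v$ interpolate between $L^2$ (controlled by $\|v\|_{\dot H^{-1}}$) and $L^{2/\alpha}$ (controlled by $\|v\|_{H^{-\alpha}}$ through $\dot H^{1-\alpha}\hookrightarrow L^{2/\alpha}$), while $\omega^2\in L^p$ supplies the remaining integrability. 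A H\"older triple-product bound followed by Young's inequality then yields $\varepsilon\|v\|_{H^{-\alpha}}^2 + C(\|\omega^2\|_{L^p})\|v\|_{\dot H^{-1}}^2$, in which the total $H^{-\alpha}$-exponent equals $\tfrac{2/p}{1-\alpha}$ and must be $<2$, i.e. $\alpha<1-1/p$, for the absorption to close; the complementary restriction $\alpha>2/p-1$ is exactly the embedding $L^p\hookrightarrow H^{-\alpha}$ needed to run the argument in the $H^{-\alpha}$ scale (automatic when $p\ge2$). The two constraints are compatible precisely when $p>3/2$, and $\alpha<1/2$ is inherited from the coercivity estimate.

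Choosing $\varepsilon$ small and collecting the bounds gives $\tfrac{\rd}{\rd t}\E\|v\|_{\dot H^{-1}}^2 \le C\,\E\|v\|_{\dot H^{-1}}^2$ with $\E\|v(0)\|_{\dot H^{-1}}^2=0$, so Gronwall forces $v\equiv0$ and pathwise uniqueness follows. I expect the main obstacle to be exactly the nonlinear estimate above: identifying the sharp interpolation so that the Euler nonlinearity is dominated by the $H^{-\alpha}$ dissipation, and thereby pinning down the admissible $(p,\alpha)$ window, is where the roughness of the Kraichnan noise is genuinely used. A secondary difficulty is the rigorous justification of the It\^o formula in $\dot H^{-1}$ on $\R^2$ for distributional solutions, including the low-frequency control for which the $L^1$ assumption on the vorticity is needed.
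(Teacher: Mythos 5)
Your proposal has the same skeleton as the paper's proof: existence by passing the conserved $L^p\cap L^1$ bounds of the regularized transport equations through the limit of Theorem \ref{thm: weak-existence} (the paper does exactly this, using \eqref{eq:omega zero Lp} and lower semicontinuity), and pathwise uniqueness by a Gr\"onwall argument on $\E\|\omega^1-\omega^2\|_{\dot H^{-1}}^2$ in which the Kraichnan noise supplies the coercive $-c\,\E\|\cdot\|_{H^{-\alpha}}^2$ term that absorbs the advection, the $L^p$ bound entering through a product/Sobolev estimate (the paper's Lemma \ref{lem:product_Sobolev}). One structural difference: the term you single out as ``decisive'', $\langle (K\ast v)\,\omega^2,\nabla(-\Delta)^{-1}v\rangle_{L^2}$, is not decisive at all — since $K=\nabla^{\perp}G$, its integrand contains the pointwise product $(\nabla^{\perp}G\ast v)\cdot(\nabla G\ast v)$, which vanishes identically. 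In the paper this term ($I_2$) therefore contributes only an $o(1)$ remainder involving $\nabla(G-G^{\delta})$, and the constraint $\alpha<1-1/p$ comes instead from the self-advection term $I_1$, via $L^{\tilde p}\subseteq \dot H^{2(\alpha+\epsilon)-1}$ with $1/\tilde p=1-\alpha-\epsilon$ and $\tilde p<p$. Your three-exponent H\"older/interpolation bound for the cross term is correct and produces the same constraint, so this is an inefficiency rather than an error, but it misattributes where the $(p,\alpha)$ window is generated.

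The genuine thin spot is the step you compress into one sentence: justifying the It\^o formula ``by mollifying the equation and passing to the limit through a DiPerna--Lions/renormalization commutator argument''. This is where the real difficulty of the theorem lives, and your route is not the one the paper takes, nor is it clear it closes. The $\sigma_k$ are only $H^{1+\alpha}$ (H\"older, not Lipschitz), and for $3/2<p<2$ the difference $v$ need not belong to $L^2$; under equation-mollification the It\^o corrector $-c\,\|\rho_{\eps}\ast v\|_{L^2}^2$ and the quadratic-variation term each diverge as $\eps\to 0$, and only their sum is controlled, so one must prove that this cancellation survives mollification — precisely the commutator analysis you have not supplied, at a regularity where it is delicate. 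The paper sidesteps this entirely by regularizing the \emph{energy functional} rather than the equation: It\^o's formula is applied to $\langle\omega,G^{\delta}\ast\omega\rangle$ with the smooth truncated Green kernel of \eqref{eq: regularization green kernel}, so every term is finite for fixed $\delta$, the noise contribution appears as $\iint \tr[(Q(0)-Q(x-y))D^2G^{\delta}(x-y)]\omega(x)\omega(y)\,\rd x\,\rd y$, and $\delta\to0$ is taken only at the end. This is also where the technical hypotheses really originate: $\alpha>2/p-1$ and $p>3/2$ are used to control the remainder $R2$ in \eqref{eq:B split} (Young's inequality for $|x|^{-2+2\alpha}1_{|x|<(8\delta\log(1/\delta))^{1/2}}$ against $\|\omega\|_{L^{p\wedge 2}}^2$), not for the embedding $L^p\hookrightarrow H^{-\alpha}$ you cite — that embedding is redundant, since $H^{-\alpha}$ integrability is already postulated in the class \eqref{eq: uniqueness class}. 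So: right skeleton, crux deferred; to complete your version you would either have to carry out the commutator estimates (hard, and problematic for $3/2<p<2$), or switch, as the paper does, to mollifying $G$. Incidentally, no Yamada--Watanabe step is needed: the theorem claims weak existence plus pathwise uniqueness, which is exactly what the two arguments deliver.
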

As mentioned above, this result is not known without noise.
As we note in Remark \ref{rmk:non optimality}, the restriction $\alpha<\min\{1-1/p,1/2\}$ is natural, but the restriction $p>3/2$, $\alpha>2/p-1$ is technical and we expect to reach any $p>1$ by a finer technical analysis. 

\begin{remark}
    \label{rem: deterministic forcing}
    Our method should also cover the case when a deterministic force $f\in L^1([0,T];L^p\cap L^1\cap \dot{H}^{-1})$ is added to the right hand side of equation \eqref{eq:main}, for which Vishik \cite{vishik2018instability} has proved that, without noise, multiple solutions exist. Indeed, concerning existence of a solution in the class \eqref{eq: uniqueness class}, one should repeat the proof of existence using the bound
    \begin{align*}
        |\langle f,G\ast \omega\rangle| \le C\|\omega\|_{\dot{H}^{-1}}\|f\|_{\dot{H}^{-1}}.
    \end{align*}
    The proof of uniqueness works exactly in the same way, since the force does not affect the estimates on the difference of two solutions.
\end{remark}

\textbf{The non-smooth Kraichnan model.} Before going into the idea of proof, we explain briefly a few facts on the Kraichnan noise and the associated model. The Kraichnan model was first introduced by Kraichnan himself \cite{kraichnan1968small}, as a simplified model of turbulence. In the Lagrangian form, the model considers the motion of a particle $X$ subject to the random velocity field $V$, that is
\begin{align}
    \dot{X}_t(x) = \circ \dot{V}(t,X_t(x)) = \sum_k\sigma_k(X_t(x)) \circ dW^k_t,\quad X_0(x)=x.\label{eq:SDE_Kraichnan}
\end{align}
In the Eulerian form, the (random) evolution of the initial mass $\mu_0$ follows the continuity equation
\begin{align}
    \rd \mu + \diverg (\mu \circ \rd V) =0\label{eq:CE Kraichnan}
\end{align}
Actually, the original model proposed in \cite{kraichnan1968small} refers to a general class of random vector fields $V$ whose law is Gaussian, isotropic in space (i.e. invariant under translations and rotations) and white in time. An important feature of isotropic Gaussian fields, already pointed out by Kraichnan \cite{kraichnan1968small}, see e.g. Baxendale and Harris \cite{baxendale1986isotropic} in the mathematical literature, is the possibility of explicit computations on the associated two-point motion: at least formally, for every $x,y\in \R^2$, the distance of two particles $|X_t(x)-X_t(y)|$ is a one-dimensional Markov process whose generator $\mathcal{M}_2$ can be explicitly computed in terms of the space covariance matrix $Q$. At the Eulerian level, if the initial condition $\mu_0$ is also isotropic, the correlation function $\E[\mu(t,x)\mu(t,y)]$ satisfies formally a one-dimensional linear PDE with generator $\mathcal{M}_2$ (see again Kraichnan \cite{kraichnan1968small}).

If the space covariance matrix $Q$ of the Kraichnan field $\dot{V}$ is given by \eqref{eq:Q intro 2}, and so it is irregular, a peculiar consequence of this irregularity is the splitting phenomenon, or spontaneous stochasticity, see Bernard, Gawedzki and Kupiainen \cite{bernard1998slow}: at least on a formal level, for $x=y$, $|X_t(x)-X_t(y)|$ becomes non-zero immediately after $t=0$; that is, one particle splits into many. Indeed, discarding low frequency contributions (that is, taking \eqref{eq:Q intro 2} as formally exact equality), the one-dimensional generator $\mathcal{M}_2$ gives formally that the process $|X_t(x)-X_t(y)|^{1-\alpha}$ is a Bessel process of effective dimension $2/(1-\alpha)>2$ (see Gawedzki \cite{gawedzki2008stochastic}) and hence leaves zero immediately. This splitting phenomenon seems to suggest non-uniqueness of solutions. Surprisingly, this is not true: as Le Jan and Raimond \cite{leJean_Raimond} (see also \cite{le2004flows}) have shown, there is a unique $V$-adapted solution $\mu$ (that is, adapted to the Brownian filtration) to the continuity equation \eqref{eq:CE Kraichnan} and such $(\mu_t)_t$ is associated with a flow of random kernels rather than a flow of maps, namely
\begin{align}
\mu_t(\rd y) = \int \eta_t(x,\rd y) \mu_0(\rd x),\label{eq:random_kernel}
\end{align}
for a family of random kernels $\eta:[0,T]\times\Omega\times\R^2\to \mathcal{P}(\R^2)$ (where $\mathcal{P}(\R^2)$ is the space of probability measures on $\R^2$).
Moreover (see again Le Jan and Raimond \cite{leJean_Raimond}), there is a compatible notion of $n$-point motion $(X_\cdot(x_1),\ldots X_\cdot(x_n))$, for which one can show rigorously that $|X_t(x)-X_t(y)|$ leaves $0$ immediately after $t=0$. A similar result for (at least) the two-point motion, by a zero-viscosity limit, has been obtained by E and Vanden-Eijnden \cite{weinan2001turbulent}, who have also shown a formula for the decay of order-two structure function $\E[|\mu(t,x)-\mu(t,y)|^2]$, discarding low frequencies and with isotropic initial conditions. Hence the non-smooth Kraichnan model has the peculiar features both of a splitting phenomenon, with explicit computations on the two-point motion, and a selection principle among solutions.

The spontaneous stochasticity is related to certain regularization properties of the generator $\mathcal{M}_2$ of the two-point motion. Indeed, as noted in Bernard, Gawedzki and Kupiainen \cite{bernard1998slow} and in Gawedzki \cite{gawedzki2008stochastic}, discarding low frequency contributions, for $t>0$ the law of the two-point motion distance $|X_t(x)-X_t(y)|$ has an explicit regular density, even when starting from $0$, something that is not possible in the case of smooth flows. Still concerning the generator, Eyink and Xin \cite{eyink1996existence} have given a rigorous upper bound on the two-point motion diffusion matrix, implying that $Q(0)-Q(x-y)\ge C|x-y|^{2\alpha}$ for a positive constant $C$, and have got certain lower bounds on the spectrum of the two-point motion generator. Discarding low frequencies, Hakulinen \cite{hakulinen2003passive} has rigorously shown regularization properties, as well as lower and upper bounds, on the two-point motion generator and its Green kernel and on the two-point motion semigroup.
As we will see, an anomalous regularity gain takes place also for the passive scalar $\mu$, at least in the $H^{-1}$ norm.

Due to its spontaneous stochasticity and other peculiar features, the Kraichnan model of passive scalars has been extensively studied in the physics literature; here we mention some other results and computations, without claim of completeness. At least for isotropic initial conditions, the energy spectrum satisfies a nonlocal but closed equation, e.g. Kraichnan \cite{kraichnan1968small}, see Eyink and Xin \cite{eyink2000self} and also Galeati and Luo \cite{galeati2023weak} for a rigorous statement. The $2N$-point correlation function satisfies also a closed system of equation (the equation for the $2N$-point correlation function depends only on the $(2N-2)$-point motion), see Kraichnan \cite{kraichnan1994anomalous}, Gawedzki and Kupiainen \cite{gawedzki1995anomalous}, Bernard, Gawedzki and Vergassola \cite{bernard1998slow}; in particular the latter work has provided a spectral decomposition of the $2$-point motion discaring low frequencies. Gawedzki and Kupiainen \cite{gawedzki1995anomalous} have noted at least formally that, in the case $\alpha=0$, the two-point motion generator is a Laplacian, that is two particles move independently as diffusions.
A rigorous analysis of the generator of the $N$-point motion, at least for $N\le 4$, has been performed by Eyink and Xin \cite{eyink1996existence}: as already mentioned for $N=2$, they have shown that the lowest eigenvalue of the $N$-point diffusivity matrix is bounded from below by $R^{2\alpha}$, where $R$ is the minimum of the two-point distance; they have used this fact to derive an upper bound on the $N$-point motion generator and finally they have used this bound to derive information on the (possible) steady state in presence of forcing. Eyink and Xin \cite{eyink2000self} exhibit also correlation functions with self-similar structure. Hakulinen \cite{hakulinen2003passive} has rigorously shown regularization properties for the $N$-point motion, discarding low frequencies, and an upper bound on the $N$-point correlation function in a (possible) steady state. Assuming the existence of a stationary state, discarding low frequencies and in the limit of large scale (white-in-time) forcing, an explicit expression for the $p$-structure function can be obtained, see Gawedzki and Vergassola \cite{gawedzki1995anomalous} (and the already mentioned \cite{eyink1996existence} and \cite{hakulinen2003passive}). This computation is related to one of the most studied properties, namely the anomalous scaling of the structure function, see also Kraichnan \cite{kraichnan1994anomalous}, Chertkov, Falkovich, Kolokolov and Lebedev \cite{chertkov1995normal} (and Celani and Vincenzi \cite{celani2002intermittency}). We refer to the works by Falkovich, Gawedzki and Vergassola \cite{falkovich2001particles} and by Gawedzki \cite{gawedzki2008stochastic} and the recent paper \cite{eyink2022kraichnan} by Eyink and Jafari for reviews and further references. Finally, Drivas and Eyink \cite{drivas2017lagrangian} have showed in general the link between spontaneous stochasticity and anomalous dissipation for the passive scalar field $\mu$, at least qualitatively in the $L^2$ norm of $\mu$; see also Eyink and Drivas \cite{eyink2015spontaneous} for an analogous remark (among other results) for Burgers equation and Drivas \cite{drivas2022self} for the interplay between anomalous dissipation and regularity in Euler equations and K41 theory. As pointed out at least in \cite[Section 4.2.1]{gawedzki2008stochastic}, the spontaneous stochasticity in the Kraichnan model is also related to a direct cascade for the passive scalar $\mu$ transported by $\dot{V}$.

\textbf{Idea and strategy of the proof.} The idea of the present work is strongly based on the link between this splitting phenomenon, anomalous dissipation of energy on average and regularization in negative Sobolev spaces. Exploiting the explicit generator of $|X_t(x)-X_t(y)|$ for the Kraichnan model, one can show an integrability gain in the reciprocal of $|X_t(x)-X_t(y)|$, namely, for a given measure $\mu_0$ on the Lagrangian initial conditions,
\begin{align}
\begin{aligned}\label{eq:Lagr dissipation}
    & \E\left[\iint \log|X_t(x)-X_t(y)|^{-1}\mu_0(\rd x)\mu_0(\rd y)\right]\\
    &\qquad \qquad +c\int_0^t\E\left[\iint|X_r(x)-X_r(y)|^{-2+2\alpha}\mu_0(\rd x)\mu_0(\rd y)\right] \rd r \\
    & \quad \le C\iint \log|x-y|^{-1}\mu_0(\rd x)\mu_0(\rd y).
\end{aligned}
\end{align}

Note that, if $(\mu_t)_t$ is the solution to the continuity equation \eqref{eq:CE Kraichnan}, the latter bound can be expressed in terms of the negative Sobolev norms. Indeed, for any $0<\beta< 1$,
\begin{align}
    \iint|X_t(x)-X_t(y)|^{-2+2\beta}\mu_0(\rd x)\mu_0(\rd y) = \iint |x-y|^{-2+2\beta} \mu_t(\rd x)\mu_t(\rd y) \approx \|\mu_t\|_{\dot{H}^{-\beta}}^2,\label{eq:Riesz intro}
\end{align}
and similarly for $\beta=1$ replacing $|x-y|^{-2+2\beta}$ with $\log|x-y|^{-1}$. Hence the bound \eqref{eq:Lagr dissipation} becomes
\begin{align}
    \E[\|\mu_t\|_{\dot{H}^{-1}}^2] +c\int_0^t \E[\|\mu_r\|_{H^{-\alpha}}^2] \rd r \le C\|\mu_0\|_{\dot{H}^{-1}}^2.\label{eq:energy dissipation}
\end{align}
Now the $\dot{H}^{-1}$ norm of the vorticity $\omega$ is the $L^2$ norm (the energy) of the velocity $u$ with $\operatorname{curl}(u)=\omega$, and the nonlinear term in Euler equations gives no contribution to the energy. Hence we may expect the bound \eqref{eq:energy dissipation} to hold also for $\mu=\omega$ solution to the stochastic Euler equations \eqref{eq:main}, so we should gain a control of the $H^{-\alpha}$ norm of $\omega$. This control is crucial in the compactness argument for existence, because it allows to pass to the limit in the nonlinear term.

The strategy of the proof of existence follows the above idea. We consider a family $\omd$ of solutions to regularized Euler equations. The main a priori estimate is a uniform energy estimate. We derive the equation for the energy $\|\omd\|_{\dot{H}^{-1}}^2 \approx \langle \omd, G\ast \omd \rangle_{L^2}$ of $\omd$, where $G(x)\approx \log|x|^{-1}$ is the Green kernel of the Laplacian, see Lemma \ref{lem:Hm1 norm formula} and Corollary \ref{cor:a_priori_1}:
\begin{align*}
    \rd \|\omd\|_{\dot{H}^{-1}}^2 &\approx 2\langle \nabla G\ast \omd, (K\ast\omd)\omd \rangle \rd t + \rd\,\text{martingale}\\
    &\quad + \iint \tr[(Q(0)-Q(x-y))D^2 G(x-y)] \omd(x)\omd(y) \rd x \rd y \rd t
\end{align*}
As well-known, the contribution due to the nonlinear term $\langle \nabla G\ast \omd, (K\ast\omd)\omd \rangle$ is zero. Concerning the term with $Q$, the key computation is the following one, which follows from \eqref{eq:Q intro 2}, see \eqref{eq:key_computation} and Lemma \ref{lem:main bound A}: for some $c>0$,
\begin{align*}
    \tr[(Q(0)-Q(x))D^2 G(x)] \approx  -c|x|^{-2+2\alpha}
\end{align*}
Hence, by \eqref{eq:Riesz intro}, we get
\begin{align*}
    \rd \E[\|\omd\|_{\dot{H}^{-1}}^2] \approx - c \E[\|\omd\|_{H^{-\alpha}}^2] \rd t,
\end{align*}
that is, we gain the control of the $H^{-\alpha}$ norm of $\omd$, see Proposition \ref{prop:a_priori_bd}, as desired. In view of the stochastic Aubin-Lions lemma, we then focus on a uniform estimate on the H\"older continuity in time. For this, we use the equation for $u$ and the $L^2([0,T];H^{-\alpha})$ bound on $\omd$ and show that $\omd$ is in $L^\infty([0,T];\dot{H}^{-1})$ uniformly with respect to $\delta$, see Lemma \ref{lem:Hm1_bd_sup}. Then, exploiting this bound, we show the uniform $C^\gamma([0,T];H^{-4})$ bound for $\omd$ for $0<\gamma<1/2$, see Lemma \ref{lem: C gamma of H minus 3}. Thanks to the uniform $L^2([0,T];H^{-\alpha})$ bound and the uniform $C^\gamma([0,T];H^{-4})$ bound, and the compact embedding of $H^{-\alpha}_{loc}$ into $H^{-1}_{loc}$, we can show tightness of the family $(\omd)_\delta$ in the space $L^2([0,T];\dot{H}^{-1}_{loc})$ (actually, we consider the space $\dot{H}^{-1}$ on $\R^2$ with a suitable weight), endowed with the \textit{strong} topology. The $\dot{H}^{-1}$ strong topology for the vorticity field, which is the $L^2$ strong topology for the velocity field, allows to pass to the limit in the nonlinear term, hence we can show that any limit point in $L^2([0,T];\dot{H}^{-1})$ of the family $(\omd)_\delta$ is a weak, $H^{-1}$-valued solution to the Euler equation \eqref{eq:main}, thus proving the existence result.

We expect that the a priori estimates are robust enough to work with other regularizations $\omd$, at least provided that the approximation to the initial condition does not explode too rapidly. One such possible regularization is the addition of small viscosity, i.e. to consider $\omd$ solution to the Navier-Stokes equations with $\delta$ viscosity, perturbed by (possibly regularized in space) Kraichnan noise; in this way, we could get a solution $\omega$ to \eqref{eq:main} also as a vanishing viscosity limit of Navier-Stokes equations with Kraichnan noise.

The proof of the uniqueness result among $L^p$ solutions is also based on an energy estimate, together with the conservation of $L^p$ norm. Given two solutions $\omega^1$ and $\omega^2$, the energy of the difference $\omega=\omega^1-\omega^2$ satisfies
\begin{align*}
    \rd \|\omega\|_{\dot{H}^{-1}}^2 &\approx 2\langle \nabla G\ast \omega, (K\ast \omega^1)\omega\rangle \rd t +2\langle \nabla G\ast \omega, (K\ast \omega)\omega^2\rangle \rd t
    +\rd\,\text{martingale} \\
    &\quad +\iint \tr[(Q(0)-Q(x-y))D^2G(x-y)]\omega(x)\omega(y)\rd x\rd y \rd t.
\end{align*}
The term $\langle\nabla G\ast \omega, (K\ast \omega)\omega^2\rangle$ vanishes, while the term $\langle \nabla G\ast \omega, (K\ast \omega^1)\omega\rangle$ can be bounded in terms of the $L^p$ norm of $\omega^1$, which is uniformly bounded, and the $H^{-\alpha}$ norm of $\omega$. The latter norm can be controlled by the term $\iint \tr[(Q(0)-Q(x-y))D^2G(x-y)]\omega(x)\omega(y)\rd x\rd y$ as in the existence part. We arrive at
\begin{align*}
    \rd \E[\|\omega\|_{\dot{H}^{-1}}^2] \le -c^\prime \E[\|\omega\|_{H^{-\alpha}}^2] \rd t +C\E[\|\omega\|_{\dot{H}^{-1}}^2] \rd t
\end{align*}
and we conclude by Gr\"onwall lemma that $\omega=0$.

It is worth noting that the same results and proofs hold also without the nonlinear term, that is for the linear, non-smooth Kraichnan model of passive scalars \eqref{eq:CE Kraichnan}.

We close this paragraph with some remarks on the regularization effect induced by the Kraichnan noise. The regularization effect from $H^{-1}$ (regularity of $\omega_0$) to $H^{-\alpha}$ (regularity of $\omega_t$ for a.e. $t>0$) is anomalous for transport-type PDEs. As already written, this anomalous regularization is strongly linked to the splitting phenomenon for the non-smooth Kraichnan model. To have an intuitive picture, recall that, as shown in as shown in Le Jan and Raimond \cite{leJean_Raimond}, the unique Lagrangian solution associated with the Kraichnan model \eqref{eq:CE Kraichnan} is given by a flow of (random) kernels $\eta$ instead of a flow of maps, see formula \eqref{eq:random_kernel}.
The anomalous regularization is then induced by the smoothing effect of these kernels (in formula \eqref{eq:random_kernel}, $\mu_0$ is smoothed by $\eta$).

Mind however that the regularization is only induced through a random transport term, whose vector field is Gaussian, white in time, rough but coloured (with H\"older continuous covariance) in space.

In some sense, the non-smooth Kraichnan noise has a regularization effect corresponding, at negative Sobolev scales, to a fractional Laplacian. However, we expect that such a regularization effect does not hold for any positive regularity index, and actually that a smooth initial condition loses regularity, again due to the splitting phenomenon. Indeed, considering even just the linear Kraichnan model \eqref{eq:CE Kraichnan}, if by contradiction the solution $\mu$ were regular, then, by a DiPerna-Lions argument \cite{diperna1989ordinary}, we expect that $\mu$ should be renormalizable, hence it should be associated with a flow of maps, which is not the case.

We also point out that the regularization for negative Sobolev space takes place already for the linear Kraichnan model \eqref{eq:CE Kraichnan}, and we are able to transfer this regularization to the nonlinear Euler equation essentially because the nonlinear Euler term preserves the energy.

\textbf{Organization of the paper.} The paper is organized as follows: In Section \ref{sec: settings} we give the precise settings of the problem and we state the main results; Section \ref{sec:regularization} introduces a regularized model; In Section \ref{sec:a priori bd} we give the main a priori energy estimate and in Section \ref{sec: continuity bounds} we give the a priori estimate on H\"older continuity; Section \ref{sec: weak existence} and Section \ref{sec: uniqueness} are devoted to the proof of weak existence and the proof of strong uniqueness respectively. Some technical and preliminary results are given in the Appendices.

\textbf{Notation}: We use $(\Omega,\mathcal{A},(\mathcal{F}_t)_t,P)$ for a 
filtered probability space (satisfying the standard assumption); $\mathbb{E}$ denotes the expectation. In accordance to the literature in the deterministic case, $\omega$ denotes the vorticity and not a generic element in $\Omega$.

For $x=(x^1,x^2)\in \R^2$, $x^\perp = (x^2,-x^1)$ denotes the vector obtained rotating $x$ clockwise of $90$ degrees. The open, resp. closed ball of center $x$ and radius $R$ will be denoted by $B_R(x)$, resp. $\bar{B}_R(x)$. We use the notation $L^2 := L^2(\mr^2)$, $H^s := H^s(\mr^2)$ and similarly for other function spaces on $\mr^2$. We denote the $L^2$ scalar product by
\begin{align*}
    \langle f,g\rangle_{L^2} = \int_{\R^2} f(x)g(x) \rd x
\end{align*}
and we use the same notation for $f$ and $g$ in dual Sobolev spaces, mostly $f\in H^s$ and $g\in H^{-s}$.
For a tempered distribution $u$ on $\mr^2$, $\hat{u}=\mathcal{F}u$ denotes the Fourier transform of $u$, that is
\begin{align*}
    \hat{u}(n)=\mathcal{F}u(n) = \int_{\R^2} u(x)e^{-2\pi ix\cdot n} \rd x;
\end{align*}
$n$ denotes usually the Fourier variable. For $n\in\R^2$, we use $\langle n\rangle =(1+|n|^2)^{1/2}$. We will often use the Plancherel identity for real functions $f$ and $g$ in $L^2$, and more in general for $f$ and $g$ tempered distribution with suitable integrability properties,
\begin{align*}
    \int_{\R^2} f(x)g(x) \rd x = \int_{\R^2} \hat{f}(n)\overline{\hat{g}(n)} \rd n.
\end{align*}
We will often use the identity, for $h$ and $g$ in $L^2(\R^2)$ and more in general for $h$ and $g$ with suitable integrability properties,
\begin{align*}
    \widehat{h\ast g}(n) = \hat{h}(n)\hat{g}(n).
\end{align*}
and its consequence
\begin{align*}
    \langle f, h\ast g\rangle = \int_{\R^2} \hat{f}(n)\overline{\hat{g}(n)\hat{h}(n)} \rd n,
\end{align*}
provided that $f$, $g$ and $h$ have enough integrability.
For $k$ positive integer, we write $D^k f$ for the tensor-valued $k$-th order derivative of $f$. We use the notation, for $f:[0,T]\to B$ with $B$ Banach space,
\begin{align*}
\|f\|_{C_t^\gamma(B)} = \sup_{t\in [0,T]}\|f(t)\|_B +\sup_{0\le s<t\le T}\frac{\|f(t)-f(s)\|_B}{|t-s|^\gamma}.
\end{align*}
The symbol $f \lesssim g$ is used for $f\le Cg$ for a constant $C\ge 0$; the possible dependence of $C$ on parameters will be specified when needed.

\subsection*{Acknowledgments}
This research was partially funded by Istituto Nazionale di Alta Matematica, group GNAMPA, through the project `Fluidodinamica Stocastica' - CUP\_E55F22000270001, and by the Hausdorff Research Institute for Mathematics in Bonn under the Junior Trimester Program `Randomness, PDEs and Nonlinear fluctuations'; M.M. acknowledges support from the Royal Society through the Newton International Fellowship NF170448 `Stochastic Euler equations and the Kraichnan model'. Parts of this work were undertaken when M.M. was at University of York, UK, and at Universit\`a degli Studi di Milano, Italy and M.C. was at Technische Universität Berlin, Germany.

We thank Franco Flandoli for the discussions at an early stage of this project, which were essential to the development of the current paper, in particular for the suggestion to look at Euler equations with Kraichnan noise (and at the related two-point motion) in a previous research line and for the suggestion to study the splitting phenomenon in the vortex model perturbed by Kraichnan noise, and for the related discussions. We thank Francesco Grotto and Marco Romito for suggesting the regularized Green kernel we use here. We thank Zdzislaw Brzezniak, Theodore Drivas and Lucio Galeati for fruitful discussions on the topic, in particular we thank Lucio Galeati for suggesting Remark \ref{rem: deterministic forcing} and for pointing out some typos in the first version. We thank Willem van Zuijlen for the answer to some technical questions and for its notes on function spaces \cite{van2022theory}.

\subsection*{Statement of competing interests}
The authors have no competing interests to declare that are relevant to the content of this article.

\section{Setting and main results}
\label{sec: settings}

\subsection{Sobolev Spaces and Green kernel}
\label{sec: sobolev spaces}

We work with the homogeneous and inhomogeneous Sobolev spaces, see for example \cite{bahouri2011fourier}. For $n\in\R^2$, we use the notation $\langle n\rangle=(1+|n|^2)^{1/2}$.

\begin{definition}
Let $s$ be a real number.
	\begin{itemize}
		\item 
	The inhomogeneous Sobolev space $H^s(\mr^2)$ is the space of tempered distributions $u$ such that $\hat{u} \in L^1_{loc}(\mr^2)$ and	\begin{equation*}
		\|u\|_{H^s}^2 :=
		\int_{\mr^2}\langle n\rangle^{2s}|\hat{u}(n)|^2 \rd n < \infty.
	\end{equation*}
	where $\hat{u}$ is the Fourier transform of $u$.
	
	\item For $s<1$, the homogeneous Sobolev space $\dot{H}^s(\mr^2)$ is the space of tempered distributions $u$ such that $\hat{u} \in L^1_{loc}(\mr^2)$ and	
	\begin{equation*}
		\|u\|_{\dot{H}^s}^2 
		:= \int_{\mathbb{R}^2} |n|^{2s} |\hat{u}(n)|^2 \rd n <\infty.
	\end{equation*}
	\end{itemize}
\end{definition}

\begin{remark}
	If $s\leq 0$, then $\|u\|_{H^s}\leq \|u\|_{\dot{H}^s}$.
\end{remark}

For $f$ in $H^s$ and $g$ in $H^{-s}$ (or $f$ in $\dot{H}^s$ and $g$ in $\dot{H}^{-s}$), we can extend the $L^2(\R^2)$ scalar product by
\begin{align*}
\langle f,g \rangle_{L^2} = \int_{\R^2} \hat{f}(n)\overline{\hat{g}(n)} \rd n.
\end{align*}

The function $G$ is the Green kernel of $-\Delta$ on $\mr^2$, that is
\begin{equation}
    \label{eq: green kernel}
    G(x) = -\frac{1}{2\pi}\log|x|,\quad x\in\R^2.
\end{equation}
The function $K$ is the Biot-Savart kernel, which is the orthogonal derivative of the Green kernel $G$:
\begin{equation}
   \label{eq: green kernel K}
    K(x)=\nabla^\perp G(x) =  -\frac{1}{2\pi}\frac{x^{\perp}}{|x|^2},\quad x\in\R^2,
\end{equation}
where $\nabla^\perp = (\partial_{x_2},-\partial_{x_1})$ and $x^\perp = (x_2,-x_1)$. The distributional  derivative (and pointwise derivative in $x\neq 0$) of $G$ is
\begin{equation*}
    \nabla G(x) = -\frac{1}{2\pi}\frac{x}{|x|^2},
\end{equation*}
and the pointwise second derivative of $G$ in $x\neq 0$ is
\begin{equation*}
    D^2G(x) = - \frac{1}{2\pi} \frac{1}{|x|^2}(I-\frac{2 xx^{\top}}{|x|^2}).
\end{equation*}
The Fourier transform of $K$ is
\begin{align*}
    \widehat{K}(n)= (2\pi)^{-1}\frac{i n^\perp}{|n|^2},\quad n\in\R^2.
\end{align*}

We also use the Riesz potentials in $\mathbb{R}^2$, that is
\begin{equation*}
	G_{\beta}(x) := \gamma_{2\beta}^{-1} |x|^{-2+2\beta},
\end{equation*}
for $0<\beta < 1$, with $\gamma_{2\beta} = \pi 2^{2\beta} \Gamma(\beta)/\Gamma(1-\beta)$ (and $\Gamma$ is the Gamma function). We recall the following classical fact:
\begin{lemma}[Riesz potentials]
	\label{lem:sobolev riesz potential}
	Let $0<\beta<1$. Then the Fourier transform of $G_\beta$ is
    \begin{equation}
        \widehat{G}_{\beta}(n) = (2\pi)^{-2\beta}|n|^{-2\beta}.\label{eq:Riesz Fourier}
    \end{equation}
	Moreover we have, for every $f$ in $\dot{H}^{-\beta}$,
	\begin{equation}
		\|f\|_{\dot{H}^{-\beta}}^2 = (2\pi)^{2\beta}\lan f, G_{\beta}\ast f\ranL.\label{eq:Hm norm and Riesz}
	\end{equation}
\end{lemma}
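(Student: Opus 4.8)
The proof splits into the two claims, and the first one---the Fourier transform formula \eqref{eq:Riesz Fourier}---is the substantive part, from which the norm identity \eqref{eq:Hm norm and Riesz} follows quickly. The plan is to reduce the Fourier transform of $G_\beta$ to that of the pure power $|x|^{-s}$ with $s=2-2\beta\in(0,2)$, and to compute the latter by the Gaussian subordination identity
\begin{equation*}
|x|^{-s}=\frac{1}{\Gamma(s/2)}\int_0^\infty t^{s/2-1}e^{-t|x|^2}\,\rd t,
\end{equation*}
which holds for $x\neq 0$ after the substitution $u=t|x|^2$. Since $-2+2\beta>-2$ the function $G_\beta$ is locally integrable on $\R^2$, and since $-2+2\beta<0$ it is of at most polynomial growth, so it is a tempered distribution and its Fourier transform is well defined in $\mathcal{S}'(\R^2)$.

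First I would insert the subordination identity and exchange the Fourier transform with the $t$-integral, using the elementary Gaussian transform $\mathcal{F}[e^{-t|\cdot|^2}](n)=(\pi/t)\,e^{-\pi^2|n|^2/t}$ on $\R^2$ (which follows by factorising into one-dimensional integrals and completing the square). A Fubini argument then gives
\begin{equation*}
\mathcal{F}[|\cdot|^{-s}](n)=\frac{\pi}{\Gamma(s/2)}\int_0^\infty t^{s/2-2}e^{-\pi^2|n|^2/t}\,\rd t.
\end{equation*}
The substitution $u=\pi^2|n|^2/t$ turns this into $\pi^{s-1}\Gamma(1-s/2)\Gamma(s/2)^{-1}|n|^{-(2-s)}$, and with $s=2-2\beta$ this reads $\pi^{1-2\beta}\Gamma(\beta)\Gamma(1-\beta)^{-1}|n|^{-2\beta}$. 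Dividing by the normalising constant $\gamma_{2\beta}=\pi 2^{2\beta}\Gamma(\beta)/\Gamma(1-\beta)$, the Gamma factors cancel and the powers of $\pi$ and $2$ combine to $(2\pi)^{-2\beta}$, giving exactly \eqref{eq:Riesz Fourier}.

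For the norm identity I would use the convolution and Plancherel identities recorded in the notation: since $\widehat{G}_\beta$ is real,
\begin{equation*}
\langle f,G_\beta\ast f\rangle_{L^2}=\int_{\R^2}|\hat f(n)|^2\,\widehat{G}_\beta(n)\,\rd n=(2\pi)^{-2\beta}\int_{\R^2}|n|^{-2\beta}|\hat f(n)|^2\,\rd n=(2\pi)^{-2\beta}\|f\|_{\dot H^{-\beta}}^2,
\end{equation*}
which is \eqref{eq:Hm norm and Riesz} after multiplying by $(2\pi)^{2\beta}$. For $f\in\dot H^{-\beta}$ one has $|n|^{-\beta}\hat f\in L^2$, so $\widehat{G_\beta\ast f}=(2\pi)^{-2\beta}|n|^{-2\beta}\hat f$ and the pairing is finite; to make the convolution identity rigorous I would first establish it for $f$ with $\hat f$ smooth and compactly supported away from the origin, then pass to the limit by density in $\dot H^{-\beta}$.

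The main obstacle is purely one of rigour in the first step: because $-2+2\beta<0$ the kernel $|x|^{-2+2\beta}$ is not integrable at infinity, so its Fourier transform lives only in $\mathcal{S}'$ and the formal manipulations above must be read against test functions. I would handle this either by the Fubini argument just indicated (pairing the subordination identity with a fixed Schwartz function and checking absolute convergence of the joint $(x,t)$-integral), or, more conceptually, by a homogeneity-plus-symmetry argument: $|x|^{-s}$ is homogeneous of degree $-s$ and rotation invariant, hence its distributional Fourier transform is homogeneous of degree $s-2$ and rotation invariant, forcing $\mathcal{F}[|\cdot|^{-s}]=c\,|n|^{-(2-s)}$, with $c$ pinned down by pairing both sides with the self-dual Gaussian $e^{-\pi|x|^2}$ and evaluating the two resulting radial $\Gamma$-integrals. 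Both routes reduce the identity to the same $\Gamma$-function bookkeeping; since the statement is classical, citing a standard reference such as \cite{bahouri2011fourier} is an acceptable alternative to spelling it out.
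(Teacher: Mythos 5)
Your proposal is correct, and it diverges from the paper only in the first half. For the Fourier transform formula \eqref{eq:Riesz Fourier} the paper gives no computation at all: it simply cites Stein (Singular Integrals, Chapter V, Section 1, Lemma 2), whereas you derive it from scratch by Gaussian subordination. I checked your bookkeeping and it is right under the paper's convention $\hat u(n)=\int u(x)e^{-2\pi i x\cdot n}\,\rd x$: with $s=2-2\beta$ one gets $\mathcal{F}[|\cdot|^{-s}](n)=\pi^{1-2\beta}\Gamma(\beta)\Gamma(1-\beta)^{-1}|n|^{-2\beta}$, and dividing by $\gamma_{2\beta}=\pi 2^{2\beta}\Gamma(\beta)/\Gamma(1-\beta)$ indeed leaves $(2\pi)^{-2\beta}|n|^{-2\beta}$; your Fubini justification also goes through, since $\int_0^\infty t^{s/2-1}\min(1,\pi/t)\,\rd t<\infty$ precisely when $0<s<2$. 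For the norm identity \eqref{eq:Hm norm and Riesz} your argument is essentially identical to the paper's: a Plancherel computation on a dense class (the paper uses $f\in\dot H^{-\beta}\cap C^\infty_c$, you use $f$ with $\hat f$ smooth and supported away from the origin — both dense in $\dot H^{-\beta}$, both fine) followed by a density passage. What the two routes buy: the paper's citation is shorter and standard; your derivation is self-contained and, more importantly, verifies the constant $(2\pi)^{-2\beta}$ in the paper's specific Fourier normalization, which is a genuine service since this constant is exactly the kind of thing that mutates between references with different conventions.
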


\begin{proof}
    The formula \eqref{eq:Riesz Fourier} is for example in \cite[Chapter V, Section 1, Lemma 2]{stein1970singular}. The formula \eqref{eq:Hm norm and Riesz} follows classically by \eqref{eq:Riesz Fourier}: indeed if $f \in \dot{H}^{-\beta} \cap C^{\infty}_c$ then
	\begin{align*}
		(2\pi)^{2\beta}\langle f, G_{\beta}\ast f \rangle &=  \langle \hat{f}, \widehat{G_{\beta} \ast f} \rangle
		=  (2\pi)^{2\beta}\langle \hat{f}, \widehat{G_{\beta}} \hat{f} \rangle \\
		&= \int_{\mathbb{R}^2} |n|^{-2\beta} |\hat{f}(n)|^2 \rd n = \| f \|_{\dot{H}^{-\beta}}^2,
	\end{align*}
    and the general case follows by density.
\end{proof}

\subsection{Definition of the Kraichnan covariance}
\label{sec: covariance}

Now we define the spatial isotropic covariance matrix $Q$.

\begin{assumption}
We take $0<\alpha<1$ (the regularity index) and $Q:\R^2\to \R^{2\times 2}$ with Fourier transform
\begin{equation}
    \widehat{Q} (n) = \langle n \rangle^{-(2+2\alpha)} \left(I - \frac{nn^{\top}}{|n|^2}\right),\quad n\in\R^2.\label{eq:Q def}
\end{equation}
\end{assumption}

Since the Fourier transform of $Q$ is integrable, $Q$ is a continuous bounded function on $\R^2$.

We recall that we can find noise coefficients $\sigma_k$ with covariance matrix $Q$:
\begin{lemma}
There exists a sequence of divergence-free functions $\sigma_k$ in $H^{1+\alpha}$, $k\in \mathbb{N}$, such that
\begin{align}
    Q(x-y) = \sum_k\sigma_k(x)\sigma_k(y)^\top \quad \forall x,y\in\R^2\label{eq:Q_sigma_k}
\end{align}
(the series converging absolutely for every $x$ and $y$) and
\begin{align}
    \sup_{x,y\in\R^2}\sum_k|\sigma_k(x)||\sigma_k(y)|\le \sup_{x\in\R^2}\sum_k |\sigma_k(x)|^2 <\infty.\label{eq:sigma_k_conv}
\end{align}
\end{lemma}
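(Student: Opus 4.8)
The plan is to build the coefficients $\sigma_k$ as the images, under a single convolution operator, of an arbitrary orthonormal basis of $L^2(\R^2;\R^2)$, and then to identify the resulting series with $Q$ via Parseval's identity. First I would take a symmetric square root of $\widehat{Q}$ at the level of Fourier symbols. Writing $P(n):=I-nn^\top/|n|^2$ for the orthogonal projection onto $n^\perp$, we have $\widehat{Q}(n)=\langle n\rangle^{-(2+2\alpha)}P(n)$, and since $P(n)$ is symmetric and idempotent the matrix
\begin{equation*}
A(n):=\langle n\rangle^{-(1+\alpha)}P(n)
\end{equation*}
is real, symmetric, positive semidefinite, even in $n$, and satisfies $A(n)^2=\widehat{Q}(n)$. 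Because $\alpha>0$, we have $\int_{\R^2}\tr(A(n)^2)\,\rd n=\int_{\R^2}\langle n\rangle^{-(2+2\alpha)}\,\rd n<\infty$, so $A\in L^2(\R^2;\R^{2\times2})$ and its inverse Fourier transform $R$ is a real, matrix-valued $L^2$ function. I then define $\sigma_k:=R\ast h_k$ (equivalently $\widehat{\sigma_k}(n)=A(n)\widehat{h_k}(n)$) for a fixed orthonormal basis $(h_k)_k$ of $L^2(\R^2;\R^2)$.

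The regularity and the divergence-free property are immediate from the symbol. Since $\|A(n)\|_{\mathrm{op}}=\langle n\rangle^{-(1+\alpha)}$,
\begin{equation*}
\|\sigma_k\|_{H^{1+\alpha}}^2=\int_{\R^2}\langle n\rangle^{2(1+\alpha)}|A(n)\widehat{h_k}(n)|^2\,\rd n\le\int_{\R^2}|\widehat{h_k}(n)|^2\,\rd n=1,
\end{equation*}
so $\sigma_k\in H^{1+\alpha}$; and the range of $A(n)$ is $n^\perp$, so $n\cdot\widehat{\sigma_k}(n)=0$, i.e. $\diverg\sigma_k=0$. Moreover $1+\alpha>1$ gives the Sobolev embedding $H^{1+\alpha}\hookrightarrow C_b$, so the pointwise values $\sigma_k(x)$ are well defined.

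The core of the argument is to pass from the formal resolution of the identity $\sum_k h_k\otimes h_k=\delta\,I$ to a rigorous statement by testing against fixed vectors. For $x\in\R^2$ and $e\in\R^2$ set $R^e_x(z):=R(x-z)^\top e$; this is a genuine element of $L^2(\R^2;\R^2)$ (this is where $R\in L^2$ is used), and by construction $e\cdot\sigma_k(x)=\langle R^e_x,h_k\rangle_{L^2}$. Parseval's identity then yields, for all $x,y$ and all $e,f$,
\begin{equation*}
\sum_k (e\cdot\sigma_k(x))(f\cdot\sigma_k(y))=\langle R^e_x,R^f_y\rangle_{L^2}=e^\top\Big(\int_{\R^2}R(x-z)R(y-z)^\top\,\rd z\Big)f,
\end{equation*}
and a direct Fourier computation shows that the correlation kernel $u\mapsto\int_{\R^2}R(w)R(w-u)^\top\,\rd w$ has symbol $A\,\overline{A}^\top=A^2=\widehat{Q}$, hence equals $Q$. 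Choosing $e,f$ over a basis gives \eqref{eq:Q_sigma_k}. Finally, taking $e=f$ and $x=y$ yields $\sum_k|\sigma_k(x)|^2=\tr Q(0)=\int_{\R^2}\langle n\rangle^{-(2+2\alpha)}\,\rd n$, which is finite and independent of $x$; this is the right-hand bound in \eqref{eq:sigma_k_conv}, the left-hand inequality is Cauchy--Schwarz, and the same Cauchy--Schwarz bound shows that the series in \eqref{eq:Q_sigma_k} converges absolutely.

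The main obstacle is the rigorous justification of the summation identity: the naive manipulation relies on the distributional completeness relation, which is not directly meaningful, so the key point is to reduce everything to honest $L^2$ inner products of the functions $R^e_x$ and to invoke Parseval. The single place where the hypothesis $\alpha>0$ is essential is precisely in ensuring $A\in L^2$, equivalently $R\in L^2$ and $\tr Q(0)<\infty$; for $\alpha=0$ the integral $\int_{\R^2}\langle n\rangle^{-2}\,\rd n$ diverges (logarithmically) in dimension two and the construction breaks down.
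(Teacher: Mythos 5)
Your proof is correct. The matrix $A(n)=\langle n\rangle^{-(1+\alpha)}\left(I-nn^\top/|n|^2\right)$ is indeed a real symmetric square root of $\widehat{Q}(n)$ (idempotence of the projection is exactly what makes this work), it belongs to $L^2$ precisely because $\alpha>0$, and once $R=\mathcal{F}^{-1}A\in L^2$ is in hand, the identity $e\cdot\sigma_k(x)=\langle R^e_x,h_k\rangle_{L^2}$ reduces everything to Parseval for the $L^2$ functions $R^e_x$: the series identification $\langle R^e_x,R^f_y\rangle_{L^2}=e^\top Q(x-y)f$ is justified because the correlation kernel $u\mapsto\int R(w)R(w-u)^\top\,\rd w$ and $Q$ are both continuous with the same ($L^1$) Fourier transform $A^2=\widehat{Q}$, and the bounds in \eqref{eq:sigma_k_conv} follow from $x=y$, $e=f$ plus Cauchy--Schwarz, exactly as you say. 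One minor point worth making explicit is that the continuous ($H^{1+\alpha}\hookrightarrow C_b$) representative of $\sigma_k$ coincides everywhere with the pointwise convolution $R\ast h_k$, since both are continuous and agree a.e.; this is what legitimizes evaluating the Parseval identity at every single $x,y$.

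Your route is genuinely different from the paper's, though both rest on factorizing the covariance and expanding along an orthonormal basis. The paper does not take a square root on $L^2$; it identifies the quadratic form of $Q$ with the $H^{-1-\alpha}$ inner product of Leray projections, $\iint \varphi(x)\cdot Q(x-y)\psi(y)\,\rd x\,\rd y=\langle \Pi\psi,\Pi\varphi\rangle_{H^{-1-\alpha}}$, takes an orthonormal basis $(e_k)_k$ of $H^{-1-\alpha}_{\operatorname{div}=0}$, and defines $\sigma_k$ through the multiplier $\widehat{\sigma_k}(n)=\langle n\rangle^{-2-2\alpha}\widehat{e_k}(n)$, so that $(\sigma_k)_k$ is an orthonormal basis of $H^{1+\alpha}_{\operatorname{div}=0}$; the pointwise identity and the bound \eqref{eq:sigma_k_conv} are then extracted by testing against approximations of the point masses $v_0\delta_{x_0}$ (admissible in $H^{-1-\alpha}$ because $1+\alpha>1$). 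What your version buys is self-containedness: everything happens in plain $L^2$ with Plancherel, the pointwise statements are immediate because $R^e_x$ is an honest $L^2$ function (no approximation of deltas needed), and the role of $\alpha>0$ is completely transparent, namely $R\in L^2$, equivalently $\tr Q(0)<\infty$. What the paper's version buys is the structural fact that the $\sigma_k$ can be taken orthonormal in $H^{1+\alpha}_{\operatorname{div}=0}$, the natural reproducing-kernel space of the noise; your $\sigma_k=R\ast h_k$ need not be orthonormal (the square-root multiplier annihilates gradient fields, so the family is overcomplete), but the lemma does not ask for orthonormality, so both proofs are equally adequate for the stated result.
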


\begin{proof}
For every vector fields $\varphi$, $\psi$ in $C^\infty_c(\R^2)^2$, we have
\begin{align}
\begin{aligned}\label{eq:Q_repr_1}
    \iint \varphi(x)\cdot Q(x-y) \psi(y) \rd x \rd y &= \int \overline{\hat{\varphi}(n)}\cdot \hat{Q}(n)\hat{\psi}(n) \rd n\\
    &=  \int \langle n \rangle^{-2-2\alpha} \overline{\hat{\varphi}(n)} \cdot\hat{\psi}(n) \left(I-\frac{nn^T}{|n|^2}\right)\rd n\\
    &= \lan \Pi\psi, \Pi\varphi \ran_{H^{-1-\alpha}}.
\end{aligned}
\end{align}
Here $\Pi$ is the Leray projection on the divergence-free vector fields, that is, in Fourier modes, $\hat{\Pi}(n)=I-\frac{nn^\top}{|n|^2}$, and $\lan\cdot,\cdot\ran_{H^\gamma}$ is the $H^\gamma$ scalar product,
\begin{align*}
    \lan f,g\ran_{H^\gamma} := \int \lan n \ran^{-2-2\alpha} \hat{f}(n)\cdot \overline{\hat{g}(n)} \rd n.
\end{align*}
Now we take a complete orthonormal basis $(e_k)_k$ of $H^{-1-\alpha}_{\operatorname{div}=0}$, the space of $H^{-1-\alpha}$ divergence-free vector-valued distributions, and we take $\sigma_k$, $k\in\mathbb{N}$, with Fourier transform
\begin{align*}
    \widehat{\sigma_k}(n) = \langle n \rangle^{-2-2\alpha} \widehat{e_k}(n).
\end{align*}
Then $(\sigma_k)_k$ is a complete orthonormal basis of $H^{1+\alpha}_{\operatorname{div}=0}$, the space of $H^{1+\alpha}$ divergence-free vector fields and, for every $C^\infty_c$ vector fields $\varphi$, $\psi$,
\begin{align}
\begin{aligned}\label{eq:Q_repr_2}
    \lan \Pi\psi,\Pi\varphi\ran_{H^{-1-\alpha}} &= \sum_k \lan \Pi\psi,e_k\ran_{H^{-1-\alpha}} \lan \Pi\varphi,e_k\ran_{H^{-1-\alpha}}\\
    &= \sum_k \lan \Pi\psi,\sigma_k\ran_{L^2} \lan \Pi\varphi,\sigma_k\ran_{L^2} = \sum_k \lan \psi,\sigma_k\ran_{L^2} \lan \varphi,\sigma_k\ran_{L^2}\\
    &= \sum_k \iint \varphi(x)\cdot  \sigma_k(x)\sigma_k(y)^\top\psi(y) \rd x\rd y.
\end{aligned}
\end{align}
In particular, for any $x_0$ in $\R^2$, taking a sequence $(\varphi_m=\psi_m)_m$ converging to $v_0\delta_{x_0}$ for some given vector $v_0\in \R^2$, we get
\begin{align*}
    \sum_k(\sigma_k(x_0)\cdot v_0)^2 =  \|\Pi[v_0\delta_{x_0}]\|_{H^{-1-\alpha}}^2 = \int \lan n\ran^{-2-2\alpha}\left(I-\frac{nn^\top}{|n|^2}\right) |v_0|^2 \rd n = C|v_0|^2 <\infty
\end{align*}
for some $C>0$ independent of $x_0$ and $v_0$. In particular, taking $v_0=(1,0)$ or $(0,1)$, we get
\begin{align*}
    \sum_k|\sigma_k(x_0)|^2\le C<\infty
\end{align*}
and so
\begin{align*}
    \sum_k|\sigma_k(x)||\sigma_k(y)|\le \left(\sum_k|\sigma_k(x)|^2\right)^{1/2}\left(\sum_k|\sigma_k(y)|^2\right)^{1/2} \le C<\infty.
\end{align*}
that is \eqref{eq:sigma_k_conv}. Putting together the equalities \eqref{eq:Q_repr_1} and \eqref{eq:Q_repr_2} and the convergence \eqref{eq:sigma_k_conv}, we obtain for every $C^\infty_c$ vector fields $\varphi$, $\psi$,
\begin{align*}
\iint \varphi(x)\cdot  Q(x-y)\psi(y) \rd x\rd y 
& = \sum_k \iint \varphi(x)\cdot  \sigma_k(x)\sigma_k(y)^\top\psi(y) \rd x\rd y \\
&= \iint \varphi(x)\cdot  \sum_k\sigma_k(x)\sigma_k(y)^\top\psi(y) \rd x\rd y.
\end{align*}
By the arbitrariness of $\varphi$ and $\psi$, we conclude that \eqref{eq:Q_sigma_k} holds (with pointwise absolute convergence).
\end{proof}

\begin{remark}
    We will often use that
    \begin{align*}
        \langle \varphi,Q\ast \psi\rangle_{L^2} = \sum_k \langle \varphi,\sigma_k\rangle_{L^2}\langle \psi,\sigma_k\rangle_{L^2}
    \end{align*}
    whenever $\varphi$ and $\psi$ are in $L^1(\R^2)$.
\end{remark}

\subsection{Properties of the covariance}

In this subsection, we explore the structure of the covariance matrix and its behaviour close to $x=0$. This behaviour is fundamental in the energy bounds for $\omega$. The material in this subsection is known, e.g. Gawedzki \cite{gawedzki2008stochastic} and Le Jan and Raimond \cite{leJean_Raimond}, but, given its importance here, we give a self-contained explanation.

We have
\begin{align*}
Q(x) = \int_{\R^2} \langle n \rangle^{-(2+2\alpha)} \left(I-\frac{nn^T}{|n|^2}\right) e^{2\pi ix\cdot n} \rd n.
\end{align*}
Since by symmetry
\begin{align*}
&2\int_{\R^2}\langle n\rangle^{-(2+2\alpha)}\frac{(n^1)^2}{|n|^2} \rd n = \int_{\R^2}\langle n\rangle^{-(2+2\alpha)}\left(\frac{(n^1)^2}{|n|^2}+\frac{(n^2)^2}{|n|^2}\right) \rd n= \int_{\R^2}\langle n\rangle^{-(2+2\alpha)} \rd n,\\
&\int_{\R^2}\int_{\R^2}\langle n\rangle^{-(2+2\alpha)}\frac{n^1n^2}{|n|^2} \rd n =0,
\end{align*}
we have ($I$ being the $2\times 2$ identity matrix)
\begin{equation}
    \label{eq:Q 0}
    Q(0) 
    = \frac{1}{2}\int_{\R^2}\langle n \rangle^{-(2+2\alpha)}  \rd n \ I
    = \frac{\pi}{2\alpha} I.
\end{equation}

We fix $x$ in $\R^2$ and take a unitary matrix $U_x$ in $\R^{2\times 2}$ such that $U_x(Re_1) = x$, where $R=|x|$. With the change of variable $\tilde n=U_x^{-1}n$, we get
\begin{align}
Q(x) &= \int_{\R^2} \langle \tilde n \rangle^{-(2+2\alpha)} \left(I-\frac{U_x\tilde n \tilde n^T U_x^T}{|\tilde n|^2}\right)  e^{2\pi iRe_1\cdot \tilde n} \rd \tilde{n}= U_xQ(Re_1)U_x^T,\label{eq:Q isotropic U}
\end{align}
so it is enough to consider $x=Re_1$. For $j,k=1,2$ we have
\begin{align}
\label{eq: QRe_1}
Q(Re_1)^{jk} = \int_{\R^2} \langle n \rangle^{-(2+2\alpha)} \left(\delta^{jk}-\frac{n^jn^k}{| n|^2}\right) (\cos(2\pi Rn^1)+i\sin(2\pi Rn^1) \rd n.
\end{align}
For $j=k$, we have
\begin{align*}
Q(Re_1)^{jj} &= \int_{\R^2} \langle n \rangle^{-(2+2\alpha)} \left(1-\frac{(n^j)^2}{| n|^2}\right) \cos(2\pi Rn^1)\rd n.
\end{align*}
For $j\neq k$, the integrand in \eqref{eq: QRe_1} is odd and therefore
\begin{align*}
Q(Re_1)^{jk}=0.
\end{align*}
We call $B_L(R) := Q(Re_1)^{11}$ and $B_N(R) := Q(Re_1)^{22}$, we end with
\begin{align*}
Q(Re_1) = B_L(|x|) e_1e_1^{\top} + B_N(|x|) (I-e_1e_1^{\top})
\end{align*}
and hence, by \eqref{eq:Q isotropic U}, we have for any $x\neq 0$,
\begin{align*}
Q(x) = B_L(|x|) \frac{xx^\top}{|x|^2} + B_N(|x|) \left(I-\frac{xx^\top}{|x|^2}\right).
\end{align*}
As we will see, $B_L(0)=B_N(0)$ and so the above formula is extended to $x=0$ by continuity.

We further analyse the term $B_L$ and $B_N$, for $R$ close to $0$. Passing to polar coordinates $n=(\rho\cos\theta,\rho\sin \theta)$, we get
\begin{align*}
B_L(R) &= \int_0^\infty \int_{-\pi}^\pi \langle \rho \rangle^{-2-2\alpha} \rho \cos (2\pi R\rho \cos\theta) (1-(\cos\theta)^2)\rd \theta d\rho,\\
B_N(R) &= \int_0^\infty \int_{-\pi}^\pi \langle \rho \rangle^{-2-2\alpha} \rho \cos (2\pi R\rho \cos\theta) (1-(\sin\theta)^2)\rd \theta d\rho\\
&= \int_0^\infty \int_{-\pi}^\pi \langle \rho \rangle^{-2-2\alpha} \rho \cos (2\pi R\rho \cos\theta) (\cos\theta)^2 \rd \theta d\rho
\end{align*}
For notational simplicity, we call, for a Borel bounded even function $f:\R\to \R$,
\begin{align*}
F_f(R) = \int_0^\infty \int_{-\pi}^\pi \langle \rho \rangle^{-2-2\alpha} \rho \cos (2\pi R\rho \cos\theta) f(\cos\theta) \rd \theta d\rho;
\end{align*}
with $f(u)=1-u^2$ we find $B_L$, with $f(u)=u^2$ we find $B_N$. We start with the change of variables $u= \cos\theta$, using symmetries we get
\begin{align*}
    F_f(R)
    & = 2 \int_0^\infty \int_{0}^\pi \langle \rho \rangle^{-2-2\alpha} \rho \cos (2\pi R\rho \cos\theta) f(\cos\theta) \rd \theta \rd\rho \\
    & =  2 \int_0^\infty \int_{-1}^{1} \langle \rho \rangle^{-2-2\alpha} \rho \cos (2\pi R\rho u) f(u) (1-u^2)^{-1/2} \rd u \rd\rho \\
    & =  4 \int_0^\infty \int_{0}^{1} \langle \rho \rangle^{-2-2\alpha} \rho \cos (2\pi R\rho u) f(u) (1-u^2)^{-1/2} \rd u \rd\rho.
\end{align*}
We apply now the change of variables $\tilde{\rho} = R\rho u$, with differential $\rd \tilde{\rho} = Ru\rd \rho$ (with a slight abuse of notation we keep using $\rho$ instead of $\tilde{\rho}$), recalling that $\langle \rho \rangle = (1+\rho^2)^{1/2}$ we get
\begin{align*}
     F_f(R)
    & = 4R^{2\alpha} \int_0^\infty \int_{0}^{1} ((Ru)^2 + \rho^2)^{-\frac{2+2\alpha}{2}}  \rho \cos(2\pi \rho) f(u) u^{2\alpha} (1-u^2)^{-1/2} \rd u \rd\rho
\end{align*}
For any $R>0$, we can repeat the previous steps for $F_f(0)$, with the change of variable $\tilde \rho = R\rho u$, getting
\begin{align*}
     F_f(0)
    & = 4R^{2\alpha} \int_0^\infty \int_{0}^{1} ((Ru)^2 + \rho^2)^{-\frac{2+2\alpha}{2}}  \rho f(u) u^{2\alpha} (1-u^2)^{-1/2} \rd u \rd\rho.
\end{align*}
We take the difference and expand the integral around $R=0$, obtaining
\begin{align}
     F_f(0) - F_f(R)
    & = 4 R^{2\alpha} \int_0^\infty \int_{0}^{1} ((Ru)^2 + \rho^2)^{-\frac{2+2\alpha}{2}}  \rho (1-\cos(2\pi \rho)) f(u) u^{2\alpha} (1-u^2)^{-1/2} \rd u \rd\rho \nonumber\\
     & = 4 R^{2\alpha} \int_0^\infty \rho^{-1-2\alpha} (1-\cos (2\pi\rho)) d\rho \int_{0}^{1} f(u) u^{2\alpha} (1-u^2)^{-1/2} \rd u \nonumber\\
     &\quad +4 R^{2\alpha} \int_0^\infty \int_{0}^{1} \left[ ((Ru)^2 + \rho^2)^{-\frac{2+2\alpha}{2}} - \rho^{-(2+2\alpha)}\right]\nonumber\\
     &\quad\cdot\rho (1-\cos(2\pi\rho)) f(u) u^{2\alpha} (1-u^2)^{-1/2} \rd u \rd\rho \nonumber\\
     &=: \beta_f R^{2\alpha} + \mathrm{Rem}_f(R).\label{eq:def beta remainder}
\end{align}
By Lemma \ref{lem: properties remainder}, the remainder is of order $R^2$, for small $R$. In particular, we find, for small $R$,
\begin{align*}
B_L(0)-B_L(R) &= \beta_L R^{2\alpha} +O(R^2),\\
B_N(0)-B_N(R) &= \beta_N R^{2\alpha} +O(R^2),
\end{align*}
with $\beta_L=\beta_{1-u^2}$ and $\beta_N=\beta_{u^2}$.

We compute $B_L(0)$, $B_N(0)$ and the constants $\beta_L$ and $\beta_N$. We write
\begin{align*}
    F_f(0) = \int_{0}^{\infty}\int_{-\pi}^{\pi}
    \langle \rho \rangle^{-(2+2\alpha)} \rho f(\cos\theta)\rd \theta d\rho 
    &= \int_{0}^{\infty}
    \langle \rho \rangle^{-(2+2\alpha)} \rho d\rho \cdot  \int_{-\pi}^{\pi} f(\cos\theta)\rd \theta .
\end{align*}
Recall that
\begin{align*}
     &\int_{-\pi}^{\pi} (\cos\theta)^2 \rd \theta 
     = \int_{-\pi}^{\pi} (\sin\theta)^2 \rd \theta 
     = \int_{-\pi}^{\pi} (1-(\cos\theta)^2) \rd \theta = \pi,\\
     & \int_0^\infty
    \langle \rho \rangle^{-(2+2\alpha)} \rho d\rho 
    = \frac{1}{2\alpha}.
\end{align*}
Therefore we have 
\begin{align*}
B_L(0) = B_N(0) = F_{u^2}(0) = \frac{\pi}{2\alpha},
\end{align*}
in particular, calling $I$ the $2\times 2$ identity matrix,
\begin{align}
\label{eq: explicit Q0}
    Q(0) = \frac{\pi}{2\alpha}I.
\end{align}

Concerning $\beta_L$ and $\beta_N$, we call $\bar{\beta} = 4\int_0^\infty \rho^{-1-2\alpha} (1-\cos(2\pi\rho)) d\rho >0$ and $\mathrm{B}$ the Beta function. Note that
\begin{align*}
    \beta_L&=\beta_{1-u^2} =\bar{\beta}\mathrm{B}(1+2\alpha,3/2),\\
    \beta_N&=\beta_{u^2} =\bar{\beta}\mathrm{B}(3+2\alpha,1/2).
\end{align*}
By integration by parts (or well-known properties of the Beta function) we get
\begin{align*}
\beta_L 
& =\beta_{1-u^2} 
= \bar{\beta} \int_{0}^{1} u^{2\alpha} (1-u^2)^{\frac{1}{2}} \rd u \\
& = \frac{\bar{\beta}}{1+2\alpha}u^{1+2\alpha}(1-u^2)^{1/2}\mid_{u=0}^{1} +\frac{\bar{\beta}}{1+2\alpha}\int_0^1 u^{2+2\alpha}(1-u^2)^{-1/2} \rd u \\
& = 0+\frac{\beta_{u^2}}{1+2\alpha} 
= \frac{\beta_N}{1+2\alpha}.
\end{align*}

We sum up the obtained properties in the following:
\begin{proposition}\label{prop:covariance structure}
We have:
\begin{align*}
    &Q(x) = B_L(|x|)\frac{xx^\top}{|x|^2} +B_N(|x|)\left(I-\frac{xx^\top}{|x|^2}\right),
\end{align*}
with
\begin{align*}
    &B_L(R) = \frac{\pi}{2\alpha} -\beta_L R^{2\alpha} -\mathrm{Rem}_{1-u^2}(R),\\
    &B_N(R) = \frac{\pi}{2\alpha} -\beta_N R^{2\alpha} -\mathrm{Rem}_{u^2}(R),\\
    &\beta_N=(1+2\alpha)\beta_L >\beta_L>0,
\end{align*}
and $\mathrm{Rem}_{1-u^2}(R)=O(R^2)$, $\mathrm{Rem}_{u^2}(R)=O(R^2)$ as $R\to 0$. In particular we have, for some $C>0$,
\begin{align}
|Q(0)-Q(x)| \le C(|x|^{2\alpha} \wedge 1) \quad \forall x\in \R^2.\label{eq:Q Holder bound}
\end{align}
\end{proposition}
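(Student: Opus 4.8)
The plan is to collect the explicit computations already carried out in the paragraphs preceding the statement and then to upgrade the local expansion to the global bound \eqref{eq:Q Holder bound}. The isotropic decomposition $Q(x) = B_L(|x|)\frac{xx^\top}{|x|^2} + B_N(|x|)(I - \frac{xx^\top}{|x|^2})$ has already been established via the rotation $U_x$ and the identity \eqref{eq:Q isotropic U}, with $B_L(R) = Q(Re_1)^{11}$ and $B_N(R) = Q(Re_1)^{22}$. The values $B_L(0) = B_N(0) = \pi/(2\alpha)$ come from the radial integral $\int_0^\infty \langle\rho\rangle^{-(2+2\alpha)}\rho\,d\rho = 1/(2\alpha)$ together with $\int_{-\pi}^\pi f(\cos\theta)\,d\theta = \pi$ for both $f(u) = 1-u^2$ and $f(u) = u^2$. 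The expansion $F_f(0) - F_f(R) = \beta_f R^{2\alpha} + \mathrm{Rem}_f(R)$ is exactly \eqref{eq:def beta remainder}, so I would simply read off $B_L(R) = \pi/(2\alpha) - \beta_L R^{2\alpha} - \mathrm{Rem}_{1-u^2}(R)$ and the analogous formula for $B_N$, with $\beta_L = \beta_{1-u^2}$ and $\beta_N = \beta_{u^2}$.

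Next I would record the two remaining facts about the constants. The relation $\beta_N = (1+2\alpha)\beta_L$ was obtained by integration by parts, equivalently from $\mathrm{B}(1+2\alpha,3/2) = \mathrm{B}(3+2\alpha,1/2)/(1+2\alpha)$; the strict positivity $\beta_L > 0$ holds because $\bar\beta = 4\int_0^\infty \rho^{-1-2\alpha}(1-\cos(2\pi\rho))\,d\rho > 0$ and the Beta factor is positive, whence $\beta_N > \beta_L > 0$. The remainder estimates $\mathrm{Rem}_{1-u^2}(R) = O(R^2)$ and $\mathrm{Rem}_{u^2}(R) = O(R^2)$ as $R \to 0$ are precisely the content of Lemma \ref{lem: properties remainder}, which I would invoke. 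This is the only genuinely technical input, since it requires controlling the bracket $[((Ru)^2 + \rho^2)^{-(2+2\alpha)/2} - \rho^{-(2+2\alpha)}]$ uniformly in $u$ after integration against $\rho(1-\cos(2\pi\rho))u^{2\alpha}(1-u^2)^{-1/2}$.

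Finally, for the global Hölder bound \eqref{eq:Q Holder bound} I would split into two regimes. Writing $Q(0) - Q(x) = (\pi/(2\alpha) - B_L(|x|))\frac{xx^\top}{|x|^2} + (\pi/(2\alpha) - B_N(|x|))(I - \frac{xx^\top}{|x|^2})$ and using $\pi/(2\alpha) - B_L(R) = \beta_L R^{2\alpha} + \mathrm{Rem}_{1-u^2}(R)$ together with its counterpart for $B_N$, on $|x| \le 1$ both radial coefficients are bounded by $C|x|^{2\alpha}$, because $R^2 \le R^{2\alpha}$ for $R \le 1$ (recall $2\alpha < 2$ as $\alpha<1$), so the $O(R^2)$ remainder is absorbed into the $R^{2\alpha}$ term; since the two matrices are orthogonal projections of unit norm, this yields $|Q(0) - Q(x)| \le C|x|^{2\alpha} = C(|x|^{2\alpha} \wedge 1)$. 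On $|x| \ge 1$, I would instead use that $\widehat{Q}$ is integrable, so $Q$ is continuous and bounded, whence $|Q(0) - Q(x)| \le 2\|Q\|_\infty \le C = C(|x|^{2\alpha}\wedge 1)$. Combining the two regimes gives the claim. The main obstacle is genuinely the remainder bound of Lemma \ref{lem: properties remainder}; everything else is bookkeeping of the preceding computation.
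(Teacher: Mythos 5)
Your proposal is correct and follows essentially the same route as the paper: Proposition \ref{prop:covariance structure} is stated there as a summary of the computations immediately preceding it (the isotropic decomposition via the rotation $U_x$ in \eqref{eq:Q isotropic U}, the expansion \eqref{eq:def beta remainder}, the integration-by-parts/Beta-function identity giving $\beta_N=(1+2\alpha)\beta_L>\beta_L>0$, and Lemma \ref{lem: properties remainder} for the $O(R^2)$ remainder), which is exactly what you assemble. Your two-regime derivation of \eqref{eq:Q Holder bound} --- absorbing the $O(R^2)$ remainder into $R^{2\alpha}$ for $|x|\le 1$ and using boundedness of $Q$ (from integrability of $\widehat{Q}$) for $|x|\ge 1$ --- is precisely the bookkeeping the paper leaves implicit in its ``in particular'' claim.
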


The key fact that we will use is that $\beta_N>\beta_L>0$.

\begin{lemma}
    For every $\epsilon>0$, $P$-a.s., the random field
    \begin{align*}
    V(t,x)= \sum_{k=1}^\infty \sigma_k(x) W_t
    \end{align*}
    is locally $(\alpha-\epsilon)$-H\"older continuous in $x$, uniformly in $t$, and $(1/2-\epsilon)$-H\"older continuous in $t\in [0,T]$, locally uniformly in $x$.   
\end{lemma}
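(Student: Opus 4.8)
The plan is to establish the two regularity claims—spatial H\"older continuity uniformly in time, and temporal H\"older continuity locally uniformly in space—via a Kolmogorov continuity argument applied to the Gaussian field $V(t,x)=\sum_k \sigma_k(x)W^k_t$. First I would recall that, since $V$ is a centered Gaussian field (as a convergent sum of independent scalar Brownian motions times deterministic coefficients), all moments are controlled by the second moment, so it suffices to compute the covariance structure of increments and then invoke Gaussian hypercontractivity to upgrade to arbitrarily high moments. Concretely, using the independence $\E[W^j_t W^k_s]=\delta_{jk}(t\wedge s)$ and the representation \eqref{eq:Q_sigma_k}, one computes
\begin{align*}
\E\big[(V^i(t,x)-V^i(t,y))(V^j(t,x)-V^j(t,y))\big] = t\,\big(Q(0)-Q(x-y)-Q(y-x)+Q(0)\big)^{ij}.
\end{align*}
By the H\"older bound \eqref{eq:Q Holder bound} from Proposition \ref{prop:covariance structure}, namely $|Q(0)-Q(z)|\le C(|z|^{2\alpha}\wedge 1)$, this second moment is bounded by $C t\,|x-y|^{2\alpha}$ for $|x-y|\le 1$. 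Hence, by Gaussianity, for every integer $m\ge 1$,
\begin{align*}
\E\big[|V(t,x)-V(t,y)|^{2m}\big] \le C_m\, t^m\, |x-y|^{2m\alpha}.
\end{align*}

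For the spatial regularity I would apply the Kolmogorov--Chentsov criterion in the variable $x\in\R^2$: choosing $m$ large enough that $2m\alpha - 2 > 0$ with room to spare, the exponent on $|x-y|$ exceeds the dimension $2$ of the parameter space, yielding a locally H\"older-continuous modification with any exponent strictly less than $\alpha - 1/m$; letting $m\to\infty$ gives every exponent $\alpha-\epsilon$. The uniformity in $t$ over the compact interval $[0,T]$ is handled by absorbing the factor $t^m\le T^m$ into the constant, so the H\"older seminorm in $x$ can be taken uniform in $t$ (one can make this rigorous by working on the joint field in $(t,x)$, or by a standard countable-dense-times argument combined with continuity in $t$).

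For the temporal regularity, fix $x$ and note that $t\mapsto V(t,x)$ is itself Gaussian with independent increments whose covariance is
\begin{align*}
\E\big[|V(t,x)-V(s,x)|^2\big] = |t-s|\,\tr Q(0) = |t-s|\,\tfrac{\pi}{\alpha},
\end{align*}
using \eqref{eq: explicit Q0}. Thus $V(\cdot,x)$ is essentially a $\tr Q(0)$-scaled two-dimensional Brownian motion, and Gaussianity gives $\E[|V(t,x)-V(s,x)|^{2m}]\le C_m|t-s|^m$; Kolmogorov's criterion in the one-dimensional time parameter then yields $(1/2-\epsilon)$-H\"older continuity, and the local uniformity in $x$ follows because the constant $\tr Q(0)$ is independent of $x$ by isotropy. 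The main obstacle—and the only point requiring genuine care rather than bookkeeping—is producing a \emph{single} modification that is simultaneously H\"older in both variables with the stated uniformities, so that the spatial seminorm is uniform in $t$ and the temporal seminorm is locally uniform in $x$; the clean way to do this is to run a two-parameter Kolmogorov argument on the joint increment, estimating $\E[|V(t,x)-V(s,y)|^{2m}]\lesssim (|t-s|+|x-y|^{2\alpha})^m$ by combining the two covariance computations above, which delivers joint continuity and hence both uniform statements at once.
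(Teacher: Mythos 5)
Your proposal is correct and takes essentially the same route as the paper: both arguments bound the $2m$-th moments of spatial and temporal increments of $V$ through the covariance structure, using the H\"older bound $|Q(0)-Q(z)|\le C(|z|^{2\alpha}\wedge 1)$ of Proposition \ref{prop:covariance structure}, and then apply Kolmogorov's continuity criterion with $m\to\infty$. The only cosmetic difference is that you obtain the high moments from Gaussianity, whereas the paper expands the $2m$-th moment directly and uses that odd products of Brownian increments have vanishing expectation; your closing remark on running a two-parameter Kolmogorov argument for a single jointly H\"older modification is, if anything, more careful about the stated uniformities than the paper's proof, which applies the criterion separately in each variable.
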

    \begin{proof}
    The proof is based on Kolmogorov continuity theorem.
    
    For $m\in \mathbb{N}$, $x,y\in\R^2$ and $t\geq 0$ we have
    \begin{align*}
        &\E\left[
            |\sum_{k} (\sigma_k(x)-\sigma_k(y)) W^k_t|^{2m}
        \right]
        = 
            \sum_{k_1,\dots,k_{2m}} (\sigma_{k_1}(x)-\sigma_{k_1}(y))^{\top}(\sigma_{k_2}(x)-\sigma_{k_2}(y))\dots \\
        &\quad \cdot     (\sigma_{k_{2m-1}}(x)-\sigma_{k_{2m-1}}(y))^{\top}(\sigma_{k_{2m}}(x)-\sigma_{k_{2m}}(y)) 
             \E[W^{k_{1}}_t\dots W^{k_{2m}}_t].
    \end{align*}
    The expectation on the right hand side vanishes as soon as $W^k_t$ appears an odd number of times in the product, in all other cases, the expectation can be bounded by a global constant $c_{m,T}>0$, which might change from line to line in the following. Hence, only squared terms remain in the sum and we have
    \begin{align*}
        \E\left[
            |\sum_{k} (\sigma_k(x)-\sigma_k(y)) W^k_t|^{2m}
        \right]
        &\leq 
            c_{m,T}\sum_{k_1,\dots,k_{m}} |\sigma_{k_1}(x)-\sigma_{k_1}(y)|^{2}\dots |\sigma_{k_{m}}(x)-\sigma_{k_{m}}(y)|^2\\
        &\leq c_{m,T} \left(
            \sum_{k} |\sigma_{k}(x)-\sigma_{k}(y)|^{2}
        \right)^m \\
        & = c_{m,T} \left|
            2Q(0)-2Q(x-y)
        \right|^m\\
        &\leq c_{m,T} |x-y|^{2\alpha m},
    \end{align*}
    where in the last line we used \eqref{eq:Q Holder bound}.
    As an application of Kolmogorov continuity criterion, we have that $V(t,\cdot)$ is $\beta$-H\"older continuous for every $\beta \in (0, \alpha-1/m)$. By choosing $m>0$ big enough, we get the desired result.

    The time regularity follows in a similar way. Let $m\in \mathbb{N}$, $x\in \R^2$ and $t,s\in [0,T]$, there exist $c_{m}>0$ (which might change from one line to the next) such that
    \begin{align*}
        &\E \left[
            |V(t,x) - V(s,x)|^{2m}
        \right]
        = \E\left[
            |\sum_{k} \sigma_k(x)( W^k_t-W^k_s)|^{2m}
        \right]\\
        &\quad =\sum_{k_1,\dots,k_{2m}} \sigma_{k_1}(x)^{\top}\sigma_{k_2}(x)\dots    \sigma_{k_{2m-1}}(x)^{\top}\sigma_{k_{2m}}(x)
             \E[(W^{k_{1}}_t-W^{k_{1}}_s)\dots (W^{k_{2m}}_t-W^{k_{2m}}_s)]\\
        &\quad =\sum_{k_1,\dots,k_{m}} |\sigma_{k_1}(x)|^2\dots |\sigma_{k_{m}}(x)|^2
             \E[(W^{k_{1}}_t-W^{k_{1}}_s)^2\dots (W^{k_{m}}_t-W^{k_{m}}_s)^2]\\
        &\quad \leq c_m |t-s|^{m} \left(\sum_{k} |\sigma_{k}(x)|^2\right)^m
        \leq c_m |t-s|^{m}.
    \end{align*}  
    By Kolmorogrov continuity theorem we have that $V(\cdot,x)$ is $\beta$-H\"older continuous for every $\beta \in (0,1/2-1/(2m))$, uniformly in $x\in \R^2$.
\end{proof}

\subsection{The existence result}

We give the rigorous definition of weak solutions in $\dot{H}^{-1}$ to \eqref{eq:main}. We recall that $K$ is the Biot-Savart kernel (defined in \eqref{eq: green kernel K}) and $Q$, the spatial covariance matrix, is defined in \eqref{eq:Q def}. In particular, $\sigma_k$ are divergence-free and $Q(0)=\frac{\pi}{2\alpha}I$, so that $\tr[Q(0)D^2] = \frac{\pi}{2\alpha}\Delta$ and equation \eqref{eq:main} reads formally in It\^o form as
\begin{align*}
&\rd \omega +u\cdot\nabla\omega \rd t +\sum_k\sigma_k\cdot\nabla\omega \rd W^k = \frac{\pi}{4\alpha} \Delta \omega,\\
&u=K\ast\omega.
\end{align*}
We will always work with the It\^o formulation.

Note that, for any scalar field $f$ in $\dot{H}^{-1}$, the vector field $v=K*f$ is well-defined in $L^2$ and satisfies $\diverg v=0$ and $\text{curl} v=f$. Moreover, if $f$ is also in $L^2$, then
\begin{align}
\label{eq: curl div}
v\cdot \nabla f = \text{curl}\, \diverg (vv^\top),
\end{align}
and the right-hand side makes sense for $f$ only in $\dot{H}^{-1}$.

\begin{definition}\label{def:def_main}
A weak solution in $\dot{H}^{-1}$ to \eqref{eq:main} is an object $(\Omega,\mathcal{A},(\mathcal{F}_t)_t,P,(W^k)_k,\omega)$, where $(\Omega,\mathcal{A},(\mathcal{F}_t)_t,P)$ is a filtered probability space satisfying the standard assumption, $(W^k)_k$ is a sequence of independent real $(\mathcal{F}_t)_t$-Brownian motions, $\omega:[0,T]\times\Omega\to \dot{H}^{-1}(\R^2)$ is a $(\mathcal{F}_t)_t$-progressively measurable process satisfying
\begin{align}
\omega \in L^\infty_t(\dot{H}^{-1}) \cap C_t(H^{-4}) \quad P\text{-a.s.}\label{eq: properties def}
\end{align}
and we have, as equality in $H^{-4}$,
\begin{align}
\begin{aligned}\label{eq: formula def 27}
\omega_t &= \omega_0 -\int_0^t \operatorname{curl}\operatorname{div}(u_r u_r^\top) \rd r\\
&\quad -\sum_{k=1}^\infty\int_0^t \operatorname{div}(\sigma_k\omega_r) \rd W^k_r+\frac{\pi}{4\alpha}\int_0^t \Delta \omega_r \rd r \quad \text{for every }t\in [0,T],
\end{aligned}
\end{align}
where $u_r=K*\omega_r$.
\end{definition}

As we will see in the proof of Lemma \ref{lem: C gamma of H minus 3} and in the Step 4 in the proof of Theorem \ref{thm: weak-existence}, if $\omega$ is in $L^\infty_t(\dot{H}^{-1}) \cap C_t(H^{-4})$, then all the terms in \eqref{eq: formula def 27} make sense.

\begin{remark}
\label{rem: ito strato}
    Equation \eqref{eq: formula def 27} is given in It\^o form, whereas equation \eqref{eq:main} is given in Stratonovich formulation. At least formally, the two formulations \eqref{eq: formula def 27} and \eqref{eq:main} coincide. Indeed, at least formally, the last term in \eqref{eq: formula def 27} corresponds to the It\^o-Stratonovich correction, as the following formal computation on the correction term shows (see also \cite{coghi_flandoli,galeati2023weak}): denoting by $[M,N]$ the quadratic covariation of $M$ and $N$,
    \begin{align*}
        \sum_{k=1}^{\infty}\rd [\operatorname{div}(\sigma_k \omega),W^k]_t
        & = \sum_{k,l=1}^{\infty} \operatorname{div}(\sigma_k \sigma_l \cdot \nabla \omega_t) \rd [W^k,W^l]_t
        = \sum_{k=1}^{\infty} \operatorname{div}(\sigma_k \sigma^{\top}_k \nabla \omega_t) \rd t\\
        & = \operatorname{div}(Q(0) \nabla \omega_t) \rd t
        = \frac{\pi}{2\alpha}\Delta \omega_t \rd t,
    \end{align*}
where we have used that $\sigma_k$ are divergence-free and, in the last equality, we have used \eqref{eq: explicit Q0}.

As noted in \cite[Section 2.2]{galeati2023weak}, to make this computation rigorous, we need some smoothness on $\sigma_k$ or on $\omega$, which we do not have here. Hence at the rigorous level, we work directly with the It\^o formulation \eqref{eq: formula def 27}, without using the Stratonovich formulation.

However, we will construct solutions $\omega$ to the It\^o formulation \eqref{eq: formula def 27} as limits of solutions $\omd$ to an approximate Euler equation (see equation \eqref{eq: regularized main} below), which is obtained from \eqref{eq: formula def 27} by regularizing $K$ and $Q$ and for which the equivalence between the It\^o formulation and Stratonovich formulation is rigorous (in the sense for example of \cite[Proposition 2.14]{galeati2023weak}). Hence the It\^o formulation \eqref{eq: formula def 27} is a natural rigorous formulation of equation \eqref{eq:main}.
\end{remark}

Our first main result, on existence of $\dot{H}^{-1}$ weak solutions, is the following:

\begin{theorem}
\label{thm: weak-existence}
Assume that the spatial covariance matrix $Q$ is defined in \eqref{eq:Q def}. Assume that $\omega_0$ is in $\dot{H}^{-1}(\R^2)$. Then equation \eqref{eq:main} admits a weak solution in $\dot{H}^{-1}$ (in the sense of Definition \ref{def:def_main}) and satisfying in addition
\begin{align}
\label{eq:Hm alpha bd sol}
    \omega \in L^\infty([0,T];L^2(\Omega;\dot{H}^{-1})) \cap L^2([0,T]\times \Omega;H^{-\alpha}).
\end{align}
\end{theorem}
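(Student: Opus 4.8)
The plan is to construct the solution as a limit in law of solutions $\omd$ to a regularized stochastic Euler equation (equation \eqref{eq: regularized main}), in which both the Biot--Savart kernel $K$ and the covariance $Q$ are mollified at scale $\delta$ and the initial datum is approximated by a smooth $\dot{H}^{-1}\cap L^2$ function that does not blow up too fast. For each fixed $\delta$ the regularized equation has space-smooth coefficients, so $\omd$ exists, is $L^2$-valued, and the It\^o--Stratonovich equivalence is rigorous (cf. Remark \ref{rem: ito strato}).

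First I would establish the central a priori energy estimate. Applying It\^o's formula to $\|\omd_t\|_{\dot{H}^{-1}}^2\approx\langle\omd_t,G\ast\omd_t\rangle_{L^2}$ (Lemma \ref{lem:Hm1 norm formula}), the transport nonlinearity contributes $2\langle\nabla G\ast\omd,(K\ast\omd)\omd\rangle$, which vanishes because the Euler nonlinearity conserves the energy. The noise produces a mean-zero martingale and a second-order correction $\iint\tr[(Q(0)-Q(x-y))D^2G(x-y)]\,\omd(x)\omd(y)\,\rd x\,\rd y$. The key computation (eq.\ \eqref{eq:key_computation}, Lemma \ref{lem:main bound A}), which crucially uses $\beta_N>\beta_L>0$ from Proposition \ref{prop:covariance structure}, shows this kernel behaves like $-c|x-y|^{-2+2\alpha}$; by the Riesz identity \eqref{eq:Hm norm and Riesz} the correction equals $-c\,\|\omd\|_{H^{-\alpha}}^2$ up to lower-order remainders. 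Taking expectations gives, uniformly in $\delta$,
\[
\E[\|\omd_t\|_{\dot{H}^{-1}}^2]+c\int_0^t\E[\|\omd_r\|_{H^{-\alpha}}^2]\,\rd r\le C\|\omega_0\|_{\dot{H}^{-1}}^2,
\]
which is Proposition \ref{prop:a_priori_bd}. This simultaneously yields the uniform $L^\infty_t L^2_\Omega\dot{H}^{-1}$ bound and, decisively, the extra $L^2_{t,\Omega}H^{-\alpha}$ regularity that the limit solution will inherit.

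Next I would obtain compactness by a stochastic Aubin--Lions argument. The spatial bound is the $L^2([0,T]\times\Omega;H^{-\alpha})$ estimate above, together with the pathwise $L^\infty_t\dot{H}^{-1}$ bound (Lemma \ref{lem:Hm1_bd_sup}); the time-regularity bound is a uniform $C^\gamma([0,T];H^{-4})$ estimate for $\gamma<1/2$ (Lemma \ref{lem: C gamma of H minus 3}), read off from the regularized equation by controlling each term in the weak $H^{-4}$ topology. Since $\alpha<1$ and $H^{-\alpha}_{loc}\hookrightarrow H^{-1}_{loc}$ compactly, these two bounds give tightness of $(\omd)_\delta$ in $L^2([0,T];\dot{H}^{-1})$ --- with a spatial weight to handle the non-compact domain $\R^2$ --- endowed with the \emph{strong} topology. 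By Prokhorov and a Skorokhod representation I pass to a new probability space carrying a subsequence that converges a.s.\ in this strong topology, jointly with the driving Brownian motions.

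The main obstacle, and the whole point of the scheme, is the passage to the limit in the nonlinear term $\operatorname{curl}\operatorname{div}(uu^\top)$ with $u=K\ast\omega$: it is quadratic in $\omega$, so the cheap weak convergence of $\omega$ would be insufficient. Here the strong $\dot{H}^{-1}$ convergence of the vorticity is exactly strong $L^2$ convergence of the velocity $\ud=K\ast\omd$, whence $\ud(\ud)^{\top}$ converges strongly in $L^1_{loc}$ and one can identify the limit of the nonlinearity tested against smooth fields. The remaining linear terms pass to the limit routinely, and the stochastic integral by standard martingale-convergence arguments; one must also verify progressive measurability and adaptedness of the limit $\omega$ with respect to the limiting filtration. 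Finally, the regularity \eqref{eq:Hm alpha bd sol} follows from the uniform a priori bounds by lower semicontinuity (Fatou) under the Skorokhod limit, so that $(\Omega,\mathcal{A},(\mathcal{F}_t)_t,P,(W^k)_k,\omega)$ is a weak solution in the sense of Definition \ref{def:def_main}.
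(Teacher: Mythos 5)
Your proposal is correct and follows essentially the same route as the paper: regularize $K$, $Q$, and the initial datum; derive the uniform energy estimate with the $H^{-\alpha}$ gain from the key computation $\tr[(Q(0)-Q)D^2G]\approx -c|x|^{-2+2\alpha}$; obtain a uniform $C^\gamma_t(H^{-4})$-type time bound; apply a weighted stochastic Aubin--Lions compactness plus Skorokhod representation; and pass to the limit using the strong $L^2$ convergence of the velocity to handle the quadratic term, with lower semicontinuity giving \eqref{eq:Hm alpha bd sol}. The only point to note is that the It\^o formula must be applied to $\langle\omd_t,G^\delta\ast\omd_t\rangle$ with the \emph{regularized} Green kernel (as in Lemma \ref{lem:Hm1 norm formula}), not to $\langle\omd_t,G\ast\omd_t\rangle$, but your ``$\approx$'' and citation indicate you intend exactly this.
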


\subsection{The uniqueness result}

Our second main result, on strong uniqueness of $L^p$ solutions, is the following:

\begin{theorem}\label{thm:Lp_wellposed}
    Assume that the spatial covariance matrix $Q$ is defined in \eqref{eq:Q def}. Take $3/2<p<\infty$ and $\max\{0,2/p-1\}<\alpha<\min\{1-1/p,1/2\}$. Assume that $\omega_0$ is in $L^p(\R^2)\cap L^1(\R^2) \cap\dot{H}^{-1}(\R^2)$. Then:
    \begin{enumerate}
        \item There exists a weak solution $(\Omega,\mathcal{A},(\mathcal{F}_t)_t,P,(W^k)_k,\omega)$ in $\dot{H}^{-1}$ (in the sense of Definition \ref{def:def_main}) to \eqref{eq:main}, which satisfies in addition \eqref{eq:Hm alpha bd sol} and
        \begin{align}   \label{eq: Lp bound}
            \sup_{t\in [0,T]}[\|\omega_t\|_{L^p}+\|\omega_t\|_{L^1}]\le \|\omega_0\|_{L^p}+\|\omega_0\|_{L^1},\quad P\text{-a.s.}.
        \end{align}
        \item Strong uniqueness holds for \eqref{eq:main} in the class of $\dot{H}^{-1}$ solutions in the space
        \begin{align*}
            \chi:= L^\infty([0,T]\times\Omega;L^p\cap L^1) \cap L^\infty([0,T];L^2(\Omega;\dot{H}^{-1})) \cap L^2([0,T]\times \Omega;H^{-\alpha}).
        \end{align*}
        That is: if $\omega^1$ and $\omega^2$ are two weak solutions in $\dot{H}^{-1}$ to \eqref{eq:main} (in the sense of Definition \ref{def:def_main}), on the same filtered probability space $(\Omega,\mathcal{A},(\mathcal{F}_t)_t,P)$ and with respect to the same sequence $(W^k)_k$ of independent Brownian motions, and $\omega^1,\omega^2$ are in $\chi$, then $\omega^1=\omega^2$ $P$-a.s..
    \end{enumerate}
\end{theorem}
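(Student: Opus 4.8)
Existence (part 1) is an upgrade of Theorem \ref{thm: weak-existence}, so the plan is to revisit the approximation used there. The solution is built as a limit in law of solutions $\omd$ to a regularized stochastic Euler equation; since the regularized drift and the noise coefficients are divergence-free, the regularized dynamics transports $\omd$ and does not increase any $L^q$ norm, while the It\^o--Stratonovich correction is the viscous term $\frac{\pi}{4\alpha}\Delta$, which only dissipates. Hence $\|\omd_t\|_{L^p}+\|\omd_t\|_{L^1}\le\|\omd_0\|_{L^p}+\|\omd_0\|_{L^1}$ for all $t$, $P$-a.s. Choosing the regularizations $\omd_0$ of the datum to converge to $\omega_0$ in $L^p\cap L^1\cap\dot{H}^{-1}$ (possible since $\omega_0$ lies in this intersection), this uniform bound passes to the limit by Skorokhod representation together with Fatou's lemma (lower semicontinuity of the $L^p$ and $L^1$ norms under the a.e.\ convergence extracted from the strong $L^2_t\dot{H}^{-1}_{loc}$ convergence), yielding \eqref{eq: Lp bound}; the bounds \eqref{eq:Hm alpha bd sol} come directly from Theorem \ref{thm: weak-existence}.

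For uniqueness (part 2), let $\omega^1,\omega^2\in\chi$ be two solutions with the same datum and the same noise, and set $\omega=\omega^1-\omega^2$, $u^i=K\ast\omega^i$, $u=K\ast\omega$. Subtracting the two instances of \eqref{eq: formula def 27} and using $\operatorname{curl}\operatorname{div}(u^1(u^1)^\top-u^2(u^2)^\top)$ together with the splitting $u^1\cdot\nabla\omega^1-u^2\cdot\nabla\omega^2=u^1\cdot\nabla\omega+u\cdot\nabla\omega^2$, the difference $\omega$ solves the linear equation
\[
\rd\omega+[u^1\cdot\nabla\omega+u\cdot\nabla\omega^2]\,\rd t+\sum_k\operatorname{div}(\sigma_k\omega)\,\rd W^k=\tfrac{\pi}{4\alpha}\Delta\omega\,\rd t,\qquad \omega_0=0.
\]
The plan is to run the energy estimate on $\|\omega\|_{\dot{H}^{-1}}^2\approx\langle\omega,G\ast\omega\rangle$ exactly as in the existence analysis, but now for the difference. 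Applying the It\^o formula for the energy (justified, as there, through a regularized Green kernel and a passage to the limit), and noting that the stochastic integral is a genuine martingale, I obtain
\[
\rd\E\|\omega\|_{\dot{H}^{-1}}^2=2\E\langle\nabla G\ast\omega,(K\ast\omega^1)\omega\rangle\,\rd t+2\E\langle\nabla G\ast\omega,(K\ast\omega)\omega^2\rangle\,\rd t+\E\,A(\omega)\,\rd t,
\]
where $A(\omega)=\iint\tr[(Q(0)-Q(x-y))D^2G(x-y)]\omega(x)\omega(y)\,\rd x\,\rd y$ is the It\^o correction.

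Two of these three contributions are favourable. First, the self-interaction term vanishes pointwise: writing $\psi=G\ast\omega$ one has $K\ast\omega=\nabla^\perp\psi$ and $\nabla G\ast\omega=\nabla\psi$, so $(K\ast\omega)\cdot\nabla G\ast\omega=\nabla^\perp\psi\cdot\nabla\psi=0$ and thus $\langle\nabla G\ast\omega,(K\ast\omega)\omega^2\rangle=0$, independently of $\omega^2$. Second, by the covariance structure in Proposition \ref{prop:covariance structure} (in particular $\beta_N>\beta_L>0$ and \eqref{eq:Q Holder bound}) one gets $\tr[(Q(0)-Q(x))D^2G(x)]\approx-c|x|^{-2+2\alpha}$, whence, via \eqref{eq:Riesz intro} and the Riesz identity \eqref{eq:Hm norm and Riesz}, $A(\omega)\le-c\|\omega\|_{H^{-\alpha}}^2$; this is the same dissipative term that drives the existence estimate.

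The heart of the matter, and the main obstacle, is the remaining term $\langle\nabla G\ast\omega,(K\ast\omega^1)\omega\rangle$, which does not cancel because $u^1=K\ast\omega^1$ is not orthogonal to $\nabla\psi$. I would control it by pairing $\omega\in H^{-\alpha}$ with $(K\ast\omega^1)\cdot\nabla G\ast\omega\in H^{\alpha}$ through a paraproduct / Sobolev-multiplication estimate of the form
\[
|\langle\nabla G\ast\omega,(K\ast\omega^1)\omega\rangle|\lesssim\|\omega^1\|_{L^p}\,\|\omega\|_{H^{-\alpha}}^{2-\theta}\,\|\omega\|_{\dot{H}^{-1}}^{\theta},\qquad\theta\in(0,1).
\]
This is precisely where the restrictions are consumed: the embedding $L^p\hookrightarrow\dot{H}^{-(2/p-1)}$ forces $\alpha>2/p-1$, while having $K\ast\omega^1$ only of $W^{1,p}$ regularity (hence not $L^\infty$ when $p\le2$) forces $\alpha<1-1/p$ and the lower bound $p>3/2$, and the overall balance requires $\alpha<1/2$ so that $\nabla G\ast\omega\in H^{1-\alpha}$ has enough regularity. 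Young's inequality then gives $\epsilon\|\omega\|_{H^{-\alpha}}^2+C_\epsilon\|\omega^1\|_{L^p}^{2/\theta}\|\omega\|_{\dot{H}^{-1}}^2$; choosing $\epsilon<c$ absorbs the first summand into the dissipative term, and since $\|\omega^1\|_{L^p}$ is bounded by \eqref{eq: Lp bound} we arrive at $\rd\E\|\omega\|_{\dot{H}^{-1}}^2\le C\,\E\|\omega\|_{\dot{H}^{-1}}^2\,\rd t$. Gr\"onwall's lemma with $\omega_0=0$ forces $\E\|\omega_t\|_{\dot{H}^{-1}}^2=0$ for all $t$, hence $\omega^1=\omega^2$ $P$-a.s. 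I expect the delicate harmonic-analysis estimate of this non-cancelling term to be the technically heaviest point, consistent with the authors' remark that $p>3/2$ and $\alpha>2/p-1$ are not optimal.
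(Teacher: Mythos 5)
Your overall strategy coincides with the paper's: the existence part is obtained exactly as you say (transport structure of the regularized equation, Lemma \ref{lem: l infty and l 1}, then lower semicontinuity of the $L^p\cap L^1$ norm along the Skorokhod limit), and the uniqueness skeleton (energy estimate on $\omega=\omega^1-\omega^2$, exact cancellation of the $\omega^2$-term, a product--Sobolev estimate plus interpolation and Young for the $\omega^1$-term, absorption into the noise-induced dissipation, Gr\"onwall) is precisely the paper's; your multiplication estimate is Lemma \ref{lem:product_Sobolev}, used there in the same two places.

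The genuine gap is in your treatment of the It\^o correction term $A(\omega)$. The It\^o formula cannot be applied with the singular kernel $G$; it is applied with $G^\delta$, so the correction is $\iint \tr[(Q(0)-Q(x-y))D^2G^\delta(x-y)]\omega(x)\omega(y)\,\rd x\,\rd y$, and the pointwise identity $\tr[(Q(0)-Q(x))D^2G(x)]\approx -c|x|^{-2+2\alpha}$ gives nothing until you control $\tr[(Q(0)-Q)D^2(G-G^\delta)]$ paired against $\omega\otimes\omega$, i.e.\ the remainder $R2$ in \eqref{eq:B split}. By \eqref{eq:Q Holder bound} and \eqref{eq: convergence second derivative G delta}, this remainder is of size $|x|^{-2+2\alpha}$ on a ball of radius $(8\delta\log(1/\delta))^{1/2}$, and the only way the paper can beat it is by pairing against $|\omega|\otimes|\omega|$ with $\omega\in L^\infty([0,T]\times\Omega;L^{p\wedge 2}\cap L^1)$: H\"older/Young with $1/r=2-2/(p\wedge 2)$ requires $(-2+2\alpha)r>-2$, which is exactly $\alpha>2/p-1$, and compatibility with $\alpha<\min\{1-1/p,1/2\}$ forces $p>3/2$. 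So these two hypotheses are consumed by the $\delta\to 0$ limit of the correction term (this is stated in Remark \ref{rmk:non optimality}), not by the estimate of $\langle\nabla G\ast\omega,(K\ast\omega^1)\omega\rangle$ as you claim: in that term only $\alpha<1-1/p$ (so that $L^p\cap L^1\subset L^{\tilde p}$ with $1/\tilde p=1-\alpha-\epsilon$) and $\alpha<1/2$ (for Lemma \ref{lem:product_Sobolev}) are used, and no embedding of the form $L^p\hookrightarrow \dot H^{-(2/p-1)}$ can produce a \emph{lower} bound on $\alpha$ there (such embeddings, applied to $\omega^1$ or dualized, all yield upper bounds of the type $\alpha\le 1-1/p$). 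Without an argument for $R2$, your claimed inequality $A(\omega)\le -c\|\omega\|_{H^{-\alpha}}^2$ is only formal; the same regularization issue also affects, less seriously, your ``pointwise vanishing'' of the $\omega^2$-term (with $G^\delta$ it does not vanish, and the paper must show the remainder $\langle\nabla(G^\delta-G)\ast\omega,(K\ast\omega)\omega^2\rangle\to 0$ via Lemma \ref{lem:product_Sobolev} and dominated convergence) and the martingale property of the stochastic integral (which uses boundedness of $\nabla G^\delta$, false for $\nabla G$).
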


In particular, we get strong uniqueness for $p=2$ (solutions with finite enstrophy) and any $0<\alpha<1/2$.

\begin{remark}\label{rmk:comment p alpha}
    The class $L^2([0,T]\times \Omega;H^{-\alpha}(\R^2))$ contains the class $L^\infty([0,T]\times\Omega;L^p(\R^2)\cap L^1(\R^2))$ if $p\ge 2/(1+\alpha)$ by Sobolev embedding, hence in this case the class $L^2([0,T]\times \Omega;H^{-\alpha}(\R^2))$ is redundant in the uniqueness statement.
\end{remark}

\begin{remark}\label{rmk:non optimality}
While the condition $1<p<\infty$, $0<\alpha<\min\{1-1/p,1/2\}$ seems structural from our approach, the condition $p>3/2$, $\alpha>2/p-1$ is rather unnatural and appears due to a technical point, namely the control of the remainder term $R2$ in formula \eqref{eq:B split} below. We expect that it can be avoided with a finer approximation argument.
\end{remark}

\section{Regular approximations}
\label{sec:regularization}

In this section, we introduce a series of regularizations for the irregular elements appearing in \eqref{eq:main}.

\subsection{Regularized covariance}

Recall that the covariance matrix $Q$ is defined in Subsection \ref{sec: covariance}. For $\delta>0$, we introduce a function $\rho^\delta$ whose Fourier transform $\widehat{\rho^\delta}$ is a real-valued radial $C^\infty$ function with $0\le \widehat{\rho^\delta}\le 1$ everywhere, $\widehat{\rho^\delta}(n)=1$ on $|n|\le 1/\delta$ and $\widehat{\rho^\delta}(n)=0$ on $|n|>2/\delta$. Hence $\rho^\delta$ is a $C^\infty$ even function and $\rho^\delta$ and its derivatives of all orders are in $L^p(\R^2)$ for every $1\le p\le \infty$. We take
\begin{align*}
    \sigma^\delta_k &= \rho^\delta \ast \sigma_k,\\
    Q^\delta &= \rho^{\delta,2}\ast Q
\end{align*}
where $\rho^{\delta,2}=\rho^\delta\ast \rho^\delta$. Note that
\begin{align*}
    \widehat{Q^\delta}(n)= \hat{Q}^\delta(n)\widehat{\rho^\delta}(n)^2 = \langle n \rangle^{-(2+2\alpha)} \left(I-\frac{nn^T}{|n|^2}\right) \widehat{\rho^\delta}(n)^2
\end{align*}
and so $Q^\delta$ satisfies
\begin{align}
    \label{eq: properties Q delta}
    Q^{\delta}\in C^{\infty},
	\qquad
    \|Q^{\delta}\|_{L^\infty}\le \|\widehat{Q^\delta}\|_{L^1} \le\|\widehat{Q}\|_{L^1} <\infty.
\end{align}

Moreover, as for \eqref{eq:Q 0}, one can check that, as $\delta\to 0$,
\begin{equation}
    \label{eq:Q delta 0}
    Q^{\delta}(0) 
    = \frac12 \int_{\R^2}\langle n \rangle^{-(2+2\alpha)} \widehat{\rho^\delta}(n) \rd n \ I
    = : c_{\delta} I \to \frac{\pi}{2\alpha} I=Q(0).
\end{equation}

\begin{lemma}\label{lem:Q_reguralized}
We have, for every $m$ in $\mathbb{N}$,
    \begin{align*}
        \sup_{x\in \R^2}\sum_k|D^m\sigma^{\delta}_k(x)|^2<\infty
    \end{align*}
    and
    \begin{align*}
        Q^{\delta}(x-y) = \sum_k \sigma^{\delta}_k(x)\sigma^{\delta}_k(y)^\top.
    \end{align*}
\end{lemma}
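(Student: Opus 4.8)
The plan is to establish the product identity first and then deduce the uniform derivative bound from it, in both cases reducing to the corresponding statement for the unregularized coefficients $\sigma_k$ (relations \eqref{eq:Q_sigma_k} and \eqref{eq:sigma_k_conv}) together with the facts that $Q$ is continuous and bounded and that $\rho^\delta$ and all its derivatives lie in $L^1(\R^2)\cap L^2(\R^2)$.

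For the identity, I would insert $\sigma_k^\delta=\rho^\delta\ast\sigma_k$ and compute, for fixed $x,y$,
\[
\sum_k \sigma_k^\delta(x)\sigma_k^\delta(y)^\top = \sum_k \iint \rho^\delta(x-z)\rho^\delta(y-w)\,\sigma_k(z)\sigma_k(w)^\top \,\rd z\,\rd w.
\]
The interchange of the sum and the double integral is justified by Fubini--Tonelli: by \eqref{eq:sigma_k_conv} the series $\sum_k |\sigma_k(z)||\sigma_k(w)|$ is bounded uniformly in $(z,w)$, and $\rho^\delta\in L^1$, so the integrand is absolutely summable-integrable. After the interchange, \eqref{eq:Q_sigma_k} turns the inner sum into $Q(z-w)$, giving $\iint \rho^\delta(x-z)\rho^\delta(y-w)Q(z-w)\,\rd z\,\rd w$. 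The change of variables $a=x-z$, $b=y-w$ rewrites this as $\iint \rho^\delta(a)\rho^\delta(b)Q((x-y)+b-a)\,\rd a\,\rd b$, and, using that $\rho^\delta$ is even, one recognizes $(\rho^\delta\ast\rho^\delta\ast Q)(x-y)=Q^\delta(x-y)$, which is the claim.

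For the derivative bound, since $\rho^\delta$ is smooth with derivatives in $L^1\cap L^2$ and $\sigma_k\in L^2$, one has $D^m\sigma_k^\delta=(D^m\rho^\delta)\ast\sigma_k$ with the pointwise representation $\partial^\beta(\sigma_k^\delta)^j(x)=\int \partial^\beta\rho^\delta(x-z)(\sigma_k)^j(z)\,\rd z$ for each multi-index $\beta$ with $|\beta|=m$ and each component $j\in\{1,2\}$. Squaring, summing over $k$, and interchanging as before (again legitimate by \eqref{eq:sigma_k_conv} and $\partial^\beta\rho^\delta\in L^1$), the inner sum collapses to $Q^{jj}(z-w)$, so that
\[
\sum_k |\partial^\beta(\sigma_k^\delta)^j(x)|^2 = \iint \partial^\beta\rho^\delta(x-z)\,\partial^\beta\rho^\delta(x-w)\,Q^{jj}(z-w)\,\rd z\,\rd w \le \|Q\|_{L^\infty}\,\|\partial^\beta\rho^\delta\|_{L^1}^2.
\]
Summing over the finitely many $\beta$ with $|\beta|=m$ and over $j$ yields a bound independent of $x$, which is the first assertion.

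The only delicate points are the two Fubini interchanges, both handled uniformly by the uniform summability \eqref{eq:sigma_k_conv}, and the use of the evenness of $\rho^\delta$ to assemble $\rho^\delta\ast\rho^\delta$ in the convolution identity; I do not expect any genuine obstacle beyond this bookkeeping.
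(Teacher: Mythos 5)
Your proposal is correct and follows essentially the same route as the paper: both proofs insert $\sigma_k^\delta=\rho^\delta\ast\sigma_k$, interchange the sum over $k$ with the integrals (justified by the uniform bound \eqref{eq:sigma_k_conv}), collapse the inner sum to $Q$ via \eqref{eq:Q_sigma_k}, and use the evenness of $\rho^\delta$ to assemble $\rho^{\delta,2}\ast Q=Q^\delta$. The only (immaterial) difference is in the derivative bound: you estimate the resulting double integral directly by $\|Q\|_{L^\infty}\|D^m\rho^\delta\|_{L^1}^2$, whereas the paper identifies it as $D^{2m}Q^\delta$ evaluated at $0$ and invokes the smoothness of $Q^\delta$; both conclusions rest on the same computation.
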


\begin{proof}
    We have, for every $x$ and $y$, using that $\rho^\delta$ is even and in $L^1$,
    \begin{align*}
        \sum_k \sigma^{\delta}_k(x) \sigma^{\delta}_k(y)^{\top}
        &= \sum_k \iint \sigma_k(x^{\prime}) \rho^{\delta}(x-x^{\prime})
        \sigma_k(y^{\prime}) \rho^{\delta}(y-y^{\prime})
        \rd x^{\prime} \rd y^{\prime} \\
        &= \iint \rho^{\delta}(x-x^\prime) \rho^{\delta}(y-y^\prime) Q(x^{\prime}-y^{\prime}) \rd x^{\prime} \rd y^{\prime} \\
        &= \iint \rho^{\delta}(x-y-z-u) \rho^{\delta}(u)\rd u Q(z) \rd z  \\
        &= \int (\rho^{\delta}\ast\rho^{\delta})(x-y-z) Q(z) \rd z  \\
        &= Q^\delta(x-y).
    \end{align*}
    Similarly, taking derivatives, we get
    \begin{align*}
        \sum_k D^m\sigma^{\delta}_k(x) D^m\sigma^{\delta}_k(y)^{\top} &= \iint D^m\rho^{\delta}(x-y-z-u) D^m\rho^{\delta}(u)\rd y^{\prime} Q(z) \rd z\\
        &= D^{2m}Q^\delta(x-y).
    \end{align*}
    In particular, taking $x=y$, we get
    \begin{align*}
        \sum_k|D^m\sigma_k^\delta(x)|^2 =\sum_{i_1,\ldots i_m \in \{1,2\}}\tr[\partial_{x_{i_1}}^2\ldots \partial_{x_{i_m}}^2Q^\delta(0)] <\infty.
    \end{align*}
    The proof is complete.
\end{proof}

\begin{remark}\label{rmk:sigma_delta_1}
    Proceeding as in the previous Lemma, one can show that
    \begin{align*}
     Q^{\delta,h}(x-y) := \rho^\delta\ast Q(x-y) =  \sum_k\sigma^\delta_k(x)\sigma_k(y)^{\top} = \sum_k\sigma_k(x)\sigma^\delta_k(y)^{\top}.
 \end{align*}
\end{remark}

\subsection{Initial conditions}
\label{section: initial condition}

For a given initial condition $\omega_0$ in $\dot{H}^{-1}$, we take a family $(\omega_0^\delta)_{\delta>0}$ such that
\begin{align}
\begin{aligned}
\label{eq: omega zero wish list}
    &\omega_0^\delta\in L^1\cap L^\infty\cap \dot{H}^{-1},\\
    &\int_{\R^2}\omega_0^\delta(x) \rd x =0,\\
    &\int_{\R^2}|x|^m|\omega_0^\delta(x)|\rd x<\infty\text{ for every }m\in \mathbb{N},
\end{aligned}
\end{align}
and
\begin{align}
    &\omega_0^\delta \to \omega_0 \text{ in }\dot{H}^{-1} \text{ as }\delta\to 0.\label{eq:omega zero convergence}
\end{align}

The existence of such a family $(\omega_0^\delta)_\delta$ is a consequence of a classical density argument. Precisely, we take $u_0=K\ast \omega_0$, which is in $L^2(\R^2)$ and satisfies $\operatorname{curl}(u_0)=\omega_0$ in the sense of distributions. Then we take a family $(u^\delta_0)_\delta$ of $C^\infty_c$ vector fields (with possibly non-zero divergence) tending to $u_0$ in $L^2$ and take $\omega^\delta_0=\operatorname{curl}(u^\delta_0)$. The properties \eqref{eq: omega zero wish list} are a consequence of the smoothness and compact support of $u^\delta$ and of the definition of $\omega^\delta_0$ as curl. The convergence \eqref{eq:omega zero convergence} follows from the convergence of $u^\delta_0$ to $u_0$ in $L^2$.

We also ask the following condition:
\begin{align}\label{eq:omega zero Lp}
    \text{if }\omega_0\in L^p\cap L^1\cap \dot{H}^{-1}\text{ for some }1<p<\infty,\text{ then }\omega_0^\delta \to \omega_0 \text{ in }L^p\cap L^1 \text{ as } \delta \to 0.
\end{align}
To show that the condition \eqref{eq:omega zero Lp} can be satisfied, together with \eqref{eq: omega zero wish list} and \eqref{eq:omega zero convergence}, we consider $\omega_0\in L^p\cap L^1 \cap \dot{H}^{-1}$ for some $1<p<\infty$ and we choose an example of $(\omega^\delta_0)_\delta$ built with the previous construction. As before we take $u_0=K\ast \omega_0$. Following \cite[proof of Theorem 10.2]{majda2002vorticity}, $\nabla u_0$ is in $L^p$. Moreover, $K=K1_{|\cdot|\le 1} +K1_{|\cdot|>1}$ can be decomposed as a sum of an $L^1$ function and a function in $\cap_{q>2}L^q$. Hence, since $\omega_0$ is in $L^p\cap L^1$,
\begin{align*}
    u_0 = K1_{|\cdot|\le 1}\ast \omega_0 +K1_{|\cdot|> 1}\ast \omega_0 =: u_{0,p} +u_{0,2+}
\end{align*}
is the sum of an $L^p$ function and a function in $\cap_{q>2}L^q$. Now we take a $C^\infty_c$ function $\bar{\rho}_1$, a $C^\infty_c$ function $\chi_1$ with $0\le \chi_1\le 1$ everywhere, $\chi_1(x)=1$ on $|x|\le 1$, $\chi_1(x)=0$ on $|x|\ge 2$ and, for $\delta>0$, $\bar{\rho}_\delta = \delta^{-2}\bar{\rho}_1(\delta^{-1}\cdot)$, $\chi_\delta=\chi(\delta\cdot)$. We take, for $\delta>0$,
\begin{align*}
    &u^\delta_0 = (u_0\ast \bar{\rho}_\delta)\chi_\delta,\\
    &\omega^\delta_0 = \operatorname{curl}(u^\delta_0) = (\omega_0\ast \bar{\rho}_\delta)\chi_\delta -(u_0\ast \bar{\rho}_\delta)\cdot \nabla^\perp\chi_\delta.
\end{align*}
Clearly $((\omega_0\ast \bar{\rho}_\delta)\chi_\delta)_\delta$ converges to $\omega_0$ in $L^p\cap L^1$. For $(u_0\ast \bar{\rho}_\delta)\cdot \nabla^\perp\chi_\delta$, we have
\begin{align*}
    &\|(u_{0,p}\ast \bar{\rho}_\delta)\cdot \nabla^\perp\chi_\delta\|_{L^p}\le \|u_{0,p}\|_{L^p} \|\nabla\chi_\delta\|_{L^\infty} \lesssim \delta \|u_{0,p}\|_{L^p} \to 0
\end{align*}
and
\begin{align*}
    &\|(u_{0,2+}\ast \bar{\rho}_\delta)\cdot \nabla^\perp\chi_\delta\|_{L^p}\lesssim \|u_{0,2+}\|_{L^{2p}} \|\nabla\chi_\delta\|_{L^{2p}} \lesssim \delta^{1-1/p} \|u_{0,2+}\|_{L^{2p}} \to 0.
\end{align*}
Hence $(\omega^\delta_0)_\delta$ converges to $\omega$ in $L^p$. Moreover, since $u_0$ is in $L^2$ (because $\omega_0$ is in $\dot{H}^{-1}$), we have, using that $\chi_\delta$ has support on $|x|\ge 1/\delta$,
\begin{align*}
    &\|(u_0\ast \bar{\rho}_\delta)\cdot \nabla^\perp\chi_\delta\|_{L^1} \lesssim \|(u_0\ast \bar{\rho}_\delta)1_{|\cdot|\ge 1/\delta}\|_{L^2} \|\nabla\chi_\delta\|_{L^2} \lesssim \|u_01_{|\cdot|\ge 1/(2\delta)}\|_{L^2} \to 0.
\end{align*}
Hence $(\omega^\delta_0)_\delta$ converges to $\omega$ also in $L^1$.

\subsection{Regularizing the Green kernel}

Recall that $G$ defined in \eqref{eq: green kernel} is the Green kernel of $-\Delta$ in $L^2(\R^2)$. For $\delta>0$, we take the regularized Green kernel $G^\delta$ as
\begin{align}
\label{eq: regularization green kernel}
G^\delta(x) = \int_\delta^{1/\delta} p_s(x) \rd s,
\qquad
x\in \R^2,
\end{align}
where $p$ is the heat kernel on $\R^2$, namely
\begin{align*}
p_s(x) = s^{-1}p_1(s^{-1/2}x),
\qquad
p_1(x) =(4\pi)^{-1}e^{-|x|^2/4},
\qquad
x\in\R^2, \ s>0.
\end{align*}
We set the regularized Biot-Savart kernel as
\begin{align}
K^{\delta} = \nabla^{\perp} G^{\delta}.\label{eq: regularization green kernel K}
\end{align}

In the next two lemmas we collect a few (standard) facts on $G^\delta$ and its Fourier transform.

\begin{lemma}
The Fourier transform of $G^\delta$ is
\begin{align}
    &\widehat{G^\delta}(n) = (2\pi)^{-2}|n|^{-2}(e^{-\delta 4\pi^2|n|^2}-e^{-4\pi^2|n|^2/\delta}).\label{eq:G delta Fourier}
\end{align}
In particular, we have
\begin{align*}
    &|\widehat{\nabla G^\delta}(n)| \le (2\pi)^{-1}|n|^{-1},
\end{align*}
and, for any $\beta<1$, for every $f$ in $\dot{H}^{\beta-1}$
\begin{align}
    \|\nabla G^\delta\ast f\|_{\dot{H}^\beta}\le \|\nabla G\ast f\|_{\dot{H}^\beta} = (2\pi)^{-1}\|f\|_{\dot{H}^{\beta-1}}.\label{eq:nabla G bound H}
\end{align}
Moreover, as $\delta\to 0$, for every $\omega \in \dot{H}^{-1}$,
\begin{equation}
    \label{eq: convergence g delta to H-1}
    \langle \omega, G^\delta\ast\omega \rangle \to \int (2\pi)^{-2}|n|^{-2}|\hat{\omega}(n)|^2 \rd n = (2\pi)^{-2}\|\omega\|_{\dot{H}^{-1}}^2.
\end{equation}
\end{lemma}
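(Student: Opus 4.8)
The plan is to reduce everything to explicit Fourier computations, exploiting that the heat semigroup is diagonalised by the Fourier transform.

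First I would compute the Fourier transform of the heat kernel. With the convention $\hat u(n)=\int_{\R^2} u(x)e^{-2\pi i x\cdot n}\,\rd x$, the standard Gaussian identity gives $\widehat{p_s}(n)=e^{-4\pi^2 s|n|^2}$; indeed, for $p_1(x)=(4\pi)^{-1}e^{-|x|^2/4}$ one gets $\widehat{p_1}(n)=e^{-4\pi^2|n|^2}$, and the rescaling $p_s(x)=s^{-1}p_1(s^{-1/2}x)$ has its prefactor $s^{-1}$ exactly cancelled by the Jacobian of the dilation. Integrating in $s$ and using $\int_\delta^{1/\delta}e^{-4\pi^2 s|n|^2}\,\rd s = (4\pi^2|n|^2)^{-1}(e^{-4\pi^2\delta|n|^2}-e^{-4\pi^2|n|^2/\delta})$ then yields \eqref{eq:G delta Fourier}, since $(2\pi)^{-2}=(4\pi^2)^{-1}$. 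The interchange of the $s$-integral with the Fourier transform is justified by Fubini, the integrand being nonnegative.

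The bound on $\widehat{\nabla G^\delta}$ follows from $\widehat{\nabla G^\delta}(n)=2\pi i\, n\,\widehat{G^\delta}(n)$ together with the elementary observation that for $0<\delta<1$ we have $0\le e^{-4\pi^2\delta|n|^2}-e^{-4\pi^2|n|^2/\delta}\le 1$; multiplying $|\widehat{G^\delta}(n)|$ by $2\pi|n|$ gives $(2\pi)^{-1}|n|^{-1}$. For the $\dot H^\beta$ estimate I would write the norm in Fourier variables, $\|\nabla G^\delta\ast f\|_{\dot H^\beta}^2=\int_{\R^2}|n|^{2\beta}|\widehat{\nabla G^\delta}(n)|^2|\hat f(n)|^2\,\rd n$, and insert this pointwise bound. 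Since for the true kernel $|\widehat{\nabla G}(n)|=(2\pi)^{-1}|n|^{-1}$ \emph{exactly}, the resulting majorant is precisely $(2\pi)^{-2}\int_{\R^2}|n|^{2\beta-2}|\hat f(n)|^2\,\rd n=(2\pi)^{-2}\|f\|_{\dot H^{\beta-1}}^2$, which simultaneously gives the inequality and identifies the right-hand side as $\|\nabla G\ast f\|_{\dot H^\beta}$. The restriction $\beta<1$ is exactly what makes $\dot H^\beta$ and $\dot H^{\beta-1}$ well-defined in the sense of the paper.

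Finally, for the convergence \eqref{eq: convergence g delta to H-1}, I would use that $G^\delta$ is real and even, so $\widehat{G^\delta}$ is real, and Plancherel gives $\lan \omega, G^\delta\ast\omega\ran_{L^2}=\int_{\R^2}\widehat{G^\delta}(n)|\hat\omega(n)|^2\,\rd n$. For each $n\neq 0$ the integrand converges to $(2\pi)^{-2}|n|^{-2}|\hat\omega(n)|^2$ as $\delta\to 0$, and it is dominated by that same function; since $\omega\in\dot H^{-1}$, this dominating function is integrable, so dominated convergence concludes. There is no deep obstacle here: the computations are routine. The two points demanding care are the bookkeeping of the $2\pi$ factors in the Fourier convention, so that the prefactor $(2\pi)^{-2}$ comes out correctly and matches $\widehat G(n)=(2\pi)^{-2}|n|^{-2}$, and the justification of dominated convergence, where the membership $\omega\in\dot H^{-1}$ is precisely what supplies an integrable dominating function.
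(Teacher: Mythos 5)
Your proof is correct and follows essentially the same route as the paper: compute $\widehat{p_s}(n)=e^{-4\pi^2 s|n|^2}$, integrate over $s\in[\delta,1/\delta]$ to obtain \eqref{eq:G delta Fourier}, and read off the pointwise bound, the $\dot H^\beta$ estimate and the convergence directly from the explicit formula. The only cosmetic difference is in the last step, where the paper invokes the monotone convergence theorem (the integrand increases as $\delta\to 0$) while you use dominated convergence with the integrable majorant $(2\pi)^{-2}|n|^{-2}|\hat\omega(n)|^2$; both are valid one-line justifications.
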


\begin{proof}
    The heat kernel $p_s$ has Fourier transform
    \begin{align*}
        \widehat{p_s}(n) = e^{-4\pi^2s|n|^2},
    \end{align*}
    and the expression \eqref{eq:G delta Fourier} of the Fourier transform of $G^\delta$ follows integrating $s\in [\delta,1/\delta]$. The bounds on $\nabla G^\delta$ and $\nabla G^\delta\ast f$ follow from \eqref{eq:G delta Fourier}. Finally \eqref{eq: convergence g delta to H-1} follows from \eqref{eq:G delta Fourier} by monotone convergence theorem.
\end{proof}

\begin{lemma}
For every $k>0$ positive integer, there exists a positive constant $C_k$ such that
\begin{equation}
    \label{eq: bounds G delta}
    |D^k G^\delta|\le C_k|x|^{-k},
\end{equation}
Moreover, there exists $C>0$ such that, for every $\delta \le 1$, for every $x\in \R^2\setminus\{0\}$,
\begin{align}
    &|\nabla (G-G^\delta)|\le  C \left[
        \delta^{3/4} 1_{\delta^{1/4} \leq |x| \leq \delta^{-1/4}}
        + \frac{1}{|x|} 1_{|x|\le \delta^{1/4} \vee |x|\ge \delta^{-1/4}}
    \right],\label{eq: convergence first derivative G delta}\\
    &|D^2 (G-G^\delta)|
    \le C\delta + C\frac{1}{|x|^2}1_{|x|< (8\delta\log(1/\delta))^{1/2}}.
    \label{eq: convergence second derivative G delta}
\end{align}
(here $\nabla G$ is the distributional and pointwise deritative of $G$, $D^2G$ is the pointwise derivative).
\end{lemma}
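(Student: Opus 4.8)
The plan is to push everything through the heat-kernel representation and reduce each bound to a one-dimensional integral in the time variable $s$. Since $p_s$ is smooth in $x$ and the $s$-interval $[\delta,1/\delta]$ is compact, one may differentiate under the integral sign to get $\nabla G^\delta(x)=\int_\delta^{1/\delta}\nabla p_s(x)\,\rd s$ and $D^2G^\delta(x)=\int_\delta^{1/\delta}D^2 p_s(x)\,\rd s$. A direct computation (substitution $u=|x|^2/(4s)$, reducing to Gamma integrals) shows that for $x\neq 0$ the full integrals $\int_0^\infty \nabla p_s(x)\,\rd s$ and $\int_0^\infty D^2 p_s(x)\,\rd s$ converge and equal $\nabla G(x)=-x/(2\pi|x|^2)$ and the pointwise $D^2 G(x)$. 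Hence $\nabla(G-G^\delta)(x)$ and $D^2(G-G^\delta)(x)$ are exactly the integrals of $\nabla p_s$, resp. $D^2 p_s$, over the complementary set $[0,\delta]\cup[1/\delta,\infty)$. The single quantitative input needed is the scaling bound $|D^kp_s(x)|\le C_k\, s^{-1-k/2}e^{-|x|^2/(8s)}$, which follows from $p_s(x)=s^{-1}p_1(s^{-1/2}x)$ and $|D^kp_1(y)|\le C_k e^{-|y|^2/8}$ (the Gaussian decay absorbing the polynomial from differentiation).

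For \eqref{eq: bounds G delta} I would simply enlarge the domain from $[\delta,1/\delta]$ to $[0,\infty)$ (the integrand being nonnegative) and compute $\int_0^\infty s^{-1-k/2}e^{-|x|^2/(8s)}\,\rd s=C_k|x|^{-k}$ via $u=|x|^2/(8s)$; this is a convergent Gamma integral for $k\ge 1$ and the constant is manifestly independent of $\delta$.

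For the gradient difference \eqref{eq: convergence first derivative G delta} I would keep closed forms. The integrals $\int_0^\delta s^{-2}e^{-|x|^2/(4s)}\rd s$ and $\int_{1/\delta}^\infty s^{-2}e^{-|x|^2/(4s)}\rd s$ evaluate explicitly and give
\[
|\nabla(G-G^\delta)(x)|=\frac{1}{2\pi|x|}\left[e^{-|x|^2/(4\delta)}+\left(1-e^{-\delta|x|^2/4}\right)\right].
\]
Since the bracket lies in $[0,2]$, the bound $C/|x|$ holds everywhere, which already covers the two outer regions $|x|\le\delta^{1/4}$ and $|x|\ge\delta^{-1/4}$. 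In the middle region $\delta^{1/4}\le|x|\le\delta^{-1/4}$ I would treat the two terms separately: for $|x|\ge\delta^{1/4}$ one has $e^{-|x|^2/(4\delta)}\le e^{-\delta^{-1/2}/4}$, smaller than any power of $\delta$, so $\frac{1}{|x|}e^{-|x|^2/(4\delta)}\le\delta^{-1/4}e^{-\delta^{-1/2}/4}\le C\delta^{3/4}$; and $1-e^{-y}\le y$ together with $|x|\le\delta^{-1/4}$ gives $\frac{1}{|x|}(1-e^{-\delta|x|^2/4})\le\frac{|x|\delta}{4}\le\frac14\delta^{3/4}$. Summing yields $C\delta^{3/4}$.

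The genuinely delicate estimate is \eqref{eq: convergence second derivative G delta}. The large-$s$ piece $\int_{1/\delta}^\infty D^2 p_s\,\rd s$ is harmless: the scaling bound gives $\le C\int_{1/\delta}^\infty s^{-2}\rd s=C\delta$. For the small-$s$ piece I would again integrate exactly, the $xx^\top/s^3$ term producing an incomplete Gamma integral $\int_a^\infty u e^{-u}\rd u=(a+1)e^{-a}$ with $a=|x|^2/(4\delta)$; this yields, up to constants, the bound $\frac{C}{|x|^2}\left(1+|x|^2/\delta\right)e^{-|x|^2/(4\delta)}$. The main obstacle is exactly the factor $\frac{1}{\delta}e^{-|x|^2/(4\delta)}$ from the $xx^\top/s^3$ contribution, which is singular as $\delta\to0$ and must be beaten against the Gaussian. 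Writing $r=|x|^2/(4\delta)$, on the range $|x|\ge(8\delta\log(1/\delta))^{1/2}$ one has $r\ge 2\log(1/\delta)$, hence $e^{-r}\le\delta^2$ and the small-$s$ piece is $\le\frac{C}{\delta}\,\frac{1+4r}{r}e^{-r}\le C\delta$ (using $\frac{1+4r}{r}\le 5$ for $r\ge1$, i.e. $\delta$ small); on the complementary range $|x|<(8\delta\log(1/\delta))^{1/2}$ one absorbs everything into $\frac{C}{|x|^2}$ using $re^{-r}\le C$. This is precisely the origin of the logarithmic threshold, and the only point where one must retain the sharp exponent $e^{-|x|^2/(4\delta)}$ rather than the lossy $e^{-|x|^2/(8s)}$; the argument gives the stated estimate for all sufficiently small $\delta$, which is the regime relevant for the application.
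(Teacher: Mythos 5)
Your proof is correct and follows essentially the same route as the paper's: the paper evaluates $\nabla G^\delta$ and $D^2 G^\delta$ in closed form via the substitution $u=|x|^2/(4s)$ and subtracts the explicit formulas for $\nabla G$ and $D^2G$, which is the same computation you organize as tail integrals of $\nabla p_s$ and $D^2p_s$ over $[0,\delta]\cup[1/\delta,\infty)$, and your scaling argument for \eqref{eq: bounds G delta} and the region splittings at $\delta^{1/4}$, $\delta^{-1/4}$ and $(8\delta\log(1/\delta))^{1/2}$ coincide with the paper's. Your closing caveat that the Hessian estimate only comes out for sufficiently small $\delta$ is in fact shared by the paper's own proof, whose intermediate factor $\bigl(\tfrac{1}{\delta\log(1/\delta)}+\tfrac{1}{\delta}\bigr)e^{-2\log(1/\delta)}$ likewise requires $\log(1/\delta)$ bounded away from $0$ (and at $\delta=1$, where $G^\delta=0$, the stated inequality \eqref{eq: convergence second derivative G delta} is actually false), so this is a blemish in the lemma's phrasing ``for every $\delta\le 1$'' rather than a gap in your argument.
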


\begin{proof}
We start proving a uniform bound on $\nabla G^\delta$. By the change of variable $s=|x|^2t$, we have, for every $x\neq 0$,
\begin{align*}
    \nabla G^\delta(x) = \int_\delta^{1/\delta} \nabla p_s(x) \rd s  
    & = - \frac{x}{|x|^2} \frac{1}{2\pi} \left(
        e^{-\frac{|x|^2\delta}{4}} - e^{-\frac{|x|^2}{4\delta}}
    \right)
\end{align*}
and so
\begin{align*}
    |\nabla G^\delta(x)| \le \frac{C}{|x|}.
\end{align*}
Differentiating, we get for $D^2G^\delta$
\begin{align}
    \label{eq: second derivative G delta}
    D^2G^\delta(x) = \frac{1}{2\pi} \frac{1}{|x|^2} \left[\left(-I+\frac{2xx^T}{|x|^2}+\frac{\delta xx^T}{2}\right)e^{-\frac{|x|^2\delta}{4}} - \left(-I+\frac{2xx^T}{|x|^2}+\frac{xx^T}{2\delta}\right)e^{-\frac{|x|^2}{4\delta}}\right]
\end{align}
and so
\begin{align*}
    |D^2G^\delta(x)|\le \frac{C}{|x|^2}.
\end{align*}
For a general $k \in \mathbb{N}$, we have, thanks to the change of variables $s= |x|^2 t$,
\begin{equation*}
D^k G^{\delta}(x) 
= \int_{\delta}^{1/\delta} s^{-k/2-1} D^k p_1(s^{-1/2}x) \rd s
= |x|^{-k} \int_{\frac{\delta}{|x|^2}}^{\frac{1}{\delta|x|^2}}
t^{(k+2)/2}
D^k p_1(t^{-1/2}\frac{x}{|x|}) \rd t,
\end{equation*}
which implies
\begin{equation*}
    |D^k G^{\delta}(x) |
    \leq 
|x|^{-k} \int_{0}^{\infty}
t^{(k+2)/2}
|D^k p_1(t^{-1/2}\frac{x}{|x|}) |\rd t.
\end{equation*}
Note that $D^k p_1(z)$ is a polynomial $q$ in the variable $z \in\R^2$ multiplied by the exponential $e^{-|z|^4/4}$. When we evaluate at $z=t^{-1/2} x/|x|$ and take the absolute value, the contribution of the term $x/|x|$ can be bounded uniformly in $x$ and hence
\begin{equation*}
    \int_0^{\infty} t^{-(2+k)/2}|D^kp_1(z)| \rd t
    \leq \int_0^{\infty} t^{-(1+k/2)}q(t^{-1/2}) e^{-4/t} \rd t =: C_k < \infty
\end{equation*}
for some polynomial $q$.

We move to the bound on $\nabla(G^\delta-G)$. Using the elementary estimate $(1-e^{-a})/a\le 1$ for $a>0$, we obtain for some $C>0$,
\begin{align*}
    |\nabla (G-G^{\delta})(x)|
    &= \frac{1}{2\pi |x|} |1- e^{-\frac{|x|^2\delta}{4}} + e^{-\frac{|x|^2}{4\delta}}| \\
    &\le \frac{1}{2\pi}
    \left(
        \frac{1-e^{-\frac{|x|^2\delta}{4}}}{|x|} + \frac{e^{-\frac{|x|^2}{4\delta}}}{|x|}
    \right) 1_{\delta^{1/4} \leq |x| \leq \delta^{-1/4}} +\frac{1}{2\pi|x|}1_{|x|\le \delta^{1/4} \vee |x|\ge \delta^{-1/4}} \\
    & \le \frac{1}{2\pi}
    \left(
        \frac{|x|\delta}{4} + \frac{e^{-\frac{|x|^2}{4\delta}}}{|x|}
    \right) 1_{\delta^{1/4} \leq |x| \leq \delta^{-1/4}} +\frac{1}{2\pi|x|}1_{|x|\le \delta^{1/4} \vee |x|\ge \delta^{-1/4}}  \\
    & \le \frac{1}{2\pi} \left( \frac{\delta^{3/4}}{4} + \delta^{-1/4} e^{-\frac{\delta^{-1/2}}{4}}
        \right) 1_{\delta^{1/4} \leq |x| \leq \delta^{-1/4}} +\frac{1}{2\pi|x|}1_{|x|\le \delta^{1/4} \vee |x|\ge \delta^{-1/4}}  \\
    & \le C \left[
        \delta^{3/4} 1_{\delta^{1/4} \leq |x| \leq \delta^{-1/4}}
        + \frac{1}{|x|} 1_{|x|\le \delta^{1/4} \vee |x|\ge \delta^{-1/4}}
    \right]
\end{align*}
that is \eqref{eq: convergence first derivative G delta}.

Finally, we show the bound on $D^2(G^\delta-G)$ (recall that $D^2G$ is understood as pointwise derivative of $G$ in $x\neq 0$):
\begin{align*}
    |D^2(G-G^\delta)(x)| &\le \frac{C}{|x|^2}(1-e^{-\frac{|x|^2\delta}{4}}) +C\delta e^{-\frac{|x|^2\delta}{4}} +C\left(\frac{1}{|x|^2}+\frac{1}{\delta}\right)e^{-\frac{|x|^2}{4\delta}}\\
    &\le C\delta +C\left(\frac{1}{\delta\log(1/\delta)}+\frac{1}{\delta}\right)e^{-2\log(1/\delta)}1_{|x|\ge (8\delta\log(1/\delta))^{1/2}}\\
    &\quad +C\frac{1}{|x|^2}\left(1+\frac{|x|^2}{4\delta}\right)e^{-\frac{|x|^2}{4\delta}} 1_{|x|< (8\delta\log(1/\delta))^{1/2}}\\
    &\le C\delta + C\frac{1}{|x|^2}1_{|x|< (8\delta\log(1/\delta))^{1/2}},
\end{align*}
that is \eqref{eq: convergence second derivative G delta}.
\end{proof}

\subsection{The regularized model}

Thanks to the regularized objects defined above, we can consider the regularized equation for \eqref{eq:main}, that is: for every $\delta>0$, we consider the following equation
\begin{align}
\begin{aligned}\label{eq: regularized main}
    &\rd \omd + (K^{\delta}\ast\omd)\cdot\nabla\omd\rd t +\sum_k\sigma^\delta_k\cdot\nabla\omd \rd W^k = \frac{c_\delta}{2} \Delta \omd \rd t,\\
    &\omd_t|_{t=0} = \omd_0.
\end{aligned}
\end{align}
We understand the rigorous definition of a solution $\omd$ to \eqref{eq: regularized main} as in Definition \ref{def:def_main}, but replacing \eqref{eq: formula def 27} with
\begin{align*}
\omd_t &= \omd_0 -\int_0^t \operatorname{div}((K^\delta\ast \omd_r)\omd_r) \rd r\\
&\quad -\sum_{k=1}^\infty\int_0^t \operatorname{div}(\sigma^\delta_k\omd_r) \rd W^k_r+\frac{c_\delta}{2}\int_0^t \Delta \omd_r \rd r \quad \text{for every }t\in [0,T].
\end{align*}
This is a Vlasov-type equation with smooth interaction kernel and with smooth common noise.

\begin{lemma}
\label{lem: l infty and l 1}
	For any $\omd_0$ satisfying \eqref{eq: omega zero wish list}, for every filtered probability space $(\Omega,\mathcal{A},(\mathcal{F}_t)_t,P)$ (with the standard assumption) and every sequence $(W^k)_k$ of independent real Brownian motions, there exists a $\dot{H}^{-1}$ solution $\omd$ to equation \eqref{eq: regularized main} such that
	\begin{align*}
		&\sup_{t\in[0,T]}\|\omd_t\|_{L^{\infty}} 
		\leq \| \omd_0\|_{L^{\infty}},\quad P\text{-a.s.},\\
        &\sup_{t\in[0,T]}\|\omd_t\|_{L^{1}} 
		\leq \| \omd_0\|_{L^{1}},\quad P\text{-a.s.},\\
        &\E\left[\sup_{t\in [0,T]}\|\omd_t\|_{\dot{H}^{-1}}^2\right] <\infty.
	\end{align*}
    Moreover, the solution is unique in the space $L^{\infty}([0,T];L^p(\Omega;L^2))$ for every $p>2$.
\end{lemma}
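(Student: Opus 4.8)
The plan is to regard \eqref{eq: regularized main} as a Vlasov-type stochastic equation with smooth interaction kernel and smooth common noise, and to solve it through its stochastic characteristics. For $\delta>0$ fixed the regularised kernel $K^\delta=\nabla^\perp G^\delta$ is $C^\infty$ with all derivatives bounded: since $G^\delta$ integrates the heat kernel over the compact interval $[\delta,1/\delta]$, the bounds \eqref{eq: bounds G delta} give that $K^\delta$ is bounded (indeed $K^\delta(0)=0$ and $|K^\delta(x)|\lesssim_\delta 1\wedge|x|^{-1}$) with $\nabla K^\delta\in L^\infty\cap L^2$, while the Gaussian cut-offs in \eqref{eq:G delta Fourier} give $\widehat{K^\delta}\in L^2$; hence the smoothing estimates $\|K^\delta\ast f\|_{W^{1,\infty}}\lesssim_\delta\|f\|_{L^1}$ and $\|K^\delta\ast f\|_{W^{1,\infty}}\lesssim_\delta\|f\|_{L^2}$. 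The coefficients $\sigma^\delta_k$ are smooth, divergence-free and, by Lemma \ref{lem:Q_reguralized}, satisfy $\sup_x\sum_k|D^m\sigma^\delta_k(x)|^2<\infty$ for every $m$. I would therefore look for the stochastic flow $\Psi$ solving $\rd\Psi_t(x)=(K^\delta\ast\omd_t)(\Psi_t(x))\,\rd t+\sum_k\sigma^\delta_k(\Psi_t(x))\circ\rd W^k_t$ together with the self-consistency relation $\omd_t=(\Psi_t)_\#\omd_0$, and then verify, via the It\^o--Wentzell formula, that $\omd$ so defined solves the It\^o equation in the sense of Definition \ref{def:def_main}. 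Existence of the coupled pair $(\Psi,\omd)$ would be obtained by a Banach fixed point: the map sending an adapted velocity field $b$ to $K^\delta\ast((\Psi^b)_\#\omd_0)$, where $\Psi^b$ is the flow driven by $b$ and the $\sigma^\delta_k$, is a contraction on $C([0,T_0];L^p(\Omega;W^{1,\infty}))$ for $T_0$ small by the Lipschitz bounds above; the solution extends to $[0,T]$ because the $L^1$ bound below keeps $\|K^\delta\ast\omd_t\|_{W^{1,\infty}}$ bounded uniformly in time. This is exactly the setting of \cite{coghi_flandoli}, whose results could alternatively be invoked.

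The three estimates are then read off the Lagrangian representation. Because $K^\delta\ast\omd$ and all $\sigma^\delta_k$ are divergence-free, the flow $\Psi_t$ preserves Lebesgue measure, so $\omd_t=\omd_0\circ\Psi_t^{-1}$ yields pathwise $\|\omd_t\|_{L^\infty}=\|\omd_0\|_{L^\infty}$ and $\|\omd_t\|_{L^1}=\|\omd_0\|_{L^1}$, the first two bounds (the same follows from the maximum principle and mass conservation for the Stratonovich transport form of \eqref{eq: regularized main}). For the $\dot H^{-1}$ bound I would use that, since $\int\omd_0=0$ is preserved, one has $\|\omd_t\|_{\dot H^{-1}}^2\lesssim\|\omd_t\|_{L^2}^2+\||x|\,\omd_t\|_{L^1}^2$ by splitting the Fourier integral and bounding $|\widehat{\omd_t}(n)|\le 2\pi|n|\,\||x|\,\omd_t\|_{L^1}$ at low frequencies. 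The first term is $\|\omd_0\|_{L^2}^2$ by conservation of the $L^2$ norm, and for the second a change of variables gives $\||x|\,\omd_t\|_{L^1}\le\int|\Psi_t(y)|\,|\omd_0(y)|\,\rd y$, whose second moment is controlled by $\E\sup_{t\le T}|\Psi_t(y)|^2\lesssim_{\delta,T}|y|^2+1$ (bounded drift $K^\delta\ast\omd$ and bounded $\sum_k|\sigma^\delta_k|^2$, via Burkholder--Davis--Gundy) together with the finite moments of $\omd_0$ from \eqref{eq: omega zero wish list}; hence $\E[\sup_{t\le T}\|\omd_t\|_{\dot H^{-1}}^2]<\infty$.

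For uniqueness in $L^\infty([0,T];L^p(\Omega;L^2))$, $p>2$, I would argue in two steps. First, for a solution $\omd$ in this class the velocity $b:=K^\delta\ast\omd$ is adapted and Lipschitz in space, so $\omd$ solves the \emph{linear} transport--diffusion SPDE with this drift; for smooth, Lipschitz coefficients this linear problem has a unique $L^2$ solution, since the difference of two $L^2$ solutions with the same data satisfies $\tfrac{\rd}{\rd t}\E\|\cdot\|_{L^2}^2=0$ — the transport, the diffusion $\tfrac{c_\delta}{2}\Delta$, the noise quadratic variation and the noise drift all cancel by $\diverg b=\diverg\sigma^\delta_k=0$ and $Q^\delta(0)=c_\delta I$ (see \eqref{eq:Q delta 0}), the commutators being harmless by smoothness. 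Thus $\omd$ coincides with $\omd_0\circ(\Psi^b)_t^{-1}$. Second, given two solutions $\omd^1,\omd^2$ with flows $\Psi^1,\Psi^2$, I would run a Gr\"onwall estimate on the coupled characteristics: writing $u^i_t(x)=\int K^\delta(x-\Psi^i_t(z))\,\omd_0(z)\,\rd z$ and using $\|\nabla K^\delta\|_{L^\infty}<\infty$ gives $\|u^1_t-u^2_t\|_{L^\infty}\lesssim_\delta\int|\Psi^1_t(z)-\Psi^2_t(z)|\,|\omd_0(z)|\,\rd z$, while the flow SDE and the Lipschitz bounds on $K^\delta\ast\omd^2$ and $\sigma^\delta_k$ close a Gr\"onwall inequality for $\E\sup_{s\le t}\int|\Psi^1_s-\Psi^2_s|\,|\omd_0|\,\rd z$ (the stochastic integral controlled by Burkholder--Davis--Gundy, which is where $p>2$ enters). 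Hence $\Psi^1=\Psi^2$ and $\omd^1=\omd^2$.

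The main difficulty is the mean-field coupling, and it is what forces the Lagrangian route. A direct energy estimate on $w=\omd^1-\omd^2$ does not close: the Euler difference term $\langle w,(K^\delta\ast w)\cdot\nabla\omd^2\rangle$ unavoidably places one spatial derivative on $w$ or on $\omd^2$, and the regularised model carries no compensating dissipation — in Stratonovich form it is a pure, measure-preserving transport, so $\tfrac{c_\delta}{2}\Delta$ is exactly cancelled by the It\^o correction of the noise and $\|w\|_{L^2}$ is conserved rather than dissipated (in sharp contrast to the genuine Kraichnan case, where the anomalous $H^{-\alpha}$ dissipation is precisely what later closes the estimate). Routing both existence and uniqueness through the characteristics avoids this loss of derivative, since the nonlinearity there only sees the Lipschitz constant of $K^\delta\ast\cdot$. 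The remaining technical points — the rigorous It\^o--Wentzell passage between the flow and the equation of Definition \ref{def:def_main}, and the renormalisation/commutator argument for linear $L^2$-uniqueness with merely $L^2$ data — are standard for the smooth coefficients at hand and are essentially the content of \cite{coghi_flandoli}.
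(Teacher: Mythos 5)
Your proposal is correct and follows essentially the same route as the paper's proof (Appendix C): both solve the regularized equation through its stochastic characteristics via a fixed-point argument (the paper contracts on measure-valued paths in a weighted dual-Lipschitz metric, following \cite{coghi2020regularized}, while you contract on the velocity field in $W^{1,\infty}$ locally in time), deduce the $L^{1}$/$L^{\infty}$ bounds from measure preservation of the divergence-free flow, obtain the $\dot H^{-1}$ bound from the zero-mean property and moment estimates exactly as in Lemmas \ref{lem: properties initial condition} and \ref{lem: moments}, and prove uniqueness by freezing the nonlinearity so that any solution in the class solves a linear transport--diffusion SPDE. The only cosmetic differences are that the paper cites classical variational SPDE theory (\cite{liu2015stochastic}) for the linear uniqueness step and then concludes from uniqueness of the Banach fixed point, rather than running your additional Gr\"onwall estimate on the coupled characteristics.
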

A sketch of the proof of this lemma is given in Appendix \ref{sec: proof of regular existence}.

From now on, unless otherwise stated we assume that the family $(\omd_0)_\delta$ satisfies \eqref{eq: omega zero wish list}, \eqref{eq:omega zero convergence} and the following bound, for some $0<\bar{\epsilon}<(1\wedge (2\alpha))/4$, as $\delta\to 0$:
\begin{align}
\begin{aligned}\label{eq:omega zero growth}
    \delta \|\omd_0\|_{L^{1}}\int_{\mathbb{R}^2} |x|^2 |\omega_0^{\delta}(x)|\rd  x  &= o(1),\\
    \delta^{\bar{\epsilon}}\| \omega^{\delta}_0 \|_{L^{\infty}} \| \omega^{\delta}_0 \|_{L^{1}} &= o(1),\\
    \delta^{1/8} (\|\omd_0\|_{L^1}+\|\omd_0\|_{L^\infty}) &= o(1).
\end{aligned}
\end{align}
Note that, given a family $(\omd_0)_\delta$ satisfying \eqref{eq: omega zero wish list}, \eqref{eq:omega zero convergence}, we can always make \eqref{eq:omega zero growth} possible by relabeling the family $(\omd_0)_\delta$.

The following lemma gives a sufficient condition for a function to be in $\dot{H}^{-1}$, which we will use later.

\begin{lemma}
\label{lem: properties initial condition}
    If $f$ is in $L^1\cap L^2$ and
    \begin{align*}
        &\int_{\R^2} f(x) \rd x =0,\\
        &\int_{\R^2} |x||f(x)| \rd x<\infty,
    \end{align*}
    then
    \begin{align*}
       |\hat{f}(n)| \le 2\pi |n| \int_{\R^2} |x||f(x)|\rd x.
    \end{align*}
    In particular $f$ is in $\dot{H}^{-1}$ and
    \begin{align*}
        \|f\|_{\dot{H}^{-1}}^2 \le C\left(\int_{\R^2} |x||f(x)|\rd x\right)^2 +\|f\|_{L^2}^2.
    \end{align*}
\end{lemma}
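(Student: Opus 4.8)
The plan is to exploit the zero-mean hypothesis to subtract a constant inside the Fourier integral, turning the oscillatory factor into one that vanishes linearly at the origin, and then to split the $\dot{H}^{-1}$ integral at $|n|=1$, controlling low frequencies by the resulting pointwise bound and high frequencies by Plancherel.

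First I would establish the pointwise estimate. Since $f$ is in $L^1$, its Fourier transform $\hat{f}(n)=\int_{\R^2} f(x) e^{-2\pi i x\cdot n}\,\rd x$ is well defined, continuous and bounded. Using $\int_{\R^2} f(x)\,\rd x=0$, I rewrite
\begin{align*}
\hat{f}(n) = \int_{\R^2} f(x)\left(e^{-2\pi i x\cdot n}-1\right)\rd x,
\end{align*}
and apply the elementary inequality $|e^{i\theta}-1|\le |\theta|$ with $\theta=-2\pi x\cdot n$, which gives $|e^{-2\pi i x\cdot n}-1|\le 2\pi |x\cdot n|\le 2\pi|x||n|$. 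Hence
\begin{align*}
|\hat{f}(n)|\le 2\pi|n|\int_{\R^2}|x||f(x)|\,\rd x,
\end{align*}
the claimed bound (the right-hand side being finite by hypothesis).

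Next I would deduce the $\dot{H}^{-1}$ bound. Writing $M:=\int_{\R^2}|x||f(x)|\,\rd x$, I split
\begin{align*}
\|f\|_{\dot{H}^{-1}}^2=\int_{\R^2}|n|^{-2}|\hat{f}(n)|^2\,\rd n=\int_{|n|\le 1}|n|^{-2}|\hat{f}(n)|^2\,\rd n+\int_{|n|> 1}|n|^{-2}|\hat{f}(n)|^2\,\rd n.
\end{align*}
On the low-frequency region the pointwise bound gives $|n|^{-2}|\hat{f}(n)|^2\le (2\pi)^2 M^2$, so integrating over the bounded set $\{|n|\le 1\}$ produces a contribution bounded by $C M^2$. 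On the high-frequency region I would use $|n|^{-2}\le 1$ together with Plancherel's identity (valid since $f\in L^2$), obtaining $\int_{|n|>1}|n|^{-2}|\hat{f}(n)|^2\,\rd n\le \int_{\R^2}|\hat{f}(n)|^2\,\rd n=\|f\|_{L^2}^2$. Adding the two contributions yields $\|f\|_{\dot{H}^{-1}}^2\le C M^2+\|f\|_{L^2}^2$, and in particular $f\in\dot{H}^{-1}$; the local integrability of $\hat{f}$ required in the definition of $\dot{H}^{-1}$ is automatic from continuity.

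The only point to watch is the singularity of the weight $|n|^{-2}$ at the origin, and the whole content of the lemma is that the zero-mean condition forces $\hat{f}$ to vanish linearly there, so that $|n|^{-2}|\hat{f}(n)|^2$ remains bounded near $0$. Beyond this observation there is no genuine obstacle: the argument is a routine low/high frequency decomposition.
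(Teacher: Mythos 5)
Your proof is correct and follows essentially the same route as the paper's: both establish the pointwise bound $|\hat{f}(n)|\le 2\pi|n|\int|x||f(x)|\,\rd x$ and then perform the identical low/high frequency split at $|n|=1$, using the pointwise bound on $\{|n|\le 1\}$ and Plancherel on $\{|n|>1\}$. The only (cosmetic) difference is in deriving the pointwise bound: the paper notes $\hat{f}(0)=0$ and $\|\nabla\hat{f}\|_{L^\infty}\le 2\pi\int|x||f(x)|\,\rd x$ and applies the mean value theorem, whereas you subtract the constant inside the integral and use $|e^{i\theta}-1|\le|\theta|$ — two equivalent one-line arguments.
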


\begin{proof}
    The Fourier transform of $f$ is bounded and satisfies
    \begin{align*}
        &\hat{f}(0)= \int_{\R^2} f(x)\rd x = 0,\\
        &\sup_{n\in \R^2}|\nabla \hat{f}(n)| \le 2\pi \int_{\R^2} |x||f(x)| \rd x <\infty.
    \end{align*}
    Therefore $\hat{f}$ is in $W^{1,\infty}(\R^2)$ and satisfies
    \begin{align*}
        |\hat{f}(n)| \le |\hat{f}(0)| +\sup_{n^\prime\in \R^2} |\nabla \hat{f}(n^\prime)||n| \le 2\pi |n| \int_{\R^2} |x||f(x)|\rd x.
    \end{align*}
    Hence we get
    \begin{align*}
        \|f\|_{\dot{H}^{-1}}^2 &=\int_{|n|\le 1} |n|^{-2}|\hat{f}(n)|^2 \rd n +\int_{|n|> 1} |n|^{-2}|\hat{f}(n)|^2 \rd n\\
        &\le C\left(\int_{\R^2} |x||f(x)|\rd x\right)^2+\int_{\R^2} |f(n)|^2 \rd n\\
        &\le C\left(\int_{\R^2} |x||f(x)|\rd x\right)^2 +\|f\|_{L^2}^2.
    \end{align*}
    The proof is complete.
\end{proof}

The next two technical lemmas will be of use in the passage to the limit $\delta\to 0$.

\begin{lemma}\label{lem:Hm1 approx_new}
    Assume \eqref{eq: omega zero wish list}, \eqref{eq:omega zero convergence} and \eqref{eq:omega zero growth} on the initial condition $\omd_0$. Let $\omd$ be the unique solution to equation \eqref{eq: regularized main} given by Lemma \ref{lem: l infty and l 1}. We have
    \begin{align*}
        &\E\left[\sup_{t\in [0,T]} |\|\omd_t\|_{\dot{H}^{-1}}^2 -4\pi^2 \langle \omd_t, G^\delta\ast \omd_t\rangle| \right] \to 0 \text{ as }\delta\to 0.
    \end{align*}
\end{lemma}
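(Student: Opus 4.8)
The plan is to move to Fourier space and isolate the error as a sum of a high-frequency and a low-frequency contribution. Using Plancherel, the reality of $\widehat{G^\delta}$, the formula \eqref{eq:G delta Fourier}, and $4\pi^2(2\pi)^{-2}=1$, one writes
\begin{align*}
    \|\omd_t\|_{\dot{H}^{-1}}^2 -4\pi^2\langle\omd_t,G^\delta\ast\omd_t\rangle
    &= \int_{\R^2}|n|^{-2}\Big[(1-e^{-4\pi^2\delta|n|^2})+e^{-4\pi^2|n|^2/\delta}\Big]|\widehat{\omd_t}(n)|^2\,\rd n\\
    &=: I_\delta(t)+II_\delta(t),
\end{align*}
where $I_\delta$ carries the factor $1-e^{-4\pi^2\delta|n|^2}$ (small at low frequencies, $O(1)$ at high ones) and $II_\delta$ carries $e^{-4\pi^2|n|^2/\delta}$ (small at high frequencies, $O(1)$ at low ones). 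Both integrands are nonnegative, so the absolute value inside the supremum can be dropped.

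The term $I_\delta$ is handled deterministically. From $1-e^{-a}\le a$ we get $|n|^{-2}(1-e^{-4\pi^2\delta|n|^2})\le 4\pi^2\delta$, so $I_\delta(t)\le 4\pi^2\delta\|\omd_t\|_{L^2}^2$; by interpolation and Lemma \ref{lem: l infty and l 1}, $\|\omd_t\|_{L^2}^2\le\|\omd_t\|_{L^1}\|\omd_t\|_{L^\infty}\le\|\omd_0\|_{L^1}\|\omd_0\|_{L^\infty}$ uniformly in $t$. Since $\delta\le\delta^{\bar{\epsilon}}$ for $\delta\le1$, the second line of \eqref{eq:omega zero growth} yields $\sup_t I_\delta(t)\le 4\pi^2\delta^{\bar{\epsilon}}\|\omd_0\|_{L^1}\|\omd_0\|_{L^\infty}=o(1)$.

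The term $II_\delta$ is the core of the argument. Because $K^\delta$ and $\sigma_k^\delta$ are divergence-free, $\int\omd_t\,\rd x=\int\omd_0\,\rd x=0$ is preserved and $\omd_t\in L^1\cap L^\infty$ with finite moments pathwise; applying Lemma \ref{lem: properties initial condition} to $\omd_t$ pathwise gives $|\widehat{\omd_t}(n)|\le 2\pi|n|\int|x||\omd_t(x)|\,\rd x$, hence $|n|^{-2}|\widehat{\omd_t}(n)|^2\le(2\pi)^2\big(\int|x||\omd_t|\,\rd x\big)^2$. With $\int_{\R^2}e^{-4\pi^2|n|^2/\delta}\,\rd n=\delta/(4\pi)$ and Cauchy--Schwarz $(\int|x||\omd_t|)^2\le\|\omd_t\|_{L^1}\int|x|^2|\omd_t|$, this produces
\begin{align*}
    \sup_t II_\delta(t)\le \pi\delta\,\|\omd_0\|_{L^1}\,\sup_t\int_{\R^2}|x|^2|\omd_t(x)|\,\rd x.
\end{align*}
It then remains to control the expected supremum of the second moment. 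For this I would use the stochastic flow representation $\omd_t=\omd_0\circ\Phi_t^{-1}$ associated with \eqref{eq: regularized main} (the regularized coefficients being smooth), which is volume-preserving and gives $\int|x|^2|\omd_t|\,\rd x=\int|\Phi_t(y)|^2|\omd_0(y)|\,\rd y$. In It\^o form $\rd\Phi_t=(K^\delta\ast\omd_t)(\Phi_t)\,\rd t+\sum_k\sigma_k^\delta(\Phi_t)\,\rd W^k_t$: the It\^o--Stratonovich drift correction $\tfrac12\sum_k(\sigma_k^\delta\cdot\nabla)\sigma_k^\delta$ equals $-\tfrac12(\diverg Q^\delta)(0)$ and \emph{vanishes} because $Q^\delta$ is even; the drift is bounded by $C(\|\omd_0\|_{L^1}+\|\omd_0\|_{L^\infty})$ using $|K^\delta|\le C/|\cdot|$ from \eqref{eq: bounds G delta}, uniformly in $\delta$; and $\sum_k|\sigma_k^\delta(\cdot)|^2=\tr Q^\delta(0)=2c_\delta\le C$. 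Hence by Doob/BDG, $\E[\sup_{t\le T}|\Phi_t(y)|^2]\le 3|y|^2+C(T)\big(1+(\|\omd_0\|_{L^1}+\|\omd_0\|_{L^\infty})^2\big)$, and integrating against $|\omd_0|$,
\begin{align*}
    \E\Big[\sup_t\int|x|^2|\omd_t|\,\rd x\Big]\le 3\int|y|^2|\omd_0(y)|\,\rd y+C(T)\big(1+(\|\omd_0\|_{L^1}+\|\omd_0\|_{L^\infty})^2\big)\|\omd_0\|_{L^1}.
\end{align*}
Multiplying by $\pi\delta\|\omd_0\|_{L^1}$, the resulting terms vanish as $\delta\to0$: the first by the first line of \eqref{eq:omega zero growth}, and the other two by its third line, which forces $\|\omd_0\|_{L^1}+\|\omd_0\|_{L^\infty}=o(\delta^{-1/8})$ and hence $\delta\|\omd_0\|_{L^1}^2\big(1+(\|\omd_0\|_{L^1}+\|\omd_0\|_{L^\infty})^2\big)=o(\delta^{1/2})$. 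Combining with the bound on $I_\delta$ gives $\E[\sup_t(I_\delta+II_\delta)]\to0$.

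The main obstacle is the low-frequency term $II_\delta$: the Gaussian $e^{-4\pi^2|n|^2/\delta}$ concentrates precisely where $|n|^{-2}$ is singular, so it cannot be absorbed into the finite $\dot{H}^{-1}$ norm; one must instead exploit the vanishing of $\widehat{\omd_t}$ at the origin (mean-zero) together with a quantitative, $\delta$-dependent second-moment bound, and then match its growth against the calibrated conditions \eqref{eq:omega zero growth}. The technical work underpinning this is establishing the flow representation and, crucially, verifying that the It\^o--Stratonovich correction vanishes and that the drift and diffusion coefficients are bounded uniformly in $\delta$ (the raw bound $\sum_k|\nabla\sigma_k^\delta|^2$ does blow up as $\delta\to0$, so the special contracted structure is essential).
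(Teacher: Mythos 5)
Your proposal is correct and follows essentially the same route as the paper: the same Fourier split of the error via \eqref{eq:G delta Fourier}, the bound $1-e^{-a}\le a$ with the conserved $L^1\cap L^\infty$ norms for the high-frequency part, Lemma \ref{lem: properties initial condition} (mean-zero plus first moment) for the low-frequency Gaussian part, and the calibration \eqref{eq:omega zero growth} to conclude. The only difference is presentational: where you re-derive the moment bound $\E[\sup_t\int|x|^2|\omd_t|\,\rd x]$ inline via the volume-preserving stochastic flow and BDG, the paper simply invokes its Lemma \ref{lem: moments}, whose proof in Appendix \ref{sec: proof of regular existence} is exactly the flow argument you give.
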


\begin{proof}
By Lemma \ref{lem: properties initial condition}, we have (using $1-e^{-a}\le a$ for $a>0$)
\begin{align*}
&|\|\omd_t\|_{\dot{H}^{-1}}^2 - 4\pi^2\langle \omd_t, G^\delta\ast\omd_t \rangle|\\
&= \int |n|^{-2}(1-e^{-4\pi^2\delta|n|^2})|\widehat{\omd}(n)|^2 \rd n +\int |n|^{-2}e^{-4\pi^2|n|^2/\delta}|\widehat{\omd}(n)|^2 \rd n\\
&\le \int 4\pi^2\delta|\hat{\omd}(n)|^2 \rd n +4\pi^2\int e^{-4\pi^2|n|^2/\delta} \left(\int |x||\omd_t| \rd x\right)^2 \rd n\\
&\lesssim \delta \left(\|\omd_t\|_{L^2}^2 +\left(\int |x||\omd_t| \rd x\right)^2\right)
\end{align*}
By Lemmas \ref{lem: l infty and l 1} and \ref{lem: moments}, we obtain
\begin{align*}
    &\E\sup_{t\in [0,T]}\left[|\|\omd_t\|_{\dot{H}^{-1}}^2 - 4\pi^2\langle \omd_t, G^\delta\ast\omd_t \rangle|\right]\\
    &\le \delta \left(\|\omd_0\|_{L^1}\|\omd_0\|_{L^\infty} +\|\omd_0\|_{L^{1}}\int_{\mathbb{R}^2} |x|^2 |\omega_0^{\delta}(x)|\rd  x 
	+ (\| \omega^{\delta}_0 \|_{L^{\infty}}^2 + \| \omega^{\delta}_0 \|_{L^{1}}^2 + 1) \| \omega^{\delta}_0\|_{L^{1}}^2\right)
\end{align*}
We conclude thanks to the assumption \eqref{eq:omega zero growth} on the initial condition.
\end{proof}

\begin{lemma}\label{lem:Hm2 approx}
Assume \eqref{eq: omega zero wish list}, \eqref{eq:omega zero convergence} and \eqref{eq:omega zero growth} on the initial condition $\omd_0$. Let $\omd$ be the unique solution to equation \eqref{eq: regularized main} given by Lemma \ref{lem: l infty and l 1}. There exists $C>0$ such that, $P$-a.s.,
\begin{equation*}
\sup_{t\in[0,T]} \| |\nabla (G- G^\delta)| \ast |\omd_t| \|_{L^\infty}
\le C\delta^{1/4}(\|\omd_0\|_{L^\infty} +\|\omd_0\|_{L^1}).
\end{equation*}

\end{lemma}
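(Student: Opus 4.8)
The plan is to reduce the statement to a purely deterministic, pointwise estimate and then feed in the uniform-in-time $L^1$ and $L^\infty$ bounds on $\omd_t$ provided by Lemma \ref{lem: l infty and l 1}. The natural starting point is the kernel bound \eqref{eq: convergence first derivative G delta}, which I would split into three pieces according to the regions it singles out:
\[
|\nabla(G-G^\delta)(z)| \le C\big[\, A_1(z)+A_2(z)+A_3(z)\,\big],
\]
with $A_1(z)=\delta^{3/4}1_{\delta^{1/4}\le|z|\le\delta^{-1/4}}$, $A_2(z)=|z|^{-1}1_{|z|\le\delta^{1/4}}$ and $A_3(z)=|z|^{-1}1_{|z|\ge\delta^{-1/4}}$. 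It then suffices to bound each convolution $A_i\ast|\omd_t|$ in $L^\infty$, uniformly in $t$, by $C\delta^{1/4}(\|\omd_0\|_{L^\infty}+\|\omd_0\|_{L^1})$.

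For the two ``tail'' pieces $A_1$ and $A_3$ I would use Young's inequality against the $L^1$ mass of $\omd_t$. Since $A_1$ is bounded by $\delta^{3/4}$ on its support, one gets $\|A_1\ast|\omd_t|\|_{L^\infty}\le\delta^{3/4}\|\omd_t\|_{L^1}\le\delta^{1/4}\|\omd_0\|_{L^1}$ for $\delta\le1$; and since $|z|^{-1}\le\delta^{1/4}$ on the support of $A_3$, likewise $\|A_3\ast|\omd_t|\|_{L^\infty}\le\delta^{1/4}\|\omd_t\|_{L^1}\le\delta^{1/4}\|\omd_0\|_{L^1}$. For the near-origin piece $A_2$ I would instead pair the (locally integrable) singularity with the $L^\infty$ bound on $\omd_t$, using that in two dimensions $\int_{|z|\le R}|z|^{-1}\,\rd z=2\pi R$; this gives $\|A_2\ast|\omd_t|\|_{L^\infty}\le 2\pi\delta^{1/4}\|\omd_t\|_{L^\infty}\le 2\pi\delta^{1/4}\|\omd_0\|_{L^\infty}$. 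Summing the three contributions and taking the supremum over $t\in[0,T]$---which costs nothing, as every bound above is uniform in $t$ and holds $P$-a.s.\ by Lemma \ref{lem: l infty and l 1}---finishes the argument.

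The estimate is essentially routine once \eqref{eq: convergence first derivative G delta} is available, and I do not expect a genuine analytic obstacle. The one point that deserves attention is the correct matching of norms to regions: the inner singular piece $A_2$ must be paired with $\|\omd_t\|_{L^\infty}$ rather than $\|\omd_t\|_{L^1}$, because it is precisely the two-dimensional integrability of $|z|^{-1}$ over $|z|\le\delta^{1/4}$ that produces the advertised power $\delta^{1/4}$, whereas the spatially extended pieces $A_1,A_3$ must be paired with $\|\omd_t\|_{L^1}$. Note also that this is exactly where the finiteness of $\|\omd_0\|_{L^\infty}$ and $\|\omd_0\|_{L^1}$ (guaranteed by \eqref{eq: omega zero wish list}) is genuinely used; the growth condition \eqref{eq:omega zero growth} plays no role in the bound itself and only enters later, to make this $\delta^{1/4}$-estimate vanish in the limit $\delta\to0$.
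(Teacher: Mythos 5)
Your proof is correct and follows essentially the same route as the paper: the paper likewise splits the kernel bound \eqref{eq: convergence first derivative G delta} into the near region $|z|\le\delta^{1/4}$ (paired with $\|\omd_t\|_{L^\infty}$, using the local integrability of $|z|^{-1}$ in two dimensions), the middle annulus (paired with $\|\omd_t\|_{L^1}$ against the factor $\delta^{3/4}$), and the far region $|z|\ge\delta^{-1/4}$ (paired with $\|\omd_t\|_{L^1}$), and then invokes Lemma \ref{lem: l infty and l 1}. Your closing observation that only \eqref{eq: omega zero wish list} and the conservation bounds matter here, while \eqref{eq:omega zero growth} enters only when this estimate is later sent to zero, is also accurate.
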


\begin{proof}
Using the bound \eqref{eq: convergence first derivative G delta}, we have
\begin{align*}
|\nabla (G - G^{\delta})|\ast |\omd_t|(x) =&  \int |\nabla G -\nabla G^{\delta}(x-y)| |\omd_t(y)| \rd y \\
\lesssim & \int_{|x-y|\leq \delta^{1/4}} |\omd_t(y)| |x-y|^{-1} \rd y 
+ \delta^{3/4}\int_{\delta^{1/4} < |x-y|
< \delta^{-1/4}} |\omd_t(y)|\rd y\\
&+ \int_{|x-y|\geq \delta^{-1/4}} |\omd_t(y)| |x-y|^{-1} \rd y.\\
\lesssim & \delta^{1/4} \|\omd_t\|_{L^\infty}
+ \delta^{3/4} \|\omd_t \|_{L^1}
+ \delta^{1/4} \|\omd_t\|_{L^1}
\end{align*}
Hence, by Lemma \ref{lem: l infty and l 1} we obtain the desired bound.
\end{proof}

\section{A priori estimates on the energy}\label{sec:a priori bd}

In this section, we establish the main a priori estimate on a solution $\omega$ to \eqref{eq:main}, namely we bound the expected value of the energy $\|\omega\|_{\dot{H}^{-1}}^2$. The key point will be the gain of control of the $H^{-\alpha}$ norm of $\omega$ (integrated over time and $\Omega$).

Throughout all this section and the next one, we take a filtered probability space $(\Omega,\mathcal{A},(\mathcal{F}_t)_t,P)$ satisfying the standard assumption and a sequence $(W^k)_k$ of independent real $(\mathcal{F}_t)_t$-Brownian motions. We take $Q^\delta$, $G^\delta$ and $K^\delta$, $\omd_0$ as in Section \ref{sec:regularization}, in particular $\omd_0$ satisfies \eqref{eq: omega zero wish list}, \eqref{eq:omega zero convergence} and \eqref{eq:omega zero growth}. We take $\omd$ to be the solution to the regularized equation \eqref{eq: regularized main}, as from Lemma \ref{lem: l infty and l 1}, on $(\Omega,\mathcal{A},(\mathcal{F}_t)_t,P)$ with Brownian motions $(W^k)_k$.

We start by computing the equation for the approximated energy in the regularized model.

\begin{lemma}\label{lem:Hm1 norm formula}
We have $P$-a.s.: for every $t$,
\begin{align}
\begin{aligned}\label{eq:energy_eqn}
    &\lan\omd_t, G^{\delta}\ast \omd_t\ranL - \int_{0}^{t} \iint_{\mr^2}\tr[(Q^\delta(0)-Q^\delta(x-y))D^2 G^{\delta}(x-y)] \omd_r(x)\omd_r(y) \rd x \rd y \rd r\\
    &= \lan\omd_0, G^{\delta}\ast \omd_0 \ranL -2\int_0^t  \sum_k \lan\sigma_k^\delta\cdot\nabla\omd_r,G^\delta\ast \omd_r\ranL \rd W^k.
\end{aligned}
\end{align}
In particular we have, for every $t$:
\begin{align}
\begin{aligned}
    &\mathbb{E}[\lan\omd_t, G^{\delta}\ast \omd_t\ranL] - \int_{0}^{t} \mathbb{E}\iint_{\mr^2}\tr[(Q^\delta(0)-Q^\delta(x-y))D^2 G^{\delta}(x-y)] \omd_r(x)\omd_r(y) \rd x \rd y \rd r \\
    & \quad = \lan\omd_0, G^{\delta}\ast \omd_0\ranL.\label{eq:energy_eqn_expectation}
    \end{aligned}
\end{align}
\end{lemma}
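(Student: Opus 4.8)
The plan is to apply It\^o's formula to the quadratic functional $F(\omega) = \lan \omega, G^\delta\ast\omega\ranL$ along the solution $\omd$ of the regularized equation \eqref{eq: regularized main}. Since $G^\delta$ is even, $F$ has derivatives $DF(\omega)[h] = 2\lan G^\delta\ast\omega, h\ranL$ and $D^2F(\omega)[h,h] = 2\lan G^\delta\ast h, h\ranL$, and $W^k$ being independent only the diagonal quadratic variations survive. Writing \eqref{eq: regularized main} as $\rd\omd = b^\delta_t\,\rd t - \sum_k\operatorname{div}(\sigma_k^\delta\omd_t)\,\rd W^k$ with $b^\delta = -\operatorname{div}((K^\delta\ast\omd)\omd) + \tfrac{c_\delta}{2}\Delta\omd$, It\^o's formula gives
\begin{align*}
\rd\lan\omd_t, G^\delta\ast\omd_t\ranL &= 2\lan G^\delta\ast\omd_t, b^\delta_t\ranL\,\rd t - 2\sum_k\lan G^\delta\ast\omd_t, \operatorname{div}(\sigma_k^\delta\omd_t)\ranL\,\rd W^k\\
&\quad +\sum_k\lan\operatorname{div}(\sigma_k^\delta\omd_t), G^\delta\ast\operatorname{div}(\sigma_k^\delta\omd_t)\ranL\,\rd t.
\end{align*}
I would then treat separately the martingale, the nonlinear drift (from $\operatorname{div}((K^\delta\ast\omd)\omd)$), the Laplacian drift, and the last line, the It\^o correction.

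The martingale term is exactly the one on the right-hand side of \eqref{eq:energy_eqn}, since $\sigma_k^\delta$ is divergence-free and hence $\operatorname{div}(\sigma_k^\delta\omd)=\sigma_k^\delta\cdot\nabla\omd$. The nonlinear drift vanishes: integrating by parts and setting $\psi^\delta = G^\delta\ast\omd$, so that $K^\delta\ast\omd = \nabla^\perp\psi^\delta$ and $\nabla(G^\delta\ast\omd) = \nabla\psi^\delta$, one obtains
\[ -2\lan G^\delta\ast\omd, \operatorname{div}((K^\delta\ast\omd)\omd)\ranL = 2\int_{\R^2}\omd\,\bigl(\nabla\psi^\delta\cdot\nabla^\perp\psi^\delta\bigr)\,\rd x = 0, \]
because $\nabla\psi^\delta\cdot\nabla^\perp\psi^\delta\equiv 0$ pointwise. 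This is the (regularized) statement that the Euler nonlinearity does not contribute to the energy.

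The heart of the matter is to combine the Laplacian drift and the It\^o correction into the trace term. Using $c_\delta I = Q^\delta(0)$ (see \eqref{eq:Q delta 0}) and $\sum_k\sigma_k^\delta(x)\sigma_k^\delta(y)^\top = Q^\delta(x-y)$ (Lemma \ref{lem:Q_reguralized}), interchanging sum and integral turns the It\^o correction into $\iint G^\delta(x-y)\nabla\omd(x)^\top Q^\delta(x-y)\nabla\omd(y)\,\rd x\,\rd y$, while the Laplacian drift $c_\delta\lan G^\delta\ast\omd, \Delta\omd\ranL$, after one integration by parts, equals $-\iint G^\delta(x-y)\nabla\omd(x)^\top Q^\delta(0)\nabla\omd(y)\,\rd x\,\rd y$. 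Their sum is
\[ \iint G^\delta(x-y)\,\nabla\omd(x)^\top\,\bigl[Q^\delta(x-y) - Q^\delta(0)\bigr]\,\nabla\omd(y)\,\rd x\,\rd y, \]
whose kernel is now bounded near the diagonal since $|Q^\delta(x-y)-Q^\delta(0)|\lesssim|x-y|^{2\alpha}$ (cf. \eqref{eq:Q Holder bound}). A double integration by parts then moves both gradients onto the kernel, producing $-\iint\omd(x)\,\operatorname{div}\operatorname{div}\!\bigl[G^\delta(Q^\delta-Q^\delta(0))\bigr](x-y)\,\omd(y)$. The key algebraic fact is that $Q^\delta$ is divergence-free, $\sum_i\partial_i Q^\delta_{ij}\equiv 0$ (a direct consequence of $\operatorname{div}\sigma_k^\delta = 0$), so that in the expansion of $\operatorname{div}\operatorname{div}[G^\delta(Q^\delta-Q^\delta(0))]$ the cross terms and the $\operatorname{div}\operatorname{div}Q^\delta$ term drop out, leaving only $\tr[(Q^\delta-Q^\delta(0))D^2G^\delta]$. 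This yields precisely $\iint\tr[(Q^\delta(0)-Q^\delta(x-y))D^2G^\delta(x-y)]\omd(x)\omd(y)\,\rd x\,\rd y$, the drift on the left of \eqref{eq:energy_eqn}. Integrating in time gives \eqref{eq:energy_eqn}; for \eqref{eq:energy_eqn_expectation} one takes expectations and uses that the stochastic integral is a true martingale, its quadratic variation being controlled, via $-\lan\sigma_k^\delta\cdot\nabla\omd, G^\delta\ast\omd\ranL = \lan\sigma_k^\delta\omd, (\nabla G^\delta)\ast\omd\ranL$, by $\lesssim\|\omd\|_{L^2}^2\|\omd\|_{\dot H^{-1}}^2$ (using \eqref{eq:nabla G bound H} and $\sup_x\sum_k|\sigma_k^\delta(x)|^2<\infty$), which is integrable by Lemma \ref{lem: l infty and l 1}.

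The main obstacle is rigour rather than algebra: justifying the infinite-dimensional It\^o formula for $F$ and, above all, the integration-by-parts manipulations, since $\omd$ is only known a priori to be in $L^\infty\cap L^1\cap\dot H^{-1}$ and the intermediate expressions involve $\nabla\omd$ as well as the individually non-integrable kernels $\tr[Q^\delta(x-y)D^2G^\delta(x-y)]$ and $\tr[Q^\delta(0)D^2G^\delta(x-y)]$ (each $\sim|x-y|^{-2}$, so only their difference is locally integrable). I would handle this by a mollification argument: replace $\omd$ by $\omd\ast\theta_\eta$, where every term is smooth and absolutely convergent, carry out the computation above, and then pass to the limit $\eta\to0$ using the $L^2([0,T];H^{-\alpha})$-type control of the combined trace term together with the uniform bounds of Lemma \ref{lem: l infty and l 1}.
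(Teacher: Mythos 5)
Your proposal follows the same skeleton as the paper's proof: It\^o's formula in $H^{-4}$ for the quadratic functional $\omega\mapsto\lan\omega,G^\delta\ast\omega\ranL$, identification of the martingale term via $\operatorname{div}(\sigma_k^\delta\omd)=\sigma_k^\delta\cdot\nabla\omd$, cancellation of the nonlinear drift from $\nabla\psi^\delta\cdot\nabla^\perp\psi^\delta=0$, and the true-martingale property from the bounds of Lemma \ref{lem: l infty and l 1}. The one place you genuinely deviate is the diffusion terms: the paper integrates by parts \emph{separately}, turning the It\^o correction into $-\iint\tr[Q^\delta(x-y)D^2G^\delta(x-y)]\omd(x)\omd(y)\,\rd x\,\rd y$ and the Laplacian drift into $\iint\tr[Q^\delta(0)D^2G^\delta(x-y)]\omd(x)\omd(y)\,\rd x\,\rd y$, and only then adds them; you combine the two while the gradients still sit on $\omd$, and then do a double integration by parts on the combined kernel $G^\delta\,(Q^\delta-Q^\delta(0))$, invoking $\partial_iQ^\delta_{ij}=0$ to kill the cross terms. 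That algebra is correct (the divergence-freeness of $Q^\delta$ in both indices does follow from $\operatorname{div}\sigma_k^\delta=0$, or from $n_i\widehat{Q^\delta}_{ij}(n)=0$); the paper's ordering simply never generates cross terms, because in $\lan\partial_{x_i}(\sigma_k^{\delta,i}\omd),\partial_{x_j}G^\delta\ast(\sigma_k^{\delta,j}\omd)\ranL$ the derivative moved by parts lands on the convolution, hence on $G^\delta$, never on $\sigma_k^\delta(y)$.

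Where your write-up goes wrong is the motivation for this reorganization. You claim $\tr[Q^\delta D^2G^\delta]$ and $\tr[Q^\delta(0)D^2G^\delta]$ are each $\sim|x-y|^{-2}$ and individually non-integrable; this is false for fixed $\delta>0$, and indeed is the whole point of the regularization \eqref{eq: regularization green kernel}: $G^\delta=\int_\delta^{1/\delta}p_s\,\rd s$ has \emph{globally bounded} second derivatives ($|D^2p_s|\lesssim s^{-2}$ gives $|D^2G^\delta|\lesssim\delta^{-1}$; the bound $C_2|x|^{-2}$ in \eqref{eq: bounds G delta} is only the $\delta$-uniform bound, and the singular behaviour reappears only as $\delta\to0$). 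Since $\omd_t\in L^1\cap L^\infty$ $P$-a.s.\ by Lemma \ref{lem: l infty and l 1}, each double integral converges absolutely on its own, every integration by parts is justified by moving derivatives onto smooth bounded kernels, and no mollification of $\omd$ is needed --- which is exactly why the paper can argue term by term. Your mollification step, besides being unnecessary, is also where the only real gap sits: if you replace $\omd$ by $\omd\ast\theta_\eta$, the mollified field no longer solves \eqref{eq: regularized main}, and in particular the exact identity $K^\delta\ast\omd=\nabla^\perp(G^\delta\ast\omd)$ that makes the nonlinear drift vanish is destroyed, so a DiPerna--Lions-type commutator remainder appears and would itself need to be estimated (alternatively one must mollify the kernel of the quadratic functional rather than the solution). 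The cleanest repair is to delete the mollification entirely and run your computation directly, as the paper does.
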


\begin{proof}
	Consider the function
	\begin{equation*}
		F: H^{-4} \ni \omega \mapsto \lan \omega, G^{\delta}\ast \omega \ranL \in \mr,
	\end{equation*}
	whose Frechet derivatives are
	\begin{align*}
        &DF(\omega)v = \lan \omega,    G^{\delta}\ast v \ranL + \lan v, G^{\delta}\ast \omega \ranL 
        = 2 \lan v, G^{\delta} \ast \omega \ranL \\
        &D^2F(\omega)[v, w] 
        = 2 \lan v, G^{\delta}\ast w \ranL.
	\end{align*}
	We apply It\^o formula (e.g. 
 \cite[Theorem 4.32]{da2014stochastic} on the Hilbert space $H^{-4}$) to $F(\omd)$ and get
    \begin{align}
    \begin{aligned}\label{eq: ito formula energy}
		\rd \lan \omd, G^{\delta}\ast \omd \ranL &= 
		-2 \lan (K^\delta\ast \omd)\cdot \nabla \omd, G^{\delta}\ast \omd \ranL \rd t
		- 2 \sum_k \lan \sigma^\delta_k \cdot \nabla \omd, G^{\delta} \ast \omd \ranL \rd W^k\\
        & \quad + \lan c_\delta\Delta \omd, G^{\delta}\ast \omd \ranL \rd t 
		+ \sum_{k} \lan \sigma^\delta_k \cdot \nabla\omd, G^{\delta}\ast (\sigma^\delta_k \cdot \nabla \omd)\ranL \rd t.
    \end{aligned}
    \end{align}
	
    By the definition of $K^\delta=\nabla^\perp G^\delta$ \eqref{eq: regularization green kernel K}, we have using integration by parts
    \begin{align}
    \begin{aligned}\label{eq:ito formula drift}
        \lan (K^\delta\ast \omd) \cdot \nabla \omd, G^{\delta}\ast \omd \ranL
        &= \lan \diverg[\nabla^{\perp}(G^{\delta}\ast\omd) \cdot \omd], G^{\delta}\ast \omd \ranL\\
        &= - \int_{\mr^2} \omd(x) \nabla^{\perp}(G^{\delta} \ast \omd)(x) \cdot \nabla(G^{\delta}\ast \omd)(x) \rd x = 0,
    \end{aligned}
    \end{align}
    that is the drift term $(K^\delta\ast \omd)\cdot\nabla\omd$ does not contribute to the energy, as expected.
	
	The integrand in the martingale term in \eqref{eq: ito formula energy} verifies the $L^2$ integrability condition: indeed
    \begin{align*}
	   \E \sum_k |\langle \sigma^\delta_k\cdot\nabla \omd, G^\delta \ast \omd \rangle|^2 &= \mathbb{E}\iint_{\R^2\times\R^2}\omd(x)\omd(y) \nabla G^\delta\ast \omd(x) \cdot Q^\delta(x-y) \nabla G^\delta\ast \omd(y) \rd x \rd y\\
	   &\le \|Q^\delta\|_{L^\infty} \|\omd\|_{L^1}^4\|\nabla G^\delta\|_{L^\infty}^2<\infty
    \end{align*}
    Hence the martingale term in \eqref{eq: ito formula energy} vanishes once we take expectation.

    We apply integration by parts to the second-to-last term in \eqref{eq: ito formula energy} and, recalling that $Q^\delta(0)=c_\delta I$, we get
    \begin{align}
    \begin{aligned}\label{eq:ito formula diffusion 1}
		\lan c_\delta\Delta \omd, G^{\delta}\ast \omd \ranL
		= & \lan \omd, c_\delta\Delta G^{\delta} \ast \omd \ranL \\
		= & \iint_{\mr^2\times\mr^2} \tr[Q^\delta(0) D^2G^{\delta}(x-y)]\omd(x)\omd(y) \rd x \rd y.
    \end{aligned}
    \end{align}
	
    We apply integration by parts also on the last term in \eqref{eq: ito formula energy}, using Einstein summation convention over $i,j=1,2$ we get
    \begin{align}
    \begin{aligned}\label{eq:ito formula diffusion 2}
		\sum_k \lan \sigma^{\delta}_k\cdot \nabla \omd, G^{\delta} \ast (\sigma^{\delta}_k \cdot \nabla \omd) \ranL 
		= & \sum_k \lan \partial_{x_i} (\sigma_k^{\delta,i}\omd), \partial_{x_j}G^{\delta} \ast (\sigma^{\delta,j}_k\omd) \ranL\\
		= & - \sum_k \lan \sigma_k^{\delta,i}\omd, \partial^2_{x_i,x_j}G^{\delta} \ast (\sigma^{\delta,j}_k\omd) \ranL\\
		= & -\sum_k \iint_{\mr^2\times\mr^2} \partial_{i,j}^2 G^{\delta}(x-y)\sigma_k^{\delta,i}(x)\sigma_k^{\delta, j}(y)\omd(x)\omd(y) \rd x \rd y \\
		= & - \iint_{\mr^2\times\mr^2} \tr[Q^\delta(x-y)D^2G^{\delta}(x-y)]\omd(x)\omd(y) \rd x \rd y.
    \end{aligned}
    \end{align}
 
    Putting together the equalities \eqref{eq:ito formula drift}, \eqref{eq:ito formula diffusion 1} and \eqref{eq:ito formula diffusion 2} in \eqref{eq: ito formula energy}, we get the first equality \eqref{eq:energy_eqn}. Taking expectation, we obtain the second equality \eqref{eq:energy_eqn_expectation}.
    The proof is complete.
\end{proof}

\begin{corollary}\label{cor:a_priori_1}
We have, for every $t$:
	\begin{align*}
		& \mathbb{E}[\|\omd_t\|_{\dot{H}^{-1}}^2] -4\pi^2\int_{0}^{t} \mathbb{E}\iint_{\mr^2\times\mr^2}\tr[(Q^\delta(0)-Q^\delta(x-y))D^2 G^{\delta}(x-y)] \omd_r(x)\omd_r(y) \rd x \rd y \rd r \\
  & \quad = \|\omega_0\|_{\dot{H}^{-1}}^2 +o(1),
	\end{align*}
	where $o(1)$ tends to $0$ as $\delta\to 0$.
\end{corollary}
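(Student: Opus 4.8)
The plan is to read off the corollary directly from the expectation identity \eqref{eq:energy_eqn_expectation} of Lemma \ref{lem:Hm1 norm formula} by multiplying it through by $4\pi^2$ and then converting the two ``boundary'' quadratic forms into genuine $\dot{H}^{-1}$ norms. Multiplying \eqref{eq:energy_eqn_expectation} by $4\pi^2$ leaves the double integral term with exactly the prefactor $4\pi^2$ appearing in the corollary (so that intermediate term needs no further work), turns the time-$t$ term into $4\pi^2\E[\lan\omd_t,G^\delta\ast\omd_t\ranL]$, and turns the right-hand side into $4\pi^2\lan\omd_0,G^\delta\ast\omd_0\ranL$. It then remains only to show that the first of these equals $\E[\|\omd_t\|_{\dot{H}^{-1}}^2]+o(1)$ and the last equals $\|\omega_0\|_{\dot{H}^{-1}}^2+o(1)$.

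For the term at time $t$, I would invoke Lemma \ref{lem:Hm1 approx_new}, which gives
\begin{align*}
    \E\left[\bigl|\|\omd_t\|_{\dot{H}^{-1}}^2 - 4\pi^2\lan\omd_t,G^\delta\ast\omd_t\ranL\bigr|\right] \to 0 \qquad\text{as }\delta\to0,
\end{align*}
so that $4\pi^2\E[\lan\omd_t,G^\delta\ast\omd_t\ranL] = \E[\|\omd_t\|_{\dot{H}^{-1}}^2] + o(1)$, which is precisely the first term of the corollary.

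For the initial term, since $\omd_0$ is deterministic, the same Lemma \ref{lem:Hm1 approx_new} evaluated at $t=0$ yields $|\|\omd_0\|_{\dot{H}^{-1}}^2 - 4\pi^2\lan\omd_0,G^\delta\ast\omd_0\ranL|\to0$ (equivalently, one may use directly the pointwise bound from the proof of that lemma together with the growth assumption \eqref{eq:omega zero growth}). Combining this with $\|\omd_0\|_{\dot{H}^{-1}}^2\to\|\omega_0\|_{\dot{H}^{-1}}^2$, which is the convergence \eqref{eq:omega zero convergence} of the initial data in $\dot{H}^{-1}$, gives $4\pi^2\lan\omd_0,G^\delta\ast\omd_0\ranL = \|\omega_0\|_{\dot{H}^{-1}}^2 + o(1)$. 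Substituting both expansions into the $4\pi^2$-multiplied identity proves the claim.

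There is no genuine obstacle here: the corollary is a normalization of Lemma \ref{lem:Hm1 norm formula}, absorbing the constant $(2\pi)^{-2}$ carried by $\widehat{G^\delta}$ (so that $4\pi^2\lan\omega,G^\delta\ast\omega\ranL\to\|\omega\|_{\dot{H}^{-1}}^2$ in the spirit of \eqref{eq: convergence g delta to H-1}) and replacing the regularized pairing by the honest norm. The only point requiring care is that $\omd_0$ itself varies with $\delta$, so one cannot apply the fixed-function limit \eqref{eq: convergence g delta to H-1} directly to the initial datum; this is exactly why the \emph{uniform} comparison of Lemma \ref{lem:Hm1 approx_new} (and hence the growth condition \eqref{eq:omega zero growth}) is needed for the initial term rather than \eqref{eq: convergence g delta to H-1} alone.
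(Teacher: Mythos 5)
Your proof is correct and follows essentially the same route as the paper: multiply the expectation identity \eqref{eq:energy_eqn_expectation} of Lemma \ref{lem:Hm1 norm formula} by $4\pi^2$, convert the time-$t$ pairing via Lemma \ref{lem:Hm1 approx_new}, and handle the initial term using the convergence \eqref{eq:omega zero convergence}. The only cosmetic difference is that you treat the initial term by evaluating Lemma \ref{lem:Hm1 approx_new} at $t=0$, whereas the paper invokes \eqref{eq: convergence g delta to H-1} for the fixed limit $\omega_0$ together with the $\dot{H}^{-1}$ convergence of the data; both are valid and rest on the same ingredients.
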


\begin{proof}
The bound follows from Lemma \ref{lem:Hm1 norm formula}, by Lemma \ref{lem:Hm1 approx_new} and the fact that $\omd_0\to \omega_0$ in $\dot{H}^{-1}$ as $\delta\to 0$ (and formula \eqref{eq: convergence g delta to H-1}).
\end{proof}

Due to Corollary \ref{cor:a_priori_1}, we need to bound the term $\tr[(Q^{\delta}(0)-Q^\delta)D^2G^\delta]$, which we split as follows, for every $x\neq 0$:
\begin{align}
	\tr&\left[\left(Q^{\delta}(0)-Q^{\delta}(x)\right)D^2G^{\delta}(x)\right]\nonumber\\
    & =  \tr\left[\left(Q(0)-Q(x)\right)D^2G(x)\right] \varphi(x)\nonumber\\
    &\quad + \tr\left[ ((Q^{\delta}(0)-Q^{\delta}(x))-(Q(0)-Q(x)))D^2G^\delta(x)\right]\varphi(x)\nonumber\\
    &\quad + \tr\left[(Q(0)-Q(x))D^2(G^{\delta}-G)(x)\right]\varphi(x)\nonumber\\
    &\quad + \tr[(Q^{\delta}(0)-Q^{\delta}(x))D^2G^{\delta}(x)](1-\varphi(x))\nonumber\\
    & =: A(x) + R1(x) + R2(x) + R3(x),\label{eq:QD2G split}
\end{align}
where $D^2G(x)$ is understood as the pointwise second derivative of $G$ in $x\neq 0$ and $\varphi(x)=\varphi(|x|)$ is a radial $C^\infty$ function satisfying $0\le \varphi\le 1$ everywhere, $\varphi(x)=1$ for $|x|\le 1$ and $\varphi(x)=0$ for $|x|\ge 2$.

The key term in \eqref{eq:QD2G split} is $A$ and the key bound, in Fourier modes, is given by the following:

\begin{lemma}\label{lem:main bound A}
For the term $A=\tr\left[\left(Q(0)-Q\right)D^2G\right] \varphi$, we have
\begin{align*}
\widehat{A}(n) \le - \frac{\alpha\beta_L\gamma_{2\alpha}}{4\pi(2\pi)^{2\alpha}}\langle n\rangle^{-2\alpha} +C\langle n\rangle^{-2},\quad \forall n\in \R^2.
\end{align*}
\end{lemma}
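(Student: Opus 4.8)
The plan is to reduce $A$ to an explicit radial function with a single homogeneous singularity at the origin, read off the leading part of its Fourier transform from the Riesz potential formula of Lemma \ref{lem:sobolev riesz potential}, and treat the cutoff and the lower-order terms as perturbations that only contribute at the $\langle n\rangle^{-2}$ scale.

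First I would compute the matrix trace explicitly. Writing $P_x=xx^\top/|x|^2$ and using $D^2G(x)=-\tfrac{1}{2\pi}\tfrac{1}{|x|^2}(I-2P_x)$ together with Proposition \ref{prop:covariance structure}, which gives $Q(0)-Q(x)=(\tfrac{\pi}{2\alpha}-B_L(|x|))P_x+(\tfrac{\pi}{2\alpha}-B_N(|x|))(I-P_x)$, the projector identities $P_x(I-2P_x)=-P_x$ and $(I-P_x)(I-2P_x)=I-P_x$ yield, for $x\neq0$,
\[
\tr[(Q(0)-Q(x))D^2G(x)]=\frac{1}{2\pi|x|^2}\bigl(B_N(|x|)-B_L(|x|)\bigr).
\]
Inserting the expansion $B_N-B_L=-2\alpha\beta_L|x|^{2\alpha}+(\mathrm{Rem}_{1-u^2}(|x|)-\mathrm{Rem}_{u^2}(|x|))$ from Proposition \ref{prop:covariance structure} (using $\beta_N=(1+2\alpha)\beta_L$) I would obtain
\[
A(x)=-\frac{\alpha\beta_L}{\pi}|x|^{-2+2\alpha}\varphi(x)+E(x),\qquad
E(x):=\frac{\mathrm{Rem}_{1-u^2}(|x|)-\mathrm{Rem}_{u^2}(|x|)}{2\pi|x|^2}\,\varphi(x).
\]
Since $\varphi$ is supported in $\{|x|\le2\}$ and, by Lemma \ref{lem: properties remainder}, each remainder is $O(|x|^2)$ near the origin, $E$ is bounded and compactly supported; I expect Lemma \ref{lem: properties remainder} to give enough regularity that in fact $E\in W^{2,1}(\R^2)$.

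Second, for the main term I would use Lemma \ref{lem:sobolev riesz potential}, i.e. $\mathcal{F}[|x|^{-2+2\alpha}](n)=\gamma_{2\alpha}(2\pi)^{-2\alpha}|n|^{-2\alpha}$, and write $\mathcal{F}[|x|^{-2+2\alpha}\varphi]=\gamma_{2\alpha}(2\pi)^{-2\alpha}|n|^{-2\alpha}-\mathcal{F}[|x|^{-2+2\alpha}(1-\varphi)]$. The tail $h:=|x|^{-2+2\alpha}(1-\varphi)$ is smooth and vanishes near the origin (there $\varphi\equiv1$), so the leading singularity is captured exactly by the Riesz term; I would bound its transform by $C\langle n\rangle^{-2}$, boundedness for small $|n|$ following from the explicit cancellation with the Riesz term and decay for large $|n|$ from integrating by parts twice against the smooth profile supported away from $0$. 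Likewise, from $E\in W^{2,1}$ one gets $|\widehat E(n)|\le C\langle n\rangle^{-2}$. Collecting terms,
\[
\widehat A(n)\le-\frac{\alpha\beta_L\gamma_{2\alpha}}{\pi(2\pi)^{2\alpha}}|n|^{-2\alpha}+C\langle n\rangle^{-2}.
\]
Finally I would convert to the stated form: using $\langle n\rangle^{-2\alpha}\le|n|^{-2\alpha}$ and retaining only a quarter of the main coefficient, the surplus $-\tfrac34\tfrac{\alpha\beta_L\gamma_{2\alpha}}{\pi(2\pi)^{2\alpha}}|n|^{-2\alpha}$ is negative and furnishes the margin to absorb $C$ on the low-frequency region $\{|n|\le1\}$ (where $\widehat A$ is bounded and $\langle n\rangle^{-2}\gtrsim1$), while $|n|^{-2}\le2\langle n\rangle^{-2}$ handles $\{|n|\ge1\}$; this gives the claimed inequality with $c_0=\tfrac{\alpha\beta_L\gamma_{2\alpha}}{4\pi(2\pi)^{2\alpha}}$ for all $n$.

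The main obstacle is the uniform control of the two non-leading Fourier contributions, $\widehat h$ and $\widehat E$, by $C\langle n\rangle^{-2}$. Both reduce to having two integrable derivatives: for the tail this is essentially free since $h$ is smooth and supported away from the origin, but for $E$ it hinges on the \emph{regularity} (not merely the size) of the remainders $\mathrm{Rem}_f$, which is precisely what I would extract from Lemma \ref{lem: properties remainder}. Extracting the exact leading coefficient $\tfrac{\alpha\beta_L}{\pi}$ and checking that the cutoff does not disturb it is the conceptual heart of the argument; reducing the constant to a quarter is a harmless convenience that buys the uniform margin across all frequencies.
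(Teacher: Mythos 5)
Your algebraic reduction is exactly right and matches the paper's computation \eqref{eq:key_computation}: the trace collapses to $\frac{1}{2\pi|x|^2}\bigl(B_N(|x|)-B_L(|x|)\bigr)=-\frac{\alpha\beta_L}{\pi}|x|^{-2+2\alpha}+\frac{1}{2\pi}|x|^{-2}\bigl(\mathrm{Rem}_{1-u^2}(|x|)-\mathrm{Rem}_{u^2}(|x|)\bigr)$. Your treatment of the main term differs from the paper's but is legitimate: the paper obtains the lower bound on $\widehat{G_\alpha\varphi}$ by scaling together with the weak-* argument of Lemma \ref{lem:indicator_Fourier}, whereas you subtract the tail $h=G_\alpha(1-\varphi)$ and control $\widehat h$ away from the origin by two integrations by parts (this works for all $\alpha\in(0,1)$ since $\Delta h$ decays like $|x|^{-4+2\alpha}$ and is hence in $L^1$, modulo the caveat that $h\notin L^1$, so the identity $\widehat h(n)=-(4\pi^2|n|^2)^{-1}\mathcal{F}[\Delta h](n)$ must be read distributionally); this route even identifies the leading Fourier coefficient exactly, which the paper's argument does not.

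The genuine gap is the claim that $E\in W^{2,1}(\R^2)$, from which you deduce $|\widehat E(n)|\le C\langle n\rangle^{-2}$. Lemma \ref{lem: properties remainder} does not give this, and the bounds provable by its method do not either: writing $\mathrm{Rem}_f(R)=4R^{2\alpha}H(R)$, the proof of that lemma yields $|H(R)|\lesssim R^{2-2\alpha}$, $|H'(R)|\lesssim R^{1-2\alpha}$, $|H''(R)|\lesssim R^{-2\alpha}$ (and the last bound is essentially sharp), which translate into $|E|\lesssim 1$, $|\nabla E|\lesssim|x|^{-1}$ and only $|D^2E|\lesssim|x|^{-2}$; in two dimensions $|x|^{-2}$ is exactly non-integrable near the origin, so membership in $W^{2,1}$ is out of reach with these estimates. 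This borderline failure is precisely why the paper proves the weaker bound $|\mathcal{F}(|\cdot|^{-2}\mathrm{Rem}_f(|\cdot|)\varphi)(n)|\lesssim\langle n\rangle^{-2+\epsilon}$ via the interpolation argument of Lemma \ref{lem: second derivative remainder}, rather than $\langle n\rangle^{-2}$. Your proof is repaired at no structural cost by replacing the $W^{2,1}$ claim with the stated conclusion of Lemma \ref{lem: properties remainder}: for $0<\epsilon<2-2\alpha$ one has $|\widehat E(n)|\lesssim\langle n\rangle^{-2+\epsilon}$, and by Young's inequality $\langle n\rangle^{-2+\epsilon}\le\delta\langle n\rangle^{-2\alpha}+C_\delta\langle n\rangle^{-2}$; choosing $\delta$ small compared with the leading coefficient lets the $\delta\langle n\rangle^{-2\alpha}$ piece be absorbed by the negative main term, which is exactly how the paper concludes. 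Without this step (or a genuinely finer second-order expansion of $\mathrm{Rem}_f$ with derivative control, which neither you nor the lemma provides), the estimate $|\widehat E(n)|\le C\langle n\rangle^{-2}$ is unsupported.
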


In particular, the bound in terms of $\langle n\rangle^{-2\alpha}$, inserted in Corollary \ref{cor:a_priori_1}, will allow to control the $H^{-\alpha}$ norm of $\omega$.

The main point of the proof of Lemma \ref{lem:main bound A} is the following: thanks to the structure of the covariance matrix $Q$, in particular Proposition \ref{prop:covariance structure}, we get, close to $x=0$ (see \eqref{eq:key_computation}),
\begin{align*}
A(x)\approx - G_\alpha(x)
\end{align*}
and $A$ is smooth away from $0$, hence passing to the Fourier transform we expect (by the theory of Riesz potentials, Lemma \ref{lem:sobolev riesz potential})
\begin{align*}
\widehat{A}(n) \approx -\langle n\rangle^{-2\alpha}.
\end{align*}

\begin{proof}
    The key point is to control $\tr\left[\left(Q(0)-Q\right)D^2G\right]$. Using the structure of the covariance matrix $Q$ from Proposition \ref{prop:covariance structure}, we have, for every $x\neq 0$,
\begin{align}
\begin{aligned}\label{eq:key_computation}
    &\tr\left[
    (Q(0)-Q(x)) D^2G(x)
    \right]\\
    &\quad = - \frac{1}{2\pi}
    \tr \left[
    \left((B_N(0)-B_N(|x|)) I + (B_N(|x|)-B_L(|x|)) \frac{xx^{\top}}{|x|^2}\right)
    \frac{1}{|x|^2}\left(I-\frac{2xx^{\top}}{|x|^2}\right)
    \right] \\
    &\quad  = - \frac{1}{2\pi} \frac{1}{|x|^2}
    \tr \left[
    \left((B_N(0)-B_N(|x|)) I + (B_N(|x|)+B_L(|x|)-2B_N(0)) \frac{xx^{\top}}{|x|^2}\right)
    \right] \\
    &\quad  = - \frac{1}{2\pi} \frac{1}{|x|^2}
    \left[
    2(B_N(0)-B_N(|x|) + (B_N(|x|)+B_L(|x|)-2B_N(0)) 
    \right] \\
    &\quad = - \frac{1}{2\pi} \frac{1}{|x|^2}
    \left(
    B_L(|x|)-B_N(|x|)
    \right) \\
    &\quad = - \frac{1}{2\pi} (\beta_N - \beta_L)|x|^{-2+2\alpha} + \frac{1}{2\pi} |x|^{-2} (\mathrm{Rem}_{1-u^2}(|x|)-\mathrm{Rem}_{u^2}(|x|))\\
    &\quad = - \frac{\gamma_{2\alpha}}{\pi}\alpha\beta_L G_\alpha(x) + \frac{1}{2\pi} |x|^{-2} (\mathrm{Rem}_{1-u^2}(|x|)-\mathrm{Rem}_{u^2}(|x|))
\end{aligned}
\end{align}
where $G_\alpha$ is the Riesz kernel.

Now we control the multiplication of $G_\alpha$ by $\varphi$ (recall that $\varphi$ is defined below formula \eqref{eq:QD2G split}). For $M>0$ we define $\varphi_M(x) := \varphi(x/M)$, for $x\in \R^2$. For $n\neq 0$, by the change of variable $x=y|n|$, calling $\hat{n}:=n/|n|$, we have
\begin{align*}
\widehat{G_\alpha\varphi}(n) &= \gamma_{2\alpha}^{-1} \int_{\R^2} |x|^{-2+2\alpha} \varphi(x) e^{-2\pi ix\cdot n}\rd x \\
&= \gamma_{2\alpha}^{-1} |n|^{-2\alpha} \int_{\R^2} |y|^{-2+2\alpha} \varphi(\frac{y}{|n|}) e^{-2\pi iy\cdot \hat{n}} \rd y
= |n|^{-2\alpha} \widehat{G_\alpha\varphi_{|n|}}(\hat{n})
\end{align*}
Since $G_\alpha \varphi_{M}$ is a radial function (i.e. invariant under rotations), its Fourier transform is also radial and therefore, calling $e_1=(1,0)$,
\begin{align*}
    \widehat{G_\alpha\varphi}(n) = |n|^{-2\alpha} \widehat{G_\alpha\varphi_{|n|}}(\hat{n}) = |n|^{-2\alpha} \widehat{G_\alpha\varphi_{|n|}}(e_1)
\end{align*}
By Lemma \ref{lem:indicator_Fourier}, $\widehat{G_\alpha\varphi_{|n|}}(e_1)\to \widehat{G_\alpha}(e_1) =(2\pi)^{-2\alpha}>0$ as $|n|\to \infty$, so there exists $M_0>0$ such that, for $|n|\ge M_0$,
\begin{align*}
\widehat{G_\alpha\varphi}(n) \ge |n|^{-2\alpha} \frac12 \widehat{G_\alpha}(e_1) = \frac12 (2\pi)^{-2\alpha}|n|^{-2\alpha}.
\end{align*}
Moreover, since $G_\alpha\varphi$ is integrable, $\widehat{G_\alpha\varphi}$ is bounded. Therefore we have, for some constant $C$: for every $n$
\begin{align*}
\widehat{G_\alpha\varphi}(n)\ge \frac12(2\pi)^{-2\alpha}\langle n\rangle^{-2\alpha} -C\langle n\rangle^{-2}.
\end{align*}

Concerning the remainder, taking $0<\epsilon<2-2\alpha$ fixed, by Lemma \ref{lem: properties remainder} we get, for some constant $C>0$,
\begin{align*}
|\mathcal{F}(|\cdot|^{-2} \mathrm{Rem}_{1-u^2}(|\cdot|)\varphi)(n)| + |\mathcal{F}(|\cdot|^{-2} \mathrm{Rem}_{u^2}(|\cdot|)\varphi)(n)| \le C \langle n\rangle^{-2+\epsilon}
\end{align*}
and therefore we get by Young inequality, for $\delta>0$ to be determined later and a constant $C_\delta>0$,
\begin{align*}
|\mathcal{F}(|\cdot|^{-2} \mathrm{Rem}_{1-u^2}(|\cdot|)\varphi)(n)| + |\mathcal{F}(|\cdot|^{-2} \mathrm{Rem}_{u^2}(|\cdot|)\varphi)(n)| \le \delta \langle n\rangle^{-2\alpha} +C_\delta \langle n\rangle^{-2}.
\end{align*}
Choosing now $\delta \le  \alpha\beta_L\gamma_{2\alpha}/(4\pi(2\pi)^{2\alpha})$, we conclude on the term $A$, for some constant $C>0$,
\begin{align*}
\widehat{A}(n) \le -\frac{\alpha\beta_L\gamma_{2\alpha}}{4\pi(2\pi)^{2\alpha}} \langle n\rangle^{-2\alpha} +C\langle n\rangle^{-2}.
\end{align*}
The proof is complete.
\end{proof}

Now we bound the terms $R1=\tr[ ((Q^{\delta}(0)-Q^{\delta})-(Q(0)-Q))D^2G^\delta]\varphi$ and $R2=\tr[(Q(0)-Q)D^2(G^{\delta}-G)]\varphi$ in \eqref{eq:QD2G split}:

\begin{lemma}\label{lem:bound R1 R2}
For any given $0<\epsilon<1\wedge (2\alpha)$, we have (with multiplicative constant depending on $\epsilon$ but not on $\delta$)
\begin{align*}
|R1(x)|&\lesssim \delta^\epsilon |x|^{-2+2\alpha-\epsilon}\varphi(x),\\
|R2(x)|&\lesssim \delta\varphi(x) +|x|^{-2+2\alpha}1_{|x|< (8\delta\log(1/\delta))^{1/2}}.
\end{align*}
\end{lemma}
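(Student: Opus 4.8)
The plan is to treat $R2$ by directly substituting the pointwise bounds already at our disposal, and to treat $R1$ through a Fourier-side analysis of the covariance regularization error, which is where the genuine work lies. For $R2(x)=\tr[(Q(0)-Q(x))D^2(G^\delta-G)(x)]\varphi(x)$ I would simply insert the H\"older bound \eqref{eq:Q Holder bound}, $|Q(0)-Q(x)|\le C(|x|^{2\alpha}\wedge 1)$, together with the second-derivative estimate \eqref{eq: convergence second derivative G delta}. On the support of $\varphi$ one has $|x|\le 2$, hence $|x|^{2\alpha}\le C$; multiplying the two bounds, the $C\delta$ part of $D^2(G-G^\delta)$ contributes $\lesssim\delta\,\varphi(x)$, while the singular part $C|x|^{-2}1_{|x|<(8\delta\log(1/\delta))^{1/2}}$ is multiplied by $|x|^{2\alpha}$ to give exactly $\lesssim|x|^{-2+2\alpha}1_{|x|<(8\delta\log(1/\delta))^{1/2}}$ (using $\varphi\le 1$). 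This already produces the claimed bound for $R2$, with no further computation.

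For $R1$ I would first write $|R1(x)|\le|\mathcal{D}(x)|\,|D^2G^\delta(x)|\varphi(x)$ with the regularization error $\mathcal{D}(x):=(Q^\delta(0)-Q^\delta(x))-(Q(0)-Q(x))$, and reduce the whole problem to showing $|\mathcal{D}(x)|\lesssim\delta^\epsilon|x|^{2\alpha-\epsilon}$. Since $\widehat{Q^\delta}=\widehat{\rho^\delta}^2\,\widehat{Q}$ and both $\widehat{Q},\widehat{Q^\delta}$ are integrable, Fourier inversion yields
\[ \mathcal{D}(x)=\int_{\R^2}\widehat{Q}(n)\,\big(\widehat{\rho^\delta}(n)^2-1\big)\,\big(1-e^{2\pi i x\cdot n}\big)\,\rd n. \]
Now $\widehat{\rho^\delta}(n)^2-1$ is supported on $|n|\ge 1/\delta$ with modulus $\le 1$, $|\widehat{Q}(n)|\le\langle n\rangle^{-2-2\alpha}$, and $|1-e^{2\pi i x\cdot n}|\le 2^{1-\theta}(2\pi|x||n|)^\theta$ for every $\theta\in[0,1]$. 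Integrating over $|n|\ge 1/\delta$, where $\int_{1/\delta}^\infty\rho^{-1-2\alpha+\theta}\,\rd\rho$ converges as soon as $\theta<2\alpha$, I obtain $|\mathcal{D}(x)|\le C_\theta|x|^\theta\delta^{2\alpha-\theta}$ for all $\theta\in[0,1]$ with $\theta<2\alpha$.

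I would then take $\theta=\min\{1,2\alpha-\epsilon\}$, admissible since $0<\epsilon<1\wedge 2\alpha$, and in particular satisfying $\theta\le 2\alpha-\epsilon$. On $\{|x|\ge\delta\}$ I use $|D^2G^\delta(x)|\le C|x|^{-2}$ from \eqref{eq: bounds G delta}, so $|R1(x)|\lesssim|x|^{\theta-2}\delta^{2\alpha-\theta}\varphi(x)$; the ratio of this to the target equals $(\delta/|x|)^{2\alpha-\theta-\epsilon}$, which is $\le 1$ because the exponent is $\ge 0$ and $\delta\le|x|$. On $\{|x|<\delta\}$ the $|x|^{-2}$ bound is too crude, so I use instead $|D^2G^\delta(x)|\le C/\delta$, which follows from the explicit formula \eqref{eq: second derivative G delta} via $|e^{-a}-e^{-b}|\le|a-b|$; together with $|x|^\theta\le\delta^\theta$ this gives $|R1(x)|\lesssim\delta^{2\alpha-1}\varphi(x)$, and since $|x|^{-2+2\alpha-\epsilon}\ge\delta^{-2+2\alpha-\epsilon}$ for $|x|<\delta$ and $\delta^{2\alpha-1}\le\delta^{2\alpha-2}$ (as $\delta\le 1$), this is again dominated by $\delta^\epsilon|x|^{-2+2\alpha-\epsilon}\varphi(x)$. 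The constants depend on $\epsilon$ through $\theta$ and $C_\theta$ but not on $\delta$, as required.

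The main obstacle is the $R1$ estimate: it requires both the Fourier interpolation producing the $\delta^\epsilon$-for-$|x|^{-\epsilon}$ trade-off and the sharper near-origin control $|D^2G^\delta|\lesssim\delta^{-1}$ to cover the regime $|x|<\delta$, which becomes unavoidable precisely when $2\alpha-\epsilon>1$ forces $\theta=1<2\alpha-\epsilon$. The estimate on $R2$, by contrast, is an immediate consequence of bounds already recorded in the excerpt.
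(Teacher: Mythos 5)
Your proof is correct. For $R2$ it coincides exactly with the paper's argument: multiply the H\"older bound \eqref{eq:Q Holder bound} by the estimate \eqref{eq: convergence second derivative G delta} and use $|x|^{2\alpha}\lesssim 1$ on the support of $\varphi$. For $R1$ your overall strategy is also the paper's — Fourier representation of the error $\mathcal{D}(x)=(Q^{\delta}(0)-Q^{\delta}(x))-(Q(0)-Q(x))$, localization of the integrand to $|n|\ge 1/\delta$, an interpolation bound on the oscillatory factor, and then the uniform bound $|D^2G^\delta(x)|\lesssim|x|^{-2}$ from \eqref{eq: bounds G delta} — but you diverge in one technical step. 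You estimate $|1-e^{2\pi i x\cdot n}|\le 2^{1-\theta}(2\pi|x||n|)^\theta$, which caps the exponent at $\theta\le 1$; as a result, when $2\alpha-\epsilon>1$ you are forced into the case split $|x|\ge\delta$ versus $|x|<\delta$, with the supplementary estimate $|D^2G^\delta|\lesssim\delta^{-1}$ near the origin (which you justify correctly from \eqref{eq: second derivative G delta}, and which also follows directly from $|D^2p_s|\lesssim s^{-2}$ integrated over $s\in[\delta,1/\delta]$). The paper avoids this entirely by symmetrizing first: since $\widehat{Q}(1-\widehat{\rho^\delta})$ is even in $n$ (and $\mathcal{D}$ is real), the oscillatory factor can be replaced by $1-\cos(2\pi x\cdot n)$, and the elementary bound
\begin{align*}
|1-\cos(a)|\le \tfrac12 a^2\wedge 2\le 2|a|^{2\alpha-\epsilon}
\end{align*}
is valid for any exponent $2\alpha-\epsilon\in(0,2)$, not just up to $1$. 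This gives $|\mathcal{D}(x)|\lesssim\delta^{\epsilon}|x|^{2\alpha-\epsilon}$ in one stroke, and the target bound follows immediately from $|D^2G^\delta|\lesssim|x|^{-2}$. So your detour is sound but buys nothing; replacing $1-e^{2\pi ix\cdot n}$ by $1-\cos(2\pi x\cdot n)$ at the outset collapses your argument to the paper's.
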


\begin{proof}
For $R1$, we have
\begin{align*}
    |R1(x)|
    \lesssim |(Q^{\delta}(0)-Q^{\delta}(x))-(Q(0)-Q(x))||D^2G^\delta(x)| \varphi(x).
\end{align*}
As $Q^\delta$ and $Q$ are real-valued, the factor with $Q^\delta-Q$ reads
\begin{align*}
    &(Q^{\delta}(0)-Q^{\delta}(x))-(Q(0)-Q(x)) \\
    &\quad = \int \langle n \rangle^{-2-2\alpha}
    \left(
        I - \frac{nn^{\top}}{|n|^2}
    \right)
    (1-e^{2\pi ix\cdot n})  (1-\widehat{\rho^\delta}(n)) \rd n \\
    & \quad = \int \langle n \rangle^{-2-2\alpha}
    \left(
        I - \frac{nn^{\top}}{|n|^2}
    \right)
    (1-\cos(2\pi x\cdot n))  (1-\widehat{\rho^\delta}(n)) \rd n.
\end{align*}
We have that, for every $a\in\R$ and $2\alpha > \epsilon > 0$,
\begin{align*}
    |1-\cos(a) | 
    \le \frac{1}{2} a^2 \wedge 2 \le 2 a^{2\alpha - \epsilon}.
\end{align*}
Hence we obtain, with a change in polar coordinates
\begin{align*}
    |Q^{\delta}(0)-Q^{\delta}(x))-(Q(0)-Q(x)) |
    &\lesssim |x|^{2\alpha - \epsilon}
    \int \langle n \rangle^{-2-2\alpha}
    |n|^{2\alpha - \epsilon} 1_{|n|\geq 1/\delta} \rd n \\
    &\lesssim |x|^{2\alpha - \epsilon}
    \int |n|^{-2-2\alpha}
    |n|^{2\alpha - \epsilon} 1_{|n|\geq 1/\delta} \rd n \\
    &= |x|^{2\alpha - \epsilon}
    \int_{1/\delta}^{\infty}
    \rho^{-1-\epsilon} \rd \rho \\
    &= \frac{1}{\epsilon} |x|^{2\alpha - \epsilon} \delta^{\epsilon}.
\end{align*}
We use the uniform bound on $D^2G^\delta$ in \eqref{eq: bounds G delta} and get
\begin{align*}
|R1(x)| \lesssim |x|^{2\alpha - \epsilon} \delta^{\epsilon} |x|^{-2}\varphi(x) = \delta^\epsilon |x|^{-2+2\alpha-\epsilon}\varphi(x).
\end{align*}

For the term $R2$, by the H\"older bound \eqref{eq:Q Holder bound} on $Q$ and the bounds \eqref{eq: convergence second derivative G delta} on $D^2(G^\delta-G)$, we get
\begin{align*}
|R2(x)| &\lesssim |Q(0)-Q(x)||D^2G^\delta(x)-D^2G(x)|\\
&\lesssim |x|^{2\alpha} (\delta\varphi(x) + |x|^{-2}1_{|x|< (8\delta\log(1/\delta))^{1/2}})\\
&\lesssim \delta\varphi(x) +|x|^{-2+2\alpha}1_{|x|< (8\delta\log(1/\delta))^{1/2}}.
\end{align*}
The proof is complete.
\end{proof}

Now we bound the term $R3 = \tr[(Q^{\delta}(0)-Q^{\delta})D^2G^{\delta}](1-\varphi)$ in \eqref{eq:QD2G split}, in Fourier modes:

\begin{lemma}\label{lem:bound R3}
For any given $0<\epsilon<1\wedge (2\alpha)$, we have (with multiplicative constant depending on $\epsilon$ but not on $\delta$)
\begin{align*}
|\widehat{R3}(n)| \lesssim |n|^{-1}1_{|n|\le 1} +n^{-2-2\alpha+\epsilon}1_{|n|>1} \lesssim |n|^{-2}.
\end{align*}
\end{lemma}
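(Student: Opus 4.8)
The plan is to pass to the Fourier side and exploit the fact that, after the cutoff $1-\varphi$ removes the singularity at the origin, all the $\delta$-dependent roughness sits inside $Q^\delta$, whose Fourier transform decays like $\langle m\rangle^{-2-2\alpha}$ \emph{uniformly in $\delta$}, while the factor carrying the second derivatives of the Green kernel is globally smooth. Since $1-\varphi$ vanishes on $\{|x|\le1\}$, the tensor $\Phi^\delta(x):=(1-\varphi(x))D^2G^\delta(x)$ is smooth on all of $\R^2$. Writing $Q^\delta(0)-Q^\delta=c_\delta I-Q^\delta$ and using $\tr[I\,D^2G^\delta]=\Delta G^\delta$, I would split
\[
R3=c_\delta(1-\varphi)\Delta G^\delta-\sum_{i,j}Q^\delta_{ij}\,\Phi^\delta_{ij}=:R3_a+R3_b,
\]
so that $R3_a$ will be negligible and $R3_b$ is a genuine product, with $\widehat{R3_b}=-\sum_{i,j}\widehat{Q^\delta_{ij}}*\widehat{\Phi^\delta_{ij}}$ (licit since $Q^\delta,\Phi^\delta\in L^2$).

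The \emph{key step} is a $\delta$-uniform control of $\widehat{\Phi^\delta}$, and here one must be careful: $\Phi^\delta$ is \emph{not} in $L^1$ uniformly, because $|D^2G^\delta(x)|\sim|x|^{-2}$ is borderline non-integrable and is damped only at scale $|x|\sim\delta^{-1/2}$, giving $\|\Phi^\delta\|_{L^1}\sim\log(1/\delta)$. However, each derivative of order $j\ge1$ gains a power of $|x|^{-1}$: by \eqref{eq: bounds G delta} one has $|D^{2+j}G^\delta(x)|\le C_{2+j}|x|^{-2-j}$, and together with the derivatives of $\varphi$ (supported in $1\le|x|\le2$, where $D^kG^\delta$ is bounded uniformly) this yields $\|D^j\Phi^\delta\|_{L^1}\le C_j$ for every $j\ge1$, uniformly in $\delta\le1$. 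Consequently $2\pi|n|\,|\widehat{\Phi^\delta}(n)|\le\|\nabla\Phi^\delta\|_{L^1}\lesssim1$, and iterating with the Laplacian gives $|\widehat{\Phi^\delta}(n)|\lesssim_M|n|^{-2M}$ for $|n|\ge1$. Thus $|\widehat{\Phi^\delta}(n)|\lesssim|n|^{-1}1_{|n|\le1}+\langle n\rangle^{-M}1_{|n|>1}$, which is in particular uniformly in $L^1$.

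It then remains to estimate the convolution. From \eqref{eq:Q def} and $0\le\widehat{\rho^\delta}\le1$ one has the $\delta$-uniform bound $|\widehat{Q^\delta}(m)|\lesssim\langle m\rangle^{-2-2\alpha}$. For $|n|\le1$, $|\widehat{R3_b}(n)|\le\|\widehat{Q^\delta}\|_{L^\infty}\|\widehat{\Phi^\delta}\|_{L^1}\lesssim1\lesssim|n|^{-1}$. For $|n|>1$, splitting the convolution into the region $|k|\le|n|/2$ (where $\langle n-k\rangle^{-2-2\alpha}\lesssim\langle n\rangle^{-2-2\alpha}$ and $\int|\widehat{\Phi^\delta}|\lesssim1$) and $|k|>|n|/2$ (where the rapid decay of $\widehat{\Phi^\delta}$ dominates, choosing $M\ge 2+2\alpha$), the standard estimate for the convolution of two polynomial weights gives $|\widehat{R3_b}(n)|\lesssim\langle n\rangle^{-2-2\alpha}$. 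Finally, on its support $R3_a=\tfrac{c_\delta\delta}{4\pi}(1-\varphi)e^{-|x|^2\delta/4}$ up to a super-exponentially small remainder; this is an $O(\delta)$-amplitude Gaussian-type function whose Fourier transform is readily seen to satisfy $|\widehat{R3_a}(n)|\lesssim_M\langle n\rangle^{-M}$ uniformly. Collecting the estimates gives $|\widehat{R3}(n)|\lesssim|n|^{-1}1_{|n|\le1}+\langle n\rangle^{-2-2\alpha}1_{|n|>1}$, which implies the stated bound (in fact the $\epsilon$ is not needed).

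The main obstacle is exactly the non-uniform integrability just flagged, namely the $\log(1/\delta)$ growth of $\|\Phi^\delta\|_{L^1}$, which forbids bounding $\widehat{\Phi^\delta}$ by $\|\Phi^\delta\|_{L^1}$; the resolution is to work with the derivatives of $\Phi^\delta$, which are uniformly integrable, and to keep the rough factor $Q^\delta$ entirely inside its Fourier transform, where it contributes only the harmless weight $\langle m\rangle^{-2-2\alpha}$. A secondary point to verify carefully is that the constants $C_k$ in \eqref{eq: bounds G delta} are genuinely $\delta$-independent on the annulus $1\le|x|\le2$ and that the Gaussian damping in $D^2G^\delta$ only improves integrability, so that no hidden $\delta$-dependence creeps into $\|D^j\Phi^\delta\|_{L^1}$.
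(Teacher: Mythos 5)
Your proof is correct, and while it uses the same decomposition as the paper, the key estimate is carried out along a genuinely different (and sharper) route. Like the paper, you split $R3$ into the trace part $c_\delta(1-\varphi)\Delta G^\delta$ and the product part $\tr[Q^\delta D^2G^\delta](1-\varphi)$, and both arguments rest on the $\delta$-uniform bounds \eqref{eq: bounds G delta} together with $|\widehat{Q^\delta}|\lesssim\langle\cdot\rangle^{-2-2\alpha}$ inserted into the convolution $\widehat{Q^\delta}\ast\widehat{D^2G^\delta(1-\varphi)}$. The difference is how that convolution is controlled: the paper never proves a uniform $L^1$ bound on $\widehat{D^2G^\delta(1-\varphi)}$, but instead combines the Hausdorff--Young bound \eqref{eq:hat_G_large_1} with the pointwise decay \eqref{eq:hat_G_large_2}, splits the convolution at the $n$-dependent radius $R=\langle n\rangle^\epsilon$, and pays for this with the $\epsilon$-loss $\langle n\rangle^{-2-2\alpha+\epsilon}$ appearing in the statement. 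You observe instead that the derivative bounds $\|D^j[(1-\varphi)D^2G^\delta]\|_{L^1}\lesssim_j 1$ for $j\ge1$ already yield $|\widehat{\Phi^\delta}(n)|\lesssim |n|^{-1}1_{|n|\le 1}+|n|^{-M}1_{|n|>1}$, which is uniformly in $L^1$ since $|n|^{-1}$ is integrable near the origin in two dimensions; the standard split of the convolution at $|m|=|n|/2$ then gives $\langle n\rangle^{-2-2\alpha}$ with no $\epsilon$-loss, a slightly stronger conclusion that implies the lemma. Your diagnosis of the obstruction, namely $\|(1-\varphi)D^2G^\delta\|_{L^1}\sim\log(1/\delta)$, is exactly the issue the paper's Hausdorff--Young step is designed to bypass, and your derivative-based resolution handles it more cleanly. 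Your treatment of the trace part is also fine: the paper gets $|n|^{-1}\wedge|n|^{-4}$ from the $k=1,4$ cases of \eqref{eq:hat_G_large_2} (this is where the $|n|^{-1}1_{|n|\le1}$ in the statement comes from), while your identification $\Delta G^\delta = p_{1/\delta}-p_\delta$ and the Gaussian computation give rapid decay uniformly in $\delta$, which is more than enough.
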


\begin{proof}
By the bound \eqref{eq: bounds G delta} on $D^2G$, we have, for any $2\le p<\infty$,
\begin{align}
\|\widehat{D^2G^\delta(1-\varphi)}\|_{L^p}\lesssim \|D^2G^\delta(1-\varphi)\|_{L^{p^\prime}} \lesssim \||x|^{-2}1_{|x|\ge 1}\|_{L^{p^\prime}} \lesssim 1\label{eq:hat_G_large_1}
\end{align}
(with multiplicative constant depending possibly on $p$). Moreover, by the bounds \eqref{eq: bounds G delta}, for any integer $k\ge 1$ we have
\begin{align}
|n|^k\|\widehat{D^2G^\delta(1-\varphi)}\|_{L^\infty} \lesssim \|D^{2+k}G^\delta(1-\varphi)\|_{L^1}  \lesssim \||x|^{-2-k} 1_{|x|\ge 1}\|_{L^1} \lesssim 1\label{eq:hat_G_large_2}
\end{align}
(with multiplicative constant depending possibly on $k$).
Taking $k=1$ and $k=4$ in \eqref{eq:hat_G_large_2}, for the term $\tr[Q^\delta(0)D^2G^\delta](1-\varphi)$ we have, for every $n$,
\begin{align}
|\mathcal{F}(\tr[Q^\delta(0)D^2G^\delta](1-\varphi))(n)| \lesssim |Q^\delta(0)||\mathcal{F}(D^2G^\delta(1-\varphi))(n)| \lesssim |n|^{-1} \wedge |n|^{-4}.\label{eq: term C 1}
\end{align}
For the term $\tr[Q^\delta D^2G^\delta](1-\varphi)$, we have
\begin{align*}
|\mathcal{F}(\tr[Q^\delta D^2G^\delta](1-\varphi))(n)| &= |\tr[\widehat{Q^\delta}\ast \widehat{D^2G^\delta (1-\varphi)}](n)|\\
&\le \int |\widehat{Q^\delta}(n-n^\prime)||\widehat{D^2G^\delta (1-\varphi)}(n^\prime)| \rd n^\prime \\
& = \int_{|n^\prime|\le R} \ldots + \int_{|n^\prime|> R} \ldots,
\end{align*}
for a suitable $R > 0$ to be determined later. For the integral on $|n^\prime|\le R$, we use the bound \eqref{eq:hat_G_large_1} with $p=2$, while for the integral on $|n^\prime|>R$ we use the bound \eqref{eq:hat_G_large_2}. We get, for any fixed integer $k\ge 3$, (calling $a^+=a\vee 0$ and $\langle a \rangle = (1+a^2)^{1/2}$ for $a\in\mathbb{R}$)
\begin{align*}
|\mathcal{F}(\tr[Q^\delta D^2G^\delta](1-\varphi))(n)| &\lesssim \int_{|n^\prime|\le R} \langle n-n^\prime \rangle^{-2-2\alpha} |\widehat{D^2G^\delta (1-\varphi)}(n^\prime)| \rd n^\prime \\
& \quad +\int_{|n^\prime|> R} |\widehat{D^2G^\delta (1-\varphi)}(n^\prime)| \rd n^\prime \\
&\lesssim \langle (|n|-R)^+\rangle^{-2-2\alpha} \int_{|n^\prime|\le R} |\widehat{D^2G^\delta (1-\varphi)}(n^\prime)| \rd n^\prime \\
&\quad +\int_{|n^\prime|> R} |n^\prime|^{-k} \rd n^\prime\\
&\lesssim \langle (|n|-R)^+\rangle^{-2-2\alpha} R \|\widehat{D^2G^\delta (1-\varphi)}\|_{L^2} +R^{2-k}\\
&\lesssim \langle (|n|-R)^+\rangle^{-2-2\alpha} R +R^{2-k}
\end{align*}
(the multiplicative constant depending possibly on $k$ but not on $n$ nor on $R$).
Now we take $0<\epsilon<1\wedge (2\alpha)$, $R=\langle n\rangle^\epsilon$ and $k>2+(2+2\alpha)/\epsilon$. We get, for every $n$,
\begin{align}
|\mathcal{F}(\tr[Q^\delta D^2G^\delta](1-\varphi))(n)| &\lesssim \langle (|n|-\langle n\rangle^\epsilon)^+ \rangle^{-2-2\alpha+\epsilon} +\langle n\rangle^{-2-2\alpha} \lesssim \langle n\rangle^{-2-2\alpha+\epsilon}.\label{eq: term C 2}
\end{align}
(with multiplicative constant depending on $\epsilon$). Putting together the bounds \eqref{eq: term C 1} and \eqref{eq: term C 2}, we get
\begin{align*}
|\widehat{R3}(n)| \lesssim |n|^{-1} \wedge |n|^{-4} +\langle n \rangle^{-2-2\alpha+\epsilon} \lesssim |n|^{-1}1_{|n|\le 1} +n^{-2-2\alpha+\epsilon}1_{|n|>1} \lesssim |n|^{-2}.
\end{align*}
The proof is complete.
\end{proof}

We can put together the bound on the key term in Lemma \ref{lem:main bound A} and the bounds on the remainder terms in Lemmas \ref{lem:bound R1 R2} and \ref{lem:bound R3}:
\begin{lemma}\label{lem:bound_stoch_term}
There exist $c>0$, $C\ge 0$ such that $P$-a.s. we have: for every $t\in [0,T]$,
\begin{align*}
&\iint_{\R^2\times\R^2}\tr[(Q^\delta(0)-Q^\delta(x-y))D^2 G^{\delta}(x-y)] \omd_t(x)\omd_t(y) \rd x \rd y \\
&\le -c\|\omd_t\|_{H^{-\alpha}}^2 +C\|\omd_t\|_{\dot{H}^{-1}}^2 + o(1),
\end{align*}
where $o(1)$ tends to $0$ as $\delta\to 0$ uniformly on $[0,T]\times \Omega$.
\end{lemma}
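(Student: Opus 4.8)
The plan is to regard the integral as the bilinear form $\iint \Phi^\delta(x-y)\,\omd_t(x)\omd_t(y)\,\rd x\rd y$ with $\Phi^\delta(x)=\tr[(Q^\delta(0)-Q^\delta(x))D^2G^\delta(x)]$, insert the splitting $\Phi^\delta=A+R1+R2+R3$ from \eqref{eq:QD2G split}, and bound the four resulting bilinear forms separately. The terms $A$ and $R3$, for which we control the Fourier transform, will be handled by Plancherel; the terms $R1$ and $R2$, for which we have pointwise bounds, will be handled in physical space by Young's inequality. Since $\omd_t\in L^1\cap L^\infty\subset L^2$ with $\|\omd_t\|_{L^1}\le\|\omd_0\|_{L^1}$ and $\|\omd_t\|_{L^\infty}\le\|\omd_0\|_{L^\infty}$ by Lemma \ref{lem: l infty and l 1}, and each kernel is real, even and locally integrable (with $A,R1,R2$ compactly supported through the cutoff $\varphi$ and $R3\in L^2$), every such bilinear form is a well-defined real number and the splitting is legitimate.

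For the leading term, since $A$ is real and even, Plancherel gives $\iint A(x-y)\omd_t(x)\omd_t(y)\,\rd x\rd y=\int_{\R^2}\widehat A(n)|\widehat{\omd_t}(n)|^2\rd n$, and inserting the bound of Lemma \ref{lem:main bound A} together with $\int\langle n\rangle^{-2}|\widehat{\omd_t}|^2\rd n=\|\omd_t\|_{H^{-1}}^2\le\|\omd_t\|_{\dot H^{-1}}^2$ yields a contribution $\le-c\|\omd_t\|_{H^{-\alpha}}^2+C\|\omd_t\|_{\dot H^{-1}}^2$ with $c=\alpha\beta_L\gamma_{2\alpha}/(4\pi(2\pi)^{2\alpha})>0$; this is the source of the crucial negative term. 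Similarly, for $R3$, Plancherel and the bound $|\widehat{R3}(n)|\lesssim|n|^{-2}$ of Lemma \ref{lem:bound R3} give a contribution bounded by $\int|n|^{-2}|\widehat{\omd_t}|^2\rd n=\|\omd_t\|_{\dot H^{-1}}^2$ up to a constant independent of $\delta$, which is absorbed into $C\|\omd_t\|_{\dot H^{-1}}^2$.

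For the remainder terms $R1$ and $R2$ I would use the pointwise bounds of Lemma \ref{lem:bound R1 R2}: both kernels are supported in $\{|x|\le2\}$ (for the singular piece of $R2$, in $\{|x|<(8\delta\log(1/\delta))^{1/2}\}$) and locally integrable since $\epsilon<2\alpha$, so Young's inequality bounds the two bilinear forms by $\|R1\|_{L^1}\|\omd_t\|_{L^2}^2$ and $\|R2\|_{L^1}\|\omd_t\|_{L^2}^2$ respectively. Bounding $\|\omd_t\|_{L^2}^2\le\|\omd_t\|_{L^1}\|\omd_t\|_{L^\infty}\le\|\omd_0\|_{L^1}\|\omd_0\|_{L^\infty}$ and computing the $L^1$ norms of the kernels, I get, choosing $\epsilon=\bar\epsilon$ as fixed in \eqref{eq:omega zero growth}, a bound $\lesssim\delta^{\bar\epsilon}\|\omd_0\|_{L^1}\|\omd_0\|_{L^\infty}$ for $R1$ and $\lesssim(\delta+(\delta\log(1/\delta))^\alpha)\|\omd_0\|_{L^1}\|\omd_0\|_{L^\infty}$ for $R2$. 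These are $o(1)$ uniformly on $[0,T]\times\Omega$, since the right-hand sides are deterministic: the $R1$ piece and the first $R2$ piece vanish by the second line of \eqref{eq:omega zero growth} using $\delta\le\delta^{\bar\epsilon}$, while the singular $R2$ piece uses $(\delta\log(1/\delta))^\alpha\lesssim\delta^{\alpha/2}\le\delta^{\bar\epsilon}$.

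Summing the four contributions gives exactly $-c\|\omd_t\|_{H^{-\alpha}}^2+C\|\omd_t\|_{\dot H^{-1}}^2+o(1)$ with $c,C$ independent of $\delta$. The only genuinely delicate point is this last matching: the powers of $\delta$ produced by the remainder bounds must be reconciled with the admissible growth of the regularized initial data through \eqref{eq:omega zero growth}. In particular the inequality $\bar\epsilon<\alpha/2$, which follows from the standing assumption $\bar\epsilon<(1\wedge2\alpha)/4$, is precisely what makes the $R2$ estimate close; everything else is a routine assembly of the preceding lemmas.
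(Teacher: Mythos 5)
Your proposal is correct and follows essentially the same route as the paper's proof: the same splitting \eqref{eq:QD2G split} into $A+R1+R2+R3$, Plancherel for $A$ and $R3$ via Lemmas \ref{lem:main bound A} and \ref{lem:bound R3}, Young's inequality plus the $L^1$--$L^\infty$ bounds of Lemma \ref{lem: l infty and l 1} for $R1$ and $R2$, and the growth condition \eqref{eq:omega zero growth} to conclude. The only (immaterial) differences are your choice $\epsilon=\bar\epsilon$ in Lemma \ref{lem:bound R1 R2} where the paper takes $\epsilon=4\bar\epsilon$, and passing through $\|\omd_t\|_{L^2}^2\le\|\omd_t\|_{L^1}\|\omd_t\|_{L^\infty}$ rather than pairing $L^1$ against $L^\infty$ directly.
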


\begin{proof}
Using the Fourier isometry and the Fourier transform of convolutions, we write
\begin{align*}
&\iint_{\R^2\times \R^2}\tr[(I-Q^\delta(x-y))D^2 G^{\delta}(x-y)] \omd(x)\omd(y) \rd x \rd y\\
&= \int_{\R^2} (\widehat{A}(n)+\widehat{R3}(n))|\widehat{\omd}(n)|^2 \rd n +\iint_{\R^2\times\R^2} (|R1(x-y)|+|R2(x-y)|)|\omd(x)||\omd(y)| \rd x \rd y.
\end{align*}
By Lemmas \ref{lem:main bound A} and \ref{lem:bound R3}, we obtain
\begin{align}
\begin{aligned}\label{eq:bound_stoch_term_1}
&\int (\widehat{A}(n)+\widehat{R3}(n))|\widehat{\omd}(n)|^2 \rd n\\
&\le -c\int \langle n \rangle^{-2\alpha} |\widehat{\omd}(n)|^2 \rd n +C\int |n|^{-2} |\widehat{\omd}(n)|^2 \rd n = -c\|\omd\|_{H^{-\alpha}}^2 +C\|\omd\|_{\dot{H}^{-1}}^2.
\end{aligned}
\end{align}
We take $\epsilon=4\bar{\epsilon}$, where $\bar\epsilon\in (0,(1\wedge(2\alpha))/4)$ is the exponent in \eqref{eq:omega zero growth}. By Lemma \ref{lem:bound R1 R2}, we have
\begin{align}
\begin{aligned}\label{eq:bound_stoch_term_2}
&\int (|R1(x-y)|+|R2(x-y)|)|\omd(x)||\omd(y)| \rd x \rd y\\
&\le (\|R1\|_{L^1}+\|R2\|_{L^1})\|\omd\|_{L^\infty}\|\omd\|_{L^1}\\
&\lesssim \int \delta^\epsilon |x|^{-2+2\alpha-\epsilon}\varphi(x) +\delta\varphi(x) +|x|^{-2+2\alpha}1_{|x|<(8\delta\log(1/\delta))^{1/2}} \rd x \  \|\omd_0\|_{L^\infty}\|\omd_0\|_{L^1}\\
&\lesssim \delta^{\epsilon/4} \int (1+|x|^{-2+2\alpha-\epsilon})\varphi(x)\rd x \ \|\omd_0\|_{L^\infty}\|\omd_0\|_{L^1}\\
&\lesssim \delta^{\bar\epsilon} \|\omd_0\|_{L^\infty}\|\omd_0\|_{L^1} \lesssim o(1).
\end{aligned}
\end{align}
Putting together the bounds \eqref{eq:bound_stoch_term_1} and \eqref{eq:bound_stoch_term_2}, we get the desired estimate.
\end{proof}

We are now able to give the main a priori estimate on $\omega$:
\begin{proposition}
\label{prop:a_priori_bd}
There exist $c>0$, $C\ge 0$ such that, for every $\delta>0$ (small),
\begin{align*}
\sup_{t\in [0,T]}\E[\|\omd_t\|_{\dot{H}^{-1}}^2] +c \int_0^T \E[\|\omd_r\|_{H^{-\alpha}}^2] \rd r \le C\|\omega_0\|_{\dot{H}^{-1}}^2 +o(1),
\end{align*}
where $o(1)$ tends to $0$ as $\delta\to 0$.
\end{proposition}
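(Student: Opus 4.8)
The plan is to combine the energy identity of Corollary \ref{cor:a_priori_1} with the pointwise bound on the trace term from Lemma \ref{lem:bound_stoch_term}, and then to close the estimate with Gr\"onwall's lemma. First I would take expectations in the bound of Lemma \ref{lem:bound_stoch_term}, which holds $P$-a.s.\ and uniformly on $[0,T]\times\Omega$, to obtain
\[
\E\iint_{\R^2\times\R^2}\tr[(Q^\delta(0)-Q^\delta(x-y))D^2G^\delta(x-y)]\omd_r(x)\omd_r(y)\,\rd x\,\rd y \le -c\,\E[\|\omd_r\|_{H^{-\alpha}}^2] + C\,\E[\|\omd_r\|_{\dot{H}^{-1}}^2] + o(1),
\]
where the $o(1)$ is a $\delta$-dependent constant that is independent of $r$. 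Inserting this into the identity of Corollary \ref{cor:a_priori_1} and using that the (uniform) $o(1)$ integrated over $[0,t]$ contributes at most $T\cdot o(1)=o(1)$, I would arrive at the key inequality
\[
\E[\|\omd_t\|_{\dot{H}^{-1}}^2] + c'\int_0^t \E[\|\omd_r\|_{H^{-\alpha}}^2]\,\rd r \le \|\omega_0\|_{\dot{H}^{-1}}^2 + C'\int_0^t \E[\|\omd_r\|_{\dot{H}^{-1}}^2]\,\rd r + o(1),
\]
with $c'=4\pi^2 c$ and $C'=4\pi^2 C$, valid for every $t\in[0,T]$.

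Next I would discard the nonnegative $H^{-\alpha}$ term on the left-hand side and apply the integral form of Gr\"onwall's lemma to the map $t\mapsto \E[\|\omd_t\|_{\dot{H}^{-1}}^2]$. To invoke Gr\"onwall legitimately I need this map to be finite, which is guaranteed by the bound $\E[\sup_{t}\|\omd_t\|_{\dot{H}^{-1}}^2]<\infty$ from Lemma \ref{lem: l infty and l 1}. Gr\"onwall then yields
\[
\sup_{t\in[0,T]}\E[\|\omd_t\|_{\dot{H}^{-1}}^2] \le \left(\|\omega_0\|_{\dot{H}^{-1}}^2 + o(1)\right)e^{C'T},
\]
which is the first half of the claim, up to renaming constants.

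Finally, I would feed this uniform bound back into the key inequality, this time retaining the $H^{-\alpha}$ term: bounding $\int_0^T \E[\|\omd_r\|_{\dot{H}^{-1}}^2]\,\rd r \le T\sup_{t}\E[\|\omd_t\|_{\dot{H}^{-1}}^2]$ by the expression just obtained gives
\[
c'\int_0^T \E[\|\omd_r\|_{H^{-\alpha}}^2]\,\rd r \le C''\|\omega_0\|_{\dot{H}^{-1}}^2 + o(1),
\]
which is the second half. Relabeling $c'$ and the various multiplicative constants as $c$ and $C$ produces the stated estimate. There is no serious analytic obstacle at this stage: the substance of the argument lies in the preceding lemmas, in particular the Fourier bound of Lemma \ref{lem:main bound A} that produces the coercive $-\langle n\rangle^{-2\alpha}$ term and the remainder estimates of Lemmas \ref{lem:bound R1 R2} and \ref{lem:bound R3}. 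The only points that genuinely require care are checking that every $o(1)$ contribution is uniform in $(t,\omega)$, so that the contributions survive time-integration as a single $o(1)$, and verifying the finiteness of $\E[\|\omd_t\|_{\dot{H}^{-1}}^2]$ before invoking Gr\"onwall.
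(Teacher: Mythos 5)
Your proposal is correct and follows essentially the same route as the paper: combine Corollary \ref{cor:a_priori_1} with Lemma \ref{lem:bound_stoch_term} to obtain the differential inequality, apply Gr\"onwall to bound $\sup_t \E[\|\omd_t\|_{\dot{H}^{-1}}^2]$, and feed that bound back to recover the $H^{-\alpha}$ term. The details you spell out (uniformity of the $o(1)$ in $(t,\omega)$, finiteness of $\E[\|\omd_t\|_{\dot{H}^{-1}}^2]$ via Lemma \ref{lem: l infty and l 1} before invoking Gr\"onwall) are exactly the bookkeeping the paper leaves implicit in its one-line conclusion.
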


\begin{proof}
By Corollary \ref{cor:a_priori_1} and Lemma \ref{lem:bound_stoch_term} (integrated on $[0,T]\times\Omega$), we get, for every $t$,
\begin{align*}
\E[\|\omd_t\|_{\dot{H}^{-1}}^2] +c \int_0^t \E[\|\omd_r\|_{H^{-\alpha}}^2] \rd r \le \|\omega_0\|_{\dot{H}^{-1}}^2 +C\int_0^t \E[\|\omd_r\|_{\dot{H}^{-1}}^2] \rd r +o(1).
\end{align*}
The conclusion follows applying Gr\"onwall lemma.
\end{proof}

\section{Bounds on time continuity}
\label{sec: continuity bounds}

In this section, we establish the a priori bounds on time continuity for a solution $\omega$ to \eqref{eq:main}, namely we bound the expected value of $\|\omega\|_{C_t^\gamma(\dot{H}^{-4})}^2$ for some $0<\gamma<1/2$.

As in the previous section, $Q^\delta$, $G^\delta$ and $K^\delta$, $\omd_0$ satisfy the conditions in Section \ref{sec:regularization} and $\omd$ is the solution to the regularized equation \eqref{eq: regularized main}.

We start with a bound on $\sup_{t\in [0,T]} \|\omd_t\|_{\dot{H}^{-1}}$:

\begin{lemma}\label{lem:Hm1_bd_sup}
There exists $C>0$ such that, for every $\delta>0$,
    \begin{align*}
        \E[\sup_{t\in [0,T]} \|\omd_t\|_{\dot{H}^{-1}}^2] \le C.
    \end{align*}
\end{lemma}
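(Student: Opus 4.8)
The plan is to upgrade the second-moment bound of Proposition \ref{prop:a_priori_bd} to a bound on the \emph{expected supremum}, by returning to the It\^o identity \eqref{eq:energy_eqn} for the approximate energy $E_t:=\lan\omd_t,G^\delta\ast\omd_t\ranL$ and controlling the running maximum of its martingale part. I would write \eqref{eq:energy_eqn} as $E_t=E_0+\int_0^t D_r\,\rd r+M_t$, where $D_r=\iint\tr[(Q^\delta(0)-Q^\delta(x-y))D^2G^\delta(x-y)]\omd_r(x)\omd_r(y)\,\rd x\,\rd y$ and $M_t=-2\int_0^t\sum_k\lan\sigma_k^\delta\cdot\nabla\omd_r,G^\delta\ast\omd_r\ranL\,\rd W^k$. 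First I bound the drift by Lemma \ref{lem:bound_stoch_term}, discarding the favourable term $-c\|\omd_r\|_{H^{-\alpha}}^2$, so that $\int_0^t D_r\,\rd r\le C\int_0^T\|\omd_r\|_{\dot{H}^{-1}}^2\,\rd r+T\,o(1)$; taking $\sup_t$ and then expectation, the initial term $E_0$ stays bounded (it converges to $(2\pi)^{-2}\|\omega_0\|_{\dot{H}^{-1}}^2$ by \eqref{eq: convergence g delta to H-1}) and $\int_0^T\E\|\omd_r\|_{\dot{H}^{-1}}^2\,\rd r\le C$ by Proposition \ref{prop:a_priori_bd}. Finally I replace $E_t$ by $\tfrac1{4\pi^2}\|\omd_t\|_{\dot{H}^{-1}}^2$ up to an error whose expected supremum vanishes by Lemma \ref{lem:Hm1 approx_new}. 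Everything therefore reduces to a uniform-in-$\delta$ bound $\E[\sup_t|M_t|]\le C$.

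For the martingale I would use Burkholder--Davis--Gundy, so that $\E[\sup_t|M_t|]\le C\,\E[\langle M\rangle_T^{1/2}]$, where $\langle M\rangle_T=4\int_0^T\lan F_r,Q^\delta\ast F_r\ranL\,\rd r$; here, using $\diverg\sigma_k^\delta=0$ exactly as in the proof of Lemma \ref{lem:Hm1 norm formula} and the covariance identity $\sum_k\lan\varphi,\sigma_k^\delta\ranL^2=\lan\varphi,Q^\delta\ast\varphi\ranL$ with $\varphi=F_r$, the relevant vector field is $F_r=\omd_r\,w_r$ with $w_r=\nabla G^\delta\ast\omd_r$. The crucial point is the bilinear estimate, uniform in $\delta$,
\[
\lan F_r,Q^\delta\ast F_r\ranL\lesssim \|\omd_r\|_{\dot{H}^{-1}}^2\big(\|\omd_r\|_{H^{-\alpha}}^2+\|\omd_r\|_{\dot{H}^{-1}}^2\big),
\]
with \emph{one} factor in the dissipative norm $H^{-\alpha}$ and one in the energy norm $\dot{H}^{-1}$. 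Granting this, $\langle M\rangle_T\le C\,\sup_r\|\omd_r\|_{\dot{H}^{-1}}^2\int_0^T(\|\omd_r\|_{H^{-\alpha}}^2+\|\omd_r\|_{\dot{H}^{-1}}^2)\,\rd r$, hence $\langle M\rangle_T^{1/2}\le C\sup_r\|\omd_r\|_{\dot{H}^{-1}}\big(\int_0^T(\cdots)\big)^{1/2}$; BDG followed by Young's inequality lets me absorb $\eta\,\E[\sup_r\|\omd_r\|_{\dot{H}^{-1}}^2]$ into the left-hand side (for $\eta$ small, via the comparison $\|\omd_t\|_{\dot{H}^{-1}}^2\approx4\pi^2E_t$) and leaves $C_\eta\,\E\int_0^T(\|\omd_r\|_{H^{-\alpha}}^2+\|\omd_r\|_{\dot{H}^{-1}}^2)\,\rd r$, bounded uniformly in $\delta$ by Proposition \ref{prop:a_priori_bd}. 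This is precisely why the running-maximum bound requires the a priori $H^{-\alpha}$ control, not merely the energy bound; a crude bound using $\|\omd_r\|_{L^2}\le(\|\omd_0\|_{L^\infty}\|\omd_0\|_{L^1})^{1/2}$ would blow up as $\delta\to0$ under \eqref{eq:omega zero growth}.

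The main obstacle is the bilinear estimate displayed above, which I expect to follow from the gradient structure of $F_r$. Bounding $\lan F_r,Q^\delta\ast F_r\ranL\le\|F_r\|_{H^{-1-\alpha}}^2$ (the operator norm of $\widehat{Q^\delta}(n)$ being at most $\langle n\rangle^{-2-2\alpha}$) and writing $\psi_r=G^\delta\ast\omd_r$, $w_r=\nabla\psi_r$, the identity $(-\Delta\psi_r)\nabla\psi_r=\diverg\big(\tfrac12|w_r|^2 I-w_r\otimes w_r\big)$ shows that the ``main'' part of $F_r$ is a divergence of $T_r=\tfrac12|w_r|^2I-w_r\otimes w_r$, giving $\|\diverg T_r\|_{H^{-1-\alpha}}\lesssim\|T_r\|_{H^{-\alpha}}$; then the duality/interpolation chain $\|T_r\|_{H^{-\alpha}}\lesssim\|w_r\|_{L^{4/(1+\alpha)}}^2\lesssim\|w_r\|_{L^2}\|w_r\|_{H^{1-\alpha}}$, together with $\|w_r\|_{L^2}\lesssim\|\omd_r\|_{\dot{H}^{-1}}$ (by \eqref{eq:nabla G bound H}) and $\|w_r\|_{H^{1-\alpha}}\lesssim\|\omd_r\|_{H^{-\alpha}}+\|\omd_r\|_{\dot{H}^{-1}}$, yields the claim for every $\alpha\in(0,1)$. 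The delicate point is that $\omd_r\neq-\Delta\psi_r$ exactly, since $G^\delta$ is a regularised kernel: one must control the mollification error $(\omd_r+\Delta\psi_r)\,w_r$, where $\omd_r+\Delta\psi_r$ is a high-frequency piece of $\omd_r$ (supported at frequencies $\gtrsim\delta^{-1/2}$, where $\widehat{Q^\delta}$ decays strongly), and show it obeys the same bilinear bound with constants independent of $\delta$. Making this error term rigorous, and tracking $\delta$-independence throughout, is where the real work lies.
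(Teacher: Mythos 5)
Your overall architecture is exactly the paper's: the energy identity \eqref{eq:energy_eqn}, discarding the favourable drift term via Lemma \ref{lem:bound_stoch_term}, BDG for the martingale, Young's inequality to absorb $\eta\,\E[\sup_t\|\omd_t\|_{\dot{H}^{-1}}^2]$, and closure via Proposition \ref{prop:a_priori_bd}; and your ``main part'' estimate (divergence-form identity, then Sobolev embedding, H\"older and interpolation to produce one $H^{-\alpha}$ factor and one $\dot{H}^{-1}$ factor) is the same chain the paper runs with $\ud(\ud)^\top$ in place of your tensor $T_r$. (A small omission: the absorption step is only legitimate because $\E[\sup_t\|\omd_t\|_{\dot{H}^{-1}}^2]<\infty$ for each fixed $\delta$, which Lemma \ref{lem: l infty and l 1} supplies.)

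The genuine gap is the mollification error, which you flag but whose difficulty you misjudge, and which your suggested mechanism cannot repair. First, the error is not purely high-frequency: by \eqref{eq:G delta Fourier}, the Fourier transform of $\omd_r+\Delta(G^\delta\ast\omd_r)$ equals $\bigl(1-e^{-4\pi^2\delta|n|^2}+e^{-4\pi^2|n|^2/\delta}\bigr)\widehat{\omd_r}(n)$, so besides the ultraviolet piece ($|n|\gtrsim\delta^{-1/2}$) there is an infrared piece ($|n|\lesssim\delta^{1/2}$, coming from the cutoff at $s=1/\delta$ in \eqref{eq: regularization green kernel}) on which $\widehat{Q^\delta}$ does not decay at all. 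Second, and more seriously, for the ultraviolet piece the decay of $\widehat{Q^\delta}$ buys nothing beyond what you already used: writing $h_r=\omd_r+\Delta(G^\delta\ast\omd_r)$, the only product estimate compatible with the uniform-in-$\delta$ bounds at hand is $\|h_rw_r\|_{H^{-1-\alpha}}\lesssim\|h_r\|_{H^{-\alpha}}\|w_r\|_{H^{1-\alpha}}\lesssim\|\omd_r\|_{H^{-\alpha}}\bigl(\|\omd_r\|_{H^{-\alpha}}+\|\omd_r\|_{\dot{H}^{-1}}\bigr)$ (valid only for $\alpha<1/2$ at that), i.e.\ \emph{two} time-integrable dissipative factors; then $\langle M\rangle_T$ acquires a term $\int_0^T\|\omd_r\|_{H^{-\alpha}}^4\,\rd r$, and bounding $\E[\langle M\rangle_T^{1/2}]$ would require $\E[\sup_t\|\omd_t\|_{H^{-\alpha}}^2]$, which Proposition \ref{prop:a_priori_bd} does not give. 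Nor is there a compensating power of $\delta$: the $H^{-\alpha}$ mass of $\omd_r$ may concentrate exactly at frequencies $\sim\delta^{-1/2}$, where your error multiplier is of order one. The paper sidesteps all of this by splitting $G^\delta=G+(G^\delta-G)$ \emph{before} invoking the algebraic identity: the main term then uses the exact relation $\omd=\operatorname{curl}(K\ast\omd)$, so no error of your type is created, and the remainder $\sum_k|\langle\nabla(G^\delta-G)\ast\omd,\sigma^\delta_k\omd\rangle|^2$ is bounded crudely by $\|Q^\delta\|_{L^\infty}\|\omd\|_{L^1}^2\,\||\nabla(G-G^\delta)|\ast|\omd|\|_{L^\infty}^2$ and then by Lemma \ref{lem:Hm2 approx}, whose $\delta^{1/4}$ gain together with assumption \eqref{eq:omega zero growth} makes it $o(1)$ — precisely the kind of $L^1$--$L^\infty$ bound you dismissed, which is admissible for this remainder (unlike for the main term) because of the explicit $\delta^{1/4}$ factor beating the slow blow-up of $\|\omd_0\|_{L^\infty}$.
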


\begin{proof}
    We use formula \eqref{eq:energy_eqn} for $\langle \omd_t, G^{\delta}\ast \omd_t\rangle$, we use Lemma \ref{lem:bound_stoch_term}, we take the supremum over $[0,T]$ and then take expectation (and use Lemma \ref{lem:Hm1 approx_new}):
    \begin{align}
    \begin{aligned}\label{eq:energy_bd_sup}
    \E[\sup_{t\in[0,T]} \|\omd_t\|_{\dot{H}^{-1}}^2] &\le  \|\omega_0\|_{\dot{H}^{-1}}^2	+2 \E\left[\sup_{t\in[0,T]}\left|\int_0^t\sum_k \lan \sigma^\delta_k \cdot \nabla \omd_r, G^{\delta} \ast \omd_r \ranL \rd W^k_r\right| \right]\\
    &\quad + C\int_0^T \E[\|\omd_r\|_{\dot{H}^{-1}}^2] \rd r +o(1),
    \end{aligned}
    \end{align}
    where $o(1)$ tends to $0$ as $\delta\to 0$. We use Burkholder-Davis-Gundy inequality for the martingale term:
    \begin{align}
    \label{eq: martingale term}
        &\E\left[ 
        \sup_{t\in[0,T]} \left|\sum_{k} \int_{0}^{t}\langle G^\delta \ast \omd_r, \sigma_k^{\delta}  \cdot \nabla \omd_r\rangle \rd W_r^k\right|
        \right]
        \le 
        \E \left[ \left(
            \int_{0}^{T} \sum_k|\langle G^{\delta}\ast \omd, \sigma_k^{\delta}\ast \nabla \omd\rangle |^2 \rd r
            \right)^{\frac{1}{2}}
        \right]\nonumber\\
        &\quad \leq \E \left[ \left(
            \int_{0}^{T} \sum_k|\langle G\ast \omd, \sigma_k^{\delta}\cdot \nabla \omd\rangle |^2 \rd r
            \right)^{\frac{1}{2}}
        \right] 
        + \E \left[ \left(
            \int_{0}^{T} \sum_k|\langle (G^{\delta}-G)\ast \omd, \sigma_k^{\delta}\cdot \nabla \omd\rangle |^2 \rd r
            \right)^{\frac{1}{2}}
        \right] \nonumber\\
        & =:M1+MR
    \end{align}
    Note the following equality, in the distributional sense: calling $\ud=K\ast \omd$ (where $K=\nabla^\perp G$), we have $\omd = \operatorname{curl}\ud$ and
    \begin{align*}
    \sigma_k^\delta\cdot \nabla \omd = \operatorname{curl}((\sigma_k^\delta \cdot \nabla)\ud + (D\sigma_k^\delta)^{\top}\ud),
    \end{align*}
    since $\operatorname{div}(\sigma_k)=0$. We use this fact to expand the integrand in the first term $M1$ on the right hand side of \eqref{eq: martingale term}. By integration by parts we get
    \begin{align*}
        |\langle G\ast \omd, \sigma_k^{\delta}\cdot \nabla \omd\rangle |^2
        = | \langle \nabla^{\perp} G\ast \omd, (\sigma_k^{\delta} \cdot \nabla)\ud + (D\sigma_k^{\delta})^{\top}\ud\rangle|^2
        = |\langle \ud, (D\sigma_k^{\delta})^{\top}\ud\rangle|^2,
    \end{align*}
    where in the last equality we used that $\sigma_k$ is a divergence-free vector field and so
    \begin{align*}
        \langle \ud, (\sigma_k^{\delta} \cdot \nabla) \ud\rangle
        = \frac{1}{2} \int \sigma_k^{\delta} \cdot \nabla (|\ud|^2) \rd x 
        = 0.
    \end{align*}
    We have now that (using Einstein's summation convention),
    \begin{align*}
        \sum_k |\langle \ud, (D\sigma_k^{\delta})^{\top}\ud\rangle|^2
        &= \iint \sum_k u^{\delta,i}(x) u^{\delta,j}(x) \partial_{i} \sigma_k^{\delta,j}(x) u^{\delta,l}(y) u^{\delta,m}(y) \partial_l \sigma_k^{\delta,m}(y) \rd x\rd y \\
        &= \iint u^{\delta,i}(x) u^{\delta,j}(x) u^{\delta,l}(y) u^{\delta,m}(y) \partial_{i}\partial_{l} Q^{\delta}_{jm}(x-y) \rd x\rd y
        \\
        &= \langle u^{\delta,i} u^{\delta,j}, \partial_i \partial_l Q^{\delta}_{jm} \ast u^{\delta,l} u^{\delta,m} \rangle
        \\
        &= \langle \widehat{u^{\delta,i} u^{\delta,j}}, \overline{\widehat{\partial_i \partial_l Q^{\delta}_{jm} } \widehat{u^{\delta,l} u^{\delta,m}}} \rangle
        \\
        &\lesssim \max_{i,j} \int |\widehat{u^{\delta,i}u^{\delta,j}}(k)|^2
        |k|^2 |\widehat{Q^{\delta}}(k)| \rd k
        \\
        &\lesssim \max_{i,j} \int |\widehat{u^{\delta,i} u^{\delta,j}}(k)|^2
        \langle k\rangle^{-2\alpha} \rd k \approx \| \ud (\ud)^{T} \|_{H^{-\alpha}}^2.
    \end{align*}
    
    We apply first Sobolev embedding, precisely \cite[Corollary 1.39]{bahouri2011fourier} with $s=-\alpha$ and $p=2/(1+\alpha)$, getting
    \begin{align*}
        \| \ud (\ud)^{\top} \|_{H^{-\alpha}} &\lesssim \| \ud (\ud)^{\top} \|_{L^{2/(1+\alpha)}}
        \approx \| |\ud|^2 \|_{L^{2/(1+\alpha)}}.
    \end{align*}
    We apply now H\"{o}lder inequality (with exponents $r=2/(1+\alpha)$, $p=2$ and $q=2/\alpha$) and again Sobolev embedding, see \cite[Theorem 1.38]{bahouri2011fourier} with $s=1-\alpha$, and we get
    \begin{equation*}
        \| |\ud|^2 \|_{L^{2/(1+\alpha)}} 
        \leq \| \ud \|_{L^2}\cdot \| \ud \|_{L^{2/\alpha}}
        \lesssim  \| \ud \|_{L^2}\cdot \| \ud \|_{\dot{H}^{1-\alpha}}.
    \end{equation*}

    Putting all together and using Young inequality we have, for $\epsilon>0$ to be determined later and a constant $c_\epsilon>0$,
    \begin{align*}
        \E \left[ \left(
            \int_{0}^{T} \sum_k|\langle G\ast \omd, \sigma_k^{\delta}\cdot \nabla \omd\rangle |^2 \rd r
            \right)^{\frac{1}{2}}
        \right] 
        &\leq \E \left[ \left(
            \int_{0}^{T} \| \ud_r \|_{L^2}^2\cdot \| \ud_r \|_{\dot{H}^{1-\alpha}}^2 \rd r
            \right)^{\frac{1}{2}}
        \right] 
        \\
        &\leq \E \left[
        \sup_{t\in[0,T]}\|\ud_t\|_{L^2}
        \left(\int_{0}^T \|\ud_r \|_{\dot{H}^{1-\alpha}}^2 \rd r\right)^{\frac{1}{2}}
        \right]
        \\
        &\leq \epsilon \E \left[\sup_{t\in[0,T]}\|\ud_t\|_{L^2}^2\right]
        + c_{\epsilon} \E \left[\int_{0}^T \|\ud_r \|_{\dot{H}^{1-\alpha}}^2 \rd r\right].
    \end{align*}
    The last term in the right hand side can be bounded in terms of $\omd$ as
    \begin{align*}
        \|\ud \|_{L^2}^2 &= \int |\widehat{\ud}|^2 \rd n
        = (2\pi)^{-2}\int |\widehat{\omd}|^2 |n|^{-2} \rd n = (2\pi)^{-2}\|\omd \|_{\dot{H}^{-1}}^2\\
        \|\ud \|_{\dot{H}^{1-\alpha}}^2
        &= \int |\widehat{\ud}|^2 |n|^{2(1-\alpha)} \rd n
        = (2\pi)^{-2}\int |\widehat{\omd}|^2 |n|^{-2\alpha} \rd n
        \\
        &\lesssim \int_{|n|\leq 1} \frac{1}{|n|^2} |\widehat{\omd}|^2 \rd n
        + \int_{|n| > 1} \langle n \rangle^{-2\alpha} |\widehat{\omd}|^2\rd n
        = \| \omd \|_{\dot{H}^{-1}}^2 + \| \omd \|_{H^{-\alpha}}^2.
    \end{align*}
    We finally have
\begin{align}
        M1 \lesssim \epsilon \E \left[\sup_{t\in[0,T]}\|\omd_t\|_{\dot{H}^{-1}}^2\right]
        + C_\epsilon \int_0^T \E[\| \omd_r \|_{\dot{H}^{-1}}^2] \rd r + C_{\epsilon}
        \int_{0}^T  \mathbb{E}[\| \omd_r\|_{H^{-\alpha}}^2 ]\rd r.\label{eq:martingale bd 1}
    \end{align}

Now we bound the term $MR$:
\begin{align*}
MR
& = \E \left[ \left( \int_{0}^{T} \sum_k|\langle \nabla(G^{\delta}-G)\ast \omd, \sigma_k^{\delta}\omd\rangle |^2 \rd r \right)^{\frac{1}{2}} \right]\\
&= \E \biggl[ \Big( \int_{0}^{T} \iiiint \partial_i(G-G^\delta)(x-y) \partial_j(G-G^\delta)(w-z) \omd(x)\omd(y)\omd(w)\omd(z) \\
& \quad \cdot \sum_k\sigma^\delta_i(x)\sigma^\delta_j(w) \rd x\rd y\rd w\rd z \rd r \Big)^{\frac{1}{2}} \biggl]\\
&= \E \biggl[ \Big( \int_{0}^{T} \iiiint \partial_i(G-G^\delta)(x-y) \partial_j(G-G^\delta)(w-z) \omd(x)\omd(y)\omd(w)\omd(z) \\
&\quad \cdot Q^\delta(x-w) \rd x\rd y\rd w\rd z \rd r \Big)^{\frac{1}{2}} \biggl].
\end{align*}
Since $Q^\delta$ is uniformly bounded (see \eqref{eq: properties Q delta}), we have
\begin{align*}
MR
&\lesssim\E \left[ \left( \int_{0}^{T} \left(\iint |\partial_i(G-G^\delta)(x-y)| |\omd(x)||\omd(y)| \rd x\rd y\right)^2 \rd r \right)^{\frac{1}{2}} \right]\\
&\lesssim \E \left[ \sup_{t\in[0,T]} \iint |\nabla(G-G^\delta)(x-y)| |\omd(x)||\omd(y)| \rd x\rd y \right]\\
&\le \E \left[\sup_{t\in[0,T]} \|\omd\|_{L^1} \| |\nabla(G-G^\delta)|\ast |\omd| \|_{L^\infty}\right]
\end{align*}
By Lemmas \ref{lem: l infty and l 1} and Lemma \ref{lem:Hm2 approx} and the condition \eqref{eq:omega zero growth} on $\omd_0$, we conclude that, as $\delta\to 0$,
\begin{align}
MR \lesssim \delta^{1/4} \|\omd_0\|_{L^1} (\|\omd_0\|_{L^\infty} +\|\omd_0\|_{L^1}) = o(1).\label{eq:martingale bd 2}
\end{align}

We plug the estimates \eqref{eq:martingale bd 1} and \eqref{eq:martingale bd 2} on the martingale term in the bound \eqref{eq:energy_bd_sup}, getting, for some $\bar{C}>0$ (independent of $\epsilon$) and some $C_\epsilon>0$,
\begin{align*}
\E[\sup_{t\in [0,T]} \|\omd_t\|_{\dot{H}^{-1}}^2 ] 
&\le \|\omega_0\|_{\dot{H}^{-1}}^2 +\epsilon \bar{C} \E[ \sup_{t\in[0,T]}\|\omd_t\|_{\dot{H}^{-1}}^2] 
+ C_\epsilon \int_0^T \E[\|\omd_r\|_{\dot{H}^{-1}}^2 ] \rd r \\ &\quad +C_\epsilon \int_0^T \E[\|\omd_r\|_{H^{-\alpha}}^2 ] \rd r
+ o(1).
\end{align*}
We take $\epsilon>0$ such that $\epsilon \bar{C}\le 1/2$ and we get
\begin{align*}
E[\sup_{t\in [0,T]} \|\omd_t\|_{\dot{H}^{-1}}^2 ] \le 2\|\omega_0\|_{\dot{H}^{-1}}^2  
+ C \int_0^T \E[\|\omd_r\|_{\dot{H}^{-\alpha}}^2 ] \rd r  
+ C \int_0^T \E[\|\omd_r\|_{\dot{H}^{-\alpha}}^2 ] \rd r +o(1).
\end{align*}
By Proposition \ref{prop:a_priori_bd}, the above right-hand side is bounded uniformly in $\delta$. The proof is complete.
\end{proof}

\begin{corollary}\label{cor:exit_time}
For $R>0$, call
\begin{align*}
\tau^\delta_R := \inf\{t\ge 0\mid \|\omd_t\|_{\dot{H}^{-1}} \ge R\} \wedge T
\end{align*}
the first exit time of $\omd$ from the $\dot{H}^{-1}$ ball of radius $R$. Then
\begin{align*}
P\{\tau^\delta_R <T\} \le \frac{C}{R^2}.
\end{align*}
\end{corollary}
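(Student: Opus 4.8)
The plan is to deduce this directly from the uniform supremum bound in Lemma \ref{lem:Hm1_bd_sup} via Markov's inequality. The entire argument rests on relating the exit-time event to the supremum of the $\dot{H}^{-1}$ norm over $[0,T]$.

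First I would observe the set inclusion
\begin{align*}
\{\tau^\delta_R < T\} \subseteq \left\{\sup_{t\in [0,T]}\|\omd_t\|_{\dot{H}^{-1}} \ge R\right\}.
\end{align*}
This requires no path-continuity argument: by definition $\tau^\delta_R = \inf\{t\ge 0 \mid \|\omd_t\|_{\dot{H}^{-1}}\ge R\}\wedge T$, so if $\tau^\delta_R < T$ then the set $\{t \mid \|\omd_t\|_{\dot{H}^{-1}}\ge R\}$ has infimum $s<T$; choosing $\eps>0$ small enough that $s+\eps<T$, there is a time $t\in [s,s+\eps)\subset [0,T)$ with $\|\omd_t\|_{\dot{H}^{-1}}\ge R$, whence the supremum over $[0,T]$ is at least $R$.

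Next I would apply the Markov (Chebyshev) inequality to the right-hand event:
\begin{align*}
P\left\{\sup_{t\in [0,T]}\|\omd_t\|_{\dot{H}^{-1}} \ge R\right\}
= P\left\{\sup_{t\in [0,T]}\|\omd_t\|_{\dot{H}^{-1}}^2 \ge R^2\right\}
\le \frac{1}{R^2}\,\E\left[\sup_{t\in [0,T]}\|\omd_t\|_{\dot{H}^{-1}}^2\right].
\end{align*}
Finally, Lemma \ref{lem:Hm1_bd_sup} furnishes a constant $C>0$, independent of $\delta$, with $\E[\sup_{t\in [0,T]}\|\omd_t\|_{\dot{H}^{-1}}^2]\le C$, and combining the three displays yields $P\{\tau^\delta_R<T\}\le C/R^2$, as claimed.

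There is essentially no obstacle here: the corollary is a packaging of Lemma \ref{lem:Hm1_bd_sup} into a tail estimate on the exit time, and the only point worth stating carefully is the set inclusion in the first step, which holds purely from the definition of the infimum without invoking the $C_t(H^{-4})$ regularity of $\omd$. The uniformity in $\delta$ of the final bound is inherited directly from the $\delta$-uniform constant $C$ in Lemma \ref{lem:Hm1_bd_sup}.
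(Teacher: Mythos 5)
Your proof is correct and follows exactly the paper's own argument: the event inclusion $\{\tau^\delta_R<T\}\subseteq\{\sup_{t\in[0,T]}\|\omd_t\|_{\dot{H}^{-1}}^2\ge R^2\}$ followed by Markov's inequality and the $\delta$-uniform bound of Lemma \ref{lem:Hm1_bd_sup}. The only difference is that you spell out the set inclusion from the definition of the infimum, which the paper leaves implicit.
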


\begin{proof}
By Markov inequality, we have
\begin{align*}
P\{\tau^\delta_R <T\} \le P\{\sup_{t\in [0,T]}\|\omd_t\|_{\dot{H}^{-1}}^2 \ge R^2\} \le \frac{C}{R^2}.
\end{align*}
The proof is complete.
\end{proof}

Now we show the uniform in time continuity of $\omd$, with high probability. We call
\begin{align*}
\omdR_t :=\omd_{t\wedge \tau^R_\delta}.
\end{align*}
We introduce a modified (mixed homogeneous-inhomogeneous) $H^{-4}$ norm of a (scalar) Schwartz distribution $f$, which is the $H^{-3}$ norm of the associated velocity field $K*f$:
\begin{align*}
\|f\|_{\tilde{H}^{-4}}^2 := \int_{\R^2} \langle n\rangle^{-6} |n|^{-2} |\hat{f}(n)|^2 \rd n = (2\pi)^2\|K*f\|_{H^{-3}}^2.
\end{align*}
The space $\tilde{H}^{-4}$, defined by all Schwartz distributions with finite $\tilde{H}^{-4}$ norm, can be identified with the space of divergence-free $H^{-3}$ vector fields and hence it is a separable Hilbert space.
Note that
\begin{align*}
\|\nabla f\|_{\tilde{H}^{-4}}^2 = (2\pi)^2\int_{\R^2} \langle n\rangle^{-6} |n|^{-2}|n|^2 |\hat{f}(n)|^2 \rd n = (2\pi)^2\|f\|_{H^{-3}}^2.
\end{align*}

\begin{lemma}
\label{lem: C gamma of H minus 3}
For every $0<\gamma<1/2$, $p\ge 2$, $R>0$, there exists $C=C_{\gamma,p,R}>0$ such that, for every $\delta>0$,
\begin{align*}
\E\left[ \|\omdR\|_{C^\gamma_t(\tilde{H}^{-4})}^p \right] \le C.
\end{align*}
\end{lemma}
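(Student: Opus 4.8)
The plan is to apply the Kolmogorov continuity theorem, in its quantitative form bounding moments of the H\"older seminorm, to the stopped process $\omdR$ regarded as taking values in the separable Hilbert space $\tilde{H}^{-4}$. It suffices to prove that for every even integer $q\ge 2$ there is $C=C_{q,R}$, uniform in $\delta$ small, with
\[
\E\left[\|\omdR_t-\omdR_s\|_{\tilde{H}^{-4}}^q\right]\le C|t-s|^{q/2},\qquad s,t\in[0,T];
\]
then choosing $q$ so large that $1/2-1/q>\gamma$ and using $p\le q$ with Jensen gives the claim. Write $\tau=\tau^\delta_R$ and set $\ud_r:=K\ast\omd_r$, so $\omd_r=\operatorname{curl}\ud_r$; the stopping yields the pathwise bound $\|\omd_r\|_{\dot H^{-1}}\le R$, hence $\|\ud_r\|_{L^2}\le(2\pi)^{-1}R$, on $[0,\tau]$. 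Splitting the increment as $\omdR_t-\omdR_s=D_{s,t}+M_{s,t}$ (drift plus stochastic integral over $[s\wedge\tau,t\wedge\tau]$), I treat the two parts separately.

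For the drift I establish a pathwise $\tilde{H}^{-4}$ bound on the integrand on $[0,\tau]$. The viscous term is immediate: since $\langle n\rangle^{-6}|n|^2\lesssim|n|^{-2}$ and $c_\delta$ is bounded, $\|c_\delta\Delta\omd_r\|_{\tilde{H}^{-4}}\lesssim\|\omd_r\|_{\dot{H}^{-1}}\le CR$. For the nonlinearity I decompose $\operatorname{div}((K^\delta\ast\omd_r)\omd_r)=\operatorname{div}(\ud_r\omd_r)+\operatorname{div}(((K^\delta-K)\ast\omd_r)\omd_r)$. The first, genuine, term is handled by \eqref{eq: curl div}, $\operatorname{div}(\ud_r\omd_r)=\ud_r\cdot\nabla\omd_r=\operatorname{curl}\operatorname{div}(\ud_r\otimes\ud_r)$, so by the definition of $\tilde{H}^{-4}$ and boundedness of the Leray projection $\mathbb{P}$ on $H^{-3}$,
\[
\|\operatorname{div}(\ud_r\omd_r)\|_{\tilde{H}^{-4}}=2\pi\|\mathbb{P}\operatorname{div}(\ud_r\otimes\ud_r)\|_{H^{-3}}\lesssim\|\ud_r\otimes\ud_r\|_{H^{-2}}\lesssim\|\ud_r\|_{L^2}^2\le CR^2,
\]
using the two-dimensional embedding $L^1\hookrightarrow H^{-2}$; this is exactly the role of the weight $|n|^{-2}$ in $\tilde{H}^{-4}$, which lets two derivatives of the $L^1$ tensor $\ud\otimes\ud$ land in the space. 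The second, regularization, term is an error: with $\|\operatorname{div}V\|_{\tilde{H}^{-4}}\lesssim\|V\|_{H^{-3}}\le\|V\|_{L^1}$, the identity $(K^\delta-K)\ast\omd=\nabla^\perp(G^\delta-G)\ast\omd$, Lemma \ref{lem:Hm2 approx} and Lemma \ref{lem: l infty and l 1},
\[
\|\operatorname{div}(((K^\delta-K)\ast\omd_r)\omd_r)\|_{\tilde{H}^{-4}}\lesssim\||\nabla(G-G^\delta)|\ast|\omd_r|\|_{L^\infty}\|\omd_r\|_{L^1}\lesssim\delta^{1/4}(\|\omd_0\|_{L^\infty}+\|\omd_0\|_{L^1})\|\omd_0\|_{L^1},
\]
which is $o(1)$ by \eqref{eq:omega zero growth}. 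Thus the drift integrand is pathwise $\le C(R)$ on $[0,\tau]$, so $\|D_{s,t}\|_{\tilde{H}^{-4}}\le C(R)|t-s|$ and $\E[\|D_{s,t}\|_{\tilde{H}^{-4}}^q]\lesssim|t-s|^q$, which is more than enough.

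For the martingale term I use the Burkholder--Davis--Gundy inequality in $\tilde{H}^{-4}$, reducing to a pathwise bound on the bracket density $\sum_k\|\operatorname{div}(\sigma^\delta_k\omd_r)\|_{\tilde{H}^{-4}}^2$ on $[0,\tau]$. As in the proof of Lemma \ref{lem:Hm1_bd_sup}, since $\sigma^\delta_k$ is divergence free, $\operatorname{div}(\sigma^\delta_k\omd)=\sigma^\delta_k\cdot\nabla\omd=\operatorname{curl}((\sigma^\delta_k\cdot\nabla)\ud+(D\sigma^\delta_k)^\top\ud)$, so $K\ast\operatorname{div}(\sigma^\delta_k\omd)$ is the Leray projection of $(\sigma^\delta_k\cdot\nabla)\ud+(D\sigma^\delta_k)^\top\ud$. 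Writing $(\sigma^\delta_k\cdot\nabla)\ud^j=\operatorname{div}(\sigma^\delta_k\ud^j)$ (again $\operatorname{div}\sigma^\delta_k=0$) and using $\|\operatorname{div}(\cdot)\|_{H^{-3}}\le\|\cdot\|_{L^2}$ together with $L^2\hookrightarrow H^{-3}$, I obtain
\[
\sum_k\|\operatorname{div}(\sigma^\delta_k\omd)\|_{\tilde{H}^{-4}}^2\lesssim\int_{\R^2}\Big(\sum_k|\sigma^\delta_k(x)|^2+\sum_k|D\sigma^\delta_k(x)|^2\Big)|\ud(x)|^2\,\rd x\lesssim\|\ud\|_{L^2}^2\le CR^2,
\]
the decisive point being that the sum over $k$ is taken \emph{before} the supremum in $x$, so that the uniform-in-$\delta$ bounds $\sup_x\sum_k|D^m\sigma^\delta_k(x)|^2<\infty$ of Lemma \ref{lem:Q_reguralized} (and \eqref{eq:sigma_k_conv}) apply. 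Hence $\E[\|M_{s,t}\|_{\tilde{H}^{-4}}^q]\lesssim(C(R)|t-s|)^{q/2}$, and combining with the drift estimate yields the required $|t-s|^{q/2}$ bound.

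The main obstacle, and the reason the weaker norm $\tilde{H}^{-4}$ (equivalently $H^{-3}$ on the velocity) is used rather than $\dot{H}^{-1}$, is the control of the quadratic nonlinearity: on the stopped interval only the energy bound $\|\ud_r\|_{L^2}\le CR$ is available pathwise, which is too weak to control the product $(K^\delta\ast\omd)\omd$ in any negative Sobolev space by naive multiplication. The combination of the curl--div identity \eqref{eq: curl div}, which rewrites the nonlinearity as two derivatives of the $L^1$ tensor $\ud\otimes\ud$, with the extra $|n|^{-2}$ weight in $\tilde{H}^{-4}$ is precisely what makes the estimate close, while the mismatch between the regularized kernel $K^\delta$ and $K$ is absorbed into an $o(1)$ error via Lemma \ref{lem:Hm2 approx} and the growth assumption \eqref{eq:omega zero growth}.
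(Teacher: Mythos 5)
Your overall skeleton (Kolmogorov/BDG with a drift--martingale split in $\tilde H^{-4}$) and your drift estimate match the paper's proof: the decomposition into the genuine nonlinearity plus the $K^\delta-K$ error, the identity \eqref{eq: curl div} combined with the embedding $L^1\hookrightarrow H^{-2}$ and the extra $|n|^{-2}$ weight of $\tilde H^{-4}$, and the use of Lemma \ref{lem:Hm2 approx} with \eqref{eq:omega zero growth} for the error are exactly the paper's treatment of its terms $S_1$ and $S_3$. The gap is in the martingale term. After writing $\operatorname{div}(\sigma^\delta_k\omd)=\operatorname{curl}\bigl((\sigma^\delta_k\cdot\nabla)\ud+(D\sigma^\delta_k)^\top\ud\bigr)$, you bound
\begin{align*}
\sum_k\|(D\sigma^\delta_k)^\top\ud\|_{L^2}^2\le\sup_{x}\Bigl(\sum_k|D\sigma^\delta_k(x)|^2\Bigr)\|\ud\|_{L^2}^2
\end{align*}
and invoke Lemma \ref{lem:Q_reguralized} as if $\sup_x\sum_k|D\sigma^\delta_k(x)|^2$ were bounded \emph{uniformly in} $\delta$. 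It is not: that lemma asserts finiteness for each fixed $\delta$ only, and its proof identifies this quantity (up to sign) with entries of $D^2Q^\delta(0)$, whose modulus is comparable to $\int_{|n|\le 2/\delta}|n|^2\langle n\rangle^{-2-2\alpha}\,\rd n\approx\delta^{-(2-2\alpha)}$, which diverges as $\delta\to 0$ for every $\alpha\in(0,1)$. This divergence is structural, not an artifact: the Kraichnan covariance satisfies $Q(0)-Q(x)\approx|x|^{2\alpha}$ near $0$, so $D^2Q(0)$ does not exist and $\sum_k|D\sigma_k(x)|^2=+\infty$; only the $m=0$ bound \eqref{eq:sigma_k_conv} is uniform in $\delta$. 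With your estimate the constant in the lemma would blow up like $\delta^{-(2-2\alpha)}$, destroying precisely the uniformity in $\delta$ that the statement requires (and that the tightness argument in Section \ref{sec: weak existence} needs).

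The paper circumvents this obstruction by never differentiating $\sigma^\delta_k$ pointwise: it bounds $\|\operatorname{div}(\sigma^\delta_k\omdR)\|_{\tilde H^{-4}}\le\|\sigma^\delta_k\omdR\|_{H^{-2}}$ and then computes $\sum_k\|\sigma^\delta_k\omdR\|_{H^{-2}}^2$ \emph{exactly} via Parseval as $\int|\widehat{\omdR}(n)|^2\widehat\psi(n)\,\rd n$, where $\widehat\psi=\tr\widehat{Q^\delta}\ast\langle\cdot\rangle^{-4}$; the convolution inherits the decay $\widehat{Q^\delta}(n)\lesssim\langle n\rangle^{-2-2\alpha}$ and yields $\widehat\psi(n)\lesssim\langle n\rangle^{-2}$ uniformly in $\delta$ (formulas \eqref{eq:Hm2 omega Q}--\eqref{eq:psi Fourier bd}), whence $S_2\lesssim (t-s)^{p/2}R^p$. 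Note that the naive alternative of staying in vorticity form and writing $\sum_k\|\sigma^\delta_k\omdR\|_{H^{-2}}^2\le\sup_x\sum_k|\sigma^\delta_k(x)|^2\,\|\omdR\|_{H^{-2}}^2$ is also invalid, since multiplication by a merely bounded function is not bounded on negative Sobolev spaces; so some use of the covariance structure in Fourier is unavoidable here. If you wish to keep your velocity-form decomposition, the term $(D\sigma^\delta_k)^\top\ud$ must likewise be handled as a quadratic form through $Q^\delta$ (as the paper does in the proof of Lemma \ref{lem:Hm1_bd_sup}, where $\partial_i\partial_l Q^\delta$ is paired in Fourier against $\widehat{\ud(\ud)^\top}$ and controlled by $|n|^2\widehat{Q^\delta}(n)\lesssim\langle n\rangle^{-2\alpha}$), not through a pointwise supremum of $\sum_k|D\sigma^\delta_k|^2$.
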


\begin{proof}
We estimate the $\tilde{H}^{-4}$ norm of $\omdR_t-\omdR_s$ and we apply Burkholder-Davis-Gundi inequality to the stochastic integral, getting
\begin{align*}
\E\left[\|\omdR_t-\omdR_s\|_{H^{-4}}^p\right]
&\lesssim \E\left[\left\| \int_s^t (K^\delta\ast \omdR) \cdot \nabla \omdR \rd r\right\|_{\tilde{H}^{-4}}^p\right]\\
&\quad +\E\left[\left\| \sum_k \int_s^t \sigma^\delta_k \cdot \nabla \omdR \rd W^k\right\|_{\tilde{H}^{-4}}^p\right] +\E\left[\left\| \int_s^t c_\delta \Delta\omdR \rd r\right\|_{\tilde{H}^{-4}}^p\right]\\
&\lesssim \E\left[\left\| \int_s^t (K^\delta\ast \omdR) \cdot \nabla \omdR \rd r\right\|_{\tilde{H}^{-4}}^p\right]\\
&\quad +\E\left[\left(\int_s^t \sum_k \|\sigma^\delta_k \cdot \nabla \omdR \|_{\tilde{H}^{-4}}^2\rd r\right)^{p/2}\right] +\E\left[\left\| \int_s^t c_\delta \Delta\omdR \rd r\right\|_{\tilde{H}^{-4}}^p\right]\\
&=: S_1+S_2+S_3.
\end{align*}
For the term $S_1$, we actually consider the term $\tilde{S_1}$, obtained replacing $K^\delta=\nabla^\perp G^\delta$ with $K=\nabla^\perp G$ in the expression for $S_1$, and claim that, as $\delta\to 0$,
\begin{align}
S_1\lesssim \tilde{S_1} +(t-s)^p o(1). \label{eq:AmAtilde}
\end{align}
We leave the proof of this (technical) inequality at the end of the proof and we focus on the bound on $\tilde{S_1}$. Calling $u^{\delta,R}=K\ast \omdR$ and using the equality \eqref{eq: curl div}, we have that
\begin{align*}
\tilde{S_1}&= \E\left[\left\| \int_s^t (K\ast \omdR) \cdot \nabla \omdR \rd r\right\|_{\tilde{H}^{-4}}^p\right]\\
&\le (t-s)^p \E\left[\sup_{r\in[0,T]}\left\| (K\ast \omdR) \cdot \nabla \omdR \right\|_{\tilde{H}^{-4}}^p\right]\\
&\le (t-s)^p \E\left[\sup_{r\in[0,T]}\left\| \text{curl}\,\diverg (u^{\delta,R} (\tilde{u}^{\delta,R})^{\top})\right\|_{\tilde{H}^{-4}}^p\right]\\
&\lesssim (t-s)^p \E\left[\sup_{r\in[0,T]}\| u^{\delta,R} (u^{\delta,R})^\top\|_{H^{-2}}^p\right].
\end{align*}
By Sobolev embedding of $L^1$ in $H^{-2}$ (e.g. \cite[Proposition 2.71]{bahouri2011fourier} and \cite[Section 2.3.2, Proposition 2 and Section 2.5.7, formula (1)]{triebel1983function}), we have
\begin{align*}
\|u^{\delta,R} (u^{\delta,R})^{\top}\|_{H^{-2}} \lesssim \|u^{\delta,R} (u^{\delta,R})^{\top}\|_{L^1} \lesssim \|u^{\delta,R}\|_{L^2}^2 = (2\pi)^{-2}\|\omdR\|_{\dot{H}^{-1}}^2.
\end{align*}
Hence we have
\begin{align}
\tilde{S_1}\lesssim (t-s)^p \E\left[\sup_{r\in [0,T]}\| \omdR\|_{\dot{H}^{-1}}^{2p}\right] \le (t-s)^p R^{2p}.\label{eq:equicont_A}
\end{align}
For the term $S_2$, we have
\begin{align*}
S_2&=\E\left[\left(\int_s^t \sum_k \|\diverg (\sigma^\delta_k \omdR) \|_{\tilde{H}^{-4}}^2\rd r\right)^{p/2}\right]\\
&\le \E\left[\left(\int_s^t \sum_k \|\sigma^\delta_k \omdR\|_{H^{-3}}^2\rd r\right)^{p/2}\right] \le \E\left[\left(\int_s^t \sum_k \|\sigma^\delta_k \omdR\|_{H^{-2}}^2\rd r\right)^{p/2}\right]\\
&\le (t-s)^{p/2} \E\left[\sup_{r\in [0,T]} \left(\sum_k \|\sigma^\delta_k \omdR\|_{H^{-2}}^2\right)^{p/2}\right].
\end{align*}
Exploiting the Fourier transform, the integrand reads as
\begin{align}
\sum_k \|\sigma^\delta_k \omdR\|_{H^{-2}}^2 &= \sum_k \int \langle n \rangle^{-4}|\widehat{\sigma^\delta_k\omdR}(n)|^2 \rd n\nonumber\\
&= \sum_k \int \langle n \rangle^{-4} \iint \sigma^\delta_k(x)\omdR(x)e^{-2\pi ix\cdot n}\cdot \sigma^\delta_k(y)\omdR(y)e^{2\pi iy\cdot n} \rd x\rd y \rd n\nonumber\\
&= \iint \omdR(x)\omdR(y) \tr Q^\delta(x-y) \int \langle n \rangle^{-4} e^{-2\pi i(x-y)\cdot n} \rd n \rd x\rd y\nonumber\\
&= \int\omdR(x)\psi\ast \omdR(x) \rd x\nonumber\\
&= \int |\widehat{\omdR}(n)|^2 \widehat{\psi}(n) \rd n,\label{eq:Hm2 omega Q}
\end{align}
where we have called
\begin{align*}
\psi(x) = \tr Q^\delta(x) \int \langle n \rangle^{-4} e^{-2\pi ix\cdot n} \rd n.
\end{align*}
For the Fourier transform of $\psi$ we have, for every $n$,
\begin{align}
\widehat{\psi}(n) &= \int \tr\widehat{Q^\delta}(n-n^\prime)\cdot  \langle -n^\prime \rangle^{-4} \rd n^\prime\nonumber\\
&\le \int \langle n-n^\prime \rangle^{-2-2\alpha} \langle n^\prime \rangle^{-4} \rd n^\prime\nonumber\\
&= \int_{|n^\prime|\le |n|/2} \langle n-n^\prime \rangle^{-2-2\alpha} \langle n^\prime \rangle^{-4} \rd n^\prime +\int_{|n^\prime|> |n|/2} \langle n-n^\prime \rangle^{-2-2\alpha} \langle n^\prime \rangle^{-4} \rd n^\prime\nonumber\\
&\lesssim \int_{|n^\prime|\le |n|/2} \langle |n|/2 \rangle^{-2-2\alpha} \langle n^\prime \rangle^{-4} \rd n^\prime +\int_{|n^\prime|> |n|/2} \langle n^\prime \rangle^{-4} \rd n^\prime \nonumber\\
&\lesssim \langle n \rangle^{-2-2\alpha} \int \langle n^\prime \rangle^{-4} \rd n^\prime +\int_{|n^\prime|> |n|/2} \langle n^\prime \rangle^{-4} \rd n^\prime \nonumber\\
&\lesssim \langle n \rangle^{-2-2\alpha} +\langle n \rangle^{-2} \lesssim \langle n \rangle^{-2}. \label{eq:psi Fourier bd}
\end{align}
Therefore we get
\begin{align}
\label{eq: sum sigma H minus 2}
\sum_k \|\sigma^\delta_k \omdR\|_{H^{-2}}^2 = \int |\widehat{\omdR}(n)|^2 \widehat{\psi}(n) \rd n
\lesssim \int |\widehat{\omdR}(n)|^2 \langle n \rangle^{-2} \rd n = \|\omdR\|_{H^{-1}}^2
\end{align}
and so
\begin{align}
S_2\lesssim (t-s)^{p/2}\E\left[\sup_{r\in [0,T]}\|\omdR\|_{H^{-1}}^p\right] \le (t-s)^{p/2} R^p.\label{eq:equicont_B}
\end{align}
For the term $S_3$, we have (recall that, by \eqref{eq:Q delta 0}, $c_\delta$ is uniformly bounded)
\begin{align}
S_3&=\E\left[\left\| \int_s^t c_\delta \Delta\omdR \rd r\right\|_{\tilde{H}^{-4}}^p\right] \nonumber\\
&\le (t-s)^p \E\left[\sup_{r\in [0,T]}\left\| c_\delta \Delta\omdR\right\|_{\tilde{H}^{-4}}^p\right] \nonumber\\
&\lesssim (t-s)^p \E\left[\sup_{r\in [0,T]}\| \omdR\|_{H^{-2}}^p\right] \le (t-s)^p R^p.\label{eq:equicont_C}
\end{align}
Putting together the bounds \eqref{eq:AmAtilde}, \eqref{eq:equicont_A}, \eqref{eq:equicont_B} and \eqref{eq:equicont_C}, we arrive at
\begin{align*}
\sup_{0\le s<t\le T}\E\left[\|\omdR_t-\omdR_s\|_{\tilde{H}^{-4}}^p\right] \lesssim (t-s)^{p/2}(1+R^{2p}),
\end{align*}
in particular, for every $0<\beta<1/2$,
\begin{align*}
\E\left[\int_0^T\int_0^T\frac{\|\omdR_t-\omdR_s\|_{\tilde{H}^{-4}}^p}{|t-s|^{\beta p+1}}\right] \lesssim 1+R^{2p}.
\end{align*}
By the Kolmogorov criterion, or the Sobolev embedding $W^{\beta,p}([0,T])\hookleftarrow C^\gamma([0,T])$ for $\gamma<\beta-1/p$ (see e.g. [DaPrato-Zabczik, Ergodicity for infinite dimensional systems, Theorem B.1.5]), we conclude that, for every $0<\gamma<1/2$,
\begin{align*}
\E\left[ \|\omdR\|_{C^\gamma_t(\tilde{H}^{-4})}^p \right] \lesssim 1+R^{2p}.
\end{align*}

It remains to show \eqref{eq:AmAtilde}. We recall that $u^{\delta,R}=K\ast \omd$ and we call $\tilde{u}^{\delta,R}=K^\delta\ast \omd$. We bound
\begin{align*}
|S_1^{1/p}-\tilde{S_1}^{1/p}| &= \left|\E \left[\left\| \int_s^t \diverg(u^{\delta,R} \omdR) \rd r \right\|_{\tilde{H}^{-4}}^p \right]^{1/p} -\E \left[\left\| \int_s^t \diverg(\tilde{u}^{\delta,R} \omdR) \rd r \right\|_{\tilde{H}^{-4}}^p \right]^{1/p}\right|\\
& \le \E \left[\left\| \int_s^t \diverg((u^{\delta,R}-\tilde{u}^{\delta,R}) \omdR) \rd r \right\|_{\tilde{H}^{-4}}^p \right]^{1/p}\\
&\le (t-s) \E \left[\sup_{r\in [0,T]} \left\|\diverg((u^{\delta,R}-\tilde{u}^{\delta,R}) \omdR)\right\|_{\tilde{H}^{-4}}^p\right]^{1/p}\\
&\le (t-s) \E \left[\sup_{r\in [0,T]} \left\|(u^\delta-\tilde{u}^\delta) \omdR\right\|_{L^2}^p\right]^{1/p}.
\end{align*}
By Lemma \ref{lem: l infty and l 1}, Lemma \ref{lem:Hm2 approx} and the condition \eqref{eq:omega zero growth} on $\omd_0$, we get
\begin{align}
|S_1^{1/p}-\tilde{S}_1^{1/p}|
&\le (t-s) \E \left[\sup_{r\in [0,T]} \|u^{\delta,R}-\tilde{u}^{\delta,R}\|_{L^\infty}^p \|\omdR\|_{L^2}^p\right]^{1/p}\nonumber\\
&\le (t-s) \|\omdR_0\|_{L^2}\E \left[\sup_{r\le T} \|(\nabla G^\delta-\nabla G)\ast \omdR\|_{L^\infty}^p \right]^{1/p}\nonumber\\
&\lesssim (t-s) \delta^{1/4} \|\omdR_0\|_{L^1}^{1/2}\|\omdR_0\|_{L^\infty}^{1/2} (\|\omdR_0\|_{L^\infty} +\|\omdR_0\|_{L^1})\nonumber \\
&\lesssim (t-s) \delta^{1/4}(\|\omdR_0\|_{L^\infty} +\|\omdR_0\|_{L^1})^2\nonumber \\
&\lesssim (t-s)o(1),\label{eq: A minus A tilde}
\end{align}
which implies \eqref{eq:AmAtilde}. The proof is complete.
\end{proof}

\begin{remark}\label{rmk:extension product}
    By a standard approximation argument for quadratic forms, one can show that the equality \eqref{eq:Hm2 omega Q} and the bound \eqref{eq: sum sigma H minus 2} hold true replacing $\sigma_k^\delta$ with $\sigma_k$ and $\omega^{\delta,R}$ with any $\omega$ in $\dot{H}^{-1}$.
\end{remark}

\begin{corollary}
\label{cor: time continuity prob}
    Let $0< \gamma < 1/2$.
    For every $\epsilon>0$, there exists $C_\epsilon>0$ such that, for every $\delta>0$ (small),
    \begin{align*}
        P\{\|\omega^\delta\|_{C_t^\gamma(\tilde{H}^{-4})}> C_\epsilon\} <\epsilon.
    \end{align*}
\end{corollary}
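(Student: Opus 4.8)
The plan is to combine the uniform moment bound for the stopped process from Lemma \ref{lem: C gamma of H minus 3} with the uniform exit-time estimate from Corollary \ref{cor:exit_time}, both of which hold uniformly in $\delta$. The key observation is that on the event $\{\tau^\delta_R = T\}$ the stopped process $\omdR$ coincides with $\omd$ throughout $[0,T]$ (since $t\wedge\tau^\delta_R = t$ there), so the two $C^\gamma_t(\tilde{H}^{-4})$ norms agree on that event.

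First I would fix $\epsilon>0$ and use Corollary \ref{cor:exit_time} to choose $R=R_\epsilon$ large enough that $P\{\tau^\delta_R < T\} \le C/R^2 < \epsilon/2$, with $R$ independent of $\delta$. Next, with this $R$ fixed, I would apply Lemma \ref{lem: C gamma of H minus 3} with, say, $p=2$ to obtain a constant $C_{\gamma,2,R}$, again independent of $\delta$, such that $\E[\|\omdR\|_{C^\gamma_t(\tilde{H}^{-4})}^2] \le C_{\gamma,2,R}$. By Markov's inequality, $P\{\|\omdR\|_{C^\gamma_t(\tilde{H}^{-4})} > C_\epsilon\} \le C_{\gamma,2,R}/C_\epsilon^2$, which can be made smaller than $\epsilon/2$ by taking $C_\epsilon$ sufficiently large, uniformly in $\delta$.

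Finally I would assemble the estimate through the inclusion
\[
\{\|\omd\|_{C^\gamma_t(\tilde{H}^{-4})} > C_\epsilon\} \subseteq \{\tau^\delta_R < T\} \cup \{\|\omdR\|_{C^\gamma_t(\tilde{H}^{-4})} > C_\epsilon\},
\]
which holds because $\{\tau^\delta_R < T\}$ and $\{\tau^\delta_R = T\}$ partition $\Omega$ and on the latter event $\omd=\omdR$ on $[0,T]$. A union bound then yields $P\{\|\omd\|_{C^\gamma_t(\tilde{H}^{-4})} > C_\epsilon\} < \epsilon/2 + \epsilon/2 = \epsilon$, as claimed. There is no substantive obstacle here: the only points requiring care are the order of the choices (first $R$ to absorb the exit probability, then $C_\epsilon$ to absorb the Markov tail) and the fact that both constants must be chosen independently of $\delta$, which is guaranteed precisely by the $\delta$-uniformity already established in Lemma \ref{lem: C gamma of H minus 3} and Corollary \ref{cor:exit_time}.
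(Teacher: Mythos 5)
Your proof is correct and follows essentially the same route as the paper: both split on the event $\{\tau^\delta_R < T\}$ via Corollary \ref{cor:exit_time}, use that $\omega^\delta$ and $\omega^{\delta,R}$ coincide on the complementary event so that Lemma \ref{lem: C gamma of H minus 3} plus Markov's inequality controls the tail, and make the choices in the same order (first $R$, then $C_\epsilon$), uniformly in $\delta$.
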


\begin{proof}
By \ref{cor:exit_time} and Lemma \ref{lem: C gamma of H minus 3}, for every $M, R > 0$, we have, some constant $C>0$ independent of $M,R$ and $C_R>0$ independent of $M$,
\begin{align*}
    \sup_{\delta}P\{\|\omega^{\delta}\|_{C_t^{\gamma}(\tilde{H}^{-4})} > M \}
    & \leq \sup_{\delta} P\{\tau_R^{\delta} < T\}
    + \sup_{\delta}P\{\tau_R^{\delta} \ge T, \|\omega^{\delta}\|_{C_t^{\gamma}(\tilde{H}^{-4})} > M \} \\
    & \leq \frac{C}{R^2} + \frac{1}{M^2} \sup_{\delta}\mathbb{E}[\|\omega_{\cdot\wedge\tau_R^{\delta}}^{\delta}\|_{C_t^{\gamma}(\tilde{H}^{-4})}^2]
    \leq \frac{C}{R^2} + \frac{C_R}{M^2}.
\end{align*}
For $\epsilon >0$, we can choose $R$ such that $\frac{C}{R^2}<\epsilon$ and then $C_\epsilon=M$ such that $\frac{C_R}{M^2}<\epsilon$. The proof is complete.
\end{proof}

\section{Weak existence}
\label{sec: weak existence}

In this section we prove weak existence for \eqref{eq:main}, that is Theorem \ref{thm: weak-existence}. We exploit a classical strategy, showing tightness of solutions $(\omd)_\delta$ to the regularized equation \eqref{eq: regularized main} and then showing that any limit of the family $(\omd)_\delta$ solves \eqref{eq: regularized main}.

The key point of the argument is the uniform bound on the $H^{1-\alpha}$ norm of the velocity field $u^\delta=K^\delta\ast\omd$. Indeed, the embedding $H^{1-\alpha}$ into $L^2$ is compact on each bounded domain of $\R^2$ and hence the uniform $H^{1-\alpha}$ bound allows us to get convergence in $L^2$ with the strong topology and hence to pass to the limit in the nonlinear term $\diverg(u^\delta u^{\delta,\top})$ in Euler equation.

In order to prove tightness of the family of the laws of $u^{\delta}=K\ast \omd$, as mentioned above, the idea is to apply the stochastic Aubin-Lions lemma to the triplet of spaces $H^{1-\alpha}(\R^2) \subset L^2(\R^2) \subset H^{-3}(\R^2)$ and the compact embedding of $L^2_t(H^{1-\alpha}) \cap C^{\alpha}_t(H^{-3})$ into $L^2_t(L^2)$. However, the immersion of $H^{1-\alpha}$ into $L^2$ is not compact when the underlying space, in this case $\R^2$, is not compact. To circumvent this technical difficulty, we introduce a suitable weight $w$ in the space $L^2$ and show the compact embedding into $L^2_t(L^2(\R^2;w))$.

Let $w:\R^2 \to \R$ be a $C^\infty$ function such that $0<w\leq 1$ and 
    \begin{equation*}
        \sup_{B_R^c} w \to 0,
        \qquad
        \mbox{as }
        R\to \infty.
    \end{equation*}
We define the weighted Lebesgue space $L^2(\R^2;w)$ as the space of $L^2_{loc}(\R^2)$ functions $f$ satisfying
\begin{align*}
    \|f\|_{L^2(\R^2;w)}^2:= \int_{\R^2}|f(x)|^2 w(x) \rd x <\infty.
\end{align*}

\begin{lemma}\label{lem:Aubin_Lions_mod}
The embedding
    \begin{equation*}
    L^2_t(H^{1-\alpha})\cap C^{\gamma}_t(H^{-3})
    \subset \subset L^2_t(L^2(\R^2;w))
\end{equation*}
is compact.
\end{lemma}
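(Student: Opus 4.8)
The plan is to reduce the statement to the classical Aubin--Lions--Simon lemma on bounded balls, where all relevant Sobolev embeddings are compact, and then to recover global compactness in the weighted space by exploiting the decay of $w$ at infinity. The only genuine difficulty is that on the full plane the embedding $H^{1-\alpha}(\R^2)\hookrightarrow L^2(\R^2)$ fails to be compact (mass can escape to spatial infinity), and the weight $w$ is precisely what restores it.

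First I would fix a sequence $(f_n)_n$ bounded in $L^2_t(H^{1-\alpha})\cap C^\gamma_t(H^{-3})$ and aim to extract a subsequence converging in $L^2_t(L^2(\R^2;w))$. For each $R\in\mathbb{N}$ I consider the restrictions $f_n|_{B_R}$. Since restriction is continuous from $H^{1-\alpha}(\R^2)$ to $H^{1-\alpha}(B_R)$ and from $H^{-3}(\R^2)$ to $H^{-3}(B_R)$, the family $(f_n|_{B_R})_n$ is bounded in $L^2_t(H^{1-\alpha}(B_R))\cap C^\gamma_t(H^{-3}(B_R))$. On the bounded domain $B_R$ one has the chain $H^{1-\alpha}(B_R)\subset\subset L^2(B_R)\hookrightarrow H^{-3}(B_R)$, where the first embedding is compact by Rellich--Kondrachov and the second is continuous (being the adjoint of $H^3_0(B_R)\hookrightarrow L^2(B_R)$). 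Hence the classical Aubin--Lions--Simon lemma in its $C^\gamma$-in-time form applies: the uniform $C^\gamma_t(H^{-3}(B_R))$ bound furnishes the uniform time-translation equicontinuity in the weak space required by Simon's criterion, so $(f_n|_{B_R})_n$ is relatively compact in $L^2_t(L^2(B_R))$.

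Next I would run a diagonal extraction over $R=1,2,\dots$ to obtain a single subsequence, still denoted $(f_n)_n$, and a limit $f$ with $f_n\to f$ in $L^2_t(L^2(B_R))$ for every $R$; by lower semicontinuity together with the uniform bound $\int_0^T\|f_n\|_{L^2}^2\,\rd t\le C$ (which follows from $H^{1-\alpha}\hookrightarrow L^2$), one gets $f\in L^2_t(L^2(\R^2))$. The final and key step is to upgrade this local convergence to convergence in $L^2_t(L^2(\R^2;w))$. Splitting the integral at radius $R$ and using $w\le 1$ on $B_R$,
\[
\int_0^T\!\!\int_{\R^2}|f_n-f|^2 w\,\rd x\,\rd t \le \int_0^T\!\!\int_{B_R}|f_n-f|^2\,\rd x\,\rd t + \Big(\sup_{B_R^c}w\Big)\int_0^T\!\!\int_{\R^2}|f_n-f|^2\,\rd x\,\rd t .
\]
The last factor is bounded uniformly in $n$ by $2C+2\|f\|_{L^2_tL^2}^2$, so given $\eps>0$ I first choose $R$ so large that $\sup_{B_R^c}w$ renders the tail term below $\eps/2$ (possible since $\sup_{B_R^c}w\to0$), and then $n$ large so that the local term is below $\eps/2$. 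This yields $f_n\to f$ in $L^2_t(L^2(\R^2;w))$, proving the claimed compactness.

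The main obstacle, as noted, is the loss of compactness of the spatial embedding on the unbounded domain; the entire point is that the weight $w$ controls the tail uniformly, so that genuine compactness is invoked only on bounded balls via Rellich--Kondrachov. An equivalent alternative route would be to establish directly the compact embedding $H^{1-\alpha}(\R^2)\subset\subset L^2(\R^2;w)$ (Rellich on balls plus the same tail estimate) together with the Ehrling-type inequality $\|g\|_{L^2(\R^2;w)}\le \eps\|g\|_{H^{1-\alpha}}+C_\eps\|g\|_{H^{-3}}$ for $g\in H^{1-\alpha}$, and then to quote the abstract Aubin--Lions--Simon lemma; I prefer the localization argument above because it sidesteps the fact that $L^2(\R^2;w)$ does \emph{not} embed continuously into $H^{-3}$.
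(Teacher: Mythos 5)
Your proof is correct and follows essentially the same route as the paper's: localization to balls $B_R$, the Aubin--Lions--Simon compactness (with $C^\gamma$-in-time control in $H^{-3}(B_R)$) on each ball, a diagonal extraction in $R$, and the tail estimate $\sup_{B_R^c} w \to 0$ combined with the uniform $L^2_t(L^2)$ bound to upgrade local convergence to convergence in $L^2_t(L^2(\R^2;w))$. The paper invokes the same compactness statement via the stochastic Aubin--Lions lemma of Flandoli--Gatarek, so there is no substantive difference.
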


\begin{proof}
As in \cite[Section 2.5.1]{edmunds1996function}, for $s\in \R$, $R>0$, we define the space $H^s(B_R)$ as the space of restrictions of $H^s(\R^2)$ distributions to $B_R$, $H^s(B_R)$ is a separable Hilbert space with the norm
\begin{align*}
    \|h\|_{H^s(B_R)} := \inf\{\|\tilde{h}\|_{H^s(\R^2)} \mid \tilde{h}\in H^s(\R^2),\, \tilde{h}\mid_{B_R} = h\}.
\end{align*}
By \cite[Section 2.5.1]{edmunds1996function}, $H^s \subset H^{s^\prime}$ if $s > s^{\prime}$ and the embedding is compact.

By the stochastic Aubin-Lions lemma (see \cite[Theorem 2.1]{flandoli1995martingale}), for every $R>0$, the following embedding is compact:
\begin{equation*}
   S_R := L^2_t(H^{1-\alpha}(B_R))\cap C^{\gamma}_t(H^{-3}(B_R))
    \subset \subset L^2_t(L^2(B_R)) \simeq L^2([0,T]\times B_R).
\end{equation*}
Assume that $(f_n)_n$ is a bounded family of functions in $L^2_t(H^{1-\alpha})\cap C^{\gamma}_t(H^{-3})$. 
Then $(f_n)_n$ is also bounded in $S_R$, therefore it admits a subsequence $(f_{n_k})_k$ converging to a function $f^R\in L^2([0,T]\times B_R)$. By a diagonal argument we can make the subsequence $n_k$ independent of $R\in \mathbb{N}^+$. We define now
\begin{equation*}
    f(t,x) := \sum_{R\in \mathbb{N}^+} f^R(t,x) 1_{x\in B_R\setminus B_{R-1}},
\end{equation*}
(where $B_0 = \emptyset$), so that the restriction of $f$ to $B_R$ equals $f^R$ Lebesgue-a.e.. Then $f$ is in $L^2([0,T]\times \R^2)$ and
\begin{align*}
    \int_{0}^{T} \int_{\R^2}|f(t,x)|^2 \rd x \rd t &= \lim_{R\to \infty} \int_{0}^{T} \int_{B_R}|f(t,x)|^2 \rd x \rd t\\
    &=\lim_{R\to\infty}\lim_k \int_{0}^{T} \int_{B_R}|f_{n_k}(t,x)|^2 \rd x \rd t\\
    &\leq \liminf_k \int_{0}^{T} \int_{\R^2}|f_{n_k}(t,x)|^2 \rd x \rd t
    \leq \liminf_k \|f_{n_k}\|_{L^2_t(H^{1-\alpha})}^2
\end{align*}
For the convergence in $L^2_t(L^2(\R^2;w))$, we have
\begin{align*}
\int_{0}^{T} \int_{\R^2}|(f_{n_k}-f)(t,x)|^2 w(x)\rd x \rd t 
& \leq \int_{0}^{T} \int_{B_R}|(f_{n_k}-f)(t,x)|^2 w(x) \rd x \rd t \\
&\quad +
\int_{0}^{T} \int_{B_R^c}|(f_{n_k}-f)(t,x)|^2 w(x) \rd x \rd t \\
& \leq \int_{0}^{T} \int_{B_R}|(f_{n_k}-f)(t,x)|^2 \rd x \rd t \\
&\quad +
2\sup_{B_R^c}w \cdot  (\sup_{k}\|f_{n_k}\|_{L^2_t(L^2(\R^2))}^2+\|f\|_{L^2_t(L^2(\R^2))}^2)\\
&=:A_{R,k}+A_R
\end{align*}
For any $\eps>0$, we can choose $R_\eps\in\mathbb{N}$ large enough such that $A_{R_\eps}<\epsilon$, then we can choose $k_\epsilon$ such that, for every $k\ge k_\epsilon$, $A_{R_\eps,k}<\epsilon$. Hence $(f_{n_k})_k$ converges to $f$ in $L^2_t(L^2(\R^2;w))$. The proof is complete.
\end{proof}

We are now ready to prove our weak existence result Theorem \ref{thm: weak-existence}. As in the previous sections, $Q^\delta$, $G^\delta$ and $K^\delta$, $\omd_0$ satisfy the conditions in Section \ref{sec:regularization} and $\omd$ is the solution to the regularized equation \eqref{eq: regularized main}.

\textbf{Step 1:} \textit{Tightness}. Let $u^{\delta} = K\ast \omega^{\delta}$. We want to prove the thightness of the family $(u^{\delta})_{\delta \geq 0}$ in the space $L^2_t(L^2(\R^2;w))$. We recall the following equalities and bounds on $K\ast f$:
\begin{align*}
&\|K\ast f\|_{L^2} = (2\pi)^{-1}\|f\|_{\dot{H}^{-1}},\\
&\|K\ast f\|_{H^{1-\alpha}} \lesssim \|f\|_{\dot{H}^{-1}}+\|f\|_{H^{-\alpha}},\\
&\|K\ast f\|_{H^{-3}} = (2\pi)^{-1}\|f\|_{\tilde{H}^{-4}}.
\end{align*}
By Proposition \ref{prop:a_priori_bd}, Corollary \ref{cor:exit_time} and Corollary \ref{cor: time continuity prob}, we get, for $0<\gamma<1/2$, for some $C>0$,
\begin{align}
&\limsup_{\delta\to 0}\left(\sup_{t\in [0,T]}\E[\|u^\delta\|_{L^2}^2] + \int_0^T \E[\|u^\delta\|_{H^{1-\alpha}}^2] \rd t \right) \le C\|u_0\|_{L^2}^2,\label{eq:unif_bd_1}\\
&\lim_{M\to\infty}\sup_\delta P\{\sup_{t\in [0,T]}\|u^\delta_t\|_{L^2}^2 \ge M\} =0,\label{eq:unif_bd_3}\\
&\lim_{M\to \infty}\sup_\delta P\{\|u^\delta\|_{C_t^\gamma(H^{-3})}> M\} =0.\label{eq:unif_bd_2}
\end{align}

By Lemma \ref{lem:Aubin_Lions_mod}, for $M>0$ the set
\begin{equation*}
    K_M = 
    \{
    f \in L^2_t(L^2(\R;w)) \mid \|f\|_{L^2_t(H^{1-\alpha})} + \|f\|_{C_t^{\gamma}(H^{-3})} \leq M
    \}
\end{equation*}
is compact in $L^2_t(L^2(\R;w))$.
We get
\begin{align}
     P \{u^{\delta} \notin K_M \}
    & \leq  P \{ \|u^{\delta}\|_{L^2_t(H^{1-\alpha})} > \frac{M}{2} \}
    +  P \{ \|u^{\delta}\|_{C^{\gamma}_t(H^{-3})} > \frac{M}{2} \}\nonumber \\
    & \leq \frac{4}{M^2} \mathbb{E}[\|u^{\delta}\|_{L^2_t(H^{1-\alpha})}^2]
    +  P \{ \|u^{\delta}\|_{C^{\gamma}_t(H^{-3})} > \frac{M}{2} \}\label{eq: compactness}
\end{align}
By the bounds \eqref{eq:unif_bd_1} and \eqref{eq:unif_bd_2}, for every $\epsilon>0$ we can choose $M$ such that the right-hand side of \eqref{eq: compactness} is smaller than $\epsilon$.
Hence the laws of $u^\delta$ are tight in $L^2_t(L^2(\R^2;w))$. As a consequence, the laws of $(u^\delta,(W^k)_k)$ are tight on $L^2_t(L^2(\R^2;w))\times C_t^{\mathbb{N}_+}$ (with $\mathcal{B}(L^2_t(L^2(\R^2;w)))\otimes \mathcal{B}(C_t)^{\otimes \mathbb{N}_+}$ as $\sigma$-algebra).

\textbf{Step 2:} \textit{Convergence $P$-a.s. of a subsequence of copies}. By Skorohod representation theorem \cite[Theorem 3.30]{kallenberg1997foundations}, there exist a complete probability space $(\tilde{\Omega},\tilde{\mathcal{A}},\tilde{P})$, a sequence of $L^2_t(L^2(\R^2;w))\times C_t^{\mathbb{N}_+}$-valued random variables $(\tilde{u}^{\delta_n},(\tilde{W}^{k,\delta_n})_k)$ on $(\tilde{\Omega},\tilde{\mathcal{A}},\tilde{P})$, with $\delta_n\to 0 $ as $n\to\infty$, and a $L^2_t(L^2(\R^2;w))\times C_t^{\mathbb{N}_+}$-valued random variable $(\tilde{u},(\tilde{W}^k)_k)$ such that each $(\tilde{u}^{\delta_n},(\tilde{W}^{k,\delta_n})_k)$ has the same law of $(u^{\delta_n},(W^k)_k)$ and the sequence $(\tilde{u}^{\delta_n},(\tilde{W}^{k,\delta_n})_k)_n$ converges $\tilde{P}$-a.s. to $(\tilde{u},(\tilde{W}^k)_k)$ as $n\to\infty$. We call
\begin{align*}
\tilde{\omega}^{\delta_n}=\operatorname{curl} \tilde{u}^{\delta_n},\quad \tilde{\omega}=\operatorname{curl}\tilde{u}.
\end{align*}

Since each $\tilde{u}^\delta_n$ has the same law of $u^{\delta_n}$, the family $(\tilde{u}^\delta_n)_n$ satisfies the bounds \eqref{eq:unif_bd_1}, \eqref{eq:unif_bd_3} and \eqref{eq:unif_bd_2}. More precisely, for each $n$ there is a version of $\tilde{u}^{\delta_n}$ (which we will still call $\tilde{u}^{\delta_n}$) which has H\"older-continuous paths with values in $H^{-3}$ and satisfies the bounds \eqref{eq:unif_bd_1}, \eqref{eq:unif_bd_3} and \eqref{eq:unif_bd_2}. Measurability of $C_t^\gamma(H^{-3})$, $L^2_t(H^{1-\alpha})$
and $L^\infty_t(L^2)$
in $L^2_t(L^2(\R^2,w))$ follows from semicontinuity (see Lemma \ref{lem: semicontinuity} in the appendix)\footnote{We recall that, if $X_n$ and $X$ are random variables on a Polish space $\chi$, $X_n\to X$ $P$-a.s. and $g$ is a non-negative lower semi-continuous function on $\chi$, then $P\{g(X)>M\}\le P(\liminf_n\{g(X_n)>M\}) \le \sup_n P\{g(X_n)>M\}$. Moreover, writing $g$ as supremum of an increasing sequence of continuous functions, we have $\E[g(X)]\le \liminf\E[g(X_n)]$.}. Also the limit $\tilde{u}$ is in $C_t^\gamma(H^{-3})\cap L^\infty_t(L^2)$ $P$-a.s. (more precisely, $\tilde{u}$ has a version which is in $C_t^\gamma(H^{-3})\cap L^\infty_t(L^2)$), showing \eqref{eq: properties def} for $\tilde{\omega}$. Finally, since the $L^2_t(H^{1-\alpha})$ norm and the $L^2_t(L^2)$ norm are lower semi-continuous in $L^2_t(L^2(\R^2;w))$, then the limit $\tilde{u}$ also satisfies
\begin{align}
    \sup_{t\in [0,T]}\E[\|\tilde{u}_t\|_{L^2}^2] + \int_0^T \E[\|\tilde{u}_t\|_{H^{1-\alpha}}^2] \rd t \le C\|u_0\|_{L^2}^2.\label{eq:Hm alpha bd limit}
\end{align}
Indeed, we can pass to the limit in the uniform bound on $\sup_{t\in [0,T]}\E[\|\tilde{u}^{\delta_n}\|_{L^2}^2]$ as follows: First, by lower-semicontinuity of the $L^2_t(L^2)$ norm and the bound \eqref{eq:unif_bd_1}, we have $(2h)^{-1}\int_{t-h}^{t+h} \E[\|\tilde{u}\|_{L^2}^2] \le \liminf_n (2h)^{-1}\int_{t-h}^{t+h} \E[\|\tilde{u}^{\delta_n}\|_{L^2}^2] \le C\|u_0\|_{L^2}^2 <\infty$ for every $t\in (0,T)$ and $h>0$ small (for some $C$ independent of $t$ and $h$). Then, we let $h\to 0$ and get $\mathrm{esssup}_{t\in [0,T]}\E[\|\tilde{u}\|_{L^2}^2]<\infty$; the essential supremum becomes a supremum by path continuity of $\tilde{u}$ in $H^{-3}$.

From \eqref{eq:Hm alpha bd limit} we have property \eqref{eq:Hm alpha bd sol} for $\tilde{\omega}$.

Now we show that $(\tilde{\omega}^{\delta_n},(\tilde{W}^{k,\delta_n})_k)$ satisfies \eqref{eq: regularized main} on a suitable filtered probability space. For $t\in [0,T]$, we call $\tilde{\mathcal{F}}^{0,\delta}_t$ the $\sigma$-algebra on $\tilde{\Omega}$ generated by $\sigma\{\tilde{u}^{\delta_n}_s,(\tilde{W}^{\delta_n,k}_s)_k\mid s\le t\}$ (with $\tilde{u}^{\delta_n}_s$ as $H^{-3}$-valued random variables) and the $\tilde{P}$-null sets; we call then $\tilde{\mathcal{F}}^{\delta_n}_t= \cap_{t'>t} \tilde{\mathcal{F}}^{0,\delta_n}_{t'}$. Similarly we call $\tilde{\mathcal{F}}^{0}_t$ the $\sigma$-algebra generated by $\sigma\{\tilde{u}_s,(\tilde{W}^k_s)_k\mid s\le t\}$ (with $\tilde{u}_s$ as $H^{-3}$-valued random variables) and the $\tilde{P}$-null sets; we call then $\tilde{\mathcal{F}}_t= \cap_{t'>t} \tilde{\mathcal{F}}^{0}_{t'}$.

\begin{lemma}
For each $n$, the filtration $(\tilde{\mathcal{F}}^{\delta_n}_t)_t$ is complete and right-continuous; $(\tilde{W}^{\delta_n,k})_k$ is a cylindrical Brownian motion with respect to $(\tilde{\mathcal{F}}^{\delta_n}_t)_t$ and $\tilde{u}^{\delta_n}$ is $(\tilde{\mathcal{F}}^{\delta_n}_t)_t$-progressively measurable. The analogous statement holds for $(\tilde{\mathcal{F}}_t)_t$, $(\tilde{W}^k)_k$ and $\tilde{u}$.
\end{lemma}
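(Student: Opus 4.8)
The plan is to treat the two assertions separately, noting first that completeness and right-continuity of $(\tilde{\mathcal{F}}^{\delta_n}_t)_t$ are built into the construction. Since $\tilde{\mathcal{F}}^{0,\delta_n}_t$ is enlarged by the $\tilde P$-null sets, each $\tilde{\mathcal{F}}^{\delta_n}_t\supseteq\tilde{\mathcal{F}}^{0,\delta_n}_t$ contains all null sets and is complete; right-continuity is then automatic from the definition, since $\cap_{t''>t}\tilde{\mathcal{F}}^{\delta_n}_{t''}=\cap_{t''>t}\cap_{t'>t''}\tilde{\mathcal{F}}^{0,\delta_n}_{t'}=\cap_{t'>t}\tilde{\mathcal{F}}^{0,\delta_n}_{t'}=\tilde{\mathcal{F}}^{\delta_n}_t$. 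The same formal identities give completeness and right-continuity of $(\tilde{\mathcal{F}}_t)_t$.

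The substantive part is to show that $(\tilde W^{\delta_n,k})_k$ is a family of independent real Brownian motions with increments independent of the past and that $\tilde u^{\delta_n}$ is adapted, and I would transfer these properties from the original probability space through the equality in law furnished by Skorohod's theorem. Since $(\tilde u^{\delta_n},(\tilde W^{\delta_n,k})_k)$ and $(u^{\delta_n},(W^k)_k)$ have the same law on $L^2_t(L^2(\R^2;w))\times C_t^{\mathbb{N}_+}$, expectations of bounded continuous functionals of the two tuples coincide. On the original space $(W^k)_k$ is a sequence of independent $(\mathcal{F}_t)_t$-Brownian motions and $u^{\delta_n}$ is adapted, so for times $s_1\le\dots\le s_m\le s<t$ the increment vector $(W^k_t-W^k_s)_k$ is a centered Gaussian independent of $\sigma\{u^{\delta_n}_r,(W^j_r)_j\mid r\le s\}$; rewriting the resulting factorization identities in terms of the tilde variables and extending from bounded continuous test functions to the full generating $\sigma$-algebra by a monotone class argument yields that $(\tilde W^{\delta_n,k}_t-\tilde W^{\delta_n,k}_s)_k$ has the same Gaussian law and is independent of $\tilde{\mathcal{F}}^{0,\delta_n}_s$. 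This transfer of the conditional-independence (martingale) structure across the Skorohod coupling is the only genuine obstacle; the remaining points are bookkeeping. A mild care is needed because the state space is infinite-dimensional and there are countably many noises, so the test functionals should be taken cylindrical and the monotone class theorem applied on the algebra they generate.

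It then remains to pass to the completed, right-continuous filtration and to record progressive measurability. For $s'\in(s,t)$ the increment $\tilde W^{\delta_n,k}_t-\tilde W^{\delta_n,k}_{s'}$ is independent of $\tilde{\mathcal{F}}^{0,\delta_n}_{s'}\supseteq\tilde{\mathcal{F}}^{\delta_n}_s$; letting $s'\downarrow s$ and using continuity of the Brownian paths gives independence of $\tilde W^{\delta_n,k}_t-\tilde W^{\delta_n,k}_s$ from $\tilde{\mathcal{F}}^{\delta_n}_s$, so enlarging to the completed right-continuous filtration preserves the Brownian property. Finally $\tilde u^{\delta_n}_t$ is $\tilde{\mathcal{F}}^{\delta_n}_t$-measurable by construction and, in the version with $H^{-3}$-continuous trajectories, is a continuous adapted process, hence progressively measurable by the classical fact that a right-continuous adapted process is progressively measurable. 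Every step applies verbatim to the limit objects $(\tilde{\mathcal{F}}_t)_t$, $(\tilde W^k)_k$, $\tilde u$, once one uses that $\tilde u$ admits an $H^{-3}$-continuous version, which was already established above.
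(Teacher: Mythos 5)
Your proposal is correct and takes essentially the same route as the paper: the paper in fact omits the proof of this lemma entirely, referring to the classical argument of \cite[Lemma 4.12]{brzezniak2019existence}, and that argument is precisely your scheme --- completeness/right-continuity by construction, transfer of the independent-increment structure through the Skorohod equality in law via cylindrical functionals and a monotone class argument, passage to the augmented right-continuous filtration by letting $s'\downarrow s$ along continuous paths, and progressive measurability from adaptedness plus path continuity in $H^{-3}$.
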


This lemma is a classical technical tool, we omit the proof since it is completely analogous to e.g. the proof of \cite[Lemma 4.12]{brzezniak2019existence}.

For each $n$, since $u^{\delta_n}$ and $\tilde{u}^{\delta_n}$ have the same law in $C_t(H^{-3})$ (Notice that, by Lemma \ref{lem: semicontinuity} the $H^1$ norm is lower semicontinuous in $H^{-3}$), $\tilde{u}^{\delta_n}$ satisfies
\begin{align*}
\sup_{t\in [0,T]}\|\tilde{\omega}^{\delta_n}_t\|_{L^2\cap \dot{H}^{-1}} \approx   \sup_{t\in [0,T]}\|\tilde{u}^{\delta_n}_t\|_{H^1} <\infty.
\end{align*}

\begin{lemma}
For each $n$, the object $(\tilde{\Omega},\tilde{\mathcal{A}},(\tilde{\mathcal{F}}^{\delta_n}_t)_t,\tilde{P},(\tilde{W}^{\delta_n,k})_k,\tilde{\omega}^{\delta_n})$ is a weak solution in $\dot{H}^{-1}$ to \eqref{eq: regularized main} with $\delta=\delta_n$.
\end{lemma}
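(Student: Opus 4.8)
The plan is to verify the tested (weak) form of the regularized equation \eqref{eq: regularized main} for the copy $\tilde\omega^{\delta_n}$ by the martingale-identification method, which sidesteps the fact that the It\^o integral against the driving Brownian motions is not a pathwise-continuous functional of the trajectory. Fix $\delta=\delta_n$ and abbreviate $\tilde\omega=\tilde\omega^{\delta_n}$, $\tilde W^k=\tilde W^{\delta_n,k}$. It suffices to show that, for every $\phi$ in a fixed countable set $\mathcal D\subset C^\infty_c(\R^2)$ separating $H^{-4}$, one has $\tilde P$-a.s., for all $t\in[0,T]$,
\begin{align*}
\langle \tilde\omega_t,\phi\rangle = \langle \tilde\omega_0,\phi\rangle + \int_0^t \langle (K^\delta\ast\tilde\omega_r)\tilde\omega_r,\nabla\phi\rangle \rd r + \sum_k\int_0^t \langle \sigma^\delta_k\tilde\omega_r,\nabla\phi\rangle \rd\tilde W^k_r +\frac{c_\delta}{2}\int_0^t\langle\tilde\omega_r,\Delta\phi\rangle\rd r,
\end{align*}
since after integration by parts this identifies all the terms of \eqref{eq: regularized main} tested against $\phi$, and $\mathcal D$ separates $H^{-4}$.

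On the original probability space define the real process
\begin{align*}
M^\phi_t := \langle \omega^{\delta_n}_t,\phi\rangle - \langle\omega^{\delta_n}_0,\phi\rangle - \int_0^t \langle(K^\delta\ast\omega^{\delta_n}_r)\omega^{\delta_n}_r,\nabla\phi\rangle\rd r -\frac{c_\delta}{2}\int_0^t\langle\omega^{\delta_n}_r,\Delta\phi\rangle\rd r,
\end{align*}
and the analogous $\tilde M^\phi_t$ with $\tilde\omega$ in place of $\omega^{\delta_n}$. Since $\omega^{\delta_n}$ solves \eqref{eq: regularized main}, $M^\phi_t=\sum_k\int_0^t\langle\sigma^\delta_k\omega^{\delta_n}_r,\nabla\phi\rangle\rd W^k_r$; by the uniform $L^1\cap L^\infty$ bounds of Lemma \ref{lem: l infty and l 1}, the boundedness of $\sum_k|\sigma^\delta_k|^2$ from Lemma \ref{lem:Q_reguralized}, and the smoothness of $\phi$, this is a square-integrable continuous $(\mathcal F_t)_t$-martingale with $[M^\phi,M^\phi]_t=\sum_k\int_0^t|\langle\sigma^\delta_k\omega^{\delta_n}_r,\nabla\phi\rangle|^2\rd r$ and $[M^\phi,W^j]_t=\int_0^t\langle\sigma^\delta_j\omega^{\delta_n}_r,\nabla\phi\rangle\rd r$. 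The crucial point is that $M^\phi_t$, its bracket integrands, and the drift terms are continuous functionals of the path $r\mapsto\omega^{\delta_n}_r$ alone (they do not involve $W$). Hence, for $0\le s\le t\le T$ and any bounded continuous $\Phi$ of the joint path of $(\omega^{\delta_n},(W^k)_k)$ up to time $s$, the martingale identity and the two bracket relations can be written as vanishing of expectations of continuous functionals of the joint trajectory (truncated by stopping at the first exit of the relevant $\dot H^{-1}$ or $L^2$ norm, cf. Corollary \ref{cor:exit_time}); since $(\tilde\omega,(\tilde W^k)_k)$ has the same law and the same uniform bounds as $(\omega^{\delta_n},(W^k)_k)$ (transferred in Step 2), each identity passes verbatim to the tilde copy. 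This shows that $\tilde M^\phi$ is a continuous $(\tilde{\mathcal F}^{\delta_n}_t)_t$-martingale with $[\tilde M^\phi,\tilde M^\phi]_t=\sum_k\int_0^t|\langle\sigma^\delta_k\tilde\omega_r,\nabla\phi\rangle|^2\rd r$ and $[\tilde M^\phi,\tilde W^j]_t=\int_0^t\langle\sigma^\delta_j\tilde\omega_r,\nabla\phi\rangle\rd r$.

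It then remains to identify $\tilde M^\phi$ with the stochastic integral. Setting $N_t:=\tilde M^\phi_t-\sum_k\int_0^t\langle\sigma^\delta_k\tilde\omega_r,\nabla\phi\rangle\rd\tilde W^k_r$, which is a well-defined $L^2$-martingale because $\sum_k|\sigma^\delta_k|^2$ is bounded, a direct computation of brackets using the two relations above gives $[N,N]_t=0$, whence $N\equiv N_0=0$, so the tested equation holds $\tilde P$-a.s. for each $\phi\in\mathcal D$. Taking a common null set over the countable family $\mathcal D$ and using that $\mathcal D$ separates $H^{-4}$ together with the continuity of $t\mapsto\tilde\omega_t$ in $H^{-4}$, we obtain the regularized form of \eqref{eq: formula def 27} as an equality in $H^{-4}$; combined with the progressive measurability and the regularity \eqref{eq: properties def} already established for $\tilde\omega^{\delta_n}$ in Step 2, this proves that $(\tilde\Omega,\tilde{\mathcal A},(\tilde{\mathcal F}^{\delta_n}_t)_t,\tilde P,(\tilde W^{\delta_n,k})_k,\tilde\omega^{\delta_n})$ is a weak solution in $\dot H^{-1}$ to \eqref{eq: regularized main}. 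The main obstacle is precisely this stochastic-integral identification: the It\^o integral against $W$ cannot be read off pathwise and must be recovered indirectly through the martingale and bracket characterizations, with care that all expectations are finite — guaranteed here, for the fixed regularization level $\delta_n$, by the smoothness of $\sigma^\delta_k$ and $K^\delta$ and the uniform $L^1\cap L^\infty\cap\dot H^{-1}$ bounds transferred from the original copies.
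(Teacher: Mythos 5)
Your proof is correct: the martingale-identification argument (transfer the martingale property and the two bracket relations through equality of laws, then kill the difference $N$ by showing $[N,N]\equiv 0$) is precisely the classical route the paper invokes here, since the paper omits the proof and refers to the proof of \cite[Lemma 4.13]{brzezniak2019existence}, which proceeds in exactly this way. The only detail worth flagging is that the pointwise-in-time functionals $\langle\omega_t,\phi\rangle$ are not continuous on $L^2_t(L^2(\R^2;w))$, so the law transfer must be performed for the $C_t(H^{-3})$-valued versions constructed in Step 2, which your appeal to the transferred regularity implicitly does.
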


Also this lemma can be proved by classical tools, see e.g. the proof of \cite[Lemma 4.13]{brzezniak2019existence}, so we omit the proof here.

\textbf{Step 3:} \textit{Pass to the limit in the SPDE in the distributional sense}. For simplicity of notation, from now on we will rename $(\tilde{\omega}^{\delta_n},\tilde{u}^{\delta_n}, (\tilde{W}^{\delta_n,k})_k)$ as $(\omega^{\delta_n},u^{\delta_n},(W^{\delta_n,k})_k$, similarly for $(\tilde{\omega},\tilde{u}, (\tilde{W}^{k})_k)$. Let $\varphi \in C^{\infty}_c(\R^2)$, then $\omega^{\delta}= \operatorname{curl}u^{\delta}$ satisfies the following equation
\begin{align}
\begin{aligned}\label{eq:SPDE_approx_tilde}
        \langle \omega^{\delta_n}_t, \varphi \rangle
        & = \langle \omega^{\delta_n}_0, \varphi\rangle
        + \int_0^t \langle (K^{\delta_n} \ast \omega^{\delta_n}_r) \omega^{\delta_n}_r, \nabla \varphi \rangle \rd r
        + \sum_{k} \int_{0}^{t} \langle \sigma_k^{\delta_n}\omega^{\delta_n}_r, \nabla \varphi \rangle \rd W^{\delta_n,k}_r \\
      &\quad + \frac{c_{\delta_n}}{2} \int_{0}^{t} \langle \omega^{\delta_n}_r, \Delta \varphi \rangle \rd r
       := A_1 + A_2 + A_3 + A_4.
\end{aligned}
\end{align}
We will now send $n \to \infty$ in each term, possibly along a subsequence, to recover an equation for $\langle\omega,\varphi\rangle$.

We observe that, for any $R>0$,
\begin{equation}
    \label{eq: conv u delta on balls}
    \|u^{\delta_n}-u\|_{L^2_t(L^2(B_R))} \le
    \|u^{\delta_n}-u\|_{L^2_t(L^2(\R^2;w))} \to 0,
    \quad P\text{-a.s.}.
\end{equation}
For the left-hand side of the equation, since $\operatorname{supp}\varphi \subset B_R$ for some $R>0$, we have $P$-a.s.
\begin{align}\label{eq:conv LHS}
    \int_{0}^{t} |\langle \omega^{\delta_n}_s, \varphi \rangle - \langle \omega_{s}, \varphi \rangle |^2 \rd s
    & = \int_{0}^{t} |\langle u^{\delta_n}_s - u_{s}, \nabla^{\top} \varphi \rangle |^2 \rd s\\
    &\le \|\varphi\|_{H^1} \|u^{\delta_n}-u\|_{L^2_t(L^2(B_R))} \to 0,
\end{align}
so the left-hand side of \eqref{eq:SPDE_approx_tilde} converges to $\langle \omega_t,\varphi\rangle$ in $L^2([0,T])$, $P$-a.s..

By the same argument and the fact that $c_{\delta_n} \to \pi/(2\alpha)$ (see \eqref{eq:Q delta 0}), we have
\begin{align}\label{eq:conv A4}
    A_4 \to \frac{\pi}{4\pi}\int_0^t\langle \omega_r,\varphi\rangle \rd r \text{ in }C([0.T]),\quad P\text{-a.s.}.
\end{align}

By the condition \eqref{eq:omega zero convergence}, $\omega^{\delta_n}_0 \to \omega_0$ in $\dot{H}^{-1}$, which proves the convergence of $A_1$.

We prove now the convergence, up to a subsequence,
\begin{equation}
    \label{eq: nonlinear term}
    A_2\to \int_{0}^{t} \langle u u^{\top}, \nabla \nabla^{\perp} \varphi \rangle
    \rd r \text{ in }C([0,T]),\quad P\text{-a.s.}.
\end{equation}
Recall equality \eqref{eq: curl div} for $\diverg(K\ast \omega^\delta)\omega^\delta$. We have
\begin{align*}
   & \left |
        \int_{0}^{t}
        \left(
        \langle (K^{\delta_n}\ast \omega^{\delta_n}_r) \omega^{\delta_n}_r, \nabla \varphi \rangle 
        - \langle u_ru_r^\top, \nabla\nabla^\perp \varphi \rangle 
        \right)
        \rd r
    \right | 
    \\
    & \leq 
    \left |
        \int_{0}^{t} \langle ((K^{\delta_n} - K) \ast \omega^{\delta_n}_r)\omega^{\delta_n}_r, \nabla \varphi \rangle \rd r
    \right |
    + \left |
        \int_{0}^{t}
        \left(
        \langle (K\ast \omega^{\delta_n}_r) \omega^{\delta_n}_r, \nabla \varphi \rangle 
        - \langle u_ru_r^\top, \nabla\nabla^\perp \varphi \rangle 
        \right)
        \rd r
    \right | \\
    &= \left |
        \int_{0}^{t} \langle ((K^{\delta_n} - K) \ast \omega^{\delta_n}_r)\omega^{\delta_n}_r, \nabla \varphi \rangle \rd r
    \right |
    +  \left |
        \int_{0}^{t} 
        \left(
        \langle 
        u^{\delta_n}_r u^{\delta_n, \top}_r -
        u_r u^{\top}_r, \nabla \nabla^{\perp} \varphi \rangle
        \right)        
        \rd r
    \right | \\
    & =: A_{21} + A_{22}.
\end{align*}    
For the term $A_{21}$, we have, for any $1\le p<\infty$, (note that $\|\cdot\|_{H^{-4}} \leq \|\cdot\|_{\tilde{H}^{-4}}$)
\begin{align*}
    \E[\sup_{t\in [0,T]}A_{21}^p]\le T\|\varphi\|_{H^4}^p  \E\left[\sup_{r\in [0,T]}\left\| \diverg [((K-K^{\delta_n})\ast \omega^{\delta_n}_r)\omega^{\delta_n}_r] \rd r\right\|_{\tilde{H}^{-4}}^p\right] \to 0,
\end{align*}
where the convergence follows from the the estimate \eqref{eq: A minus A tilde} (with $s=0$). Hence, passing to a subsequence, $A_{21}$ tends to $0$ in $C([0,T])$, $P$-a.s.. For the term $A_{22}$, by Cauchy-Schwartz inequality and \eqref{eq: conv u delta on balls}, taking $R>0$ such that $\operatorname{supp}\varphi \subset B_R$., we get
\begin{align*}
    A_{22} \leq \|\varphi \|_{C^2}
    \left(    \|u^{\delta_n}\|_{L^2_t(L^2(B_R))}^2 + \|u\|_{L^2_t(L^2(B_R))}^2 
    \right)^{1/2}
    \|u^{\delta_n} - u\|_{L^2_t(L^2(B_R))}
    \to 0\quad P\text{-a.s.}.
\end{align*}

Concerning the convergence of the stochastic integral $A_3$, we use a classical result, see e.g. \cite[Lemma 2.1]{debussche2011local}, here in the version \cite[Lemma 4.3]{bagnara2023no}: if
\begin{align}
\sum_k \int_0^T \left|\langle \sigma_k^{\delta_n}\omega^{\delta_n}_r,\nabla\varphi\rangle -\langle \sigma_k\omega_r,\nabla\varphi\rangle\right|^2 \rd r \to 0 \quad \text{in probability},\label{eq:conv_stoch_integrand}
\end{align}
then
\begin{align}\label{eq:conv diffusion coeff}
\sup_{t\in [0,T]} \left|\sum_k \int_{0}^{t} \langle \sigma_k^{\delta_n}\omega^{\delta}_r, \nabla \varphi \rangle \rd W^{\delta_n,k}_r - \sum_k \int_{0}^{t} \langle \sigma_k\omega_r, \nabla \varphi \rangle \rd W^k_r\right| \to 0\quad \text{in probability}.
\end{align}
To show \eqref{eq:conv_stoch_integrand}, we split
\begin{align*}
\sum_k \int_0^T \left|\langle \sigma_k^{\delta_n}\omega^{\delta_n}_r,\nabla\varphi\rangle -\langle \sigma_k\omega_r,\nabla\varphi\rangle\right|^2 \rd r &\lesssim \sum_k \int_0^T \left|\langle \sigma_k(\omega^{\delta_n}_r-\omega_r),\nabla\varphi\rangle \right|^2 \rd r\\
&\quad + \sum_k \int_0^T \left|\langle (\sigma_k^{\delta_n}-\sigma_k)\omega^{\delta_n}_r,\nabla\varphi\rangle \right|^2 \rd r\\
&=: A_{31}+A_{32}.
\end{align*}
To bound the term $A_{31}$, we consider a $C^\infty_c$ function $\psi$ with $0\le \psi\le 1$ on $\R^2$ and $\psi=1$ on the support of $\varphi$. We have
\begin{align*}
A_{31} &= \sum_k \int_0^T \left|\langle \sigma_k(\omega^{\delta_n}_r-\omega_r)\psi,\nabla\varphi\rangle \right|^2 \rd r\\
&\le \|\varphi\|_{H^3}^2 \sum_k \int_0^T \|\sigma_k(\omega^{\delta_n}_r-\omega_r)\psi\|_{H^{-2}}^2 \rd r.
\end{align*}
Now we proceed as in the bound \eqref{eq: sum sigma H minus 2} for $\sum_k\|\sigma_k^{\delta,R} \omd\|_{H^{-2}}$ in the proof of Lemma \ref{lem: C gamma of H minus 3}, replacing $\sigma_k^\delta$ with $\sigma_k$ and $\omd$ with $(\omega^{\delta_n}-\omd)\psi$ (see Remark \ref{rmk:extension product}) and we get
\begin{align*}
\sum_k \|\sigma_k(\omega^{\delta_n}-\omega)\psi\|_{H^{-2}}^2 \lesssim \|(\omega^{\delta_n}-\omega)\psi\|_{H^{-1}}^2.
\end{align*}
For the right-hand side, we have $P$-a.s., (with multiplicative constant depending on the support of $\varphi$)
\begin{align*}
\|(\omega^{\delta_n}-\omega)\psi\|_{L^2_t(H^{-1})} &= \|\operatorname{curl}((u^{\delta_n}-u)\psi) +(u^{\delta_n}-u)\cdot\nabla^\perp \psi \|_{L^2_t(H^{-1})}\\
&\le \|(u^{\delta_n}-u)\psi\|_{L^2_t(L^2))} +\|(u^{\delta_n}-u)\cdot\nabla^\perp \psi\|_{L^2_t(L^2)}\\
&\lesssim \|u^{\delta_n}-u\|_{L^2_t(L^2(\R^2;w))} \to 0
\end{align*}
Therefore we obtain
\begin{align*}
A_{31}\lesssim \|\varphi\|_{H^3}\|(\omega^{\delta_n}-\omega)\psi\|_{L^2_t(H^{-1})}\to 0\quad P\text{-a.s.}.
\end{align*}
For $A_{32}$, we have
\begin{align*}
    A_{32}\le \|\varphi\|_{H^3}^2 \sum_k\int_0^T \|(\sigma_k^{\delta_n}-\sigma_k)\omega^{\delta_n}\|_{H^{-2}}^2 \rd r.
\end{align*}
Recalling Lemma \ref{lem:Q_reguralized} and Remark \ref{rmk:sigma_delta_1}
and proceeding as in the equality \eqref{eq:Hm2 omega Q}, replacing $\sigma_k^\delta$ with $\sigma_k^{\delta_n}-\sigma_k$ (see again Remark \ref{rmk:extension product}), we get
\begin{align}
    &\sum_k\|(\sigma_k^{\delta_n}-\sigma_k)\omega^{\delta_n}\|_{H^{-2}}^2\nonumber\\
    &= \sum_k \int \langle n\rangle^{-4}|\mathcal{F}((\sigma_k^{\delta_n}-\sigma_k)\omega^{\delta_n})(n)|^2 \rd n\nonumber\\
    &= \sum_k \int \langle n \rangle^{-4} \iint (\sigma^{\delta_n}_k(x)-\sigma_k(x))\omd(x)e^{-2\pi ix\cdot n}\cdot (\sigma^{\delta_n}_k(y)-\sigma_k(y))\omd(y)e^{2\pi iy\cdot n} \rd x\rd y \rd n\nonumber\\
    &= \iint \omega^{\delta_n}(x)\omega^{\delta_n}(y) \tr [Q+Q^{\delta_n}-2Q^{\delta_n,h}](x-y) \int \langle n \rangle^{-4} e^{-2\pi i(x-y)\cdot n} \rd n \rd x\rd y\nonumber\\
    &= \int |\widehat{\omega^{\delta_n}}(n)|^2 \widehat{\phi}(n) \rd n,\label{eq:sigma delta minus sigma}
\end{align}
where
\begin{align*}
    \phi(x) = \tr[Q+Q^{\delta_n}-2Q^{\delta_n,h}](x) \int \langle n \rangle^{-4} e^{-ix\cdot n} \rd n.
\end{align*}
For the Fourier transform of $\phi$ we have, for every $n$,
\begin{align}
\widehat{\phi}(n) &= \int \tr[Q+Q^{\delta_n}-2Q^{\delta_n,h}](n-n^\prime)\cdot \langle -n^\prime \rangle^{-4} \rd n^\prime\nonumber \\
&\lesssim \int_{|n-n^\prime|>1/{\delta_n}} \langle n-n^\prime \rangle^{-2-2\alpha} \langle n^\prime \rangle^{-4} \rd n^\prime.\label{eq:A32_intermediate}
\end{align}
If $|n|\le 1/(2{\delta_n})$, then the term in \eqref{eq:A32_intermediate} can be bounded (up to a multiplicative constant) by
\begin{align*}
&\delta_n^{2+2\alpha} \int_{|n-n^\prime|>1/{\delta_n}} \langle n^\prime \rangle^{-4} \rd n^\prime 
\le \delta_n^{2+2\alpha} \int_{|n^\prime|>|n|} \langle n^\prime \rangle^{-4} \rd n^\prime \lesssim \delta_n^{2+2\alpha} \langle n\rangle^{-2},
\end{align*}
where we have used that, if $|n-n^\prime|>1/{\delta_n}$, then $|n^\prime| \ge |n^\prime-n|-|n| \ge 1/{\delta_n} \ge |n|$. If instead $|n|>1/(2{\delta_n})$, then, as in \eqref{eq:psi Fourier bd}, the term in \eqref{eq:A32_intermediate} can be bounded (up to a multiplicative constant) by $\langle n\rangle^{-2}$. Hence we arrive at
\begin{align*}
    \widehat{\phi}(n) \lesssim {\delta_n}^{2+2\alpha} \langle n\rangle^{-2} 1_{|n|\le 1/(2{\delta_n})} +\langle n\rangle^{-2} 1_{|n|> 1/(2{\delta_n})}.
\end{align*}
Inserting this inequality in \eqref{eq:sigma delta minus sigma}, we get
\begin{align*}
    \sum_k\|(\sigma_k^{\delta_n}-\sigma_k)\omega^{\delta_n}\|_{H^{-2}}^2 &\lesssim \delta_n^{2+2\alpha}\int |\widehat{\omega^{\delta_n}}(n)|^2 \langle n\rangle^{-2} \rd n +\int |\widehat{\omega^{\delta_n}}(n)|^2 \langle n\rangle^{-2} 1_{|n|> 1/(2\delta_n)} \rd n\\
    &\lesssim \delta_n^{2+2\alpha}\|\omega^{\delta_n}\|_{H^{-1}}^2 +\delta_n^{2-2\alpha} \|\omega^{\delta_n}\|_{H^{-\alpha}}^2.
\end{align*}
Therefore we obtain
\begin{align*}
    \E[A_{32}]\lesssim \|\varphi\|_{H^3} (\delta_n^{2+2\alpha}\E[\|\omega^{\delta_n}\|_{L^2_t(H^{-1})}^2] +\delta_n^{2-2\alpha} \E[\|\omega^{\delta_n}\|_{L^2_t(H^{-\alpha})}^2])\to 0
\end{align*}
and so, up to taking a subsequence, $A_{32}$ converges to $0$ in probability. Hence \eqref{eq:conv_stoch_integrand} holds up to a subsequence and so \eqref{eq:conv diffusion coeff} holds up to a subsequence.

Putting together \eqref{eq:conv LHS}, \eqref{eq:conv A4}, \eqref{eq: nonlinear term} and \eqref{eq:conv diffusion coeff} (and the convergence of $A_1$), we can pass to the limit in \eqref{eq:SPDE_approx_tilde}, up to a subsequence, in $L^2([0,T])$ in probability and get the equation for $\langle \omega,\varphi\rangle$, for every $\varphi$ in $C^\infty_c$, $P$-a.s.: for a.e. $t\in [0,T]$,
\begin{align}
        \langle \omega_t, \varphi \rangle
        & = \langle \omega_0, \varphi\rangle
        + \int_0^t \langle uu^\top, \nabla \nabla^\perp\varphi \rangle \rd s
        + \sum_{k\in \mathbb{N}} \int_{0}^{t} \langle \sigma_k\omega_s, \nabla \varphi \rangle \rd W^{k}_s 
      + \frac{c}{2} \int_{0}^{t} \langle \omega_s, \Delta \varphi \rangle \rd s \label{eq:main_test}
    \end{align}
Since the left-hand side and all the integrals in \eqref{eq:main_test} are continuous in time, this equation holds for every $t$ (on a $P$-full set independent of $t$).

\textbf{Step 4:} \textit{Conclusion}. To conclude that $\omega$ is a weak solution to \eqref{eq:main} (in the sense of Definition \ref{def:def_main}), it is enough to remove the test function in the formulation \eqref{eq:main_test}. We start noting that, from the bounds in the proof of Lemma \ref{lem: C gamma of H minus 3}, since $\omega$ is in $L^\infty_t(H^{-1})\cap C_t(H^{-4})$, $\omega$ and all the integrals in \eqref{eq: formula def 27} are well defined and continuous with values in $H^{-4}$. For each $\varphi$ in $C^\infty_c$, equation \eqref{eq: formula def 27} holds (for every $t$) tested against $\varphi$, on a $P$-full set which might depend on $\varphi$. We can make the $P$-null set $\Omega_0\subseteq \Omega$ independent of $\varphi$, for $\varphi \in C^\infty_c$ in a countable dense set of $H^4$. Hence \eqref{eq: formula def 27} holds, for every $t$, on the $P$-full set $\Omega_0$. The proof of Theorem \ref{thm: weak-existence} is complete.

\section{Uniqueness}
\label{sec: uniqueness}

In this section, we prove pathwise uniqueness for \eqref{eq:main} in the space $L^p$, that is Theorem \ref{thm:Lp_wellposed}. The idea of the proof is to estimate the $\dot{H}^{-1}$ norm of the difference of two solutions $\omega^1$, $\omega^2$ and get advantage both of the control of the $H^{-\alpha}$ norm of $\omega^1-\omega^2$, due to the noise, and the uniform $L^p$ bound on $\omega^1$ and $\omega^2$.

We need the following result, which follows from \cite{bahouri2011fourier}:

\begin{lemma}\label{lem:product_Sobolev}
    Let $0<\alpha<1/2$ be given. We have the following inequality:
    \begin{equation*}
    \| f g \|_{\dot{H}^\alpha} \lesssim \| f \|_{\dot{H}^{1-\alpha}} \|g\|_{\dot{H}^{2\alpha}}.
\end{equation*}
\end{lemma}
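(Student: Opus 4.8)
The plan is to recognize the inequality as the endpoint of the standard product law in homogeneous Sobolev spaces: in dimension two the pointwise product maps $\dot{H}^{s_1}\times\dot{H}^{s_2}$ continuously into $\dot{H}^{s_1+s_2-1}$ whenever $s_1<1$, $s_2<1$ and $s_1+s_2>0$ (a corollary of the paraproduct and remainder estimates in \cite{bahouri2011fourier}). Applying this with $s_1=1-\alpha$ and $s_2=2\alpha$ produces the target regularity $s_1+s_2-1=\alpha$, and the three structural conditions become $1-\alpha<1$, $2\alpha<1$ and $1+\alpha>0$; the first two are exactly the hypotheses $\alpha>0$ and $\alpha<1/2$, while the last is automatic. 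So the claim is precisely the range of the product law permitted by $0<\alpha<1/2$.

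To make the argument self-contained I would run Bony's decomposition $fg=T_fg+T_gf+R(f,g)$ and bound each piece via Littlewood--Paley theory. For the paraproduct $T_fg$ I would use the embedding $\dot{H}^{1-\alpha}\hookrightarrow\dot{B}^{-\alpha}_{\infty,\infty}$ (valid since $(1-\alpha)-1=-\alpha<0$) together with the negative-index paraproduct estimate, giving $\|T_fg\|_{\dot{H}^\alpha}\lesssim\|f\|_{\dot{B}^{-\alpha}_{\infty,\infty}}\|g\|_{\dot{H}^{2\alpha}}\lesssim\|f\|_{\dot{H}^{1-\alpha}}\|g\|_{\dot{H}^{2\alpha}}$. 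Symmetrically, for $T_gf$ I would use $\dot{H}^{2\alpha}\hookrightarrow\dot{B}^{2\alpha-1}_{\infty,\infty}$ (valid since $2\alpha-1<0$, i.e. $\alpha<1/2$) and the same paraproduct bound to obtain $\|T_gf\|_{\dot{H}^\alpha}\lesssim\|g\|_{\dot{H}^{2\alpha}}\|f\|_{\dot{H}^{1-\alpha}}$.

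The remainder term is where the condition $s_1+s_2>0$ enters and is the only genuinely delicate point. Writing $R(f,g)=\sum_k\Delta_kf\,\tilde\Delta_kg$ with comparable frequencies, each summand lies in $L^1$ with $\|\Delta_kf\,\tilde\Delta_kg\|_{L^1}\lesssim 2^{-k(1+\alpha)}e_k\|f\|_{\dot{H}^{1-\alpha}}\|g\|_{\dot{H}^{2\alpha}}$ for some $(e_k)\in\ell^1$; applying the Bernstein inequality $\|\Delta_jh\|_{L^2}\lesssim 2^{j}\|h\|_{L^1}$ (dimension two) and summing over $k\gtrsim j$ yields $2^{j\alpha}\|\Delta_jR\|_{L^2}\lesssim\sum_{k\ge j-N}2^{(j-k)(1+\alpha)}e_k$, which is an $\ell^2$ sequence in $j$ by Young's inequality precisely because $1+\alpha>0$ makes the convolution kernel summable. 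I expect this remainder estimate, together with the careful bookkeeping of the two Besov embeddings, to be the main obstacle: since neither factor lies in $L^\infty$ (both regularities sit below the critical index $d/2=1$), a crude H\"older argument is insufficient and one must exploit the frequency-localized gain throughout.
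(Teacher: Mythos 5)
Your proposal is correct and takes essentially the same route as the paper: the paper identifies $\dot{H}^{\alpha}$ with $\dot{B}^{\alpha}_{2,2}$ and invokes the product law of Bahouri--Chemin--Danchin (Proposition 2.20 and Corollary 2.55) with exactly your choice of indices $s_1=1-\alpha$, $s_2=2\alpha$, the constraints $s_1<d/2=1$, $s_2<1$ giving the hypothesis $0<\alpha<1/2$. Your additional Bony-decomposition argument (the two paraproduct bounds via the embeddings $\dot{H}^{1-\alpha}\hookrightarrow\dot{B}^{-\alpha}_{\infty,\infty}$, $\dot{H}^{2\alpha}\hookrightarrow\dot{B}^{2\alpha-1}_{\infty,\infty}$, plus the remainder estimate using $s_1+s_2=1+\alpha>0$) is precisely the proof of that cited product law, so it makes the argument self-contained but does not constitute a different approach.
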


\begin{proof}
    From \cite[Example at page 63]{bahouri2011fourier} we know that the homogeneous Besov space $\dot{B}_{2,2}^{\alpha}$ coincides with the homogeneous Sobolev space $\dot{H}^{\alpha}$ when $0<\alpha<1$. Hence we can apply \cite[Proposition 2.20 and Corollary 2.55]{bahouri2011fourier} to obtain
    \begin{align*}
        \|f g \|_{\dot{H}^{\alpha}}
        \lesssim
        \|f g \|_{\dot{B}^\alpha_{2,1}}
        \lesssim \|f\|_{\dot{H}^{1-\alpha}}\|g\|_{\dot{H}^{2\alpha}}.
    \end{align*}
   Note that in order for the last bound to hold, we need $1-\alpha<1$ and $2\alpha < 1$, which gives us the condition $0<\alpha < 1/2$. The proof is complete.
\end{proof}

\begin{proof}[Proof of Theorem \ref{thm:Lp_wellposed}]

\textbf{Existence.} From Theorem \ref{thm: weak-existence} and its proof, we know that there exists a solution in $\dot{H}^{-1}(\R^2)$, which is the limit in law in $L^2_t(L^2(\R^2;w))$ of a family of solutions $\omd$ to the regularized equation \ref{eq: regularized main}. The bound \eqref{eq: Lp bound} follows by passing to the limit in the bounds in Lemma \ref{lem: l infty and l 1}, using the fact \eqref{eq:omega zero Lp} on the initial condiion and the lower semi-continuity of the $L^\infty_t(L^p\cap L^1)$ norm in the space $L^2_t(L^2(\R^2;w))$.

\textbf{Uniqueness.}
Let $\omega^1$ and $\omega^2$ be two $\dot{H}^{-1}$ solutions to \eqref{eq:main}, on the same filtered probability space $(\Omega,\mathcal{A},(\mathcal{F}_t)_t,P)$ (satisfying the standard assumption) and with respect to the same cylindrical Brownian motion $(W^k)_k$, satisfying $\omega_1, \omega_2 \in L^\infty([0,T]\times\Omega;L^p(\R^2)\cap L^1(\R^2))$. Note that the nonlinear term satisfies relation \eqref{eq: curl div}: this is clear for $p\ge 2$, while for $3/2<p<2$ one can use for example that under our assumptions, for $\eps>0$, $\|\omega^i(K\ast \omega^i)\|_{\dot{H}^{1-\alpha-\eps}}\lesssim \|\omega^i\|_{\dot{H}^{-\alpha-\eps}}\|\omega^i\|_{\dot{H}^{2(\alpha+\eps)-1}} \lesssim \|\omega^i\|_{\dot{H}^{-\alpha-\eps}}\|\omega^i\|_{L^1\cap L^p}$, by similar arguments as in the estimates below on $I_1$ and $I_2$. 

The difference $\omega:=\omega^1-\omega^2$ satisfies the equality in $H^{-4}$
\begin{align*}
\rd\omega + [(K\ast\omega^1)\cdot\nabla \omega +(K\ast\omega)
\cdot \nabla \omega^2]\rd t +\sum_{k=1}^\infty \sigma_k\cdot\nabla \omega \rd W^k = \frac{\pi}{4\alpha} \Delta\omega \rd t. 
\end{align*}
We take $G^\delta$ as in \eqref{eq: regularization green kernel} and we apply It\^o formula to $\langle \omega, G^\delta*\omega \rangle$: as in the proof of Lemma \ref{lem:Hm1 norm formula}, we obtain
\begin{align}
    \rd \langle \omega,G^\delta\ast\omega \rangle
    &= 2\langle \nabla G^\delta\ast \omega, (K\ast \omega^1)\omega\rangle \rd t +2\langle \nabla G^\delta\ast \omega, (K\ast \omega)\omega^2\rangle \rd t
    +2\sum_k M_k \rd W^k \nonumber \\
    &\quad +\iint \tr[(Q(0)-Q(x-y))D^2G^\delta(x-y)]\omega(x)\omega(y)\rd x\rd y \rd t \nonumber\\
    &=: (2I_1+2I_2)\rd t +2\sum_k M_k \rd W^k +J\rd t, \label{eq: ito uniqueness}
\end{align}
where $M_k  = \langle \nabla G^\delta\ast\omega, \sigma_k\omega\rangle$. Note that, for each $\delta>0$, since $\omega$ is in $L^\infty([0,T]\times\Omega;L^1(\R^2))$ and $\nabla G^\delta$ and $Q$ are bounded,
\begin{align*}
\sup_{t\in [0,T]}\sum_k \E[|M_k(t)|^2] & = \iiiint \tr[\nabla G^\delta(x-x') \cdot Q(x-y)\nabla G^\delta(y-y')]\\
&\quad \cdot\E[\omega(x')\omega(y')\omega(x)\omega(y)] \rd x\rd x'\rd y\rd y'<\infty
\end{align*}
and hence the It\^o integral is a martingale with zero expectation.

Concerning term $I_1$, the idea is to control $\omega$ by its $H^{-\alpha}$ norm and $K\ast \omega^1$ by its $L^p$ norm. We fix $\epsilon>0$ such that $\alpha+\epsilon<\min\{1-1/p,1/2\}$. We exploit Lemma \ref{lem:product_Sobolev} and get, using \eqref{eq:nabla G bound H},
\begin{align*}
    |I_1| &= |\langle \nabla G^\delta\ast \omega, (K\ast \omega^1)\omega\rangle|\\
    &\le \|\omega\|_{\dot{H}^{-\alpha-\epsilon}} \|(K\ast \omega^1)\nabla G^\delta\ast \omega\|_{\dot{H}^{\alpha+\epsilon}}\\
    &\lesssim \|\omega\|_{\dot{H}^{-\alpha-\epsilon}} \|K\ast \omega^1\|_{\dot{H}^{2(\alpha+\epsilon)}} \|\nabla G^\delta\ast \omega\|_{\dot{H}^{1-\alpha-\epsilon}}\\
    &\lesssim \|\omega\|_{\dot{H}^{-\alpha-\epsilon}} \|\omega^1\|_{\dot{H}^{2(\alpha+\epsilon)-1}} \|\omega\|_{\dot{H}^{-\alpha-\epsilon}} = \|\omega^1\|_{\dot{H}^{2(\alpha+\epsilon)-1}} \|\omega\|_{\dot{H}^{-\alpha-\epsilon}}^2.
\end{align*}
Taking $1/\tilde{p}=1-\alpha-\epsilon$ (so $1<\tilde{p}<p$), thanks to the Sobolev embedding $L^{\tilde{p}}\subseteq \dot{H}^{2(\alpha+\epsilon)-1}$ (e.g. \cite[Corollary 1.39]{bahouri2011fourier}), we have
\begin{align*}
|I_1|\lesssim \|\omega^1\|_{L^{\tilde{p}}} \|\omega\|_{\dot{H}^{-\alpha-\epsilon}}^2.
\end{align*}
By assumption, the $L^p$ norm and the $L^1$ norm of $\omega^1$ are bounded uniformly on $[0,T]\times\Omega$, hence there exists a constant $C_{\omega^1}>0$, such that (calling $\theta:=1-p^{\prime}/\tilde{p}^{\prime}$), $dt\otimes P$-a.s., 
\begin{align}
\label{eq: Lp initial condition}
\|\omega^1_t\|_{L^{\tilde{p}}} \le  \|\omega^1_t\|_{L^1}^\theta\|\omega^1_t\|_{L^p}^{1-\theta} \le \|\omega^1_t\|_{L^1}+\|\omega^1_t\|_{L^p} \le C_{\omega^1},
\end{align}
Thanks to the interpolation result \cite[Theorem 2.80]{bahouri2011fourier}, we have for $\bar{\epsilon} > 0$ to be determined later
\begin{align*}
    \|\omega\|_{\dot{H}^{-\alpha-\epsilon}}^2 &\le \|\omega\|_{\dot{H}^{-1}}^2 +\|\omega\|_{H^{-\alpha-\epsilon}}^2\\
    &\lesssim \|\omega\|_{\dot{H}^{-1}}^2 +\|\omega\|_{H^{-\alpha}}^{2-2\epsilon/(1-\alpha)}\|\omega\|_{H^{-1}}^{2\epsilon/(1-\alpha)}\\
    &\le \|\omega\|_{\dot{H}^{-1}}^2 +\bar{\epsilon}\|\omega\|_{H^{-\alpha}}^2 +C_{\bar{\epsilon}}\|\omega\|_{H^{-1}}^2,
\end{align*}
where $C_{\bar{\epsilon}} \approx_\epsilon \bar{\epsilon}^{1-\frac{1-\alpha}{\epsilon}}$. Taking expectation and integrating in time, we get, for some $\bar{c}>0$ independent of $\bar{\epsilon}$,
\begin{equation}
\label{eq: bound A1}
    \int_0^t\E[|I_1|] \rd r\le C_{\omega_0^1} (1+C_{\bar{\epsilon}}) C\int_0^t\E[\|\omega_r\|_{\dot{H}^{-1}}^2] \rd r + C_{\omega_0^1} \tilde{c} \bar{\epsilon} \int_0^t\E[\|\omega_r\|_{H^{-\alpha}}^2] \rd r.
\end{equation}

Concerning term $I_2$, the main point is that
\begin{align*}
    \langle \nabla G\ast \omega, (K\ast \omega)\omega^2 \rangle = 0,
\end{align*}
so it is enough to control the remainder term with $G-G^\delta$. We have, again by Lemma \ref{lem:product_Sobolev},
\begin{align*}
    |I_2|&= |\langle \nabla (G^\delta-G)\ast \omega, (K\ast \omega)\omega^2\rangle|\\
    &\le \|\omega^2\|_{\dot{H}^{-\alpha-\epsilon}} \|(K\ast \omega) \nabla (G^\delta-G)\ast \omega\|_{\dot{H}^{\alpha+\epsilon}}\\
    &\lesssim \|\omega^2\|_{\dot{H}^{-\alpha-\epsilon}} \|K\ast \omega\|_{\dot{H}^{2(\alpha+\epsilon)}} \|\nabla (G^\delta-G)\ast \omega\|_{\dot{H}^{1-\alpha-\epsilon}}\\
    &\le \|\omega^2\|_{\dot{H}^{-\alpha-\epsilon}} \|\omega\|_{\dot{H}^{2(\alpha+\epsilon)-1}} \|\nabla (G^\delta-G)\ast \omega\|_{\dot{H}^{1-\alpha-\epsilon}}.
\end{align*}
Using again \cite[Corollary 1.39]{bahouri2011fourier} and the $L^{\tilde p}$ bounds \eqref{eq: Lp initial condition}, with $1/\tilde{p} = 1-\alpha-\epsilon$, we get
\begin{align*}
    |I_2|&\le \|\omega^2\|_{\dot{H}^{-\alpha-\epsilon}}(\|\omega^1\|_{L^{\tilde{p}}}+\|\omega^2\|_{L^{\tilde{p}}}) \|\nabla (G^\delta-G)\ast \omega\|_{\dot{H}^{1-\alpha-\epsilon}}\\
    & \le \|\omega^2\|_{\dot{H}^{-\alpha-\epsilon}} (C_{\omega^1} + C_{\omega^2}) \|\nabla (G^\delta-G)\ast \omega\|_{\dot{H}^{1-\alpha-\epsilon}}.
\end{align*}
Taking expectation and integral in time, we get
\begin{align*}
    \int_0^T \E[|I_2|] \rd r & \lesssim (C_{w_0^1} + C_{\omega_0^2}) \left(\int_0^T \E[\|\omega^2_r\|_{\dot{H}^{-\alpha-\epsilon}}^2] \rd r\right)^{1/2}\\
    &\quad  \cdot \left(\int_0^T \E[\|\nabla (G^\delta-G)\ast \omega_r\|_{\dot{H}^{1-\alpha-\epsilon}}^2 ]\rd r\right)^{1/2}.
\end{align*}
Thanks again to the interpolation bounds \cite[Theorem 2.80]{bahouri2011fourier}, we arrive at
\begin{align*}
    \int_0^T \E[|I_2|] \rd r & \lesssim (C_{w_0^1} + C_{\omega_0^2}) \left(\int_0^T \E[\|\omega^2_r\|_{\dot{H}^{-1}}^2 +\|\omega^2_r\|_{H^{-\alpha}}^2] \rd r\right)^{1/2} \\
    &\quad \cdot \left(\int_0^T \E[\|\nabla (G^\delta-G)\ast \omega_r\|_{\dot{H}^{1-\alpha-\epsilon}}^2 ]\rd r\right)^{1/2}.
\end{align*}
To control the last term, we write, using \eqref{eq:G delta Fourier},
\begin{align*}
& \int_0^T \E[\|\nabla (G^\delta-G)\ast \omega_r^1\|_{\dot{H}^{1-\alpha-\epsilon}}^2 ]\rd r\\
& \quad = \int |n|^{-2(\alpha+\epsilon)}(1-e^{-\delta|n|^2}+e^{-|n|^2/\delta})^2 \int_0^T \E[|\widehat{\omega_r^1}(n)|^2] \rd r \rd n.
\end{align*}
Now the integral
\begin{align*}
    \int \int_0^T |n|^{-2(\alpha+\epsilon)} \E[|\widehat{\omega_r^1}(n)|^2] \rd r \rd n = \int_0^T \E[\|\omega_r^1\|_{\dot{H}^{-\alpha-\epsilon}}^2] \rd r
\end{align*}
is finite, therefore, by dominated convergence theorem,
\begin{align*}
    \int_0^T \E[\|\nabla (G^\delta-G)\ast \omega_r^1\|_{\dot{H}^{1-\alpha-\epsilon}}^2 ]\rd r \to 0 \text{ as }\delta \to 0
\end{align*}
and analogously for $\omega^2$. Hence
\begin{equation}
\label{eq: bound A2}
    \int_0^T \E[|I_2|] \rd r = o(1) \text{ as } \delta \to 0.
\end{equation}

As in Section \ref{sec:a priori bd}, the term $J$ in \eqref{eq: ito uniqueness} provides us with a control of the $H^{-\alpha}$ norm of the difference $\omega$. We split $J$ as in \eqref{eq:QD2G split}:
\begin{align}
    \tr\left[\left(Q(0)-Q(x)\right)D^2G^{\delta}(x)\right]
	&=  \tr\left[\left(Q(0)-Q(x)\right)D^2G(x)\right] \varphi(x)
    \nonumber\\
    & \quad - \tr\left[(Q(0)-Q(x))D^2(G-G^{\delta})(x)\right]\varphi(x) \nonumber\\
    & \quad + \tr[(Q(0)-Q(x))D^2G^{\delta}(x)](1-\varphi(x))
    \nonumber\\
    & =:A+R2+\tilde{R3}\label{eq:B split}
\end{align}
where $D^2G$ is understood as the pointwise second derivative of $G$ and $\varphi(x)=\varphi(|x|)$ is a radial $C^\infty$ function satisfying $0\le \varphi\le 1$ everywhere, $\varphi(x)=1$ for $|x|\le 1$ and $\varphi(x)=0$ for $|x|\ge 2$. 
Now we proceed as in the existence part. In particular, we exploit Lemmas \ref{lem:main bound A}, \ref{lem:bound R1 R2} and \ref{lem:bound R3}, where in the latter we can replace $Q^\delta$ by $Q$ (with the same proof) and we obtain, for some $c>0$ independent of $\delta$,
\begin{align*}
J&\le \int (\widehat{A}+\widehat{\tilde{R3}}(n))|\widehat{\omega}(n)|^2 \rd n +\iint |R2(x-y)| |\omega(x)||\omega(y)| \rd x \rd y\\
&\le \int (-c\langle n\rangle^{-2\alpha}+ C|n|^{-2})|\widehat{\omega}(n)|^2 \rd n \\
& \quad +C\iint (\delta\varphi(x-y)+|x-y|^{-2+2\alpha}1_{|x-y|< (8\delta\log(1/\delta))^{1/2}}) |\omega(x)||\omega(y)| \rd x \rd y.
\end{align*}
Using H\"older and Young inequality for the last term, we get, taking $1/r = 2-2/(p\wedge 2)$,
\begin{align*}
J&\le -c\|\omega\|_{H^{-\alpha}}^2 +C\|\omega\|_{\dot{H}^{-1}}^2 +C\delta\|\omega\|_{L^1}^2 \\
& \quad +C\left(\int |x|^{(-2+2\alpha)r} 1_{|x|< (8\delta\log(1/\delta))^{1/2}}) dx\right)^{1/r}\|\omega\|_{L^{p\wedge 2}}^2\\
&\le -c\|\omega\|_{H^{-\alpha}}^2 +C\|\omega\|_{\dot{H}^{-1}}^2 +C\delta\|\omega\|_{L^1}^2 +o(1)\|\omega\|_{L^{p\wedge 2}}^2,
\end{align*}
where we have used that $\alpha>2/p-1$ and so $(-2+2\alpha)r>-2$.

By \eqref{eq: Lp initial condition} the $L^1$ norm and the $L^{p\wedge 2}$ norm of $\omega$ are bounded uniformly on $[0,T]\times\Omega$ by $C_{\omega^1}+C_{\omega^2}$.
Taking expectation and integrating in time, we get
\begin{equation}
\label{eq: bound B}
\int_0^t \E[J] \rd r \le -c\int_0^t\E[\|\omega_r\|_{H^{-\alpha}}^2] \rd r +C\int_0^t\E[\|\omega_r\|_{\dot{H}^{-1}}^2] \rd r +o(1),
\end{equation}
where $o(1)$ is uniform for $t$ in $[0,T]$.

Now we integrate in time and take expectation in \eqref{eq: ito uniqueness} and we let $\delta\to 0$ using the bounds \eqref{eq: bound A1}, \eqref{eq: bound A2}, \eqref{eq: bound B} (and the limit \eqref{eq: convergence g delta to H-1}). We can choose $\bar{\epsilon} > 0$ in \eqref{eq: bound A1} small enough so that $C_{\omega_0^1} \tilde{c}\bar{\epsilon} \le c/2$. We obtain
\begin{align*}
    \mathbb{E}[\|\omega_t\|_{\dot{H}^{-1}}^2]
    \leq 
    \|\omega_0\|_{\dot{H}^{-1}}^2
    -\frac{c}{2}\int_0^t\E[\|\omega_r\|_{H^{-\alpha}}^2] \rd r +\tilde{C}\int_0^t\E[\|\omega_r\|_{\dot{H}^{-1}}^2] \rd r,
\end{align*}
where $\tilde{C} \approx_{\epsilon} 1+C_{\omega_0^1}^{\frac{1-\alpha}{\epsilon}}$. By Gronwall inequality, we get
\begin{align*}
    \sup_{t\in [0,T]}\mathbb{E}[\|\omega_t\|_{\dot{H}^{-1}}^2] +\frac{c}{2}\int_0^T\E[\|\omega_r\|_{H^{-\alpha}}^2] \rd r
    \leq 
    \|\omega_0\|_{\dot{H}^{-1}}^2 e^{\tilde{C}T}.
\end{align*}
In particular, if $\omega^1_0=\omega^2_0$ (that is $\omega_0=0$), then $\omega_t=0$ $P$-a.s., for every $t$. By time continuity of $\omega^1$ and $\omega^2$ with values in $H^{-4}$, then $\omega^1=\omega^2$ $P$-a.s.. The proof of uniqueness is complete.
\end{proof}

\appendix

\section{Estimating the remainder}
In this section we want to find an estimate for the remainder term defined in formula \eqref{eq:def beta remainder}, namely, for $R\ge 0$,
\begin{align*}
    \mathrm{Rem}_f(R)
    & := 4 R^{2\alpha} \int_0^\infty \int_{0}^{1} \left[ ((Ru)^2 + \rho^2)^{-\frac{2+2\alpha}{2}} - \rho^{-(2+2\alpha)}\right]  \\
    &\quad \cdot\rho (1-\cos(2\pi\rho)) f(u) u^{2\alpha} (1-u^2)^{-1/2} \rd u \rd\rho
\end{align*}
where $f: [0,1] \to \R$ is a given Borel bounded function. We take $\varphi$ as defined below formula \eqref{eq:QD2G split}, namely $\varphi:\R^2\to \R$ is a $C^\infty$ function, depending only on $|x|$, with $0\le \varphi\le 1$ everywhere, $\varphi(x)=1$ for $|x|\le 1$ and $\varphi(x)=0$ for $|x|\ge 2$.

\begin{lemma}
    \label{lem: properties remainder}
    We have the following bound:
    \begin{equation*}
    \left|\mathrm{Rem}_f(R)\right| 
    \lesssim R^2,\quad \forall R\ge 0.
    \end{equation*}
    Moreover, for every $\epsilon>0$ the Fourier transform of $\mathrm{Rem}$ satisfies (with multiplicative constant possibly depending on $\epsilon$)
    \begin{align*}
    |\mathcal{F}(|\cdot|^{-2}\mathrm{Rem}_f(|\cdot|)\varphi)(n)|\lesssim \langle n \rangle^{-2+\epsilon},\quad \forall n\in\R^2.
    \end{align*}
\end{lemma}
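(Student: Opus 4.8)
For the pointwise bound $|\mathrm{Rem}_f(R)|\lesssim R^2$, the key is to estimate the bracket
\[
D_R(\rho,u) := ((Ru)^2+\rho^2)^{-(1+\alpha)} - \rho^{-(2+2\alpha)}
\]
by two complementary mechanisms and to exploit the vanishing of $1-\cos(2\pi\rho)$ at the origin. On the region $\rho\le Ru$ I would bound $|D_R(\rho,u)|\le \rho^{-(2+2\alpha)}$ (both summands are nonnegative and the second dominates, so the difference lies in $[-\rho^{-(2+2\alpha)},0]$), while on $\rho> Ru$ I would apply the mean value theorem to $s\mapsto s^{-(1+\alpha)}$ to get $|D_R(\rho,u)|\le (1+\alpha)(Ru)^2\rho^{-(4+2\alpha)}$. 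Combined with the elementary inequality $1-\cos(2\pi\rho)\le\min\{2\pi^2\rho^2,2\}$, each region yields an inner $\rho$-integral of size $\lesssim (Ru)^{2-2\alpha}$ (for $Ru\le 1$), after splitting the second region further at $\rho=1$; multiplying by the prefactor $4R^{2\alpha}$ and integrating $u^2 f(u)(1-u^2)^{-1/2}$ over $[0,1]$ (which converges, $f$ being bounded) produces $\lesssim R^2$. For $R\ge 1$ the same splitting, together with the crude observation that the $u$-integral over $\{Ru\ge1\}$ is $O(1)$, gives $|\mathrm{Rem}_f(R)|\lesssim R^{2\alpha}+1\lesssim R^2$, so the bound holds for all $R\ge0$.

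For the Fourier bound I set $h(x):=|x|^{-2}\mathrm{Rem}_f(|x|)\varphi(x)$, which is supported in $\{|x|\le 2\}$ and, by the first part, satisfies $|h|\lesssim 1$; hence $\hat h\in L^\infty$ and $|\hat h(n)|\lesssim 1\lesssim\langle n\rangle^{-2+\epsilon}$ for $|n|\le 2$, so it remains to prove decay for $|n|\ge 2$. I would first upgrade the size bound to derivative bounds on $m(R):=R^{-2}\mathrm{Rem}_f(R)$, namely $|m(R)|\lesssim 1$, $|m'(R)|\lesssim R^{-1}$, $|m''(R)|\lesssim R^{-2}$ on $(0,2]$. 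Through the Leibniz rule these reduce to $|\mathrm{Rem}_f(R)|\lesssim R^2$, $|\mathrm{Rem}_f'(R)|\lesssim R$, $|\mathrm{Rem}_f''(R)|\lesssim 1$, which I would obtain from the representation $\mathrm{Rem}_f(R)=4R^{2\alpha}\Psi(R)$, where $\Psi(R):=\int_0^\infty\!\int_0^1 D_R(\rho,u)\rho(1-\cos(2\pi\rho))f(u)u^{2\alpha}(1-u^2)^{-1/2}\,du\,d\rho$, by differentiating under the integral sign. The point of this representation is that only the smooth factor $((Ru)^2+\rho^2)^{-(1+\alpha)}$ depends on $R$, so each $\partial_R^k$ falls on it and produces absolutely convergent integrals; the same region-splitting as above then gives $|\Psi(R)|\lesssim R^{2-2\alpha}$, $|\Psi'(R)|\lesssim R^{1-2\alpha}$, $|\Psi''(R)|\lesssim R^{-2\alpha}$, and the singular powers of $R$ cancel exactly in the products $R^{2\alpha-k}\Psi^{(j)}$.

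Finally, for $|n|\ge 2$ I would estimate $\hat h(n)=\int_{\R^2} h(x)e^{-2\pi i x\cdot n}\,dx$ by splitting at $|x|=1/|n|$. On $|x|\le 1/|n|$ the trivial bound $|h|\lesssim1$ gives a contribution $\lesssim|n|^{-2}$. On $1/|n|<|x|\le 2$ I would integrate by parts twice against $e^{-2\pi i x\cdot n}$ using the identity $(n\cdot\nabla_x)e^{-2\pi i x\cdot n}=-2\pi i|n|^2 e^{-2\pi i x\cdot n}$; the boundary terms on the circle $|x|=1/|n|$ are controlled by $|m|$ and $|m'|$ there and are each $\lesssim|n|^{-2}$, while the twice-differentiated volume term is bounded via $|\nabla^2 h|\lesssim|m''|+|m'|/|x|+|m|\lesssim|x|^{-2}$, so that (writing $r=|x|$ and including the radial Jacobian) $\int_{1/|n|}^{2} r^{-2}\,r\,dr\lesssim\log|n|$. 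This produces $|\hat h(n)|\lesssim|n|^{-2}\log|n|\lesssim|n|^{-2+\epsilon}$, the logarithm being precisely the source of the $\epsilon$-loss. The main obstacle is the bound $|\mathrm{Rem}_f''(R)|\lesssim1$ (equivalently $|\Psi''(R)|\lesssim R^{-2\alpha}$): naive term-by-term differentiation of $F_f(0)-F_f(R)$ diverges, and it is only the exact cancellation of the $R^{2\alpha-2}$ singularities built into the definition of $\mathrm{Rem}_f$ that keeps the second derivative bounded, so the region-splitting estimates on $\Psi''$ must be carried out carefully near $\rho=0$.
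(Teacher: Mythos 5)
Your proof is correct. For the pointwise bound your route is essentially the paper's in a slightly more direct form: the paper writes $\mathrm{Rem}_f(R)=4R^{2\alpha}H(R)$, proves $|H'(R)|\lesssim R^{1-2\alpha}$ and $|H''(R)|\lesssim R^{-2\alpha}$ using exactly your splitting at $\rho=Ru$ together with $1-\cos(2\pi\rho)\le\min\{2\pi^2\rho^2,2\}$, and recovers $|H(R)|\lesssim R^{2-2\alpha}$ from $H(0)=0$ and the fundamental theorem of calculus, whereas you bound the integrand difference $D_R$ directly (mean value theorem for $\rho>Ru$, crude domination for $\rho\le Ru$); since you need the derivative bounds on $\Psi=H$ anyway for the second part, the net content is the same, and your bookkeeping of the product-rule terms ($R^{2\alpha-2}\Psi$, $R^{2\alpha-1}\Psi'$, $R^{2\alpha}\Psi''$ all $O(1)$) matches the paper's estimates. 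The genuine difference is the final Fourier step. Both arguments reduce to the pointwise bounds $|h|\lesssim 1$, $|\nabla h|\lesssim|x|^{-1}$, $|D^2h|\lesssim|x|^{-2}$ for $h=|\cdot|^{-2}\mathrm{Rem}_f(|\cdot|)\varphi$ away from the origin, but the paper then invokes its auxiliary Lemma \ref{lem: second derivative remainder}, whose proof runs through fractional Sobolev norms $W^{1+\gamma,p}$, their identification with Bessel-potential spaces, the Hausdorff--Young inequality, and a Sobolev embedding applied to $\hat h$ itself. You replace all of that machinery with a direct non-stationary-phase argument: split at $|x|=1/|n|$, use $|h|\lesssim 1$ on the inner disc, and integrate by parts twice on the annulus via $(n\cdot\nabla_x)e^{-2\pi i x\cdot n}=-2\pi i|n|^2e^{-2\pi i x\cdot n}$, with the two boundary terms on $|x|=1/|n|$ controlled by $|h|$ and $|\nabla h|$ (each of size $|n|^{-2}$) and the volume term by $|n|^{-2}\int_{1/|n|}^{2}r^{-1}\,\rd r\lesssim|n|^{-2}\log|n|$. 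Your method is more elementary and self-contained, and it exhibits the $\epsilon$-loss concretely as a logarithm; the paper's method is heavier but packages the conclusion as a standalone lemma valid for any compactly supported function satisfying those three pointwise bounds --- a generality your integration-by-parts argument in fact also achieves, since it uses nothing else about $h$.
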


\begin{proof}
We call
\begin{align*}
    &I(R) := I_\rho(R) := ((Ru)^2 + \rho^2)^{-\frac{2+2\alpha}{2}} - \rho^{-(2+2\alpha)},\\
    &H(R) := \int_0^\infty \int_{0}^{1} I_\rho(R)  \rho (1-\cos(2\pi\rho)) f(u) u^{2\alpha} (1-u^2)^{-1/2} \rd u \rd\rho = \frac{\mathrm{Rem}_f(R)}{4R^{2\alpha}}.
\end{align*}
In order to find both the estimate on $\mathrm{Rem}_f$ and the one on its Fourier transform, we will find suitable bounds on the derivatives of $H$. The first derivative of $I$ in $R$ is
\begin{align*}
    \frac{\rd I}{\rd R
    } (R) = -(2+2\alpha)\ Ru^2 \ ((Ru)^2+\rho^2)^{-(2+\alpha)}.
\end{align*}
We use this expression to estimate the first derivative of $H$ in $R>0$:
\begin{align*}
    \left| \frac{\rd H}{\rd R
    }(R) \right|
    &= (2+2\alpha) R
    \int_{0}^{\infty}\int_{0}^{1}
     ((Ru)^2+\rho^2)^{-(2+\alpha)}
     \rho (1-\cos(2\pi\rho)) f(u) u^{2\alpha + 2} (1-u^2)^{-\frac{1}{2}} \rd u \rd \rho
     \\
    & \lesssim R \int_{0}^{1} \int_{Ru}^{\infty}
    \rho^{-(4+2\alpha)} \rho (1-\cos(2\pi \rho)) \rd \rho \  f(u) u^{2\alpha + 2} (1-u^2)^{-\frac{1}{2}}  \rd u 
    \\
    & \quad + R^{-4-2\alpha} 
    \int_{0}^{1} \int_{0}^{Ru}
    \rho (1-\cos(2\pi \rho)) \rd \rho \  f(u) u^{-2} (1-u^2)^{-\frac{1}{2}}   \rd u.
\end{align*}
Using that $1-\cos(2\pi\rho) \leq (2\pi\rho)^2/2$, we have
\begin{align*}
    \left| \frac{\rd H}{\rd R
    } (R) \right|
    & \lesssim R \int_{0}^{1} \int_{Ru}^{\infty}
    \rho^{-(1+2\alpha)} \rd \rho \ f(u) u^{2\alpha + 2} (1-u^2)^{-\frac{1}{2}}  \rd u 
    \\
    & \quad + R^{-4-\alpha} 
    \int_{0}^{1} \int_{0}^{Ru}
    \rho^3 \rd \rho \ f(u) u^{-2} (1-u^2)^{-\frac{1}{2}}   \rd u
    \\
    & \approx R^{1-2\alpha}.
\end{align*}
From this bound and the explicit expression of $\frac{\rd H}{\rd R}$, we see that $\frac{\rd H}{\rd R}$ is continuous on $(0,\infty)$ and locally integrable on $[0,\infty)$ as a function of $R$. Moreover we have $H(0) = 0$, so we get
\begin{align*}
    |H(R)| 
    \leq \int_{0}^R | \frac{\rd H}{\rd R
    } (r) |\rd r
    \lesssim R^{2-2\alpha}.
\end{align*}
We conclude the bound on $\mathrm{Rem}_f$
\begin{align*}
    |\mathrm{Rem}_f(R)| = 4R^{2\alpha}|H(R)|\lesssim R^2.
\end{align*}

We proceed by estimating the pointwise second derivative of $H$. Again at first we look at the second derivative of $I$, that is
\begin{align*}
    \frac{\rd^2 I}{\rd R^2}(R)
    = - (2+2\alpha) u^2 \left[((Ru)^2 + \rho^2)^{-(2+\alpha)}
    -(4+2\alpha) R^2 u^2 ((Ru)^2 + \rho^2)^{-(3+\alpha)}
    \right],
\end{align*}
for which we have the following estimates
\begin{align*}
    |\frac{\rd^2 I}{\rd R^2} (R)|
    & \lesssim
    u^2 \rho^{-(4+2\alpha)} + R^2u^4 \rho^{-(6+2\alpha)},\\
    |\frac{\rd^2 I}{\rd R^2} (R)|
    & \lesssim
    u^2 ((Ru)^{-(4+2\alpha)} + (Ru)^2(Ru)^{-(6+2\alpha)})
    = 2R^{-(4+2\alpha)} u^{-(2+2\alpha)}.
\end{align*}
We use the first bound for $\rho>Ru$ and the second one for $\rho\le Ru$, together again with the bound $1-\cos(2\pi\rho) \leq (2\pi\rho)^2/2$, and estimate the pointwise second derivative of $H$ in $R>0$:
\begin{align*}
   \left| \frac{\rd^2 H}{\rd R^2
    } (R) \right|
    & \lesssim \int_0^\infty \int_{0}^{1} 
    \left| \frac{\rd^2 I}{\rd R^2} (R) \right|
    \rho (1-\cos(2\pi\rho)) f(u) u^{2\alpha} (1-u^2)^{-\frac{1}{2}} \rd u \rd\rho\\
    & \lesssim  \int_{0}^{1} \int_{Ru}^\infty
     \rho^{-(4+2\alpha)} 
    \rho (1-\cos(2\pi \rho))\rd\rho \ f(u) u^{2+2\alpha} (1-u^2)^{-\frac{1}{2}} \rd u \\
    & \quad + R^2 \int_{0}^{1} \int_{Ru}^\infty
     \rho^{-(6+2\alpha)}
    \rho (1-\cos(2\pi\rho)) \rd\rho \ f(u) u^{4+2\alpha} (1-u^2)^{-\frac{1}{2}}  \rd u \\
    & \quad + R^{-4-2\alpha}  \int_{0}^{1} \int_0^{Ru}
    \rho (1-\cos(2\pi\rho)) \rd\rho \ f(u) u^{-2} (1-u^2)^{-\frac{1}{2}} \rd u \\
    & \approx R^{-2\alpha}.
\end{align*}

Now we come back to the estimate in Fourier modes of
\begin{align*}
|x|^{-2}\mathrm{Rem}_f(|x|)\varphi(x) = 4|x|^{-2+2\alpha}H(|x|)\varphi(x).
\end{align*}
By the previous bounds we have, for $0<|x|< 2$,
\begin{align*}
&|x|^{-2+2\alpha}|H(|x|)| \lesssim |x|^{-2+2\alpha}|x|^{2-2\alpha}=1,\\
&|\nabla [|\cdot|^{-2+2\alpha}H(|\cdot|)](x)| = \left|\left((-2+2\alpha)|x|^{-3+2\alpha}H(|x|) +|x|^{-2+2\alpha}\frac{\rd H}{\rd R}(|x|)\right)\frac{x}{|x|}\right| \lesssim |x|^{-1},\\
&|D^2 [|\cdot|^{-2+2\alpha}H(|\cdot|)](x)|\\
&= \left|\left((-2+2\alpha)(-4+2\alpha)|x|^{-4+2\alpha}H(|x|)+(-5+4\alpha)|x|^{-3+2\alpha}\frac{\rd H}{\rd R}(|x|) +|x|^{-2+2\alpha}\frac{\rd^2 H}{\rd R^2}(|x|)\right)\frac{xx^\top}{|x|^2}\right.\\
&\quad \left.+\left((-2+2\alpha)|x|^{-4+2\alpha}H(|x|) +|x|^{-3+2\alpha}\frac{\rd H}{\rd R}(|x|)\right)I\right|\\
&\lesssim |x|^{-2},
\end{align*}
where $\nabla$ and $D^2$ here denote the pointwise derivates in $x\neq 0$. Since $\varphi$ is $C^\infty$ and constant in a neighborhood of $x=0$, the same bounds hold for $|\cdot|^{-2+2\alpha}H(|\cdot|)\varphi$. 

By Lemma \ref{lem: second derivative remainder} below, we conclude that, for every $\epsilon>0$,
\begin{align*}
|\mathcal{F}(|\cdot|^{-2}\mathrm{Rem}_f(|\cdot|)\varphi)(n)|\lesssim \langle n \rangle^{-2+\epsilon}.
\end{align*}
The proof is complete.
\end{proof}

We have used the following:
\begin{lemma}
\label{lem: second derivative remainder}
Let $g:\R^2\to \R$ be a Borel function, which is $C^2$ on $\R^2\setminus\{0\}$ and has support in $\bar{B}_2(0)$, and assume that
\begin{align*}
|g(x)|\lesssim 1,\quad |\nabla g(x)|\lesssim |x|^{-1},\quad |D^2g(x)|\lesssim |x|^{-2}, \quad \forall x\neq 0.
\end{align*}
Then for every $\epsilon>0$ we have (with multiplicative constant depending on $\epsilon$)
\begin{align*}
|\widehat{g}(n)|\lesssim \langle n \rangle^{-2+\epsilon},\quad \forall n\in\R^2.
\end{align*}
\end{lemma}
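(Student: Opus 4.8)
The plan is to estimate $\widehat{g}(n)$ by a frequency-adapted scale decomposition, trading the singular second derivatives of $g$ at the origin against two integrations by parts away from it. Since the bound for bounded frequencies is immediate from $|\widehat{g}(n)|\le \|g\|_{L^1}\lesssim 1$ (as $g$ is bounded with support in $\bar{B}_2(0)$), I may assume $|n|\ge 1$ and aim for $|\widehat{g}(n)|\lesssim |n|^{-2+\epsilon}$; the general claim then follows by patching the two regimes into the $\langle n\rangle^{-2+\epsilon}$ bound.

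First I would fix a radial cutoff $\chi\in C^\infty_c(\R^2)$ with $\chi\equiv 1$ on $B_1(0)$, $\chi\equiv 0$ off $B_2(0)$ and $0\le\chi\le 1$, and set the splitting scale $r:=1/|n|$. Writing $g = g_1 + g_2$ with $g_1 := g\,\chi(\cdot/r)$ and $g_2 := g\,(1-\chi(\cdot/r))$, the near-origin piece is controlled directly: since $g_1$ is supported in $\{|x|\le 2r\}$ and $|g|\lesssim 1$, one gets $|\widehat{g_1}(n)|\le \|g_1\|_{L^1}\lesssim r^2 = |n|^{-2}$.

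For the far piece $g_2$, which is $C^2$ and supported in the annulus $\{r\le |x|\le 2\}$, I would integrate by parts twice using $\Delta_x e^{-2\pi i x\cdot n} = -4\pi^2|n|^2 e^{-2\pi i x\cdot n}$, obtaining $|\widehat{g_2}(n)|\le (4\pi^2|n|^2)^{-1}\|\Delta g_2\|_{L^1}$. Expanding $\Delta g_2$ by the product rule produces three types of terms: $(\Delta g)(1-\chi(\cdot/r))$, cross terms $\nabla g\cdot \nabla[\chi(\cdot/r)]$, and $g\,\Delta[\chi(\cdot/r)]$. Using $|D^2 g|\lesssim |x|^{-2}$, the first integrates over the annulus to $\int_{r}^{2}\rho^{-2}\,\rho\,\rd\rho\approx \log(1/r)\approx\log|n|$. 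Using $|\nabla g|\lesssim|x|^{-1}$, $|g|\lesssim 1$ together with the scalings $|\nabla[\chi(\cdot/r)]|\lesssim r^{-1}1_{|x|\sim r}$ and $|\Delta[\chi(\cdot/r)]|\lesssim r^{-2}1_{|x|\sim r}$, the cross and cutoff terms each integrate to $O(1)$. Hence $\|\Delta g_2\|_{L^1}\lesssim \log|n|$, so $|\widehat{g_2}(n)|\lesssim |n|^{-2}\log|n|$.

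Combining the two pieces gives $|\widehat{g}(n)|\lesssim |n|^{-2}(1+\log|n|)$ for $|n|\ge 1$; absorbing the logarithm via $\log|n|\lesssim_\epsilon |n|^{\epsilon}$ and patching with the trivial bound for $|n|\le 1$ yields $|\widehat{g}(n)|\lesssim_\epsilon \langle n\rangle^{-2+\epsilon}$. The only genuine obstacle is the logarithmic loss coming from the borderline non-integrability of the $|x|^{-2}$ bound on $D^2 g$ against the planar measure on the annulus: this is exactly what prevents the clean $\langle n\rangle^{-2}$ decay and forces the $\epsilon$, and it is the reason that two integrations by parts is the correct amount, given that only second-order bounds on $g$ are available.
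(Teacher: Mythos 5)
Your proof is correct, and it takes a genuinely different route from the paper's. The paper argues through function spaces: it interpolates between the bounds on $\nabla g$ and $D^2 g$ to show $g\in W^{1+\gamma,p}(\R^2)$ for every $0<\gamma<1$ and $1<p<2/(1+\gamma)$, invokes the embedding of $W^{1+\gamma,p}$ into the Bessel-potential space $H^{1+\gamma,p}$ together with Hausdorff--Young to get $\langle\cdot\rangle^{1+\gamma}\widehat{g}\in L^{p'}$, and then upgrades this integrability to a pointwise bound by showing $\langle\cdot\rangle^{1+\gamma}\widehat{g}\in W^{1,p'}$ (differentiating $\widehat{g}$, which is legitimate thanks to the compact support of $g$) and applying the Sobolev embedding $W^{1,p'}(\R^2)\hookrightarrow L^\infty$ on the Fourier side. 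You instead perform a frequency-adapted splitting at scale $r=1/|n|$: the inner piece is estimated trivially in $L^1$ by $r^2=|n|^{-2}$, and the outer piece by two integrations by parts, which is admissible since $g_2=g(1-\chi(\cdot/r))$ vanishes identically on $B_r(0)$ and is therefore a $C^2$ function of compact support. Your bookkeeping for $\|\Delta g_2\|_{L^1}$ is right: the Hessian term produces the logarithm $\int_r^2\rho^{-2}\rho\,\rd\rho\approx\log(2/r)$, while the cross and cutoff terms are $O(1)$ because $|\nabla[\chi(\cdot/r)]|\lesssim r^{-1}$ and $|\Delta[\chi(\cdot/r)]|\lesssim r^{-2}$ live on an annulus of area $\sim r^2$ on which $|\nabla g|\lesssim r^{-1}$ and $|g|\lesssim 1$. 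As for what each approach buys: yours is elementary and self-contained, avoids the fractional Sobolev and Besov machinery entirely, and in fact proves the strictly stronger estimate $|\widehat{g}(n)|\lesssim\langle n\rangle^{-2}\log(2+|n|)$, of which the stated $\langle n\rangle^{-2+\epsilon}$ bound is an immediate consequence; the paper's argument is heavier, but it stays within the embedding toolkit (Triebel, Bahouri--Chemin--Danchin) already used throughout the paper and records along the way a fractional regularity statement for $g$ itself rather than only a decay statement for $\widehat{g}$.
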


\begin{proof}
    We start noting that, due to the smoothness of $g$ outside $x=0$ and the estimates on $g$ and $\nabla g$, the pointwise derivative $\nabla g$ is actually the distributional derivative of $g$
    First we bound the $W^{1+\gamma,p}$ norm of $g$ for any $0<\gamma<1$ and $1<p<2/(1+\gamma)$. Here the $W^{1+\gamma,p}$ norm is defined as
    \begin{align*}
        \|g\|_{W^{1+\gamma,p}}^p = \|g\|_{L^p}^p+\|\nabla g\|_{L^p}^p +\iint_{\R^2\times\R^2} \frac{|\nabla g(x)-\nabla g(y)|^p}{|x-y|^{2+\gamma p}} \rd x\rd y
    \end{align*}
    Clearly the $L^p$ norms of $g$ and $\nabla g$ are finite. For the last term, on one hand we exploit the bound $\nabla g$ and get, for every $x\neq0,y\neq 0$,
    \begin{align}\label{eq:bound_Dg_1}
        \frac{|\nabla g(x)-\nabla g(y)|^p}{|x-y|^{2+\gamma p}}
        \lesssim \frac{1}{(|x|^p\wedge |y|^p)|x-y|^{2+\gamma p}}
    \end{align}
    On the other hand, we take a continuous path $\eta:[0,1]\to \R^2$ with $\eta(0)=y$, $\eta(1)=x$, $\min_{s\in [0,1]}|\eta(s)|\gtrsim |x|\wedge |y|$ and $\max_{s\in [0,1]}|\eta^\prime(s)|\lesssim |x-y|$; we exploit the bound on $D^2 g$ and get:
    \begin{align}
        \frac{|\nabla g(x)-\nabla g(y)|^p}{|x-y|^{2+\gamma p}}
        &\le \left(\int_0^1 |D^2g(\eta(s))||\eta^\prime(s)|\rd s \right)^p \cdot \frac{1}{|x-y|^{2+\gamma p}} \nonumber\\
        &\lesssim \frac{\max_s|\eta^\prime(s)|^p}{\min_s |\eta(s)|^{2p}} \cdot \frac{1}{|x-y|^{2+\gamma p}} \nonumber\\
        &\lesssim \frac{1}{(|x|^{2p}\wedge |y|^{2p})|x-y|^{2-(1-\gamma)p}}.\label{eq:bound_Dg_2}
    \end{align}
    Interpolating between \eqref{eq:bound_Dg_1} and \eqref{eq:bound_Dg_2}, we get, for any $0<\beta<1$,
    \begin{align*}
        \frac{|\nabla g(x)-\nabla g(y)|^p}{|x-y|^{2+\gamma p}} &\lesssim \left(\frac{1}{(|x|^p\wedge |y|^p)|x-y|^{2+\gamma p}}\right)^\beta \left(\frac{1}{(|x|^{2p}\wedge |y|^{2p})|x-y|^{2-(1-\gamma)p}}\right)^{1-\beta}\\
        &\le \left(\frac{1}{|x|^{2p-\beta p}}+\frac{1}{|y|^{2p-\beta p}}\right)\cdot \frac{1}{|x-y|^{2+(\beta+\gamma-1)p}}.
    \end{align*}
    Now, since $1<p<2/(1+\gamma)$, we can find $0<\beta<1$ such that $2p-\beta p <2$ and $2+(\beta+\gamma-1)p < 2$, making the right-hand side above integrable in $x$ and $y$ over $B_2(0)\times B_2(0)$. Hence $g$ has finite $W^{1+\gamma,p}$ norm for $0<\gamma<1$, $1<p<2/(1+\gamma)$.
    
    As a consequence of \cite[Subsection 2.5.7 formulas (5) and (9), Subsection 2.5.6 formula (2), Subsection 2.3.2 formula (9)]{triebel1983function} (see also \cite[Section 24]{van2022theory}), we have (with multiplicative constant possibly depending on $0<\gamma<1$ and $1<p<2/(1+\gamma)$)
    \begin{align*}
        \|g\|_{H^{1+\gamma,p}}:= \|\mathcal{F}^{-1}(\lan \cdot \ran^{1+\gamma}\hat{g})\|_{L^p} \lesssim \|g\|_{W^{1+\gamma,p}} <\infty
    \end{align*}
    and so, since $\mathcal{F}$ is bounded from $L^p$ to $L^{p'}$,
    \begin{align*}
        \lan \cdot \ran^{1+\gamma}\hat{g} \in L^{p'}.
    \end{align*}

    To conclude, we would like show that $\lan \cdot \ran^{1+\gamma}\hat{g}$ is actually bounded (and then we take $2-\epsilon=1+\gamma$). To do so, we will use the compact support of $g$, through Sobolev embedding for $\hat{g}$. Note that, as $g$ has compact support, $\hat{g}$ is $C^\infty$ and
    \begin{align}
        |\nabla (\lan\cdot\ran^{1+\gamma}\hat{g})(n)| &\lesssim \lan n \ran^{1+\gamma} |\nabla \hat{g}(n)| +\lan n\ran^\gamma|\hat{g}(n)| \nonumber\\
        &\lesssim \lan n\ran^2|\nabla \hat{g}(n)| +\lan n\ran|\hat{g}(n)| \label{eq:bound_g_hat_moment}
    \end{align}
    Concerning the first addend of \eqref{eq:bound_g_hat_moment}, we note that
    \begin{align*}
        \lan n\ran^2|\nabla \hat{g}(n)| = |\mathcal{F}[(I-(2\pi)^{-2}\Delta)[2\pi xg(x)]](n)|.
    \end{align*}
    By the assumptions on $g$ and its derivatives, for $1<p<2/(1+\gamma)$, $xg(x)$ is in $L^p$ and, for every $x\neq 0$, the pointwise derivative $\Delta(xg(x))$ satisfies
    \begin{align*}
        |\Delta(xg(x))|\lesssim |\nabla g(x)|+ |x||D^2g(x)|\lesssim |x|^{-1}.
    \end{align*}
    Hence the pointwise derivative $\Delta(xg(x))$ is actually a distributional derivative
    and is in $L^p$. Therefore, $\lan n\ran^2|\nabla \hat{g}(n)|$ is in $L^{p'}$. Similarly, concerning the second addend of \eqref{eq:bound_g_hat_moment}, we have
    \begin{align*}
        \lan n\ran|\hat{g}(n)| \le |\hat{g}(n)| +|n\hat{g}(n)| = |\hat{g}(n)|+(2\pi)^{-1}|\mathcal{F}(\nabla g)(n)|
    \end{align*}
    and, since $g$ and $\nabla g$ are in $L^p$, $\lan n\ran|\hat{g}(n)|$ is in $L^{p'}$. We conclude that, for $0<\gamma<1$, for any $1<p<2/(1+\gamma)$, $\lan \cdot\ran^{1+\gamma}\hat{g}$ is in $W^{1,p'}$ and so, by the Sobolev embedding on $\R^2$, $\lan \cdot\ran^{1+\gamma}\hat{g}$ is bounded. The proof is complete.
\end{proof}

\section{Technical lemmas}

\begin{lemma}\label{lem:indicator_Fourier}

Let $\varphi(x)=\varphi(|x|)$ be a radial $C^\infty$ function satisfying $0\le \varphi\le 1$ everywhere,  $\varphi(0)=1$ and $\varphi(x)=0$ for $|x|\ge 2$. For $M>0$ we define $\varphi_M := \varphi(x/M)$. Then, for $0<\alpha<1$, for every $n\neq 0$,
    \begin{align*}
    \widehat{G_{\alpha}\varphi_M}(n) \to \widehat{G_{\alpha}}(n) \quad \text{as }M\to \infty.
    \end{align*}
\end{lemma}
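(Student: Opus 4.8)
The plan is to first remove the non-integrability of $G_\alpha$ at infinity by a scaling argument, reducing the statement to a high-frequency asymptotic for a single fixed integrable function, and only then to analyze that asymptotic. Recall $G_\alpha(x)=\gamma_{2\alpha}^{-1}|x|^{-2+2\alpha}$ with $\widehat{G_\alpha}(n)=(2\pi)^{-2\alpha}|n|^{-2\alpha}$ by Lemma \ref{lem:sobolev riesz potential}. Since $G_\alpha$ is locally integrable, the truncation $g:=G_\alpha\varphi$ lies in $L^1(\R^2)$ (compact support plus integrable singularity at $0$), so $\widehat g$ is a genuine continuous function. Using the homogeneity $G_\alpha(x/M)=M^{2-2\alpha}G_\alpha(x)$ one checks $G_\alpha\varphi_M(x)=M^{-2+2\alpha}g(x/M)$, and the scaling rule for the Fourier transform gives $\widehat{G_\alpha\varphi_M}(n)=M^{2\alpha}\widehat g(Mn)$. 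Because $g$ is radial, so is $\widehat g$; writing $\widehat g(\xi)=(2\pi)^{-2\alpha}|\xi|^{-2\alpha}+\rho(\xi)$ on $\R^2\setminus\{0\}$, the claimed limit is equivalent to $\rho(\xi)=o(|\xi|^{-2\alpha})$ as $|\xi|\to\infty$, since then $\widehat{G_\alpha\varphi_M}(n)=(2\pi)^{-2\alpha}|n|^{-2\alpha}+M^{2\alpha}\rho(Mn)\to(2\pi)^{-2\alpha}|n|^{-2\alpha}=\widehat{G_\alpha}(n)$ for each fixed $n\neq0$.

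To identify $\rho$, I would compare $g$ directly with $G_\alpha$. Fix a radial cutoff $\chi\in C^\infty_c$ with $\chi\equiv1$ near $0$ and split $g-G_\alpha=G_\alpha(\varphi-1)=B_1+B_2$, where $B_1:=G_\alpha(\varphi-1)\chi$ and $B_2:=G_\alpha(\varphi-1)(1-\chi)$. Radial smoothness of $\varphi$ forces $\varphi-1=O(|x|^2)$ near $0$, so $B_1$ is compactly supported, smooth away from $0$, with the singularity softened to $|x|^{2\alpha}$; in particular $|B_1|\lesssim1$, $|\nabla B_1|\lesssim|x|^{-1}$ and $|D^2B_1|\lesssim|x|^{-2}$, so Lemma \ref{lem: second derivative remainder} yields $|\widehat{B_1}(\xi)|\lesssim\langle\xi\rangle^{-2+\epsilon}$ for every $\epsilon>0$. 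The three transforms satisfy $\widehat g=\widehat{G_\alpha}+\widehat{B_1}+\widehat{B_2}$ as tempered distributions, and since each summand is represented by a continuous function on $\R^2\setminus\{0\}$ (for $\widehat{B_2}$ this is established below), the identity holds pointwise there, giving $\rho=\widehat{B_1}+\widehat{B_2}$ off the origin.

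The delicate point, and the main obstacle, is the term $B_2$: it is \emph{globally smooth} (it vanishes near $0$) but decays only like $|x|^{-2+2\alpha}$, hence is not integrable and has no classical Fourier transform. The key observation is that the decay of $\widehat{B_2}$ away from the origin is governed by smoothness, not by integrability at infinity. I would exploit the distributional identity $\mathcal{F}[\Delta B_2]=-4\pi^2|\xi|^2\widehat{B_2}$ in $\mathcal{S}'$, noting that $\Delta B_2$ decays like $|x|^{-4+2\alpha}$ and is therefore in $L^1$, so that $\widehat{\Delta B_2}$ is a bounded continuous function. Dividing by $|\xi|^2$ on $\R^2\setminus\{0\}$ shows that $\widehat{B_2}$ is represented there by the continuous function $-(4\pi^2|\xi|^2)^{-1}\widehat{\Delta B_2}(\xi)$, whence $|\widehat{B_2}(\xi)|\le(4\pi^2|\xi|^2)^{-1}\|\Delta B_2\|_{L^1}=O(|\xi|^{-2})$.

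Choosing $\epsilon<2-2\alpha$, both bounds $\langle\xi\rangle^{-2+\epsilon}$ and $|\xi|^{-2}$ are $o(|\xi|^{-2\alpha})$ as $|\xi|\to\infty$, precisely because $\alpha<1$. This gives $\rho(\xi)=o(|\xi|^{-2\alpha})$ and hence $\widehat{G_\alpha\varphi_M}(n)\to\widehat{G_\alpha}(n)$, completing the proof. The whole difficulty is the non-$L^1$ tail of $G_\alpha$; the scaling reduction isolates it into the single smooth remainder $B_2$, where the Laplacian identity disposes of it cleanly without any boundary-term bookkeeping.
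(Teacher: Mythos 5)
Your proof is correct, but it follows a genuinely different route from the paper's. The paper works entirely on the Fourier side via the convolution identity $\widehat{G_\alpha\varphi_M}(n)=\langle\widehat{G_\alpha}(n-\cdot),\widehat{\varphi_M}\rangle$: it shows $\widehat{\varphi_M}=M^2\widehat{\varphi}(M\cdot)$ is an approximate identity (uniformly bounded in $L^1$, converging weakly-* to $\delta_0$, and uniformly small away from the origin), and then handles the local singularity of $\widehat{G_\alpha}(n-\cdot)$ at $n'=n$ by an $L^1$-approximation argument. You instead exploit the exact homogeneity of the Riesz kernel to write $\widehat{G_\alpha\varphi_M}(n)=M^{2\alpha}\widehat{g}(Mn)$ with $g=G_\alpha\varphi\in L^1$, reducing the lemma to a high-frequency asymptotic $\widehat g(\xi)=(2\pi)^{-2\alpha}|\xi|^{-2\alpha}+o(|\xi|^{-2\alpha})$ for a single fixed function; this you obtain by splitting $g-G_\alpha=G_\alpha(\varphi-1)$ into a compactly supported piece $B_1$ with an $|x|^{2\alpha}$-softened singularity — which satisfies the hypotheses of the paper's Lemma \ref{lem: second derivative remainder} (note you must take $\operatorname{supp}\chi\subseteq\bar B_2(0)$ to match its support assumption, and you correctly use $\nabla\varphi(0)=0$ from radial smoothness to get $\varphi-1=O(|x|^2)$) — and a smooth non-integrable tail $B_2$, disposed of by the clean observation that $\Delta B_2\in L^1$, so $\widehat{B_2}=-(4\pi^2|\xi|^2)^{-1}\widehat{\Delta B_2}$ is $O(|\xi|^{-2})$ away from the origin (the possible distributional part of $\widehat{B_2}$ supported at $\{0\}$ is irrelevant after restriction, as you note). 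Each approach has its merits: yours is quantitative — it yields a rate $|\widehat{G_\alpha\varphi_M}(n)-\widehat{G_\alpha}(n)|\lesssim_n M^{2\alpha-2+\epsilon}$ rather than bare convergence — and it recycles a lemma the paper already proves; the paper's argument, by contrast, never uses homogeneity of $G_\alpha$, only that $\widehat{G_\alpha}$ is locally integrable, continuous off a point, and vanishing at infinity, so it would survive replacing $G_\alpha$ by a non-homogeneous kernel, where your scaling step would break down.
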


\begin{proof}
       \textbf{Step 1}: we show that the family $(\widehat{\varphi_M})_M$ satisfies, for every $a>0$,
       \begin{align}
            \sup_{M\ge 1}\sup_{|n|\ge a} |\widehat{\varphi_M}(n)|<\infty\label{eq:boundedness of varphi M}
       \end{align}
       and, for every $f\in C_0(\R^2)$, the space of continuous functions vanishing at infinity,
       \begin{align}
       \label{eq: measure convergence of phi M}
            \langle\widehat{\varphi}_M,f\rangle \to f(0),
            \qquad
            \mbox{as } M\to \infty
       \end{align}
       that is the family of measures $(\widehat{\varphi_M})_{M>0}$ converges weakly-* to the measure $\delta_0$ as $M\to\infty$.

       First we write the Fourier transform of $\varphi_M$ in terms of the Fourier transform of $\varphi$
        \begin{align*}
            \widehat{\varphi}_M(n) 
            = \int \varphi(\frac{y}{M}) e^{2\pi iy\cdot n} \rd y
            = M^2 \int \varphi(x) e^{2\pi iM x\cdot n} \rd x
            = M^2 \widehat{\varphi}(Mn).
        \end{align*}
        Since $\widehat{\varphi}$ is a Schwartz function, we have
        \begin{align*}
            \sup_{M\ge 1}\sup_{|n|\ge a} M^2 |\widehat{\varphi}(Mn)| \le  \frac{1}{a^2}  \sup_{n^\prime} |n^\prime|^2 |\widehat{\varphi}(n^\prime)|<\infty, 
        \end{align*}
        showing \eqref{eq:boundedness of varphi M}. Now we show the convergence \eqref{eq: measure convergence of phi M} for $f$ in $C^\infty_c$: by the Plancherel isometry,
       \begin{align*}
           \langle \widehat{\varphi_M}, f \rangle = \langle \varphi_M, \widecheck{f} \rangle \overset{M\to\infty}{\to} \int \widecheck{f}(x) \rd x = f(0).
       \end{align*}
        Let us compute the $L^1$ norm of $\widehat{\varphi}_M$:
        \begin{align*}
            \int | \widehat{\varphi_M}(n)| \rd n
            = M^2 \int |\widehat{\varphi}(nM)| \rd n 
            = M^2 \int |\widehat{\varphi}(n)| \frac{\rd n}{M^2}
            = \| \widehat{\varphi} \|_{L^1}
        \end{align*}
        Since $(\widehat{\varphi_M})_M$ converges to $\delta_0$ as a tempered distribution and it is uniformly bounded in $L^1$, it also converges weakly-* as a bounded family of measures, proving \eqref{eq: measure convergence of phi M}.

        \textbf{Step 2:} conclusion. Fix $n\neq 0$. We note that $ \widehat{G_{\alpha}\varphi_M}(n) = \langle \widehat{G_{\alpha}}(n-\cdot), \widehat{\varphi_M}\rangle$, so we need to prove
        \begin{align*}
            \langle\widehat{G_{\alpha}}(n-\cdot),\widehat{\varphi_M}\rangle 
            \to \langle\widehat{G_{\alpha}}(n - \cdot), \delta_0\rangle
            = \widehat{G_{\alpha}}(n).
        \end{align*}
        Note that $\widehat{G_\alpha}(n-\cdot)$ is in $L^1_{loc}(\R^2)$, it is continuous on $B_\delta(n)^c$ for every $\delta>0$ and it vanishes at infinity (by the explicit formula for $\widehat{G_\alpha}$ in \ref{lem:sobolev riesz potential}). Therefore, for every $\epsilon>0$ we can find $0<\delta<|n|/2$ and a $C_0(\R^2)$ function $f$, coinciding with $\widehat{G_\alpha}(n-\cdot)$ on $B_\delta(n)^c$, in particular on $B_{|n|/2}(0)$, and with $\|\widehat{G_\alpha}(n-\cdot)-f\|_{L^1}<\epsilon$. In particular, by the uniform bound \eqref{eq:boundedness of varphi M}, we have for some $C>0$ (possibly depending on $n$),
        \begin{align*}
            \sup_{M\ge 1}|\langle \widehat{G_\alpha}(n-\cdot)-f , \widehat{\varphi_M} \rangle| &\le \sup_{M\ge 1}\int_{B_\delta(n)}|\widehat{G_\alpha}(n-n^\prime)-f(n^\prime)||\widehat{\varphi_M}(n^\prime)|\rd n^\prime\\
            &\le \|\widehat{G_\alpha}(n-\cdot)-f\|_{L^1} \sup_{M\ge 1}\sup_{|n^\prime|\ge|n|/2}|\widehat{\varphi_M}(n^\prime)|\le C\epsilon.
        \end{align*}
        By the weak-* convergence of $\widehat{\varphi_M}$ to $\delta_0$, we can then take $M_0$ such that, for every $M> M_0$, $|\langle f , \widehat{\varphi_M} \rangle -\widehat{G_\alpha}(n)| =|\langle f , \widehat{\varphi_M} \rangle -f(0)| < \epsilon$. Therefore we have, for $M>M_0$,
        \begin{align*}
            |\langle \widehat{G_\alpha}(n-\cdot), \widehat{\varphi_M} \rangle -\widehat{G_\alpha}(n)| \le (C+1)\epsilon.
        \end{align*}
        Hence $\widehat{G_{\alpha}\varphi_M}(n) \to \widehat{G_\alpha}(n)$ as $M\to \infty$. The proof is complete.
\end{proof}

\subsection{Lowersemicontinuity of norms}

In order to prove lower-semicontinuity of norms, we prove that the level sets are closed.

\begin{lemma}
\label{lem: semicontinuity}
We have the following
\begin{enumerate}[(i)]
    \item 
    Let $V\subset B$ be Banach spaces with continuous embedding, such that $V$ is reflexive. Then the norm of $V$ is lower-semicontinuous in $B$.

    \item The $C^{\gamma}([0,T];H^{-3})$-norm is lower-semicontinuous in $L^2([0,T];L^2(\R^2;w))$.

    \item The $L^{\infty}([0,T];L^2)$-norm is lower-semicontinuous in $L^2([0,T];L^2(\R^2;w))$.
\end{enumerate}
\end{lemma}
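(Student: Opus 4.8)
The plan is to use the standard fact, valid in the metric space $B:=L^2([0,T];L^2(\R^2;w))$, that a map $g:B\to[0,\infty]$ is sequentially lower-semicontinuous if and only if every sublevel set $\{g\le M\}$ is (sequentially) closed. So in each case I would fix $M>0$, take $f_n$ in the sublevel set with $f_n\to f$ in $B$, and show $f$ is still in the sublevel set. For (i) this is cleanest: reflexivity of $V$ makes the ball $\{v:\|v\|_V\le M\}$ weakly sequentially compact, so a subsequence of $f_n$ converges weakly in $V$ to some $v$ with $\|v\|_V\le M$; since the continuous embedding $V\hookrightarrow B$ is weak-weak continuous, $f_n\rightharpoonup v$ also in $B$, while $f_n\to f$ strongly (hence weakly) in $B$, so uniqueness of weak limits gives $f=v\in V$ with $\|f\|_V\le M$. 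I would apply this in particular to $V=L^2([0,T];H^{1-\alpha})$, which is reflexive and embeds continuously into $B$ because $\|\cdot\|_{L^2(w)}\le\|\cdot\|_{L^2}\le\|\cdot\|_{H^{1-\alpha}}$.

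Part (iii) I would reduce to (i) applied pointwise in time. First I would record that convergence in $B$ yields, along a subsequence, $f_n(t)\to f(t)$ in $L^2(\R^2;w)$ for a.e.\ $t$. Since $L^2(\R^2)$ is reflexive and embeds continuously into $L^2(\R^2;w)$, item (i) shows $\|\cdot\|_{L^2}$ is lower-semicontinuous on $L^2(\R^2;w)$; hence $\|f(t)\|_{L^2}\le\liminf_n\|f_n(t)\|_{L^2}\le M$ for a.e.\ $t$, and taking the essential supremum gives $\|f\|_{L^\infty_t(L^2)}\le M$.

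Part (ii) is the real work, since $C^\gamma([0,T];H^{-3})$ is not reflexive and elements of $B$ carry no intrinsic pointwise-in-time values. For $f_n$ in the sublevel set I would work with the continuous representatives $\tilde f_n\in C^\gamma([0,T];H^{-3})$, which are uniformly bounded, $\|\tilde f_n(t)\|_{H^{-3}}\le M$, and equi-Hölder, $\|\tilde f_n(t)-\tilde f_n(s)\|_{H^{-3}}\le M|t-s|^\gamma$. Fixing a countable set $\{\psi_j\}\subset C_c^\infty(\R^2)$ dense in $H^3$, the scalar functions $t\mapsto\langle\tilde f_n(t),\psi_j\rangle$ are bounded and equi-Hölder, so a diagonal Arzelà–Ascoli argument produces a subsequence and limits $\phi_j\in C^\gamma([0,T])$ with uniform convergence. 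For each fixed $t$ the functional $\psi_j\mapsto\phi_j(t)$ is bounded by $M\|\psi_j\|_{H^3}$, hence extends to $g(t)\in H^{-3}$ with $\|g(t)\|_{H^{-3}}\le M$; passing the Hölder bound to the limit and then by density gives $g\in C^\gamma([0,T];H^{-3})$, and using $\liminf_n a_n+\liminf_n b_n\le\liminf_n(a_n+b_n)$ on the two pieces of the $C^\gamma$ norm yields $\|g\|_{C^\gamma_t(H^{-3})}\le M$.

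The main obstacle, and the final step, is to identify $g$ as a representative of $f$. Here I would exploit that convergence in $B$ implies $L^2(B_R)$-convergence for every $R$ (since the continuous positive $w$ is bounded below on $B_R$), so that for $\chi\in C([0,T])$ and $\psi=\psi_j$ one has $\int_0^T\chi(t)\langle f_n(t),\psi\rangle\,dt\to\int_0^T\chi(t)\langle f(t),\psi\rangle\,dt$, while the uniform bounded weak convergence $\langle\tilde f_n(t),\psi\rangle\to\langle g(t),\psi\rangle$ gives the same integrals converging to $\int_0^T\chi(t)\langle g(t),\psi\rangle\,dt$. Because $\tilde f_n=f_n$ a.e.\ in time, the two limits coincide for all $\chi$, whence $\langle f(t),\psi_j\rangle=\langle g(t),\psi_j\rangle$ for a.e.\ $t$ and every $j$, and by density $f(t)=g(t)$ for a.e.\ $t$. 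Thus $g$ is a continuous representative of $f$ with $\|f\|_{C^\gamma_t(H^{-3})}\le M$, closing the sublevel set. I expect the delicate point to be exactly this reconciliation of the a.e.-in-time (distributional) information coming from $B$ with the pointwise weak-$H^{-3}$ information produced by Arzelà–Ascoli.
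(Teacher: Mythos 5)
Your parts (i) and (iii) are essentially the paper's own arguments: closedness of sublevel sets via weak compactness of balls in the reflexive space $V$ plus uniqueness of weak limits in $B$ for (i), and for (iii) the same pointwise-in-time reduction (a.e.-in-$t$ convergence in $L^2(\R^2;w)$ along a subsequence, then (i) with $V=L^2(\R^2)$, $B=L^2(\R^2;w)$, then an essential supremum).

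Part (ii) is where you take a genuinely different route. The paper argues pointwise in time: for a.e.\ $t$ it extracts a weak $H^{-3}$ limit $v_t$ of $u^n_t$ by reflexivity, identifies $v_t$ with $u_t$ by restricting to balls $B_R$ (where $w$ is bounded below, so the strong $L^2(B_R)$ convergence is available and $L^2(B_R)\hookrightarrow H^{-3}(B_R)$ forces the two limits to coincide), and finally passes the H\"older bound to the limit by duality. You instead build a single limit $g\in C^\gamma([0,T];H^{-3})$ defined for \emph{all} $t$, via Arzel\`a--Ascoli on the scalar projections $t\mapsto\langle \tilde f_n(t),\psi_j\rangle$, and then identify $g$ with $f$. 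The construction itself (equi-H\"older scalar functions, diagonal extraction, extension of $\psi_j\mapsto\phi_j(t)$ to $g(t)\in H^{-3}$ with $\|g(t)\|_{H^{-3}}\le M$, superadditivity of $\liminf$ for the two pieces of the $C^\gamma$ norm, and the integrated-in-time identification $\langle f(t),\psi_j\rangle=\langle g(t),\psi_j\rangle$ for a.e.\ $t$ and all $j$) is correct, and it buys you something the paper's argument leaves implicit, namely a limit defined at every $t$ rather than a.e.

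There is, however, a genuine gap in your very last step, ``by density $f(t)=g(t)$''. Agreement of $f(t)\in L^2(\R^2;w)$ and $g(t)\in H^{-3}$ on a countable family $\{\psi_j\}\subset C^\infty_c$ that is merely dense in $H^3$ does \emph{not} imply equality: the pairing $\psi\mapsto\langle f(t),\psi\rangle$ is not continuous for the $H^3$ norm when $f(t)$ is only weighted-$L^2$, because an $H^3$-approximating sequence may carry mass to spatial infinity, where $1/w$ blows up. The implication can actually fail: take $h$ smooth with $h(x)=e^{|x|}$ for large $|x|$ and $w(x)=e^{-4|x|}$ there, so $h\in L^2(\R^2;w)$; then $\psi\mapsto\int h\psi\,\rd x$ is $H^3$-unbounded on $C^\infty_c$ (translate a fixed bump to infinity), hence its kernel is $H^3$-dense, and a countable dense family chosen inside this kernel makes $h$ agree with $0\in H^{-3}$ on every $\psi_j$ although $h\neq 0$. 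The repair is exactly the localization the paper exploits: choose $\{\psi_j\}$ so that for every integer $R$ the functions supported in $B_{R+1}$ are dense in $C^\infty_c(B_R)$ for the $H^3$ norm. Then any $\varphi\in C^\infty_c$ is approximated by $\psi_j$'s with uniformly bounded supports, on which $w$ is bounded below, so $\langle f(t),\psi_{j_k}\rangle\to\langle f(t),\varphi\rangle$ as well as $\langle g(t),\psi_{j_k}\rangle\to\langle g(t),\varphi\rangle$, and $f(t)=g(t)$ follows. With that amendment (or, equivalently, identifying $f(t)$ and $g(t)$ through restrictions to balls, as in the paper) your proof is complete.
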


\begin{proof}
\begin{enumerate}[(i)]
    \item 
    We prove that the set $L:=\{x\in B \mid \|x\|_{V} \le 1\}$ is closed. Take a sequence $(x^n)_n$ in $L$ that converges strongly to $x$ in $B$, hence it also converges weakly in $B$. The sequence is also bounded in $V$, which is reflexive, so it admits a subsequence $(x^{n_k})_k$ that converges weakly in $V$ to some $\bar{x}$ with $\|\bar{x}\|_{V} \leq 1$. But since $V$ continuously embeds in $B$, we have that $x^{n_k} \rightharpoonup\bar{x}$ in $B$. By uniqueness of the weak limit in $B$, we have that $x =\bar{x} \in L$.
    \item Let $u^n \to u$ in $L^2_t(L^2_{x,w})$ such that $\|u^n\|_{C^{\gamma}_tH^{-3}}\leq 1$. We can thus find a (non relabelled) susequence such that $u^n_t \to u_t$ in $L^2_{x,w}$ $\rd t$-a.s. On the other hand, $(u_t^n)_n$ is bounded in $H^{-3}$, which is reflexive, so it converges weakly in $H^{-3}$ (up to a subsequence) to $v_t \in H^{-3}$. Let $B_R$ be the ball of radius $R>0$ in $\R^2$. We have that the restriction $u_t|_{B_R}$ is the $L^2$-weak limit of $u^n_t|_{B_R}$ and $v_t|_{B_R}$ is the $H^{-3}$-weak limit of the same sequence. Since $L^2(B_R)$ continuously embeds in $H^{-3}(B_R)$ we have that $u_t|_{B_R} = v_t|_{B_R}$ for every $R>0$. Since $u_t$ and $v_t$ coincide an any ball, they are the same. We showed that $u_t$ is the weak limit in $H^{-3}$ of $u^n_t$. We have for every $f\in (H^{-3})^{\ast}$, 
    \begin{align*}
    |f(u_t-u_s)| 
    &\leq |f(u^n_t)-f(u_t)| 
    + |f(u^n_s) - f(u_s)|
    + |f(u^n_t)-f(u^n_s)|\\
    &\leq |f(u^n_t)-f(u_t)| 
    + |f(u^n_s) - f(u_s)|
    + \|f\|_{(H^{-3})^{\ast}} |t-s|^{\gamma}\\
    &\to  \|f\|_{(H^{-3})^{\ast}} |t-s|^{\gamma},
    \qquad \mbox{as } n\to \infty.
    \end{align*}
    Since $\|u_t-u_s\|_{H^{-3}} = \sup_{\|f\|_{(H^{-3})^{\ast}}\leq 1}|f(u_t-u_s)|\leq |t-s|^{\gamma}$, we have that $\|u\|_{C^{\gamma}H^{-3}}\leq 1$.

    \item 
    We prove that $L=\{u\in L^2_t(L^2_{x,w}) \mid \operatorname{esssup}_{t\in[0,t]}\|u_t\|_{L^2} \leq 1\}$ is closed. Take $u^n \in L$ such that $u^n$ converges to $u$ in $L^2_t(L^2_{x,\omega})$. Then, up to a subsequence, $\|u^n_t(x)-u_t(x)\|_{L^2_{x,w}} \to 0$, $\rd t$-a.s. For a fixed $t$ in a set of full measure, the sequence $(u^n_t)_n$ converges in $L^2_{x,w}$ and is bounded in $L^2$ by $1$. Since $L^2_{x,w}$ is reflexive and continuously embedded in $L^2$ we can apply (i) and we have that limit $u_t$ is also bounded by $1$ in $L^2$. Hence, $\operatorname{esssup}_{t\in[0,T]}\|u_t\|_{L^2}\leq 1$ and $u\in L$. We established that $u_t$ is the weak limit it $H^{-3}$ of $u_t^n$.
    \end{enumerate}
\end{proof}

\section{Regularized equation}
\label{sec: proof of regular existence}

In this section we give a proof to Lemma \ref{lem: l infty and l 1}. We will closely follow the proof of \cite[Theorem 13]{coghi2020regularized} (which is a refinement of \cite{coghi_flandoli}), where the result is proven for the $2$-dimensional torus $\mathbb{T}^2$. We will limit the discussion to the points that needs to be tweaked in order to adapt the proof to $\R^2$ as most of the computation don't change between the two cases.

We call $\mathcal{M}(\R^2)$ the space of signed measures on $\R^2$ endowed with the total variation norm. Notice that $\mathcal{M}(\R^2)$ is the dual of $C_0(\R^2)$, the space of continuous functions vanishing at infinity.
We will denote by $\mathcal{M}_M(\R^2)$ the elements of $\mathcal{M}(\R^2)$ with total variation smaller than $M$.

Moreover, we call $\operatorname{Lip}_c(\R^2)$ the space of Lipschitz functions on $\R^2$ with compact support, for $\varphi \in \operatorname{Lip}_c(\R^2)$, we call $\operatorname{Lip}(\varphi)$ its Lipschitz constant. We will denote the unit ball in $\operatorname{Lip}_c(\R^d)$ as
\begin{equation*}
    \operatorname{Lip}_{1} =
    \{
        \varphi \in \operatorname{Lip}_c(\R^2) \mid \|\varphi\|_{\infty} + \operatorname{Lip}(\varphi) \leq 1
    \}.
\end{equation*}
We endow the space $\mathcal{M}_M(\R^2)$ with the distance
\begin{equation*}
    W_1(\mu,\nu) = \sup_{\varphi \in \operatorname{Lip}_1} |\mu(\varphi) - \nu(\varphi)|,
    \qquad
    \mu,\nu \in \mathcal{M}(\R^2),
\end{equation*}
Since the space $\operatorname{Lip}_c(\R^2)$ is dense in $C_0(\R^2)$, the space $(\mathcal{M}_M(\R^2),W_1)$ is complete. 

Finally, for $p\geq 2$ and $M>0$ we define the space $V_M^{p,T} := L^p(\Omega; C([0,T], \mathcal{M}_M(\R^2)))$, which we endow with the distance
\begin{equation*}
    d_c(\mu,\nu) := \mathbb{E}[\sup_{t\in[0,T]} e^{-ct}W_1(\mu_t,\nu_t)^p]^{\frac{1}{p}},
    \qquad
    \mu,\nu \in V_M^{p,T}.
\end{equation*}
for a given $c>0$ to be determined later.

This space is complete, see \cite[Remark 2.2]{coghi2020regularized}.

As a first step we introduce the following stochastic differential equation (SDE), for a fixed $\mu \in V_M^{p,T}$,
\begin{equation}
    \label{eq: regular SDE}
    \rd X_t = (K^{\delta}\ast \mu)(X_t) \rd t + \sum_{k} \sigma_k^{\delta}(X_t) \rd W^k_t,
    \qquad
    X_0 = x \in \R^2.
\end{equation}
Equation \eqref{eq: regular SDE} admits a pathwise unique strong solution as its coefficients are uniformly Lipschitz continuous. Moreover, the solution adimts a version, which we denote by $\Phi^{\mu}(t,x)$, which is Lipschitz continuous in the initial datum $x\in \R^2$, uniformly in $t\in[0,T]$, and H\"older continuous in time $t\in[0,t]$, uniformly in $x\in \R^2$.

We fix $\omega^{\delta}_0$ satisfying \eqref{eq: omega zero wish list}. We have that $\omega^{\delta}_0 \in \mathcal{M}_M(\R^2)$ for $M= \|\omega^\delta_0\|_{L^1}$.

We define the operator
\begin{equation*}
    \Psi: V_M^{p,T} \ni \mu \mapsto \Phi^{\mu}_{\#}\omega_0^{\delta} \in V_M^{p,T}.
\end{equation*}

From now on we take $p>2$. For fixed $T>0$ and $M>0$, it is proved in \cite[Lemma 3.2]{coghi2020regularized} that there exists $c>0$ such that the operator $\Psi$ is a contraction on $(V_M^{p,T}, d_c)$. Notice that the proof of \cite[Lemma 3.2]{coghi2020regularized} works also on $\R^2$ (whereas in the original lemma it is done in $\mathbb{T}^2$). \cite[Lemma 3.2]{coghi2020regularized} relies on two crucial ingredients. The first one is that $K^{\delta}$ is uniformly Lipschitz and bounded by the definition of $G^\delta$. The second ingredient is that there exists a constant $C=C(\delta) > 0$ such that
\begin{equation*}
    \sum_k |\sigma^{\delta}_k (x) - \sigma^{\delta}_k(y) |^2
    \leq C |x-y|^2,
\end{equation*}
which follows from Lemma \ref{lem:Q_reguralized}.

Since $(V_M^{p,T}, d_c)$ is complete, the operator $\Psi$ admits a unique fixed point $\omega^{\delta} \in V_M^{p,T}$. 

A straight-forward application of It\^o's formula shows that $\omega^{\delta}$ is a strong solution to equation \eqref{eq: regularized main} in the sense of distributions.

We can compute now the $L^1$ and $L^{\infty}$ norms of $\omega^{\delta}$. Notice that, by construction
\begin{equation}
\label{eq: Jacobian expressino}
    \omega^{\delta} = \Phi^{\omega_\delta}_{\#}\omega^{\delta}_0,
    \qquad
    \iff
    \qquad
    \omega^{\delta}_t(x) \rd x
    = \omega^{\delta}_0((\Phi^{\omega^{\delta}})^{-1}(x)) |\operatorname{det}D\Phi^{\omega^\delta}(x)|\rd x,
\end{equation}
where the last equality is to be intended in the space $\mathcal{M}(\R^2)$ for every $t\in[0,T]$, $P$-a.s.

By our assumptions on $\sigma^{\delta}_k$, the It\^o-Stratonovich correction in equation \eqref{eq: regular SDE} vanishes, see \cite[Remark 2.4]{Brzezniak_Flandoli_Maurelli} for more details. Since equation \eqref{eq: regular SDE} coincides with its Stratonovich version and the coefficients are divergence free, the corresponding flow is measure preserving. Equivalently, the Jacobian in \eqref{eq: Jacobian expressino} is constantly equal to $1$. As a consequence we have that $P$-a.s.
\begin{equation*}
  \sup_{t\in[0,T]}\|\omd_t\|_{L^{\infty}} \leq \| \omd_0\|_{L^{\infty}},
    \qquad
    \sup_{t\in[0,T]}\|\omd_t\|_{L^{1}} 
    \leq \| \omd_0\|_{L^{1}}.
\end{equation*}

From the previous bounds we have that $
\omd \in L^{\infty}(\Omega; L^2([0,T]\times \R^2))$. So that every term in equation \eqref{eq: regular SDE} is well-defined in $H^{-4}$. By passing to the limit in the weak formulation of the equation (for more details see Step 4 in the proof of existence) we can conclude that $\omd$ is an analytically strong solution to \eqref{eq: regular SDE} in $H^{-4}$. Moreover, there exists a version $\omd \in C([0,T];H^{-4})$ since each integral in equation \eqref{eq: regular SDE} admits a continuous version.

We must now prove that $\omd \in L^2([0,T],\dot{H}^{-1})$, $P$-a.s.. In order to do so, we prove the following preliminary result.

\begin{lemma}
For every fixed $m\ge 2$ we have (with multiplicative constant depending on $m$)
\label{lem: moments}
	\begin{align*}
	&\mathbb{E}\left[\sup_{t\in[0,T]} \int_{\mathbb{R}^2}|x|^m|\omega^{\delta}_t(x)| \rd x\right] 
	\lesssim \int_{\mathbb{R}^2} |x|^m |\omega_0^{\delta}(x)|\rd  x 
	+ (\| \omega^{\delta}_0 \|_{L^{\infty}}^m + \| \omega^{\delta}_0 \|_{L^{1}}^m + 1) \| \omega^{\delta}_0\|_{L^{1}},\\
    \end{align*}
    and 
    \begin{align*}
        \mathbb{E}\left[\sup_{t\in[0,T]} \left(\int_{\mathbb{R}^2}|x|^{m/2}|\omega^{\delta}_t(x)| \rd x\right)^2\right] 
	&\lesssim \|\omd_0\|_{L^{1}}\int_{\mathbb{R}^2} |x|^m |\omega_0^{\delta}(x)|\rd  x \\
	&\quad + (\| \omega^{\delta}_0 \|_{L^{\infty}}^m + \| \omega^{\delta}_0 \|_{L^{1}}^m + 1) \| \omega^{\delta}_0\|_{L^{1}}^2
	\end{align*}
\end{lemma}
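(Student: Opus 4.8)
The plan is to transfer the moment bounds on the measure $\omd_t$ to moment bounds on the stochastic flow $\Phi^{\omd}$ solving \eqref{eq: regular SDE}, and then estimate the latter by standard SDE techniques. By the representation \eqref{eq: Jacobian expressino} together with the measure-preserving property of the flow established above, for every $t$ and $P$-a.s., the change of variables $x=\Phi^{\omd}(t,y)$ (whose Jacobian equals $1$) gives
\begin{equation*}
\int_{\R^2}|x|^m|\omd_t(x)|\,\rd x = \int_{\R^2}|\Phi^{\omd}(t,y)|^m|\omd_0(y)|\,\rd y .
\end{equation*}
Since the integrand on the right is nonnegative I can bound $\sup_t\int\dots\le\int\sup_t|\Phi^{\omd}(t,y)|^m|\omd_0(y)|\,\rd y$ and, using the joint measurability and time-continuity of the chosen version of $\Phi^{\omd}$, apply Tonelli to obtain
\begin{equation*}
\E\Big[\sup_{t\in[0,T]}\int_{\R^2}|x|^m|\omd_t(x)|\,\rd x\Big] \le \int_{\R^2}\E\big[\sup_t|\Phi^{\omd}(t,y)|^m\big]\,|\omd_0(y)|\,\rd y .
\end{equation*}
Thus it suffices to prove the pointwise-in-$y$ estimate $\E[\sup_t|\Phi^{\omd}(t,y)|^m]\lesssim |y|^m+\|\omd_0\|_{L^\infty}^m+\|\omd_0\|_{L^1}^m+1$, with a constant independent of $y$ and (crucially) of $\delta$.

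To obtain this I would bound the two driving terms of \eqref{eq: regular SDE} uniformly in $\delta$. For the drift, the naive bound $\|K^\delta\|_{L^\infty}$ degenerates as $\delta\to0$, so instead I use the $\delta$-uniform pointwise bound $|K^\delta(x)|=|\nabla G^\delta(x)|\lesssim|x|^{-1}$ from \eqref{eq: bounds G delta} and the standard interpolation estimate for weakly singular kernels: splitting the convolution at radius $R=(\|\omd\|_{L^1}/\|\omd\|_{L^\infty})^{1/2}$ yields
\begin{equation*}
\|K^\delta\ast\omd_s\|_{L^\infty}\lesssim\big\||\cdot|^{-1}\ast|\omd_s|\big\|_{L^\infty}\lesssim\|\omd_s\|_{L^1}^{1/2}\|\omd_s\|_{L^\infty}^{1/2}\le\|\omd_0\|_{L^1}^{1/2}\|\omd_0\|_{L^\infty}^{1/2},
\end{equation*}
where the last step uses Lemma \ref{lem: l infty and l 1}. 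For the martingale part $M_t:=\sum_k\int_0^t\sigma^\delta_k(\Phi^{\omd}(s,y))\,\rd W^k_s$ the quadratic variation has density $\sum_k|\sigma^\delta_k|^2=\tr Q^\delta(0)=2c_\delta$, bounded uniformly in $\delta$ by \eqref{eq: properties Q delta} and \eqref{eq:Q delta 0}. Writing $|\Phi^{\omd}(t,y)|\le|y|+Tb+\sup_t|M_t|$ with $b:=C\|\omd_0\|_{L^1}^{1/2}\|\omd_0\|_{L^\infty}^{1/2}$, raising to the $m$-th power, and applying Burkholder--Davis--Gundy to control $\E[\sup_t|M_t|^m]\lesssim\E[\langle M\rangle_T^{m/2}]\lesssim1$, I get $\E[\sup_t|\Phi^{\omd}(t,y)|^m]\lesssim|y|^m+b^m+1$; Young's inequality $b^m\lesssim\|\omd_0\|_{L^1}^m+\|\omd_0\|_{L^\infty}^m$ then gives exactly the desired pointwise bound.

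Plugging this into the Tonelli inequality above produces the first claimed bound directly, since $\int(\,\cdot\,)|\omd_0(y)|\,\rd y$ separates into $\int|y|^m|\omd_0|\,\rd y$ plus $(\|\omd_0\|_{L^\infty}^m+\|\omd_0\|_{L^1}^m+1)\|\omd_0\|_{L^1}$. For the second bound I would apply Cauchy--Schwarz with respect to the finite measure $|\omd_0(y)|\,\rd y$,
\begin{equation*}
\Big(\int|\Phi^{\omd}(t,y)|^{m/2}|\omd_0(y)|\,\rd y\Big)^2\le\|\omd_0\|_{L^1}\int|\Phi^{\omd}(t,y)|^m|\omd_0(y)|\,\rd y,
\end{equation*}
and then take $\sup_t$ and expectation as before, which yields the stated estimate with the extra factor $\|\omd_0\|_{L^1}$. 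The main obstacle, and the only genuinely non-routine point, is securing the $\delta$-\emph{uniform} control of the drift: a crude estimate would leave a factor blowing up like $\delta^{-1/2}$, which would be fatal once these moment bounds are combined with the initial-data growth conditions \eqref{eq:omega zero growth} in Lemma \ref{lem:Hm1 approx_new} and its successors. The interpolation bound, resting on $|K^\delta|\lesssim|\cdot|^{-1}$, is precisely what removes this dependence; the remaining ingredients (measure preservation, BDG, Young, Cauchy--Schwarz, and the Tonelli interchange) are standard.
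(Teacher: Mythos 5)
Your proof is correct and follows essentially the same route as the paper's: both transfer the moment bound to the flow via the measure-preserving pushforward representation, control the drift uniformly in $\delta$ through the pointwise bound $|K^\delta|\lesssim|x|^{-1}$ together with the conserved $L^1$ and $L^\infty$ norms, handle the martingale part by Burkholder--Davis--Gundy using $\sum_k|\sigma^\delta_k|^2=\tr Q^\delta(0)\lesssim 1$, and then integrate the pointwise flow estimate against $|\omd_0(y)|\,\rd y$, with the second bound obtained by Cauchy--Schwarz (the paper's H\"older step). The only cosmetic difference is your optimized-radius interpolation $\|K^\delta\ast\omd\|_{L^\infty}\lesssim\|\omd_0\|_{L^1}^{1/2}\|\omd_0\|_{L^\infty}^{1/2}$ versus the paper's fixed-radius splitting yielding $\|\omd_0\|_{L^\infty}+\|\omd_0\|_{L^1}$, which are equivalent for the purpose of the lemma by Young's inequality.
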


\begin{proof}
	Notice $|\omega^{\delta}_t| = |\omega^{\delta}_0|((\Phi^{\omd}_t)^{-1})$. Since $X_t$ is measure preserving it follows that $|\omega^{\delta}_t| = (\Phi^{\omd}_t)_{\#}|\omega_0^{\delta}|$. By standard estimates via Burkholder-Davis-Gundy inequality, we have (with multiplicative constant depending on $m$ and $T$ but not on $\delta$)
    \begin{align*}
        \E[\sup_{t\in[0,T]}|\Phi^{\omd}_t(x)|^m] \lesssim |x|^m +\|K^\delta\ast \omd\|_{L^\infty(\Omega\times [0,T]\times \R^2)}^m +\tr[Q^\delta(0)]^{m/2}.
    \end{align*}
    Since $K^\delta(x)1_{|x|\le 1}$ is in $L^1$ and $K^\delta(x)1_{|x|> 1}$ is in $L^\infty$, we have
    \begin{align*}
        \|K^\delta\ast \omd\|_{L^\infty(\Omega\times [0,T]\times \R^2)} &\lesssim \|\omd\|_{L^\infty(\Omega\times [0,T]\times \R^2)}+\|\omd\|_{L^\infty(\Omega\times [0,T];L^1(\R^2))}\\
        &\lesssim \|\omd_0\|_{L^\infty}+\|\omd_0\|_{L^1}.
    \end{align*}
    Moreover, $Q^\delta(0)\lesssim 1$ by \eqref{eq:Q delta 0}. Hence
    \begin{align*}
         \E[\sup_{t\in[0,T]}|\Phi^{\omd}_t(x)|^m] \lesssim |x|^m +\|\omd_0\|_{L^\infty}^m+\|\omd_0\|_{L^1}^m +1.
    \end{align*}
    Integrating this expression in $\omd_0$, we get the first bound. The second bound follows from the first one by H\"older inequality.
\end{proof}

Notice that, for every $t\in[0,T]$, $P$-a.s.
\begin{equation*}
    \int_{\R^2} \omd_t(x)\rd x 
    = \int_{\R^2} \omd_0(0) \rd x
    = 0.
\end{equation*}
We can apply Lemma \ref{lem: properties initial condition} to have that $\omd_t \in \dot{H}^{-1}$, for every $t\in[0,T]$, $P$-a.s. and
\begin{align*}
    \mathbb{E}\left[\sup_{t\in [0,T]}\|\omd_t\|_{\dot{H}^{-1}}^2\right]
    \lesssim
    \mathbb{E}\left[\sup_{t\in [0,T]}
    \left(
        \int_{\R^2}|x||\omd_t(x)|\rd x
    \right)^2
    \right]
    + \mathbb{E}\left[\sup_{t\in [0,T]}\|\omd_t\|_{L^2}^2\right]
    < \infty,
\end{align*}
where the last bound follows from Lemma \ref{lem: moments}. This shows that $\sup_{t\in [0,T]}\|\omd_t\|_{\dot{H}^{-1}}$ is in $L^2(\Omega)$.

\textbf{Uniqueness} Assume now that $\omd \in L^{\infty}([0,T];L^p(\Omega;L^2))$ is an analytically weak solution to equation \eqref{eq: regularized main}. The solution we built above belong to this class. We can freeze the nonlinearity in the equation and we see that $\omd$ also solves the linear equation
\begin{align}
\begin{aligned}\label{eq: linear regularized main}
    &\rd \omd + b\cdot\nabla\omd\rd t +\sum_k\sigma^\delta_k\cdot\nabla\omd \rd W^k = \frac{c_\delta}{2} \Delta \omd \rd t,\\
    &\omd_t|_{t=0} = \omd_0,
\end{aligned}
\end{align}
where $b_t(x) := (K^{\delta}\ast\omd_t)(x)$. By classical SPDE theory (e.g. \cite{liu2015stochastic}) equation \eqref{eq: linear regularized main} admits a unique weak solution. Since both $\omd$ and $\Phi^{\omd}_{\#}\omd_0$ are in $L^\infty([0,T];L^p(\Omega;L^2))$ and are weak solutions to \eqref{eq: linear regularized main}, they must coincide. Thus, every solution to \eqref{eq: regularized main} in the space $L^{\infty}([0,T];L^p(\Omega;L^2))$ must be a fixed point to the map $\psi$, which is unique.

This concludes the proof of Lemma \ref{lem: l infty and l 1}.

\bibliographystyle{abbrv}
\bibliography{bibliography}

\end{document}